\numberwithin{equation}{section}
\newcounter{AbcT}
\newtheorem {Theorem}    {Theorem}[section]
\newtheorem* {Theorem1.9}    {Theorem 1.9}
\newtheorem* {Theorem1.6}    {Theorem 1.6}
\newtheorem* {Question1.7}    {Question 1.7}
\newtheorem {Definition}[Theorem] {Definition} 
\newtheorem* {Remark}	{\bf{Remark}}
\newtheorem {Lemma}      [Theorem]    {Lemma}
\newtheorem {Corollary}   [Theorem] {Corollary}
\newtheorem {Proposition}[Theorem]    {Proposition}
\newtheorem {Claim}      [Theorem]    {Claim}
\newtheorem {Observation}[Theorem]    {Observation}
\theoremstyle{remark}
\newcounter{DM@bibnum}
\newcommand{\la}{\langle}
\newcommand{\ra}{\rangle}
\def\IA{{\rm IA}}
\def\IAC{{\rm IAC}}
\def\IAR{{\rm IAR}}
\def\Col{{\rm Col}}
\def\Row{{\rm Row}}
\def\SL{{\rm SL}}
\def\GL{{\rm GL}}
\def\Sp{{\rm Sp}}
\def\deg{{\rm deg\,}}
\def\supp{{supp\,}}
\def\Aut{{\rm Aut}}
\def\SAut{{\rm SAut}}
\def\Mod{{\rm Mod}}
\def\Ker{{\rm Ker\,}}
\def\rk{\rm{rk\,}}
\def\NSL_2{{\mathcal N SL_2}}
\def\eps{\varepsilon}
\def\phi{\varphi}
\def\calC{{\mathcal C}}
\def\calF{{\mathcal F}}
\def\calG{{\mathcal G}}
\def\calI{{\mathcal I}}
\def\calR{{\mathcal R}}
\def\calT{{\mathcal T}}
\def\calU{{\mathcal U}}
\def\calX{{\mathcal X}}
\def\hbar{\bar h}
\def\dbN{{\mathbb N}}
\def\dbQ{{\mathbb Q}}
\def\dbR{{\mathbb R}}
\def\dbZ{{\mathbb Z}}
\def\skv{{\vskip .12cm}}
\begin{document}

\title{On finite presentability of some partial Torelli subgroups of $\Aut(F_n)$}

\subjclass[2020]{Primary 20F05, 20F65;
                 Secondary 20F06, 20F28, 57M07}

\keywords{automorphisms of free groups, Torelli subgroup, BNSR invariants, van Kampen diagram}

\author{Mikhail Ershov}
\address{University of Virginia}
\email{ershov@virginia.edu}

\begin{abstract} Let $F_n$ be the free group of rank $n$, and let $\rho_{ab}:\Aut(F_n)\to \GL_n(\dbZ)$ be the map
induced by the natural projection $F_n\to\dbZ^n$.
It is a long-standing open problem whether the subgroup of $\IA$-automorphisms $\IA_n=\Ker\rho_{ab}$ is finitely presented for $n\geq 4$. In this paper we establish finite presentability of certain infinite index subgroups of $\Aut(F_n)$ containing $\IA_n$. In the terminology of Putman, these subgroups are natural analogues of partial Torelli subgroups of mapping class groups. 
\end{abstract}

\maketitle
\section{Introduction}

Given $n\geq 2$, let $F_n$ be the free group of rank $n$ and $\Aut(F_n)$ its automorphism group. We begin the paper with a brief overview of the past work on finite presentability of $\Aut(F_n)$ and some of its subgroups.

\skv
\paragraph{\bf Finite presentability of $\Aut(F_n)$ and some of its subgroups.} Finite generation of $\Aut(F_n)$ was established by Nielsen in 1921~\cite{Ni1}, and shortly afterwards, Nielsen
proved that $\Aut(F_n)$ is finitely presented~\cite{Ni}. While the proof of finite generation in \cite{Ni1} and the description of the finite presentation in \cite{Ni} were elementary, justification of this presentation used sophisticated geometric techniques. Whitehead~\cite{Wh} gave an algorithm which determines whether two given $n$-tuples of elements of $F_n$
lie in the same $\Aut(F_n)$-orbit, using what is now known as the {\it peak reduction lemma}.
This was another fundamental result about $\Aut(F_n)$ with a simple algebraic statement proved by a non-algebraic method.

Rapaport~\cite{Ra} gave an algebraic proof of Whitehead's peak reduction lemma, which was later simplified by Higgins and Lyndon~\cite{HL}.
McCool~\cite{Mc1} used a variation of the results from \cite{HL} to find another presentation for $\Aut(F_n)$, and 
then used it in \cite{Mc2} to give another (this time purely algebraic) proof of the correctness of Nielsen's presentation from \cite{Ni}. Later, using similar peak-reduction techniques, McCool~\cite{Mc} established finite presentability of several classes of subgroups of $\Aut(F_n)$, including stabilizers of finite subsets as well as algebraic mapping class groups. More recently, Day generalized McCool's finite presentability
results to the automorphism groups of the right-angled Artin groups~\cite{D09} and their corresponding subgroups \cite{D14}. 
\skv

\paragraph{\bf The main result.} One prominent subgroup of $\Aut(F_n)$ whose presentability does not seem to be tractable by McCool's method (or its variations) is $\IA_n$, called the subgroup of $\IA$-automorphisms or the {\it Torelli subgroup}. It is defined as the kernel
of the map $\rho_{ab}:\Aut(F_n)\to \Aut(\dbZ^n)=\GL_n(\dbZ)$ induced by the natural projection $F_n\to\dbZ^n$. Magnus~\cite{Ma} proved that $\IA_n$ is finitely generated and found a simple finite generating set. This result immediately implies that $\IA_2$ is free of rank $2$ and thus trivially finitely presented.
Krstic and McCool~\cite{KM} proved that $\IA_3$ is not finitely presented, and the question whether $\IA_n$ is finitely presented for $n\geq 4$ remains open.

We note that if for some $n$ the Torelli group $\IA_n$ is finitely presented, then so is any subgroup of the form $\rho_{ab}^{-1}(P)$
where $P$ is a finitely presented subgroup of $\GL_n(\dbZ)$. On the other hand, to the best of our knowledge, the groups of this form were only known to be finitely presented when $P$ has finite index in $\GL_n(\dbZ)$ (in which case the result follows automatically from finite presentability of $\Aut(F_n)$). The goal of this paper is to establish finite presentability for a natural family of groups of the form $\rho_{ab}^{-1}(P)$ with $P$ of infinite index in $\GL_n(\dbZ)$ (Theorem~\ref{thm:finpres} below). By analogy with \cite{Pu}, we will call these groups {\it partial Torelli subgroups}.

\skv
\paragraph{\bf Partial Torelli subgroups.} Given integers $1\leq d\leq n$, let $\Col_{n,d}$ (resp. $\Row_{n,d}$) denote the subgroup of $\GL_n(\dbZ)$ consisting of all matrices whose first $d$ columns (resp. rows) coincide with those of the identity matrix. Let $\IAC_{n,d}$ (resp. $\IAR_{n,d}$) denote the preimage of $\Col_{n,d}$ (resp. $\Row_{n,d}$) under $\rho_{ab}:\Aut(F_n)\to \GL_n(\dbZ)$.
Thus, $$\IA_n=\IAC_{n,n}\subseteq \IAC_{n,n-1}\subseteq \cdots\subseteq \IAC_{n,0}=\Aut(F_n)$$ and similarly with $\IAC_{n,d}$
replaced by $\IAR_{n,d}$.

\begin{Theorem} 
\label{thm:finpres}
The groups $\IAC_{n,d}$ and $\IAR_{n,d}$ are finitely presented when $n\geq d+115$.
Moreover, the groups $\IAC_{n,1}$ and $\IAR_{n,1}$ are finitely presented  for all $n\geq 26$.
\end{Theorem}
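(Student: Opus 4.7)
The natural approach is via the Bieri--Neumann--Strebel--Renz theory of $\Sigma$-invariants. Recall the criterion: if $G$ is a finitely presented group and $H\leq G$ contains $[G,G]$, then $H$ is finitely presented if and only if every character $\chi\colon G\to\dbR$ with $\chi(H)=0$ lies in the open set $\Sigma^2(G)$. The plan is to apply this with $H=\IAC_{n,d}$ (the $\IAR_{n,d}$ case being handled by a parallel argument), so that the theorem reduces to two sub-problems: (i)~producing a suitable finitely presented overgroup $G$ with $[G,G]\leq\IAC_{n,d}$, and (ii)~verifying that the characters of $G$ vanishing on $\IAC_{n,d}$ all lie in $\Sigma^2(G)$.

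For (i), I would look for $G=\rho_{ab}\iv(P)$ where $P\leq\GL_n(\dbZ)$ contains $\Col_{n,d}$ with $P/\Col_{n,d}$ abelian (which is exactly the condition $[G,G]\leq\IAC_{n,d}$). A natural choice is to take $P$ generated by $\Col_{n,d}$ together with a short list of elementary matrices giving $P/\Col_{n,d}$ the structure of a free abelian group of small rank. Finite presentation of such $G$ should be obtainable by a McCool-style peak-reduction argument, starting from the finitely presented stabilizer of $(x_1,\ldots,x_d)$ in $\Aut(F_n)$ (which is covered by \cite{Mc}) and adjoining controlled Nielsen-type automorphisms lifting the extra elementary generators of $P$, together with finitely many explicit relations.

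For (ii), the characters of $G$ vanishing on $\IAC_{n,d}$ are exactly those factoring through $P/\Col_{n,d}$, and so they form a subsphere of $S(G)$ of dimension one less than the rank of $P/\Col_{n,d}$. To show this subsphere sits in $\Sigma^2(G)$, I would, for each such character $\chi$ and each defining relator of a chosen finite presentation of $G$, construct a van Kampen diagram whose vertices evaluate nonnegatively under $\chi$ up to a uniform constant. The diagrams are to be built by running Nielsen and peak-reduction moves inside the free generators $x_{d+1},\ldots,x_n$, which serve as scratch space, while the frozen generators $x_1,\ldots,x_d$ are left undisturbed.

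The combinatorial construction of van Kampen diagrams in step (ii) is the main obstacle. The procedure must respect the sign constraints of each character, and the number of free generators consumed by the reductions is what governs the hypothesis on $n$. The bound $n\geq d+115$ should reflect the worst-case cost of filling the defining relators of $G$ under these constraints, whereas the much better bound $n\geq 26$ available when $d=1$ should exploit extra symmetry and a considerably simpler relator structure when only a single generator is frozen.
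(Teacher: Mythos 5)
Your step (i) contains the essential gap, and it is fatal to the whole strategy. To invoke Renz's criterion you need a \emph{finitely presented} group $G=\rho_{ab}\iv(P)$ in which $\IAC_{n,d}$ is coabelian, i.e.\ $\Col_{n,d}\trianglelefteq P$ with $P/\Col_{n,d}$ abelian. Any such $P$ contains $\Col_{n,d}$ with at most an abelian (hence very small) quotient, so $G$ contains $\IAC_{n,d}$ with small abelian quotient, and establishing finite presentability of $G$ is essentially the same problem as the theorem itself: as the paper notes, groups of the form $\rho_{ab}\iv(Q)$ were previously known to be finitely presented only when $Q$ has finite index in $\GL_n(\dbZ)$. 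Your suggestion to get finite presentability of $G$ from McCool does not work: McCool's theorem covers stabilizers of finite tuples (and algebraic mapping class groups), and the stabilizer of $(x_1,\dots,x_d)$ is a proper subgroup of $\IAC_{n,d}$ that does not contain $\IA_n$; adjoining ``controlled Nielsen-type automorphisms'' to reach $\rho_{ab}\iv(P)$ is exactly the step nobody knows how to do while preserving finite presentability (this is the same obstruction that makes finite presentability of $\IA_n$ open). So the argument is circular: the finitely presented overgroup you need is not available.

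The paper's actual route is designed precisely to avoid this. Since $\IAR_{n,d}$ and $\IAC_{n,d}$ are not coabelian (indeed not normal) in $\SAut(F_n)$ --- whose character sphere is empty anyway, so BNSR invariants see nothing --- the author does not apply Renz's criterion but rather generalizes its \emph{proof}: one works with the Schreier coset space $G/H\cong Um_{d\times n}(\dbZ)$ for $G=\widetilde{\SAut(F_n)}$, replaces the character $\chi$ by ad hoc super-Artinian partial orders on this coset space, and proves a Brown-type criterion (Proposition~1.2/2.4) by explicitly performing peak reduction on $Um_{d\times n}(\dbZ)$-labeled van Kampen diagrams over Gersten's presentation. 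Your step (ii) gestures at the right kind of diagram-filling, but without a character to measure against (and without a finitely presented $G$ to start from) it cannot be run as you describe; the constants $115$ and $26$ in the statement come from the combinatorics of these coset-labeled reductions (supports of galleries of cells, defect bounds row by row), not from a cost of filling relators against sign constraints of a character.
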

\begin{Remark}\rm The question whether $\IAR_{n,1}$ is finitely presented for $n\geq 4$ was asked by Krstic and McCool~\cite[Problem~3]{KM}.
It was proved in \cite{KM} that $\IAR_{3,1}$ is not finitely presented.
\end{Remark}

\paragraph{\bf Motivation: BNSR invariants.}
Our proof of Theorem~\ref{thm:finpres} is inspired by the work of Renz on the second BNSR invariant. Given a finitely generated group $G$,
its BNS invariant $\Sigma(G)=\Sigma^1(G)$ (which is also the first BNSR invariant) was introduced in the celebrated paper of Bieri, W. Neumann and Strebel~\cite{BNS}. Among other things, it was proved in \cite{BNS} that $\Sigma(G)$ completely determines which coabelian
subgroups of $G$ are finitely generated (we call a subgroup $N$ of $G$ coabelian if $N$ is normal and $G/N$ is abelian). This result is often called the {\it BNS criterion}. Higher order analogues of $\Sigma(G)$, now called {\it BNSR invariants}, were introduced and studied in the Ph.D. thesis of Renz~\cite{Re} (homotopical BNSR invariants) and in the paper of Bieri and Renz~\cite{BR} (homological BNSR invariants). In particular, Renz~\cite{Re} proved that the second homotopical BNSR invariant $\Sigma^2(G)$, which can be associated to any finitely presented group $G$,  determines which coabelian subgroups of $G$ are finitely presented (this result will be referred to as {\it Renz's criterion}).

In the proofs of both BNS criterion and Renz's criterion, geometry of the Cayley graph of $G/N$ plays the key role in determining when 
a coabelian subgroup $N$ of $G$ is finitely generated (resp. finitely presented). A. Putman\footnote{private communication} suggested that this geometric approach may be applicable beyond the coabelian setting, and in this paper we will implement this idea in the case where $G=\SAut(F_n)$ and $N=\IAR_{n,d}\cap G$ or $\IAC_{n,d}\cap G$ (with $n$ and $d$ as in Theorem~\ref{thm:finpres}). Here
$\SAut(F_n)$ is the subgroup of ``orientation-preserving'' automorphisms of $F_n$ defined by $\SAut(F_n)=\rho_{ab}^{-1}(\SL_n(\dbZ))$
(it has index $2$ in $\Aut(F_n)$). Note that in these cases $N$ is not normal in $G$, so instead of the Cayley graph we will deal with the Schreier graph of $G/N$. 

The reason we will be working with $\SAut(F_n)$ instead of $\Aut(F_n)$ is that $\SAut(F_n)$ has a particularly nice Steinberg-type presentation
found by Gersten~\cite{Ge}, which is very similar to the standard presentation of $\SL_n(\dbZ)$ -- see \S~\ref{sec:groupandrel}. 
\skv
 
\paragraph{\bf About the proof of Theorem~\ref{thm:finpres}.} We will prove Theorem~\ref{thm:finpres} using a general criterion 
in terms of van Kampen diagrams: 

\begin{Proposition} 
\label{thm:basiccriterion}
Let $G=\la X|R \ra$ be a  finitely presented group, $H$ a finitely generated subgroup of $G$
and $\rho:G\to G/H$ the natural projection.
Let $A\subseteq G/H$ be a finite subset such that $\rho^{-1}(A)$ is connected in the Cayley graph $Cay(G,X)$,
and suppose that there exists a finite subset $B$ of $G/H$ with the following property:
\begin{itemize} 
\item[(*)] For any simple closed path $\gamma$ in $Cay(G,X)$ all of whose vertices lie in $\rho^{-1}(A)$ there exists a 
disk van Kampen diagram $\Omega$ relative to $(X,R)$ such that $\partial \Omega = \gamma$ (as defined below) and all vertices of $\Omega$ lie in $\rho^{-1}(B)$.
\end{itemize}
Then $H$ is finitely presented.
\end{Proposition}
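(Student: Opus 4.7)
The plan is to realize $H$ as the fundamental group of an explicit finite $2$-complex built from the Cayley $2$-complex of $(X,R)$, and then to extract a finite presentation via a Tietze transformation. The connectedness of $\rho^{-1}(A)$ will produce a finite generating set for $H$, and hypothesis (*) will identify which words in those generators represent the identity of $H$.

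First I form the Cayley $2$-complex $\tilde K$ of $(X, R)$: its $1$-skeleton is $Cay(G,X)$, with a $2$-cell for every pair $(r,g) \in R \times G$ attached along the loop spelled by $r$ starting at $g$. Since $\langle X \mid R\rangle$ is a presentation of $G$, the complex $\tilde K$ is simply connected, and $G$ acts on it freely by left multiplication. Hence $H$ acts freely, and the quotient $Y := H \backslash \tilde K$ is a $2$-complex with $\pi_1(Y) \cong H$ whose $1$-skeleton is the Schreier graph $\Gamma$ of $G/H$. Using the connectedness of $\rho^{-1}(A)$, the inclusion of the finite induced subgraph $\Gamma_A \subseteq \Gamma$ on vertex set $A$ induces a surjection $\pi_1(\Gamma_A, *) \twoheadrightarrow H$: for each $h \in H$, a path in $\rho^{-1}(A)$ from a lift $g_0$ of $*$ to $hg_0$ projects to a loop in $\Gamma_A$ representing $h$.

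After replacing $B$ by $A \cup B$, I may assume $A \subseteq B$ and define the finite subcomplex $L \subseteq Y$ with vertex set $B$, all Schreier edges between these vertices, and all $2$-cells of $Y$ with attaching map in $L^{(1)}$. Choosing a spanning tree $T_A$ of $\Gamma_A$ extended to a spanning tree $T_L$ of $L^{(1)}$, I read off a finite presentation
$$\pi_1(L,*) = \langle g_1,\ldots,g_k,\, y_1,\ldots,y_m \mid r_1,\ldots,r_n\rangle,$$
in which the $g_i$'s correspond to edges of $\Gamma_A \setminus T_A$ (a free basis of $\pi_1(\Gamma_A,*)$), the $y_j$'s to the remaining non-tree edges of $L^{(1)}$, and the $r_l$'s to the $2$-cell boundaries.

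Hypothesis (*) now enters through the key assertion: the kernel of $\pi_1(\Gamma_A,*) \twoheadrightarrow H$ coincides with the kernel of $\pi_1(\Gamma_A,*) \hookrightarrow \pi_1(L^{(1)},*) \twoheadrightarrow \pi_1(L,*)$. Any loop in $\Gamma_A$ trivial in $H$ lifts to a closed path in $Cay(G,X)$ with vertices in $\rho^{-1}(A)$; decomposing this closed path into simple closed sub-paths and applying (*) to each produces a van Kampen diagram all of whose vertices lie in $\rho^{-1}(B)$, which projects to a disk filling in $L$. Granted this, each $y_j$ represents some element of $H$ which by the surjection above is expressible as a word $W_j \in F(g_1,\ldots,g_k)$. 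Then the finitely presented group
$$Q := \langle g_1,\ldots,g_k \mid r_l(g_*, W_*(g_*)) : 1 \le l \le n\rangle$$
(obtained by adjoining the relations $y_j = W_j(g_*)$ to $\pi_1(L)$ and eliminating the $y_j$'s) is isomorphic to $H$ via $g_i \mapsto g_i$: well-definedness of $H \to Q$ uses the kernel identification, while well-definedness of $Q \to H$ holds because in $H$ both $y_j W_j(g_*)^{-1}$ (by choice of $W_j$) and $r_l$ (via $\pi_1(L) \to \pi_1(Y) = H$) become trivial. Hence $H \cong Q$ is finitely presented. The main obstacle is the kernel identification via (*): it requires carefully handling \emph{non-simple} closed paths in $Cay(G,X)$ by decomposing them into simple closed sub-paths and assembling their (*)-fillings into a single van Kampen diagram whose vertices all remain in $\rho^{-1}(B)$; the subsequent Tietze step is then essentially formal.
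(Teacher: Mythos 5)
Your argument is correct, but it takes a genuinely different route from the paper's main proof. The paper's self-contained proof is purely combinatorial: it builds a Schreier-type generating set $S$ of $H$ all of whose suffixes project into $A$ (Claim~2.5), rewrites an arbitrary relation among the $S$-generators as a closed path $\delta$ in $Cay(G,X)$ with vertices in $\rho^{-1}(A)$, uses (*) on the simple closed subpaths of $\delta$ to express the corresponding word as a product of conjugates $w_i^{-1}r_i^{\pm 1}w_i$ with $w_i\in\theta^{-1}(B)$, and then manipulates normal closures inside the free group $F(X_H)$ to exhibit a finite presentation. You instead work topologically in the quotient $2$-complex $Y=H\backslash\tilde K$, identify $H$ with $\pi_1(Y)$, and compare the finite subcomplexes on $A$ and $B$; this is in substance a direct, self-contained proof of the special case of Brown's criterion that the paper invokes in its second proof (Theorem~2.7), with the Tietze elimination of the generators $y_j$ replacing the appeal to Brown's theorem. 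The two proofs rest on the same two inputs, which you correctly isolate: the surjection $\pi_1(\Gamma_A)\twoheadrightarrow H$ coming from connectedness of $\rho^{-1}(A)$ (the paper's Claim~2.5 via \cite[Theorem~2.12]{EF}), and the reduction of arbitrary closed paths in $\rho^{-1}(A)$ to simple ones so that (*) applies; your sketch of the latter (peel off simple closed subpaths, note that the conjugating prefixes also stay in $\rho^{-1}(A)\subseteq\rho^{-1}(B)$) is the same device the paper uses and is adequate. What your route buys is conceptual transparency and a cleaner explanation of why the kernel of $\pi_1(\Gamma_A)\to H$ is finitely normally generated; what the paper's combinatorial route buys is the complete avoidance of covering-space and CW machinery, which matters for the paper's stated goal of giving a purely group-theoretic argument. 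One cosmetic caveat: the paper works with left Cayley graphs and right cosets, so your left-multiplication deck action and your $\rho$ should be transposed accordingly, but this does not affect the mathematics.
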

In the statement of Proposition~\ref{thm:basiccriterion} we assume that the vertices of $\Omega$ are labeled by elements of $G$
(and as usual the edges of $\Omega$ are labeled by elements of $X$) such that $l(w)=l(e)l(v)$ whenever $e$ is an edge from $v$ to $w$.
If we fix some vertex $v$ of $\Omega$ and some $g\in G$, there exists a unique such vertex labeling with $l(v)=g$. The equality
$\partial \Omega = \gamma$ means that there exists a vertex $v$ of $\partial \Omega$ and a vertex $g\in G$ such that $l(v)=g$
and $l(\partial \Omega)=l(\gamma)$ where $l(\partial \Omega)$ and $l(\gamma)$ are the labels of the paths $\Omega$ and $\gamma$
read starting from $v$ and $g$, respectively.

\begin{Remark}\rm
A finite subset $A\subseteq G/H$ such that $\rho^{-1}(A)$ is connected always exists since $H$ is finitely generated
(see \cite{Str,EF}).
\end{Remark}

Proposition~\ref{thm:basiccriterion} is a special case of a well-known criterion of Brown~\cite[Theorem~3.2]{Br2}.
Although Proposition~\ref{thm:basiccriterion} cannot 
be formally deduced from the statement of \cite[Theorem~3.2]{Br2}, it immediately follows from
its proof (see Theorem~\ref{thm:Br87}). 
In \S~2 we will also provide a self-contained and purely group-theoretic proof of Proposition~\ref{thm:basiccriterion} 
(we will slightly reformulate it using additional terminology introduced at the beginning of \S~2 --
see Proposition~\ref{thm:criterion}).

The proof of the aforementioned Renz's criterion in \cite{Re} uses a special case of Proposition~\ref{thm:basiccriterion}
where condition (*) is assumed to hold for $B=A$. In this case
one can use the basic fact that a group $H$ is finitely presented if it acts freely
on a simply-connected CW-complex $\calC$ such that the quotient $\calC/H$ has finite 2-skeleton 
(see, e.g., \cite[Theorem~4]{Br}).
\skv

Let us now describe a general method for verifying condition (*) in Proposition~\ref{thm:basiccriterion} for a specific pair $(G,H)$.
This method was introduced in \cite{Re} in a more specialized setting. 
Suppose that we have
a norm function $N:G/H\to Z_0$ for some well-ordered set $Z_0$ with the property that for all
$z\in G/H$ the set $\{y\in G/H: N(y)\leq N(z)\}$ is finite (in particular, $N$ has finite fibers). 
Since $B$ can always be enlarged without violating (*), 
we can assume that $B$ contains $\{z\in G/H: N(z)\leq N(a)\mbox{ for some }a\in A\}$.
By van Kampen's lemma, for any simple closed path $\gamma$ as in (*) there exists some 
disk van Kampen diagram $\Omega$ relative to $(X,R)$  such that  $\partial \Omega=\gamma$.
Then $\Omega$ either satisfies the conclusion of (*) or contains an interior vertex $v$
of maximal norm. In the latter case our goal is to replace $\Omega$ by a modified diagram $\Omega'$ where $v$ is eliminated and every new vertex $w$ satisfies $N(w)<N(v)$. If such $\Omega'$
can always be constructed, an easy argument shows that after finitely many modifications we will obtain
a diagram satisfying (*).
\skv

Renz~\cite{Re} dealt with this problem in the special case where $H$ is normal in $G$ and $G/H$ is free abelian.
In this case the norm function $N$ is just the Euclidean norm on $G/H$ (relative to a fixed basis of $G/H$), and there exists a fairly general algorithm  for constructing the modified diagram $\Omega'$ (of course, the algorithm only works if the group $H$ is actually finitely presented).

The norm functions in this paper will be more involved and defined in a more ad hoc way. Likewise, at each step
we will need to construct $\Omega'$ from $\Omega$ in several substeps, and the algorithm will depend on the boundary labels
of the 2-cells containing $v$ (the chosen vertex of maximal norm).
\skv
The method we just described can also be adequately called {\it peak reduction} (at each step our goal is to remove the chosen ``peak'' vertex $v$), but the setting is different from those of Whitehead's lemma and its generalizations in McCool's papers ~\cite{Mc1,Mc}. In those papers one starts with a group $G$ defined by a faithful action on
some set $\Omega$ and then proves finite presentability of $G$ directly using this action. In our case we start with a group $G$ with a known finite presentation $(X,R)$ and prove finite presentability of a subgroup $H$ of $G$ using its action on the Cayley complex of $G$ corresponding to the presentation $(X,R)$.
\skv
 
\paragraph{\bf Some related questions.} Given a group $G$, one has the following implications
\skv
\centerline{$G$ is finitely presented $\Rightarrow$ $G$ has type $(\mathrm{FP}_2)$ $\Rightarrow$ $\rk H_2(G,\dbZ)<\infty$ $\Rightarrow$ 
$\dim H_2(G,\dbQ)<\infty$.}
\skv
In the case $G=\IA_n$, $n\geq 4$, it is not known whether any of the above properties hold. In this paper we prove finite presentability
for the groups $\IAR_{n,d}$ and $\IAC_{n,d}$, $n\geq d+115$, and it is natural to ask if similar ideas could be used to establish
some weak form of finite presentability from the above list for a certain group $H$ lying strictly between $\IAR_{n,d}$ or $\IAC_{n,d}$
and $\IA_n$, with the most ambitious case being $H=\IA_n$. In \cite{DP17}, Day and Putman proved that
for $n\geq 6$, the second homology $H_2(\IA_n,\dbZ)$ is finitely generated as a $\GL_n(\dbZ)$-module and
moreover, $\IA_n$ is ``finitely presented with respect to the conjugation action of $\Aut(F_n)$'' (in a suitable sense). It would be interesting to see if these results could be used to strengthen Theorem~\ref{thm:finpres}. 
\skv 
Another challenge is to adapt the proofs from this paper to the case of mapping class groups. Given non-negative integers $g$ and $b$, let $\Sigma_{g,b}$ be an orientable surface of genus $g$ with $b$ boundary components and let $\Mod_{g,b}=\Mod(\Sigma_{g,b})$ be its mapping class group. The subgroup $\calI_{g,b}$ consisting of elements of
$\Mod_{g,b}$ which act trivially on $H_1(\Sigma_{g,b},\dbZ)$ is called the Torelli subgroup of $\Mod_{g,b}$,
and in the cases $b=0,1$ there are many similarities between the groups $\Mod_{g,b}$ (resp. $\calI_{g,b}$)
and $\Aut(F_n)$ (resp. $\IA_n$). In \cite{Pu}, Putman considered  natural counterparts of $\IAR_{n,d}$ and $\IAC_{n,d}$ inside $\Mod_{g,b}$, $b\geq 1$, called {\it partial Torelli subgroups}, and defined as follows. Let us think of $\Sigma_{g,b}$ as a sphere with $g$ handles attached
and $b$ disks removed and enumerate the handles arbitrarily. For $1\leq d\leq g$ define the partial Torelli subgroup $\calI_{g,b;d}$ to be the subgroup of $\Mod_{g,b}$ consisting of mapping classes which acts trivially on the part of $H_1(\Sigma_{g,b},\dbZ)$ supported on the first 
$d$ handles (thus, $\Mod_{g,b}=\calI_{g,b;0}\supseteq \calI_{g,b;1}\supseteq\cdots \supseteq \calI_{g,b;g}=\calI_{g,b}$).
Putman~\cite{Pu} established various homological stability results for these groups. To the best of our knowledge, it is currently an open problem whether
the groups $\calI_{g,b;d}$ are finitely presented, apart from the case $d=0$. 
\skv
Several amazing breakthroughs on closely related questions about $\Mod_{g,b}$ have occured in the past two years.
First, Minahan~\cite{Mi2} proved that the Torelli group $\calI_{g,b}$, $b\leq 1$,
has finite dimensional second rational homology for $g\geq 51$. In \cite{MP25a}, Minahan and Putman extended this theorem
to all $g\geq 6$ and moreover gave an explicit description of $H_2(\calI_{g,b},\dbQ)$ as a $\Sp_{2g}(\dbZ)$-module;
see also \cite{Mi}, \cite{MP25b} and \cite{MP25c} for important related results, some of which are used in \cite{MP25a}.
We do not know if the techniques from these papers
could be applicable to the groups $\IA_n$ or any of the groups $\IAR_{n,d}$ and $\IAC_{n,d}$.

\skv 
 
\paragraph{\bf Outline of the paper.} The paper is organized as follows. In \S~2 we will introduce most of our terminology involving van Kampen diagrams, some of which is non-standard, and describe in detail our general method for proving finite presentability. We will also introduce Gersten's presentation for $\SAut(F_n)$ and its universal central extension which will provide a starting point for proving finite presentability of $\IAR_{n,d}$ and $\IAC_{n,d}$. In \S~3 and \S~4 we will prove finite presentability for
the groups $\IAR_{n,1}$ and $\IAC_{n,1}$, respectively. Since the proofs in these two cases will follow the same general outline, in \S~4  
we will concentrate on parts of the proof for $\IAC_{n,1}$ that require non-trivial modifications. Finally, in \S~5, we will prove 
Theorem~\ref{thm:finpres} in the general case. 
The proof will be essentially inductive, although we are unable to formally use induction on $d$. More precisely, in order to prove Theorem~\ref{thm:finpres} for $\IAR_{n,d}$ and $\IAC_{n,d}$ it will not be enough to assume the result for smaller values of $d$. Instead, we will need to imitate certain steps of the proof for $d=1$, but in a more general setting.

\vskip .2cm
\paragraph{\bf Acknowledgments} I am extremely grateful to Andrew Putman for explaining the proof of the BNS criterion from \cite{Str} and sharing his ideas about possible generalizations during his visit to the University of Virginia in Fall 2017. This project would have never started without that conversation. I would also like to thank Matthew Day, Andrei Jaikin, Daniel Minahan and Dmitriy Sonkin for useful discussions.

\section{Preliminaries}

\paragraph{\bf Cayley and Schreier graphs and Schreier sets.} In this paper we will adopt a slightly unusual convention and work with left Cayley graphs instead of more commonly used right Cayley graphs. We will also view Cayley graphs as labeled oriented graphs. Thus if $G$ is a group and $S$ is its generating set, we define $Cay(G,S)$ to be the graph whose vertex set is $G$ and where for each $g\in G$ and $s\in S$ there is an oriented edge from $g$ to $sg$ labeled by $s$. Similarly, one defines the Schreier graph $Sch(Z,S)$ where $Z$ is any left $G$-set.

To be consistent with this convention we define a subset $A$ of a free group $F(X)$ to be {\it Schreier} if for every $a\in A$,
every suffix of $a$ also lies in $A$ (in the usual definition prefixes are used instead of suffixes). Geometrically this means that all vertices on the unique path in $Cay(F(X),X)$ form $1$ to $a\in A$ must lie in $A$.

\subsection{Van Kampen diagrams} 
\label{sec:vanKampen} 
We start with the definition of a van Kampen diagram.  Let $X$ be a finite alphabet and let
$\Omega$ be a finite connected oriented plane graph whose edges are labeled by elements of $X$. We will refer to
the bounded connected components of $\dbR^2\setminus\Omega$ as {\it cells} of $\Omega$.
Given a cell $\calF$, let $e_1,\ldots, e_k$ be its edges listed as we traverse $\partial\calF$,
the boundary of $\calF$, starting from some vertex, either clockwise or counterclockwise. We define $l(\partial\calF)$, the boundary label of $\partial\calF$, to be the word $\prod\limits_{i=k}^1 l(e_i)^{\eps_i}$ where $l(e_i)$ is the label of $e_i$ and
$\eps_i=1$ or $-1$ depending on whether $e_i$ is traversed in the positive or negative direction
(note that $l(\partial F)$ is only defined up to cyclic shifts and inverses).

\begin{Definition}\rm 
Fix a presentation $(X,R)$ of a group $G$ where each $r\in R$ is cyclically reduced, and let $\Omega$ be as above. 

\begin{itemize}
\item[(1)] Suppose that for every cell $\calF$ of $\Omega$,
its boundary label is equal to a cyclic shift of some element of $R\cup R^{-1}$. Then $\Omega$ is called a
{\it van Kampen diagram} over $(X,R)$. 

\item[(2)] Let $\calU$ be the unique unbounded  connected component of $\dbR^2\setminus\Omega$. By abuse of notation we define the boundary $\partial \Omega$ to be $\partial \calU$, and we define its label $l(\partial\Omega)$ in the same way as labels for the cells of $\Omega$.

\item[(3)] We say that $\Omega$ is a {\it disk diagram} if $\dbR^2\setminus\calU$
is homeomorphic to a (closed) disk.
\end{itemize}
\end{Definition}

\begin{Remark}\rm Note that the boundary label of a cell in
a van Kampen diagram should be read ``from right to left'' -- for instance, the boundary label of the cell shown in Figure~\ref{orient} is $dcba$, not $abcd$. This is because we are working with left Cayley graphs. 
\end{Remark} 
\input{figure_orientation}
\skv

The following basic result is known as van Kampen's Lemma:

\begin{Lemma}[van Kampen's Lemma]
\label{lem:vanKampen} 
Let $(X,R)$ be a presentation of a group $G$, with each $r\in R$ cyclically reduced. Let $f\in F(X)$. 
Then $f=1$ in $G$ if and only if there exists a van Kampen diagram $\Omega$ over $(X,R)$ whose boundary label is $f$.
\end{Lemma}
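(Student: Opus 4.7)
The plan is to prove both directions by classical constructions; van Kampen's Lemma is standard but the conventions in this paper (left Cayley graphs, boundary read ``from right to left'') require a little care.

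For the ``if'' direction, I would induct on the number of $2$-cells of $\Omega$. The base case is $\Omega$ with no cells: then $\Omega$ is a finite plane graph whose complement has a single (unbounded) component, i.e.\ a finite tree (possibly with a few extra edges that would create cells, so really a tree). Traversing $\partial \Omega$ visits every edge of $\Omega$ exactly twice, once in each direction, and these occurrences appear in the cyclic order $e,\ldots,e^{-1}$ of a boundary walk around a tree. Thus the boundary label $l(\partial\Omega)$ freely reduces to the empty word, and in particular equals $1$ in $G$. For the inductive step, pick a $2$-cell $\calF$ whose closure meets the unbounded component $\calU$ along a nontrivial arc, and consider the diagram $\Omega'$ obtained by deleting from $\Omega$ the open cell $\calF$ together with every edge of $\partial\calF$ that lies on $\partial\Omega$ (and every vertex that becomes isolated). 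Then $\Omega'$ is again a van Kampen diagram, its boundary label differs from $l(\partial\Omega)$ by inserting a cyclic conjugate of some $r^{\pm1}\in R^{\pm1}$ (namely $l(\partial\calF)$) inside $l(\partial\Omega)$, so the two labels represent the same element of $G$. Induction finishes the argument.

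For the ``only if'' direction, suppose $f=1$ in $G$. Then in $F(X)$ we can write $f = \prod_{i=1}^{k} g_i\, r_i^{\eps_i}\, g_i^{-1}$ with $r_i\in R$, $\eps_i\in\{\pm 1\}$, $g_i\in F(X)$. For each factor I would build a \emph{lollipop} diagram: a ``stick'' edge-path $\pi_i$ whose word read from an endpoint $v_*$ is $g_i$, together with a polygonal $2$-cell attached at the far end whose boundary word is a cyclic shift of $r_i^{\eps_i}$. Read around its outer boundary starting at $v_*$, the lollipop has label $g_i r_i^{\eps_i} g_i^{-1}$. Glue the $k$ lollipops at the common basepoint $v_*$ in the cyclic order prescribed by the product (this is why the construction is planar: at $v_*$ we simply arrange the lollipops around a point), and take an orientation reversal if needed to accommodate the right-to-left reading convention. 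The resulting plane graph $\Omega$ is a van Kampen diagram over $(X,R)$ whose boundary label, read with the paper's convention, is exactly $f$.

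The step I expect to require the most care is the inductive deletion in the ``if'' direction, because when the $2$-cell $\calF$ touches $\partial\Omega$ in more than one arc (so $\partial\calF\cap\partial\Omega$ is disconnected) the deletion can disconnect $\Omega$, and the labeling bookkeeping for $l(\partial\Omega)$ vs.\ $l(\partial\Omega')$ via the cyclic shift of $l(\partial\calF)$ has to be done honestly. The standard fix is to decompose $\Omega$ along a separating arc first and apply the induction to each piece, or equivalently to choose $\calF$ minimally so $\partial\calF\cap\partial\Omega$ is a single arc; either way the combinatorics is routine once one commits to a convention. The convention in this paper that edge labels multiply as $l(w)=l(e)l(v)$ along an edge $e\colon v\to w$ forces boundary labels to be read clockwise rather than counterclockwise, which I would flag explicitly at the start of the argument so that the ``gluing'' in the ``only if'' direction produces the correct word and not its inverse.
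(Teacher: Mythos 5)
The paper does not actually prove this lemma: it is stated as a classical result (the standard reference being Ol'shanskii's book, which appears in the bibliography), and the only original content nearby is the remark that, because $0$-edges are disallowed, the diagram need not be a disk. So there is nothing to compare your argument to except the textbook proof, which is indeed the route you take: induction on the number of cells for the ``if'' direction, and the wedge-of-lollipops construction for the ``only if'' direction. Your handling of the orientation convention and of the deletion step when $\partial\calF\cap\partial\Omega$ is disconnected is the right bookkeeping, and those details are routine as you say.

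There is, however, one genuine missing step in your ``only if'' direction. After gluing the $k$ lollipops at $v_*$, the boundary label of the resulting diagram is the \emph{unreduced} word $g_1 r_1^{\eps_1}g_1^{-1}\cdots g_k r_k^{\eps_k} g_k^{-1}$, which equals $f$ in $F(X)$ but is in general not the word $f$ itself; your claim that the boundary label ``is exactly $f$'' is therefore premature. The standard repair is a folding (or ``sewing'') step: whenever the boundary walk contains two consecutive edges carrying the same label with opposite orientations, identify them, thereby performing one free reduction on the boundary word; iterate until the boundary label is reduced, hence equal to $f$. This step is exactly what produces the phenomenon the paper warns about immediately after the lemma --- the folded diagram is typically a union of disk subdiagrams joined by arcs rather than a disk --- so it is not an ignorable formality in this paper's setup. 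One must also check that folding never forces a $0$-edge (it does not, since one only identifies distinct adjacent boundary edges or collapses a spike $e\,e^{-1}$ by deleting the free edge $e$). With that paragraph added, your proof is complete.
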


Contrary to a common convention, we do not allow $0$-edges (edges labeled by the trivial element $1$). 
This means that the diagram $\Omega$ in Lemma~\ref{lem:vanKampen} is not necessarily a disk diagram; in general it consists of several disk subdiagrams connected by (possibly degenerate) arcs.

In the definition of a van Kampen diagram only edges (not vertices) come with labels. However the `if' direction of Lemma~\ref{lem:vanKampen}
easily implies that for any van Kampen diagram $\Omega$, one can label the vertices by elements of $G$ such that whenever $v$ and $w$ are vertices and $e$ is an edge from $v$ to $w$ we have $l(w)=l(e)l(v)$; in fact, such a labeling is uniquely determined by the label of one vertex (which can be chosen arbitrarily). More generally, if $Z$ is any left $G$-set, one can label the vertices of $\Omega$
by elements of $Z$ such that if $e$ is an edge from $v$ to $w$, then $l(w)=l(e).l(v)$. In this case we will say that $\Omega$
is a {\it $Z$-labeled diagram over $(X,R)$}. 

We will also use the following shortcut notation. Suppose that $Z=G/H$ for some subgroup $H$ and $\rho:G\to Z$
the natural projection. If $\Omega$ is a $Z$-labeled diagram over $(X,R)$ and $\gamma$ is a simple closed path in $Cay(G,X)$,
we will write $\partial \Omega = [\gamma]_Z$ if there exists a vertex $v$ of $\partial \Omega$ and a vertex $g\in G$ such that 
$l(v)=\rho(g)$ and $l(\partial \Omega)=l(\gamma)$ where the path labels $l(\partial \Omega)$ and $l(\gamma)$ are 
read starting from $v$ and $g$, respectively.
\skv

We will now give a self-contained proof of Proposition~\ref{thm:basiccriterion}, which we slightly rephrase below
(see Proposition~\ref{thm:criterion}) using vertex labels we have just introduced.

\begin{Proposition} 
\label{thm:criterion}
Let $G=\la X|R \ra$ be a finitely presented group, with $R$ cyclically reduced. Let $H$ be a subgroup of $G$. Set 
$Z=G/H$, and let $\rho:G\to Z$ be the natural projection. 
Let $A$ be a finite subset of $Z$ such that $\rho^{-1}(A)$ is connected in the Cayley graph 
$Cay(G,X)$, and suppose that there exists a finite subset $B$ of $Z$ with the following property:
\begin{itemize} 
\item[(*)] For any simple closed path $\gamma$ in $Cay(G,X)$ all of whose vertices lie in $\rho^{-1}(A)$ there exists a 
$Z$-labeled van Kampen diagram $\Omega$ relative to $(X,R)$ 
such that $\partial \Omega=[\gamma]_Z$ and all vertices of $\Omega$ lie in $B$.
\end{itemize}
Then $H$ is finitely presented.
\end{Proposition}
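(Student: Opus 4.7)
The plan is to prove finite presentability of $H$ by exhibiting an explicit finite presentation, with generators indexed by edges of the Schreier graph inside $A$ and relations derived from the 2-cells of the Cayley 2-complex of $(X,R)$ lying in $\rho^{-1}(B)$; condition (*) will enter only in the final step.

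After replacing $B$ by $A\cup B$ I assume $A\subseteq B$, and I fix a basepoint $g_0\in\rho^{-1}(A)$. Using connectedness of $\rho^{-1}(A)$, for each $a\in A$ I choose a lift $g_a\in\rho^{-1}(a)$ together with a word $p_a$ over $X^{\pm 1}$ representing $g_a g_0^{-1}$, such that the path traced by $p_a$ from $g_0$ in $\Cay(G,X)$ visits only vertices of $\rho^{-1}(A)$; set $p_{\rho(g_0)}=\varepsilon$. For each $a\in A$ and $s\in X$ with $s\cdot a\in A$, define the Schreier generator $\sigma(a,s):=g_{sa}^{-1}sg_a\in H$. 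A telescoping argument along any path in $\rho^{-1}(A)$ from $g_0$ to $g_0 h$ shows that the finite set $S_H=\{\sigma(a,s)\}$ generates $H$. Next, extend the choice of lifts arbitrarily to $g_b\in\rho^{-1}(b)$ for every $b\in B$. For each $b\in B$ and $s\in X$ with $sb\in B$, put $\tau(b,s):=g_{sb}^{-1}sg_b\in H$ and fix once and for all a word $W_{b,s}$ over $S_H^{\pm 1}$ representing $\tau(b,s)$. For every $r=s_n\cdots s_1\in R$ and every $b\in B$ such that all images $s_i\cdots s_1\cdot b$ (for $1\leq i\leq n$) lie in $B$, the identity $r=1$ in $G$ yields an identity $\tau(s_{n-1}\cdots s_1 b,s_n)\cdots\tau(b,s_1)=1$ in $H$, which after substituting the fixed $W_{b,s}$'s becomes a word $\omega(r,b)$ over $S_H^{\pm 1}$ representing $1$ in $H$. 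Let $R_H=\{\omega(r,b)\}$; this set is finite.

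The final step is to show that $\langle S_H\mid R_H\rangle$ presents $H$: for every $u\in F(S_H)$ with $u=1$ in $H$, I must produce a derivation of $u=1$ from $R_H$. Substituting each $\sigma(a,s)^{\pm 1}$ in $u$ by $(g_{sa}^{-1}sg_a)^{\pm 1}$ using the words $p_a$ yields a word $\tilde u$ over $X^{\pm 1}$ with $\tilde u=1$ in $G$ whose associated path from $g_0$ in $\Cay(G,X)$ is, by the careful choice of the $p_a$'s, a closed loop with every vertex in $\rho^{-1}(A)$. By (*), this loop bounds a disk van Kampen diagram $\Omega$ over $(X,R)$ with every vertex in $\rho^{-1}(B)$. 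I then peel off the 2-cells of $\Omega$ one at a time from the boundary; each peel modifies the current boundary word by one instance of some $\omega(r,b)\in R_H$ together with free cancellations of $p_a p_a^{-1}$ and $W_{b,s}W_{b,s}^{-1}$ pairs produced at the interior vertices of the peeled cell. After finitely many peels $\Omega$ is exhausted and $u$ has been reduced modulo $R_H$ to the trivial word, so $u\in\lla R_H\rra_{F(S_H)}$.

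The main obstacle lies in the peeling step. One must match each geometric peel rigorously with the algebraic application of a single $\omega(r,b)$, and check that the bookkeeping around the interior vertices of the peeled cell produces exactly the free cancellations demanded by the vertex-labelling convention $l(w)=l(e)l(v)$: at every interior vertex $v$ of the peeled cell one needs a $p_{\rho(v)}p_{\rho(v)}^{-1}$ pair to appear and cancel, and the $W$-words must interleave with the $\sigma$-words along the boundary exactly as the rewriting prescribes. This is essentially a diagrammatic incarnation of the classical Reidemeister--Schreier rewriting process, and it is where the bulk of the proof's technical work is concentrated.
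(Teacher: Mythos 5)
Your overall architecture (Schreier generators indexed by the edges over $A$, relations from the cells over $B$, with (*) supplying the filling diagrams) is close in spirit to the paper's argument, but the presentation $\la S_H\mid R_H\ra$ you propose does not present $H$, and the gap is not confined to the deferred peeling step. The set $R_H$ contains only relations coming from 2-cells, so it cannot account for the part of the kernel of $F(S_H)\to H$ that is already present at the level of the Schreier graph over $A$: since you take one generator for \emph{every} edge $(a,s)$ with $a,sa\in A$ and allow arbitrary lifts $g_a$, the rank of $F(S_H)$ exceeds the number of generators $H$ actually needs by roughly $|A|-1$, and the resulting relations among the $\sigma(a,s)$ are invisible to $R_H$. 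Concretely, take $G=F(x,y)$ with $R=\emptyset$ and $H=\Ker(G\to\dbZ/2\dbZ)$, $x\mapsto 1$, $y\mapsto 0$, with $A=B=G/H$. Condition (*) is vacuous (the Cayley graph is a tree), so $R_H=\emptyset$, and the proposition correctly predicts that $H$ is finitely presented (it is free of rank $3$). But choosing $g_0=1$ and $g_{xH}=yx$ gives $\sigma(H,x)=x^{-1}y^{-1}x$ and $\sigma(xH,y)=x^{-1}yx$, so $u=\sigma(H,x)\,\sigma(xH,y)$ is a reduced word of length $2$ in $F(S_H)$ that equals $1$ in $H$ yet does not lie in $\lla R_H\rra=\{1\}$. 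Your peeling step cannot repair this: here $\tilde u$ freely reduces to the empty word in $F(X)$, the diagram $\Omega$ has no cells to peel, and yet $u$ is not freely trivial in $F(S_H)$ --- the $p_ap_a^{-1}$ cancellations you invoke occur in $F(X)$, not in $F(S_H)$.

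The paper avoids exactly this by building its generating set $S$ from a Schreier transversal $T_A$ coming from a spanning tree of the Schreier graph over $A$, so that $S$ extends to a free basis $X_H$ of the full preimage $F_H$ of $H$ in $F(X)$; the relations you are missing are then absorbed into the free-basis property and into the final Tietze-transformation step eliminating $X_H\setminus S$. To salvage your route you would need to (i) insist on tree-based lifts and discard the resulting trivial tree-edge generators, so that $F(S_H)$ is identified with $\pi_1$ of the Schreier graph over $A$ and its kernel onto $H$ really is normally generated by rewritings of closed loops in $\rho^{-1}(A)$; (ii) choose $W_{a,s}$ to be $\sigma(a,s)$ itself (empty for tree edges) so the two rewritings agree on boundary edges; and (iii) address the fact that (*) applies only to \emph{simple} closed paths, whereas the loop traced by $\tilde u$ is generally not simple, by first decomposing it into simple closed subpaths as the paper does. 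With those repairs the peeling argument (the groupoid form of van Kampen's lemma) does go through, but as written the proof asserts a false presentation and leaves its central verification unperformed.
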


\begin{proof} 
First we reduce to the case $1\in \rho^{-1}(A)$. Indeed, suppose that the hypotheses of Proposition~\ref{thm:criterion} hold 
for some quadruple $(G,H,A,B)$, and choose finite sets $\widetilde A, \widetilde B\in G$ such that $\rho^{-1}(A)=\widetilde A H$
and $\rho^{-1}(B)=\widetilde B H$. The group $G$ acts on the set of $G$-labeled van Kampen diagrams by right multiplication,
and for any $g\in G$ we have $\widetilde A H = (\widetilde A g)(g^{-1}Hg)g^{-1}$ and 
$\widetilde B H =(\widetilde B g)(g^{-1}Hg)g^{-1}$. Thus, the hypotheses of Proposition~\ref{thm:criterion} also
hold for $(G,g^{-1}Hg,\rho_g(\widetilde A g),\rho_g(\widetilde B g))$ where $\rho_g$ is the projection onto $G/(g^{-1}Hg)$. 
Thus, for any $g\in G$ we can replace $H$ by $g^{-1}Hg$ and $A$ by $\rho_g(\widetilde A g)$, and choosing any 
$g\in {\widetilde A}^{-1}$ ensures that $\rho_g(1)\in \rho_g(\widetilde A g)$.

Thus, from now on we assume that $1\in \rho^{-1}(A)$.
We will first prove that $H$ has a finite generating set with a nice property (see Claim~\ref{claim:ess} below).
Let $N$ be the normal closure of $R$ in $F(X)$ (so that $G=F/N$), and let $F_H$ be the unique subgroup of $F(X)$ such that 
$H=F_H/N$. Denote by $\theta$ the natural projection from $F(X)$ to $Z$.

\begin{Claim}
\label{claim:ess}
There exists a finite subset $S$ of $F_H$ such that
\begin{itemize}
\item[(a)] the image of $S$ in $G$ generates $H$;
\item[(b)] $S$ is contained in some free generating set $X_H$ of $F_H$; 
\item[(c)] every suffix of every element of $S\cup S^{-1}$ lies in $\theta^{-1}(A)$.
\end{itemize} 
\end{Claim}
\begin{proof} 
Recall that by our assumption $\rho^{-1}(A)$ is connected in $Cay(G,X)$ and hence $A$ is connected in the Schreier graph 
$Sch(Z,X)$ (note that the latter property is much weaker, and the original assumption will be used again later in the proof). 

Choose any maximal tree $\calT_A$ inside the subgraph of $Sch(Z,X)$ spanned by $A$ 
(so that $A$ is the set of vertices of $\calT_A$). For every $a\in A$ let $\gamma_a$ be the unique path
inside $\calT_A$ from $\rho(1)$ to $a$ (recall that $\rho(1)\in A$), and let $t_a$ be the unique element of $F(X)$ corresponding to $\gamma_a$ (that is, $t_a$ is the product of the edge labels of $\gamma_a$). 
Let $T_A=\{t_a: a\in A\}$. By construction, $\theta:F(X)\to Z=G/H$ maps $T_A$ bijectively onto $A$
and $T_A$ is a Schreier subset (that is, every suffix of an element of $T_A$ lies in $T_A$).

Given $u\in \theta^{-1}(A)$, let $\overline{u}$ be the unique element of $T_A$ such that $\theta(\overline{u})=\theta(u)$.
Let $S$ be the set of all non-identity elements of the form $\overline{xt\,}^{-1}xt$ where $t\in T_A$ and $x\in X$
are such that $xt\in \theta^{-1}(A)$ as well. We claim that $S$ has the required properties.

First, $S$ lies in $F_H$ since $\theta(u)=\theta(v)$ if and only if $u^{-1}v\in F_H$ for $u,v\in F(X)$.
Since $\rho^{-1}(A)$ is connected in $Cay(G,X)$ (this time we need the full power of this assumption),
by \cite[Theorem~2.12]{EF} $S$ satisfies (a). 

Since $T_A$ is a Schreier subset, it is contained inside
some Schreier transversal $T$, and by Schreier's theorem the set 
$X_H=\{\overline{xt\,}^{-1}xt: t\in T, x\in X\}\setminus\{1\}$ (which clearly contains $S$) freely generates $F_H$.
This proves (b).
\skv

Finally, we prove (c). Take any $s\in S$, so by definition $s=\overline{xt\,}^{-1}xt$ for some $t\in T_A$ and $x\in X$
such that $\theta(xt)\in A$ as well. Let $q$ be any suffix of $s$, so that $s=pq$ for some $p\in F(X)$ and the word
$pq$ is reduced. Since $s=\overline{xt\,}^{-1}xt$, either $q$ is a suffix of $xt$ or $p$ is a prefix of $\overline{xt\,}^{-1}$.
In the former case either $q=xt$, so that $\theta(q)\in A$ by assumption, or $q$ is a suffix of $t$, in which case
$\theta(q)\in A$ since $t\in T_A$ and $T_A$ is Schreier. Suppose now that $p$ is a prefix of $\overline{xt\,}^{-1}$. Then
$p^{-1}$ is a suffix of $\overline{xt\,}\in T_A$, so $\theta(p^{-1})\in A$. But $pq\in H$, so $\theta(q)=qH = p^{-1}H=\theta(p^{-1})\in A$.

Thus, we proved that $\theta(q)\in A$ for every suffix $q$ of $s$. Further, the argument in the last case shows that
$\theta(p^{-1})\in A$ for every prefix $p$ of $s$. Since the inverses of the prefixes of $s$ are precisely the suffixes of $s^{-1}$,
we showed that $\theta(q)\in A$ for every suffix $q$ of $s^{-1}$, which completes the proof of (c).
\end{proof}

We proceed with the proof of Proposition~\ref{thm:criterion}. Let $S$ and $X_H$ be as in Claim~\ref{claim:ess}.

Take any word $r\in F(S)$ which represents $1$ in $G$, and let $r'\in F(X)$ be the word obtained from $r$ by expressing each $s\in S$
in terms of $X\cup X^{-1}$. Let $\delta$ be the closed (but not necessarily simple) path in $Cay(G,X)$ starting at $1$ and representing $r'$.

By condition~(c) in Claim~\ref{claim:ess}, every vertex of $\delta$ lies in $\rho^{-1}(A)$. Hence by condition (*) in 
Proposition~\ref{thm:criterion}, for any simple closed subpath $\gamma$ of $\delta$
there exists a $Z$-labeled disk diagram $\Omega_{\gamma}$ with $\partial \Omega_{\gamma}=[\gamma]_Z$ whose vertices lie in $B$. 
By the standard correspondence
between disk diagrams and relators this means that $r'=\prod\limits_{i=1}^k w_i^{-1} r_i^{\pm 1} w_i$ for some $w_i\in \theta^{-1}(B)$
and $r_i\in R$.

Choose a finite subset $V$ of $F(X)$ such that $\theta(V)=B$, and for each $1\leq i\leq k$ let
$v_{i}\in V$ be such that $\theta(w_i)=\theta(v_i)$ . Then $w_i^{-1} v_i$ represents an element of $H$ in $G$, that is, 
$w_i^{-1} v_i\in F(X_H)$. Hence 
$$r'=\prod\limits_{i=1}^k w_i^{-1} r_i^{\pm 1} w_i=\prod\limits_{i=1}^k (w_i^{-1} v_i) (v_i^{-1} r_i^{\pm 1} v_i) (w_i^{-1} v_i)^{-1}$$
lies in the closure of the finite set $R^V=\{r\in R, v\in V\}$ in the group $F_H=F(X_H)$.

Recall that $N$ denotes the kernel of the natural projection $F(X)\to G$. We just showed that $N\cap F(S)$ lies in the normal closure of 
$R^V$ in $F(X_H)$. Thus, the embedding $F(S)\to F(X_H)$ induces a map from $F(S)/N\cap F(S)$ to $H^*=\la X_H | R^V\ra$.
Since $S$ generates $H$, $F(S)/N\cap F(S)$ is naturally isomorphic to $H$. 

Now define the groups $H'$ and $H''$ as follows. For each $u\in X_H\setminus S$ choose any $s_u\in F(S)$ such that $u=s_u$ in $G$.
Let $H'$ be the group obtained from $H^*$ by imposing the relations $u=s_u$ for all $u\in X_H\setminus S$, and
let $H''$ be the group obtained from $H'$ by removing all the generators from $X_H\setminus S$ together with relations
of the form $u=s_u$ with $u\in X_H\setminus S$ and replacing each $u$ by $s_u$ in relations from $R^V$.
By a standard argument $H''$ is isomorphic to $H'$, and by construction $H''$ is finitely presented.

Since $H'$ is generated by the image of $X_H$ and every defining relation of $H'$ holds in $H$, there
is a well-defined projection $\alpha_3:H'\to H$.
Now consider the following sequence of maps:
$$H\stackrel{\alpha_1}{\longrightarrow} F(S)/N\cap F(S)\stackrel{\alpha_2}{\longrightarrow}
H'\stackrel{\alpha_3}{\longrightarrow} H.$$
By construction each $\alpha_i$ is surjective. On the other hand, it is straightforward to check that the composition
$\alpha_1\circ\alpha_2\circ\alpha_3$ is the identity map on $H$, so each $\alpha_i$ must also be injective and hence an isomorphism. Thus $H\cong H'\cong H''$, so $H$ is finitely presented.
\end{proof}

Let us now explain why Proposition~\ref{thm:criterion} also follows from a criterion of Brown. The following result can be deduced
from the proof of \cite[Theorem~3.2]{Br2}:

\begin{Theorem}\label{thm:Br87}
Suppose that a group $H$ has a cellular action on a CW-complex $C$ such that the vertex stabilizers are
finitely presented and the edge stabilizers are finitely generated. Assume that $C_1\subseteq C_2$ are connected 
$H$-invariant subcomplexes of $C$ such that
\begin{itemize}
\item[(i)] the action of $H$ on $C_2$ (and hence on $C_1$) is cocompact;
\item[(ii)] the induced map $\pi_1(C_1)\to \pi_1(C_2)$ is trivial.
\end{itemize}
Then $H$ is finitely presented.
\end{Theorem}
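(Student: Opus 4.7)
The plan is to build a finite presentation of $H$ in two stages: first use the cocompact action on $C_1$ to write down a tentative finitely-generated but (a priori) infinitely-related presentation, then invoke condition (ii) together with cocompactness on $C_2$ to cut the relator set down to a finite one.

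Concretely, by (i) I would choose a finite set of $H$-orbit representatives for the cells of $C_2$ in dimensions $0,1,2$. Taking finite generating sets for each vertex stabilizer (available by finite presentability) together with one ``transport'' element per edge orbit produces a finite generating set $\Sigma$ for $H$. Following the standard Bass--Serre / complex-of-groups recipe, I would then write down a tentative relation set $R^{(1)}$ consisting of the defining relators of each vertex stabilizer, consistency relations for the inclusion of each edge stabilizer into its two endpoint stabilizers (finite in number because edge stabilizers are finitely generated), and one boundary relator per $H$-orbit of 2-cells of $C_1$. The resulting group $\widehat H = \langle \Sigma \mid R^{(1)} \rangle$ is finitely presented and comes with a canonical surjection $\phi : \widehat H \twoheadrightarrow H$ whose kernel $K$ is the normal closure in $\widehat H$ of the loop classes coming from $\pi_1(C_1,*)$.

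The crux is then to show that $K$ is finitely normally generated in $\widehat H$. Cocompactness of the action on $C_1$ implies, via a van Kampen-style argument relative to the $H$-translates of the chosen finite orbit data, that $\pi_1(C_1,*)$ is generated as an $H$-set by finitely many loop classes $\ell_1,\ldots,\ell_k$; consequently $K$ is the normal closure in $\widehat H$ of lifts of these $\ell_i$. By (ii) each $\ell_i$ is null-homotopic in $C_2$, hence equals a product of boundaries of 2-cells in $C_2^{(2)}$, and each such 2-cell is, by cocompactness on $C_2$, an $H$-translate of one of finitely many orbit representatives $f_1,\ldots,f_m$. After enlarging $\widehat H$ by the finitely many extra generators and edge-stabilizer relations needed to describe the cells of $C_2\setminus C_1$, each $\ell_i$ becomes expressible as a product of $\widehat H$-conjugates of the relators $r_{f_j}$ coming from $\partial f_j$. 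Adjoining the finitely many $r_{f_j}$ therefore kills $K$ and produces a finite presentation of $H$.

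The main obstacle I anticipate is the rigorous justification that $\pi_1(C_1)$ is finitely normally generated in $\widehat H$---intuitively clear from cocompactness but requiring a careful combinatorial argument---together with the bookkeeping needed to re-express the 2-discs provided by (ii) as products of conjugates of boundaries of 2-cells from the chosen finite set of orbit representatives in $C_2$. This is precisely why both cocompactness conditions (on $C_2$, not merely on $C_1$) are needed simultaneously: without cocompactness on $C_2$ there would be no finiteness on the disc-filling side promised by (ii). Once these technical points are handled, condition (ii) functions exactly as the bridge that lets the finitely many extra 2-cell relators from $C_2$ trivialize the loops coming from $C_1$, and the argument closes.
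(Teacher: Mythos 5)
First, note that the paper itself does not prove this statement: it is quoted with the remark that it ``can be deduced from the proof of \cite[Theorem~3.2]{Br2}'', so there is no in-paper argument to compare yours against. Your general architecture --- present $\widehat H$ as the fundamental group of the complex of groups associated to the action on $C_1$, identify $K=\ker(\widehat H\to H)$ with the image of $\pi_1(C_1)$, and then try to kill $K$ using the $2$-cells of $C_2$ --- is the right shape, and the first two steps are standard and correct.

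There is, however, a genuine error at the crux. You assert that cocompactness of the action on $C_1$ alone implies that $\pi_1(C_1,*)$ is generated by finitely many $H$-orbits of loop classes, and hence that $K$ is the normal closure of finitely many elements of $\widehat H$. This is false, and if it were true the theorem would follow with no reference to $C_2$ or to condition (ii) at all: one would simply adjoin the finitely many words $\tilde\ell_i$ as relators and conclude that $H=\widehat H/K$ is finitely presented. Concretely, let $H$ be finitely generated but not finitely presented and let $C_1$ be its Cayley graph with respect to a finite generating set $S$; the action is free and cocompact with trivial (hence finitely presented) stabilizers, $\widehat H=F(S)$, and $K=\ker(F(S)\to H)$ is exactly the image of $\pi_1(C_1)$. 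If $K$ were finitely normally generated, $H$ would be finitely presented. So finite normal generation of $K$ inside $\widehat H$ cannot be extracted from $C_1$ alone; condition (ii) must enter \emph{before} this point, not after it. One way to repair the argument: let $\widehat H_2$ be the fundamental group of the complex of groups associated to the action on $C_2$, which is finitely presented by (i) and the stabilizer hypotheses, and let $\alpha:\widehat H\to\widehat H_2$ be the map induced by the inclusion $C_1\subseteq C_2$. By naturality of the exact sequence $\pi_1(C_i)\to\widehat H_i\to H\to 1$ and by (ii), the image of $\pi_1(C_1)$ in $\widehat H_2$ is trivial, so $\ker(\widehat H\to H)=\ker\alpha$ and $\alpha$ descends to a splitting $H\to\widehat H_2$ of the canonical surjection $\widehat H_2\to H$. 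Thus $H$ is a retract of a finitely presented group, and retracts of finitely presented groups are finitely presented (the kernel of a retraction $G=\la X|R\ra\to Q$ is normally generated by the finitely many elements $x\,\iota(r(x))^{-1}$, $x\in X$). In this version no finite-generation statement about $\pi_1(C_1)$ over $H$ is needed, which is exactly the point your draft gets wrong.
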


\begin{proof}[Another proof of Proposition~\ref{thm:criterion}.]
Define $C=Cay(G;X,R)$ to be the Cayley complex associated to the presentation $(X,R)$,
and let $C_1$ (resp. $C_2$) be the full subcomplexes spanned by all $g\in \rho^{-1}(A)$ (resp. all $g\in \rho^{-1}(B)$). 

The group $H$ acts on $C$ by right multiplication. This action is cellular and free on vertices and thus has trivial vertex stabilizers and finite
edge stabilizers. Since $\rho(gh)=\rho(g)$
for all $g\in G$ and $h\in H$, both $C_1$ and $C_2$ are $H$-invariant. The subcomplex $C_1$ is connected by the hypotheses of 
Proposition~\ref{thm:criterion}, and we can make $C_2$ connected by enlarging $B$ if needed (pick a ball in $Cay(G,X)$ centered at $1$
whose image in $G/H$ contains $A$ and let $B$ be the image of that ball in $G/H$). The action of $H$ on $C_2$ is cocompact since $B$ is finite. 

Finally, $\pi_1(C_1)$ is generated by closed paths
in the $Cay(G,X)$ which stay in $C_1$. Condition (*) in Proposition~\ref{thm:criterion} implies that any such path
is homotopic in $C_2$ to the trivial path, so the induced map $\pi_1(C_1)\to \pi_1(C_2)$ is trivial. Thus, we verified all hypotheses
of Theorem~\ref{thm:Br87} and hence $H$ is finitely presented.
\end{proof}

We finish this subsection with two important definitions.
\vskip .1cm

\paragraph{\bf Super-reduced presentations}

\begin{Definition}\rm
Let $(X,R)$ be a presentation of a group $G$. We will say that $r\in R$
is {\it super-reduced} if it is cyclically reduced and
any non-trivial proper subword of $r$ represents a non-trivial element of $G$. We will say that $(X,R)$ is {\it super-reduced} if every $r\in R$ is super-reduced.
\end{Definition}

From now on we will assume that the presentation $(X,R)$ is super-reduced. This is not a major restriction.
Indeed, if $(X,R)$ is any presentation of $G$ and some $r\in R$ is not super-reduced, then some
cyclic permutation of $r$ can be written as $r_1r_2$ where $r_1$ and $r_2$ are relators of $G$
shorter than $r$. Replacing each such $r$ by the corresponding pair $\{r_1,r_2\}$ and repeating the procedure if
$r_1$ or $r_2$ is not super-reduced, we obtain a super-reduced presentation of $G$.

The importance of having a super-reduced presentation $(X,R)$ is that for any van Kampen diagram $\Omega$ over $(X,R)$
and any cell $\calF$ of $\Omega$ with $n$ vertices, the associated map from the topological $n$-gon $P_n$ to $\calF$ is 
a homeomorphism on the entire $P_n$ and not just its interior.
\skv

\paragraph{\bf Cancellation of cells.}

\begin{Definition}\rm
Let $\Omega$ be a disk van Kampen diagram, and let $\calF_1$ and $\calF_2$ be distinct cells of $\Omega$. We will say that
the pair $(\calF_1,\calF_2)$ is {\it cancellable} if the following conditions hold:
\begin{itemize}
\item[(i)] the intersection $\partial\calF_1\cap \partial\calF_2$ contains at least one edge $e$;
\item[(ii)] the boundary labels of $\calF_1$ and $\calF_2$ are the same if they are read starting from some fixed vertex
$v\in \partial\calF_1\cap \partial\calF_2$ and $\partial\calF_1\cap \partial\calF_2$ is traversed in the same direction;
\item[(iii)] the intersection $(\partial\calF_1\cup \partial\calF_2)\cap \partial\Omega$ is contained in $\partial\calF_i$
for some $i=1,2$.
\end{itemize}
\end{Definition}

For any cancellable pair $(\calF_1,\calF_2)$ we can construct a new disk diagram $\Omega'$
with the same boundary label. If the intersection $\partial\calF_1\cap \partial\calF_2$ is connected, we simply remove the cells 
$\calF_1$ and $\calF_2$ and identify the parts of their boundaries away
from their intersection (this is possible since we are assuming that $R$ is super-reduced and hence the boundaries 
$\partial F_1$ and $\partial F_2$ are not self-intersecting). 

In general we let $\Delta$ be the smallest disk subdiagram containing of $\Omega_1\cap\Omega_2$.
One can deduce from our hypotheses that $l(\partial\Delta)$ represents the trivial element of $F(X)$; more specifically, 
$\partial\Delta=(\partial\Delta\cap \partial F_1)\cup(\partial\Delta\cap \partial F_2)$,
$\partial\Delta\cap \partial F_1$ and $\partial\Delta\cap \partial F_2$ only intersect at the endpoints,
and $l(\partial\Delta\cap \partial F_2)=l(\partial\Delta\cap \partial F_2)^{-1}$ if both paths are traversed in the same
direction starting from the same point in $\partial\Delta\cap \partial F_1\cap \partial F_2$. We now construct a new diagram
by removing the interior of $\Delta$ and identifying $\partial\Delta\cap \partial F_1$ and $\partial\Delta\cap \partial F_2$.

 The operation we just described will be referred to as {\it cancellation} of the pair $(\calF_1,\calF_2)$ --
see Figure~\ref{cancel} for an illustration. The term {\it reduction} commonly used for such operation will have a completely different meaning in this paper (see Definition~\ref{def:reduction} in the next subsection).

\input{figure_cancellation.tex}

\begin{Remark}\rm The technical condition (iii) is necessary to ensure that in the process of identifying the boundaries of
$\calF_1$ and $\calF_2$ we do not glue distinct edges of $\partial\Omega$ (since in the latter case the new diagram will
have a different boundary label and in fact will not even be a disk diagram). Typically one simply requires that 
$\calF_1$ and $\calF_2$ are interior cells (in which case (iii) is automatic).
\end{Remark}

\subsection{Diagram maps and reductions}
\label{sec:diagrams}

Throughout this section we fix the following objects and notations:
\begin{itemize}
\item a group $G$ given by a finite super-reduced presentation $(X,R)$;
\item a finitely generated subgroup $H$ of $G$;
\item $Z=G/H$ considered as a left $G$-set. We denote by $\rho:G\to Z$ and $\theta:F(X)\to Z$ the natural projections.
\end{itemize}

By a {\it diagram} we will always mean a $Z$-labeled disk van Kampen diagram relative to $(X,R)$ (the labeling will not
play a role in some of the definitions). Starting with the definition of a {\it reduction} later in this subsection 
(Definition~\ref{def:reduction}), 
we will also assume that $Z$ is endowed with a {\it super-Artinian} partial order (an Artinian partial order satisfying an additional condition -- see Definition~\ref{def:Artinian} below).
\skv
The sets of vertices and edges of a diagram $\Omega$ will be denoted by $V(\Omega)$ and $E(\Omega)$, respectively.

\begin{Definition}\rm
Let $\Omega$ be a diagram and $\Delta$ a disk subdiagram of $\Omega$. Let $\Delta'$ be another diagram
with $\partial \Delta'=\partial \Delta$. Consider the diagram obtained from $\Omega$ by replacing $\Delta$ by
$\Delta'$ (that is, by first removing $\Delta$ and then gluing $\Delta'$ to $\Omega\setminus\Delta$ along $\partial\Delta$). In this case
we will say that $\Omega'$ is obtained from $\Omega$ by a {\it $\Delta$-map} and symbolically write $(\Omega,\Delta)\to (\Omega',\Delta')$ or simply $\Omega\to \Omega'$. 
\begin{itemize}
\item By a {\it diagram map} we will mean a $\Delta$-map for some $\Delta$.
\item If $\phi:(\Omega,\Delta)\to (\Omega',\Delta')$ is a diagram map, 
\begin{itemize}
\item the subdiagram  $\Delta$ will be called the {\it domain} of $\phi$ and 
\item the subdiagram $\Delta'$ will be called the {\it replacement diagram} of $\phi$. We will also symbolically write 
$\Delta'=\phi(\Delta)$.
\end{itemize}
\end{itemize}
\end{Definition}

It is clear that any diagram map does not change the boundary of the diagram. Any two
diagrams $\Omega$ and $\Omega'$ with the same boundary can be obtained from each other by a diagram map (since we can take
$\Delta=\Omega$).  We will be primarily using $\Delta$-maps in the case where all the cells of $\Delta$
share a common vertex. Of particular importance to us will be {\it single-cell} maps.

\begin{Definition}\rm
\label{def:singlecellmap}
Let $\phi:(\Omega,\calF)\to (\Omega',\Delta')$ be a diagram map. We will say that $\phi$ is a {\it single-cell map} if
\begin{itemize}
\item[(i)] $\calF$ is a cell of $\Omega$;
\item[(ii)] for each vertex $v$ of $\calF$ there exists a unique edge $e(v)\in E(\Delta')\setminus E(\calF)$ which contains $v$.
Edges of the form $e(v)$  will be called the {\it side edges} of $\phi$.
\end{itemize}
Condition (ii) ensures that for each edge $e$ of $\calF$, the diagram $\Delta'$ has a unique
cell $\calF(e)$ containing $e$. Cells of this from will be called the  {\it side cells} of $\phi$. 
\end{Definition}
\begin{Remark}
\rm
It would probably be more accurate to call a map of this form a {\it single-cell refinement}. Likewise it might be more natural to drop condition
(ii) from Definition~\ref{def:singlecellmap}. However, we decided to stick with a simpler name and more restrictive definition given how frequently such maps will appear in the paper. 
\end{Remark}

\paragraph{\bf Commuting and conjugating single-cell maps.}
We now introduce a simple way to construct single-cell maps.

\begin{Definition}\rm
\label{def:commutingmap}
Let $\calF$ be a cell of a diagram $\Omega$, let $t\in X$, and suppose that $t$ commutes with every edge label $e$
of $\calF$ and all the relators $[t,e]$ lie in $R$.
Define the $\calF$-map $\phi:\Omega\to\Omega'$ as follows:
\begin{itemize}
\item[(1)] all side edges of $\phi(\calF)$ (the replacement diagram) are labeled by $t$ and all point in the same direction;
\item[(2)] for every edge $e$ of $\calF$, the corresponding side cell $\calF(e)$ has boundary label $e^{-1}t^{-1}et$;
\item[(3)] $\phi(\calF)$ has a unique interior cell with the same boundary label as $\calF$.
\end{itemize} 
The map $\phi$ will be called a {\it commuting $\calF$-map} and denoted by
$C_{\calF}(t^{\eps})$ where $\eps=1$ (resp. $\eps=-1$) if all side edges of $\phi(\calF)$ point away from (resp. towards) $\partial F$.
See Figure~\ref{fig:commut} for an illustration.
\end{Definition} 

\input{figure_commuting.tex}

\paragraph{\bf Conjugating maps.}
Let us now describe a certain generalization of commuting $\calF$-maps called {\it conjugating $\calF$-maps}.

Fix a diagram $\Omega$, its cell $\calF$ and $t\in X$, and suppose that for every edge label
$e$ of $\calF$ there exists a word $w_{t,e}\in F(X)$ such that $w_{t,e}=t^{-1}et$ is a defining relation
(that is, $w_{t,e}^{-1}t^{-1}et\in R$). Let us now remove $\calF$ from $\Omega$,
add side edges as in (1) above, and for each edge $e$ of $\partial\calF$ add the side cell $\calF(e)$ with boundary label $w_{t,e}^{-1}t^{-1}et$.
This produces an annular diagram $\Omega_{1}$ whose inner boundary $\partial_{inn}\Omega_{1}$ has
label $r_1=\prod_{e\in \partial \calF}w_{t,e}$ (note that $r_1$ need not be cyclically reduced or even reduced).

Suppose first that $r_1$ is super-reduced. In this case we simply choose any diagram $\calG_1$ with $l(\partial \calG_1)=r_1$ 
(note that $\calG_1$ must be a disk diagram since $r_1$ is super-reduced) and use it to fill the hole in $\Omega_{1}$,
thereby producing a disk diagram $\Omega'$. 
\skv
In general we proceed as follows. First suppose that $r_1$ is not reduced. Then $\partial_{inn}\Omega_{1}$ contains
two consecutive edges $e$ and $e'$ with the same label and opposite orientation. We can then produce a new diagram
identifying $e$ and $e'$. Applying this operation several times, we produce an annular diagram 
$\Omega_{2}$ whose inner boundary label $r_2$ is reduced (and so are its cyclic shifts) and thus cyclically reduced.
If $r_2$ is super-reduced, we are done as before, so assume that $r_2$ is not super-reduced. We will fill the hole
in $\Omega_2$ in several steps.
\skv
Since $r_2$ is not super-reduced, $\partial_{inn}\Omega_{2}$ contains a proper non-empty edge
subpath $\gamma'$ with edges $e_1,\ldots, e_m$ such that $r'=\prod_{i=m}^1 l(e_i)$
is a relator of $G$, and if we assume that $m$ is smallest possible, then $r'$ is super-reduced. Since $r'$ is a relator
of $G$, we can produce a new diagram identifying the endpoints of $\gamma'$ (dividing the hole in $\Omega_2$ in two parts)
and then fill the hole whose boundary relator is $r'$ with a disk diagram $\calG'$ such that $l(\partial \calG')=r'$. 
Repeating this operation several times, we eventually fill the entire hole in $\Omega_2$, as desired.
\skv

\paragraph{\bf Super-Artinian orders.}
For the remainder of this section we will assume that $Z$ is endowed with a {\it super-Artinian} (as defined below) partial order $<$.

\begin{Definition}\rm 
\label{def:Artinian}
A partial order $<$ on a set $P$ will be called 
\begin{itemize}
\item[(a)] {\it Artinian} if $P$ does not have infinite strictly descending chains;
\item[(b)] {\it super-Artinian} if it is Artinian and whenever
$p,q\in P$ are incomparable we have $\{r\in P: r<p\}=\{r\in P: r<q\}$ and $\{r\in P: r>p\}=\{r\in P: r>q\}$;
\item[(c)] {\it strongly Artinian} if it is super-Artinian and for any $p\in P$ the set $\{r\in P: r<p\}$ is finite.
\end{itemize}
\end{Definition}
It is easy to check that super-Artinian orders on a set $P$ are precisely the orders of the following form: choose some well-ordered
set $P_0$ and a map $N:P\to P_0$ (the ``norm'' function), and for $p,q\in P$ define $p<q$ if and only if $N(p)<N(q)$. 
The order is strongly Artinian if in addition we can require that $N$ has finite fibers and $P_0\cong\dbN$ as posets.

It might be convenient to visualize super-Artinian orders in this way, although it will usually be unnecessary 
to define the norm function explicitly.
\skv

Given vertices $v$ and $w$ of some diagram, we will write $v<w$ or $v\leq w$ if their labels satisfy the corresponding inequality.
A {\it maximal vertex} of a diagram $\Omega$ is any vertex of $\Omega$ whose label is a maximal element of the set of labels
of the vertices of $\Omega$. 
\skv
\paragraph{\bf Diagram reductions.} We now define the key notion of a diagram reduction (with respect to the chosen partial order $<$).

\begin{Definition}\rm 
\label{def:reduction}
Let $\phi: \Omega\to \Omega'$ be a diagram map and $v$ a maximal vertex of $\Omega$. We will say that
\begin{itemize}
\item[(i)] $\phi$ is a {\it reduction} if $w<v$ for every vertex $w\in V(\Omega')\setminus V(\Omega)$; 
\item[(ii)] Given a maximal vertex $v\in V(\Omega)$, $\phi$ is a {\it full reduction at $v$} if $\phi$ is a reduction which also eliminates $v$.
\end{itemize}
By a full reduction we will mean a full reduction at some maximal vertex.
If $\Delta$ is a disk subdiagram of $\Omega$, a $\Delta$-reduction is a $\Delta$-map which is a reduction.
\end{Definition}
Condition (b) in the definition of a super-Artinian order (Definition~\ref{def:Artinian}) implies that 
the definition of a reduction (Definition~\ref{def:reduction}) does not depend
on the choice of a maximal vertex $v$. Moreover, the following holds:
\begin{Observation}
\label{obs:red} A diagram map $\phi: \Omega\to \Omega'$ is a reduction if and only if for every  $w'\in V(\Omega')\setminus V(\Omega)$ there exists $w\in V(\Omega)$ with $w'<w$.
\end{Observation}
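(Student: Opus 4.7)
The plan is to handle the two directions separately, with all the content sitting in the backward implication and relying on clause (b) of the definition of a super-Artinian order.

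For the forward direction, I would simply take any maximal vertex $v$ of $\Omega$ (which exists since $V(\Omega)$ is finite and $<$ is Artinian). By definition of a reduction, $w' < v$ for every $w' \in V(\Omega') \setminus V(\Omega)$, so the element $w = v \in V(\Omega)$ witnesses the required inequality.

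For the backward direction, assume that every $w' \in V(\Omega') \setminus V(\Omega)$ admits some $w \in V(\Omega)$ with $w' < w$. I want to show that $w' < v$ for some (equivalently, every) maximal vertex $v$ of $\Omega$. Fix such $v$ and such a $w$. Since $v$ is maximal, the vertex $w$ cannot satisfy $v < w$, so exactly one of the following holds: $w \leq v$, or $w$ and $v$ are incomparable. In the first case the inequality $w' < w \leq v$ immediately gives $w' < v$ by transitivity. In the second case I would invoke clause (b) of Definition~\ref{def:Artinian}, which says that incomparable elements have the same strict down-sets: $\{r \in Z : r < w\} = \{r \in Z : r < v\}$. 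Since $w'$ lies in the left-hand set, it lies in the right-hand set as well, i.e.\ $w' < v$.

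This also explains the parenthetical remark preceding the observation: the two equivalent characterizations of a reduction do not depend on the choice of maximal vertex $v$, again by clause (b). I do not anticipate any real obstacle: the only subtlety is remembering to use the super-Artinian hypothesis (rather than merely Artinian) to handle the incomparable case, and this is precisely what clause (b) is designed for.
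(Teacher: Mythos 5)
Your proof is correct and follows exactly the route the paper intends: the paper gives no explicit argument, simply noting that condition (b) of the super-Artinian definition makes the notion of reduction independent of the choice of maximal vertex, and your case split (comparable vs.\ incomparable, with the incomparable case handled by the equality of strict down-sets) is precisely the spelled-out version of that remark.
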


The next key lemma uses only the fact that $<$ is Artinian. 

\begin{Lemma}\rm
\label{obs:Artinian}
Any diagram $\Omega$ cannot admit an infinite sequence of full reductions. Moreover, $\Omega$ cannot admit
a sequence of reductions which includes infinitely many full reductions.
\end{Lemma}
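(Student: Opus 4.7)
The plan is to attach to each diagram a well-founded invariant that strictly decreases at every full reduction and is non-increasing at any reduction. The super-Artinian hypothesis on $Z$ is tailor-made to supply such an invariant.

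First I would pass from $Z$ to a canonical well-ordered quotient. Declare $p \sim q$ on $Z$ when $\{r : r < p\} = \{r : r < q\}$ and $\{r : r > p\} = \{r : r > q\}$; the super-Artinian condition forces any two incomparable elements to be $\sim$-equivalent, so the induced partial order on $\overline{Z} := Z/{\sim}$ is a total order, and it inherits well-foundedness from the Artinian property. Hence $(\overline{Z}, <)$ is a well-ordered set. For a vertex $v$ of a diagram let $N(v) \in \overline{Z}$ be the class of $l(v)$, and associate to each diagram $\Omega$ the pair
\[
I(\Omega) := \bigl(\mu(\Omega),\, m(\Omega)\bigr), \quad \text{where } \mu(\Omega) := \max_{v \in V(\Omega)} N(v) \text{ and } m(\Omega) := |\{v \in V(\Omega) : N(v) = \mu(\Omega)\}|.
\]
The maximum exists because $V(\Omega)$ is finite, and the lexicographic product $\overline{Z} \times \mathbb{N}$ is again well-ordered.

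The main step is to verify that $I$ behaves correctly under reductions. Let $\phi : \Omega \to \Omega'$ be a reduction and let $v$ be a maximal vertex of $\Omega$, so that $N(v) = \mu(\Omega)$. By the definition of a reduction every new vertex $w' \in V(\Omega') \setminus V(\Omega)$ satisfies $l(w') < l(v)$, and therefore $N(w') < \mu(\Omega)$ in $\overline{Z}$; the remaining vertices of $\Omega'$ lie in $V(\Omega)$ and have $N$-value at most $\mu(\Omega)$. Consequently $\mu(\Omega') \leq \mu(\Omega)$, and in the case of equality every vertex contributing to $m(\Omega')$ already contributed to $m(\Omega)$, so $m(\Omega') \leq m(\Omega)$. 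Thus $I(\Omega') \leq I(\Omega)$ lexicographically for any reduction. If in addition $\phi$ is a full reduction at $v$, then $v \notin V(\Omega')$, so the maximal vertex counted in $m(\Omega)$ is lost; either $\mu$ strictly drops or $\mu$ stays equal and $m$ strictly drops by at least one. In either case $I(\Omega') < I(\Omega)$.

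Both statements of the lemma follow immediately. An infinite sequence of reductions $\Omega_0 \to \Omega_1 \to \cdots$ produces a non-increasing sequence $I(\Omega_0) \geq I(\Omega_1) \geq \cdots$ in the well-ordered set $\overline{Z} \times \mathbb{N}$, and every full reduction in the sequence is an instance of a strict decrease; since a non-increasing sequence in a well-ordered set admits only finitely many strict decreases, only finitely many reductions in the sequence can be full. The only point where the super-Artinian (rather than merely Artinian) hypothesis is actually used is in the very first step, where it ensures that $\overline{Z}$ is totally ordered and $\mu(\Omega)$ is a well-defined single element rather than a set of incomparable maxima.
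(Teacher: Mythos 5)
Your proof is correct, but it is not the argument the paper gives, and the difference is worth noting. The paper argues by contradiction: it first reduces the ``moreover'' clause to the first assertion by observing that compositions of reductions are reductions and that composing a full reduction with a reduction is again a full reduction, and then, assuming an infinite sequence of full reductions, it builds a ``genealogy'' graph whose edges run from each eliminated maximal vertex $v_n$ to the vertices created at that step; since this graph is infinite with finitely many components and finite degrees, it contains an infinite directed path, which by the definition of reduction is an infinite strictly descending chain in $Z$, contradicting the Artinian property alone. You instead construct a termination measure: you collapse $Z$ by the super-Artinian equivalence to a well-ordered quotient $\overline{Z}$ and track the lexicographic pair $(\mu(\Omega), m(\Omega))$, which is non-increasing under any reduction and strictly decreasing under a full reduction, so both assertions follow at once from well-foundedness of $\overline{Z}\times\dbN$. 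Your route is arguably cleaner -- it avoids the K\"onig-type graph extraction and dispenses with the separate composition lemma -- at the cost of invoking the super-Artinian hypothesis (which the paper's proof deliberately avoids, as it advertises that the lemma needs only Artinian-ness; in the paper's setting the stronger hypothesis is always available, so nothing is lost). The one point to keep an eye on is the bookkeeping of ``old'' versus ``new'' vertices when cells are cancelled and boundary vertices get identified, but since identifications can only merge old vertices (whose labels, and hence $N$-values, are unchanged) and every vertex of $\Omega'$ realizing $\mu(\Omega')=\mu(\Omega)$ must be old, your inequalities $\mu(\Omega')\leq\mu(\Omega)$ and $m(\Omega')\leq m(\Omega)$ (with strict drop of the pair when the maximal vertex $v$ is eliminated) survive this.
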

\begin{proof} First note that the composition of reductions is a reduction, and the composition of a full
reduction and a reduction (in either order) is a full reduction. Thus, it suffices to prove the first assertion
of Lemma~\ref{obs:Artinian}.

Assume that, on the contrary, some diagram $\Omega$ admits an infinite sequence of full reductions
$\Omega=\Omega_1\to \Omega_2\to \ldots$, so that for each $n$ there exists a vertex $v_n\in V(\Omega_{n})\setminus V(\Omega_{n+1})$ which is maximal for $\Omega_n$ (if there is more than one such vertex,
we choose $v_n$ arbitrarily).
Construct the graphs $\{\Gamma_n\}_{n=1}^{\infty}$ inductively as follows.
The graph $\Gamma_1$ has no edges and its vertices are exactly the vertices of $\Omega_1$. Suppose now
that $\Gamma_n$ is defined for some $n\geq 1$. We define $\Gamma_{n+1}$ by adding some vertices and edges to $\Gamma_n$.
The extra vertices of $\Gamma_{n+1}$ are precisely the vertices in $V(\Omega_{n+1})\setminus V(\Omega_n)$, 
and we add a directed edge from $v_n$ to $w$ for every $w\in V(\Omega_{n+1})\setminus V(\Omega_{n})$. 
Finally, define $\Gamma$ as the union of all $\Gamma_n$.

Every vertex of $\Gamma$ is connected by a path to a vertex of $\Gamma_1$, so $\Gamma$ is an infinite graph with finitely many connected components and thus must have an infinite
connected component $C$. Since for every $n\in\dbN$ the map $\Omega_n\to\Omega_{n+1}$ is a reduction at $v_n$, each vertex of $\Gamma$ has finite degree,
and therefore $C$ must have an infinite directed path. But by construction each vertex in a directed path in $\Gamma$
is smaller than the previous one, so we obtain an infinite descending chain in $(Z,<)$, contrary to the assumption
that $<$ is Artinian.
\end{proof}

Our next result (Lemma~\ref{obs:Renz}) is a simple consequence of Proposition~\ref{thm:criterion} and provides a criterion
for finite presentability of $H$ in terms of reductions. Note that unlike Proposition~\ref{thm:criterion},
condition (*) in Lemma~\ref{obs:Renz} does not include any assumptions on the boundary relator of the diagram.

\begin{Lemma}
\label{obs:Renz}
Let $H$ be a finitely generated subgroup of $G$, and assume that the chosen order on $Z=G/H$ is strongly Artinian.
Suppose that there exists a finite subset $B$ of $Z$ 
with the following property:
\begin{itemize}
\item[(*)] If $\Omega$ is any $Z$-labeled diagram which has an interior maximal vertex $v$ outside of $B$, then $\Omega$ admits a full reduction.
\end{itemize}
Then $H$ is finitely presented.
\end{Lemma}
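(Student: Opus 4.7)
The plan is to reduce to Proposition~\ref{thm:criterion} by producing a finite subset $B'\subseteq Z$ that witnesses condition $(*)$ there. Since $H$ is finitely generated I first fix any finite $A\subseteq Z$ with $\rho^{-1}(A)$ connected in $Cay(G,X)$ (available by the remark following Proposition~\ref{thm:basiccriterion}), and then set
$$B' \;=\; \{z\in Z : z\le a \text{ for some } a\in A\cup B\}.$$
Strong Artinianness (part (c) of Definition~\ref{def:Artinian}) guarantees that $\{z:z\le p\}$ is finite for each $p\in Z$, so $B'$ is finite.

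The key step is verifying condition $(*)$ of Proposition~\ref{thm:criterion} for the pair $(A,B')$. Given a simple closed path $\gamma$ whose vertices all lie in $\rho^{-1}(A)$, van Kampen's Lemma (Lemma~\ref{lem:vanKampen}) provides \emph{some} disk diagram $\Omega_{0}$ with $\partial\Omega_{0}=[\gamma]_Z$; the $Z$-labeling is pinned down by matching one boundary vertex of $\Omega_{0}$ with its counterpart in $\gamma$. Now iterate: while the current diagram $\Omega$ has an interior maximal vertex $v$ with $\rho(v)\notin B$, apply the full reduction at $v$ supplied by the hypothesis. Diagram maps preserve boundaries, so the boundary stays equal to $[\gamma]_Z$ throughout the process, and Lemma~\ref{obs:Artinian} ensures that the sequence of full reductions must terminate in some diagram $\Omega_f$.

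It remains to show that every vertex of $\Omega_f$ lies in $\rho^{-1}(B')$. Boundary vertices of $\Omega_f$ coincide with those of $\gamma$ and hence lie in $\rho^{-1}(A)$; interior maximal vertices of $\Omega_f$ lie in $\rho^{-1}(B)$ by the termination criterion. Thus every \emph{maximal} vertex of $\Omega_f$ is labeled in $A\cup B$. Every non-maximal vertex $w$ of $\Omega_f$ must then be strictly below some maximal vertex of $\Omega_f$: on the finite vertex set of $\Omega_f$ the order is Artinian, and the super-Artinian property (part (b) of Definition~\ref{def:Artinian}) rules out an element that is incomparable to all maxima (incomparable elements share the same upward set, so non-maximality really forces strict domination by some maximum). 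Hence $\rho(w)\in B'$, condition $(*)$ of Proposition~\ref{thm:criterion} holds, and $H$ is finitely presented.

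I do not anticipate a serious obstacle: the argument essentially packages van Kampen's Lemma, the termination principle of Lemma~\ref{obs:Artinian}, and Proposition~\ref{thm:criterion}. The subtlest point, and the only place where the super-Artinian axiom (rather than plain Artinianness) is used, is the last claim that on any finite subset of $Z$ every element is $\le$ some maximum; once that is in hand the rest is bookkeeping about boundaries being unchanged under diagram maps.
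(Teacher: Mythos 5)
Your proof is correct and follows essentially the same route as the paper: reduce to Proposition~\ref{thm:criterion}, iterate the full reductions granted by (*), and invoke Lemma~\ref{obs:Artinian} for termination. The only (cosmetic) difference is that the paper enlarges $B$ up front to a downward-closed set of the form $\{z: z<z_0\}$, so that termination immediately forces all vertices into $B$, whereas you keep $B$ fixed and take the downward closure $B'$ of $A\cup B$ at the end, observing that every vertex of the terminal diagram sits below a maximal one.
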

\begin{proof} Choose any finite $A\subseteq Z$ such that $\rho^{-1}(A)$ is connected (since $H$ is finitely generated,
such $A$ exists by \cite[Theorem~2.12]{EF}). Clearly we can make $B$ larger (as long as it remains finite).
Thus we can assume that $A\subseteq B$; further, since the order on $Z$ is strongly Artinian, we can assume that
$B=\{z\in Z: z<z_0\}$ for some $z_0\in Z$.

By Proposition~\ref{thm:criterion}, to
prove that $H$ is finitely presented it suffices to show that for any simple closed path $\gamma$ in $Cay(G,X)$
all of whose vertices lie in $\rho^{-1}(A)$ there exists a $Z$-labeled diagram $\Omega$
with $\partial\Omega=[\gamma]_Z$ all of whose vertices lie in $B$. 

Start with any $Z$-labeled diagram $\Omega_0$ with $\partial\Omega_0=[\gamma]_Z$.
If all the vertices of $\Omega_0$ lie in $B$, we are done, so suppose that some vertex $v_0$ does not lie in $B$.
Since $\partial\Omega_0=[\gamma]_Z$, all boundary vertices of $\Omega_0$ lie in $A$, and by assumption $A\subseteq B$.
Thus, $v_0$ must be an interior vertex. While $v_0$ need not be maximal, since $B=\{z\in Z: z<z_0\}$,
any maximal vertex $v$ of $\Omega_0$ also lies outside of $B$ (and hence is also interior).
Thus by (*) $\Omega_0$ admits a full reduction $\Omega_0\to\Omega_1$.
 
Apply the same procedure to $\Omega_1$ and keep going as long as we can. By Lemma~\ref{obs:Artinian}, after finitely many steps we will obtain a diagram $\Omega$ with 
$\partial\Omega=[\gamma]_Z$ which does not admit a full reduction. But this means that all the vertices of $\Omega$ lie in $B$, as desired.
\end{proof}

\paragraph{\bf How to construct full reductions.} Note that a single-cell map does not eliminate any vertices from the diagram and in particular cannot be a full reduction. However, as we will explain next, one can eliminate a vertex $v$ by composing single-cell maps at all cells adjacent to $v$ followed by suitable cancellations.

\begin{Definition}\rm 
\label{def:gallery}
Let $v$ be a vertex of a diagram $\Omega$ and $k\in\dbN$. A subdiagram $\Delta$ of $\Omega$
will be called a {\it gallery of length $k$ at $v$} if $\Delta$ has $k$ cells, all of which contain $v$, and there exists an ordering
$\calF_1,\ldots, \calF_k$ of the cells of $\Delta$ such that for each $1\leq i\leq k-1$, the cells $\calF_{i}$ and $\calF_{i+1}$ share at least one edge
which contains $v$. If $\Delta$ is the union of all cells containing $v$, we will say that $\Delta$ is the {\it full gallery} at $v$.
\end{Definition}

\begin{Definition}\rm 
\label{def:gallerycompatible}
Let $\Omega$ be a diagram, and let $\calF_1$ and $\calF_2$ be distinct cells of $\Omega$ which have a common edge $e$. Suppose that we defined single-cell maps
$\phi_i:(\Omega,\calF_i)\to (\Omega_i',\Delta_i)$ for $i=1,2$. We will say that $\phi_1$ and $\phi_2$ are {\it compatible at $e$}
if the side cells at $e$ arising from $\phi_1$ and $\phi_{2}$ form a cancellable pair.
\end{Definition}

\begin{Definition}\rm 
\label{def:gallerymap}
Let $\Omega$ be a diagram, $v$ a vertex of $\Omega$ and $\Delta$ a gallery of length $k$ at $v$ with
cells $\calF_1,\ldots, \calF_k$, ordered as in Definition~\ref{def:gallery}. For each $1\leq i\leq k$ we fix an
edge $e_i\in E(\calF_i)\cap E(\calF_{i+1})$ containing $v$. Suppose that
\begin{itemize}
\item[(i)] for each $1\leq i\leq k$ we defined a single-cell map $\phi_i:(\Omega,\calF_i)\to (\Omega_i',\Delta_i)$;
\item[(ii)] for each $1\leq i\leq k-1$ the maps $\phi_i$ and $\phi_{i+1}$ are compatible at $e_i$.
\end{itemize}
Define $\phi=\cup\phi_i$ as follows: first apply each $\phi_i$ individually and then
cancel all the cancellable pairs arising from condition (ii).

A map $\phi$ of this form will be called a {\it $k$-fold cell map at $v$} ({\it double-cell map} for $k=2$ and
{\it triple-cell map} for $k=3$). We will also say that $\phi$ is a {\it gallery $\Delta$-map at $v$}.
\end{Definition}

\begin{Remark}\rm It is clear that $\phi$ is a reduction if and only if each $\phi_i$ is a reduction.
\end{Remark}

\begin{Definition}\rm Suppose that in Definition~\ref{def:gallerymap} each $\phi_i$ is a commuting (resp. conjugating) map 
$C_{\calF_i}(t^{\eps})$ for the same $t$ and $\eps$, in which case condition (ii) holds automatically. 
The corresponding map $\phi$ will be called a 
{\it commuting $\Delta$-map} (resp. {\it conjugating $\Delta$-map}) and denoted by $C_{\Delta}(t^{\eps})$.
\end{Definition}

If $\Delta$ is the full gallery at an interior vertex $v$ and $\phi$ is a gallery $\Delta$-map at $v$, then $\phi$ eliminates $v$
(see Figure~\ref{fullreduction} for an illustration), so if $\phi$ is also a reduction, it must be a full reduction at $v$. 
In \cite{Re}, it is shown that if $G/H$ is free abelian and $H$ happens to be finitely presented, one can establish finite presentability of $H$ via Lemma~\ref{obs:Renz} by using only full reductions of this form. In our setting, we will need to work with more complex full reductions constructed as compositions of several gallery reductions.

\input{figure_fullreduction.tex}

\begin{Lemma}
\label{lem:kgallery}
Let $v$ be an interior vertex of a diagram $\Omega$, let $\Delta$ be a gallery of length $k$ at $v$, and let
$\phi:\Omega\to \Omega'$ be a gallery $\Delta$-map. Assume that $k<\deg(v)$ (that is, $\Delta$ is not the full gallery at $v$), so that
$\phi$ does not eliminate $v$. Then $\deg_{\Omega'}(v)-\deg_{\Omega}(v)\leq 2-k$.
In particular, $\deg_{\Omega'}(v)<\deg_{\Omega}(v)$ if $k\geq 3$.
\end{Lemma}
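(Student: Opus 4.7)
The plan is to track how the edge star at $v$ changes through the two stages of a gallery $\Delta$-map: first the simultaneous application of the single-cell maps $\phi_1,\ldots,\phi_k$, and then the $k-1$ cancellations specified in Definition~\ref{def:gallerymap}. Since $v$ is interior, the cells of $\Omega$ meeting $v$ can be cyclically ordered $\calG_1,\ldots,\calG_d$ (with $d=\deg_{\Omega}(v)$) so that consecutive cells share an edge at $v$. Because $\Delta$ is a gallery of length $k<d$, its cells $\calF_1,\ldots,\calF_k$ form a proper contiguous arc in this cyclic order, and the distinguished edges $e_1,\ldots,e_{k-1}$ are precisely the $v$-incident edges interior to $\Delta$.

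I would first show that after applying the $\phi_i$ individually (but before any cancellation) the degree at $v$ becomes exactly $d+k$. Condition~(ii) of Definition~\ref{def:singlecellmap} guarantees that each $\phi_i$ introduces exactly one new side edge at $v$, call it $s_i$, while the two edges of $\partial\calF_i$ incident to $v$ are untouched (they still sit on $\partial\Delta_i=\partial\calF_i$ in the replacement). Summing over $i$ adds $k$ fresh edges at $v$.

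Next I would analyze the cancellations. For each $i$, the compatible pair at $e_i$ consists of the side cells $\calF_i(e_i)$ and $\calF_{i+1}(e_i)$; both contain $v$ (as an endpoint of $e_i$), and since $e_i$ is interior to $\Delta$ these are the only cells containing $e_i$ after the single-cell maps. Cancelling the pair therefore deletes $e_i$ from the edge set entirely and identifies $s_i$ with $s_{i+1}$ along the matched portion of the boundary (it may identify more edges if the boundary intersection exceeds the single edge $e_i$, but never fewer). Running this over $i=1,\ldots,k-1$, the $k-1$ edges $e_i$ all vanish from the star of $v$, and the $k$ side edges $s_1,\ldots,s_k$ are collapsed into a single edge (or possibly fewer). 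Hence the cancellations remove at least $2(k-1)$ edges at $v$ in total, so
\[
\deg_{\Omega'}(v)\leq (d+k)-2(k-1)=d+2-k,
\]
which gives the claimed bound $\deg_{\Omega'}(v)-\deg_{\Omega}(v)\leq 2-k$; for $k\geq 3$ the right-hand side is $\leq -1$, so $\deg_{\Omega'}(v)<\deg_{\Omega}(v)$.

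The main point of care is that the $k-1$ cancellations genuinely act disjointly on the star of $v$ and that the side edges $s_i$ really are distinct before being identified by cancellation. This is precisely where the hypothesis $k<\deg(v)$ enters: it prevents the gallery from wrapping around $v$, so the cells $\calF_1,\ldots,\calF_k$ are pairwise distinct in the cyclic order, the edges $e_1,\ldots,e_{k-1}$ are pairwise distinct, and the side edges $s_1,\ldots,s_k$ arise from distinct single-cell refinements and are not identified with anything outside of $\Delta$. With this combinatorial independence in hand, the edge accounting above is rigorous and completes the proof.
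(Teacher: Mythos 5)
Your proof is correct and follows the same argument as the paper: each of the $k$ single-cell maps adds one side edge at $v$, and each of the (at least) $k-1$ cancellations removes two edges at $v$ (the shared edge $e_i$ and one of the two identified side edges), giving $\deg_{\Omega'}(v)-\deg_{\Omega}(v)\leq k-2(k-1)=2-k$. The extra care you take with the distinctness of the $e_i$ and $s_i$ is a more detailed version of the same counting, not a different route.
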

\begin{proof}
Recall that $\phi$ is constructed by first composing single-cell maps at each cell of $\Delta$, followed by removing
certain pairs of cells which share an edge containing $v$. Each single-cell map at $v$ increases the degree of $v$ by $1$,
and each cancellation decreases the degree of $v$ by $2$. By definition of a gallery of length $k$, we will perform at least
$k-1$ cancellations. Therefore, $\deg_{\Omega'}(v)-\deg_{\Omega}(v)\leq k-2(k-1)=2-k$.
\end{proof}

Lemma~\ref{lem:kgallery} shows that if for some interior vertex $v$ we can always construct a $\Delta$-reduction at $v$
for some gallery $\Delta$ at $v$ of length $\min(3,\deg(v))$, then we can eliminate $v$ by applying Lemma~\ref{lem:kgallery} enough times. Hence if $v$ is also maximal, there exists a full reduction at $v$. This observation (which generalizes the basic idea of \cite{Re}) will still be insufficient for our purposes.
Our general strategy for removing a vertex $v$ from a diagram will be as follows (the actual algorithm will be even more involved,
and at this point we just want to outline the main idea):

Fix a maximal vertex $v$, and assume that $v$ is interior.
\begin{itemize}
\item[(i)] We first apply several single-cell reductions to remove cells with ``complicated'' boundary labels at $v$.
The total number of cells containing $v$ will increase at this stage. 
\item[(ii)]Then we apply double-cell reductions to remove edges with complicated labels containing $v$.
\item[(iii)] Once all cells and edges at $v$ with complicated labels have been eliminated, we will always be able to define
a triple-cell reduction (which will not reintroduce new complicated cells and edges). Since 
triple-cell reductions decrease the degree of $v$, after finitely many steps $v$ will be eliminated from the diagram.
\end{itemize}

\subsection{Group and Relations}
\label{sec:groupandrel}

While our main goal in this paper is to prove finite presentability of the subgroups
of $\IAR_{n,d}$ and $\IAC_{n,d}$ of $\Aut(F_n)$, it will be more convenient to work with the corresponding subgroups of the group $\widetilde{\SAut(F_n)}$ defined below. As usual, $\SAut(F_n)$
is the preimage of $\SL_n(\dbZ)$ under the natural homomorphism $\Aut(F_n)\to \GL_n(\dbZ)$; thus it is an index $2$ subgroup of
$\Aut(F_n)$. We define $\widetilde{\SAut(F_n)}$ to be the universal central extension
of $\SAut(F_n)$. 

Gersten~\cite{Ge} gave a very simple Steinberg-type presentation of $\SAut(F_n)$ for all $n\geq 3$. Moreover, he proved that
if $n\geq 5$, one can obtain a presentation of $\widetilde{\SAut(F_n)}$ from that presentation of $\SAut(F_n)$ by dropping 
one family of relations and that the kernel of the projection 
$\widetilde{\SAut(F_n)}\to \SAut(F_n)$ is cyclic of order $2$. We define
$\widetilde{\IAR_{n,d}}$ (resp. $\widetilde{\IAC_{n,d}}$) to be the preimage
of $\IAR_{n,d}\cap \SAut(F_n)$ (resp. $\IAC_{n,d}\cap \SAut(F_n)$) in $\widetilde{\SAut(F_n)}$.
Clearly, if $n\geq 5$, finite presentability of $\widetilde{\IAR_{n,d}}$ (resp. $\widetilde{\IAC_{n,d}}$)
implies that of $\IAR_{n,d}$ (resp. $\IAC_{n,d}$).
\skv

We proceed with describing Gersten's presentations for $\SAut(F_n)$ and $\widetilde{\SAut(F_n)}$ established in \cite{Ge}. 
This presentation uses the standard generating set consisting of Nielsen maps $R_{ij}: x_i\mapsto x_i x_j$ and
$L_{ij}: x_i\mapsto x_j x_i$ with $i\neq j$, but it is easiest to describe it by adding extra generators $w_{ij}$ 
given by $w_{ij}=L_{ij}L_{ji}^{-1}R_{ij}$.

\begin{Theorem}[Gersten~\cite{Ge}] 
\label{Gersten_pres}
The group $\SAut(F_n)$ has a presentation with a generating set 
$\{R_{ij},L_{ij},w_{ij}: 1\leq i\neq j\leq n\}$ subject to the following relations.
Here $T_{ij}$ stands for either $R_{ij}$ or $L_{ij}$ and in each relation distinct letters denote distinct indices:
\begin{align}
\label{row1}
&[R_{ij},L_{ij}]=1&
&[T_{ij},T_{kl}]=1&
&[T_{ij},T_{kj}]=1&
&&
\\
\label{row2}
&[R_{jk},R_{ij}]=R_{ik}&
&[R_{ij},R_{jk}^{-1}]=R_{ik}&
&[R_{ij}^{-1}, L_{jk}]=R_{ik}&
&[L_{jk}^{-1}, R_{ij}]=R_{ik}& \\
\label{row3}
&[L_{jk},L_{ij}]=L_{ik}&
&[L_{ij},L_{jk}^{-1}]=L_{ik}&
&[L_{ij}^{-1}, R_{jk}]=L_{ik}&
&[R_{jk}^{-1}, L_{ij}]=L_{ik}& \\
\label{row4}
&w_{ij}=L_{ij}L_{ji}^{-1}R_{ij}&
&w_{ij}=R_{ij}R_{ji}^{-1}L_{ij}&
&&
&&
\\
\label{row5}
&w_{ij}^{-1}R_{ij}w_{ij}=L_{ji}^{-1}&
&w_{ij}^{-1}L_{ij}w_{ij}=R_{ji}^{-1}&
&w_{ij}^{-1}R_{ji}w_{ij}=R_{ij}^{-1}&
&w_{ij}^{-1}L_{ij}w_{ij}=L_{ij}^{-1}&
\\
\label{row6}
&w_{ij}^{-1}R_{ik}w_{ij}=L_{ik}^{-1}&
&w_{ij}^{-1}L_{ik}w_{ij}=R_{ik}^{-1}&
&w_{ij}^{-1}R_{ki}w_{ij}=R_{kj}^{-1}&
&w_{ij}^{-1}L_{ki}w_{ij}=L_{kj}^{-1}&
\\
\label{row7}
&w_{ji}=w_{ij}^{-1}&
&&
&&
&&
\\
\label{row8}
&w_{ij}^4=1&
&&
&&
&&
\end{align}
In addition, the following hold for $n\geq 5$:
\begin{itemize}
\item[(a)] The group $\widetilde{\SAut(F_n)}$ can be defined by the presentation obtained from the one above by removing 
the last family of relations ($w_{ij}^4=1$).
\item[(b)]The kernel of the
projection $\widetilde{\SAut(F_n)}\to\SAut(F_n)$ is cyclic of order $2$. 
\end{itemize}
\end{Theorem}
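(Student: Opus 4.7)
The plan is to verify the stated presentation in two stages, first confirming that the listed relations hold in $\SAut(F_n)$ and then showing they suffice to define the group (and its universal central extension for $n\geq 5$).

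For the first stage, I would take the generators $R_{ij}$, $L_{ij}$ acting on $F_n$ by their standard Nielsen definitions and define $w_{ij}$ by (\ref{row4}); relation (\ref{row7}) and the second identity in (\ref{row4}) then follow from direct calculation on basis elements (both $w_{ij}$ and $w_{ji}^{-1}$ swap $x_i\leftrightarrow x_j^{-1}$ and fix everything else up to sign, yielding the same outer automorphism of sign type). The commutator relations (\ref{row1})--(\ref{row3}) are the familiar Magnus--Nielsen identities and can each be checked by applying both sides to $x_1,\ldots,x_n$. The conjugation formulas (\ref{row5})--(\ref{row6}) reduce to computing how the involution $w_{ij}$ intertwines left and right Nielsen moves, which is a straightforward inspection. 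Finally (\ref{row8}) holds because $w_{ij}^2$ already acts as $x_i\mapsto x_i^{-1}$, $x_j\mapsto x_j^{-1}$ and fixes the rest, and its square equals the identity in $\SAut(F_n)$.

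For the second stage (completeness), I would work from a known finite presentation of $\SAut(F_n)$, say Nielsen's or McCool's, and carry out Tietze transformations until the relations match Gersten's list. The strategy is to introduce the symbols $w_{ij}$ via (\ref{row4}) as abbreviations and then to eliminate Nielsen's original relations by showing they are consequences of (\ref{row1})--(\ref{row8}); the key combinatorial input here is that any word in the Nielsen generators representing the identity can be pushed, via the commutation relations (\ref{row1}) and the Steinberg-type commutators (\ref{row2})--(\ref{row3}), into a product of conjugates of the $w_{ij}^4$ and the $w$--elimination relations. Concretely, modulo the commutator relations, $\SAut(F_n)$ is a Steinberg-type group generated by the $R_{ij}, L_{ij}$, and the only ``new'' relation needed to cut it down to $\SAut(F_n)$ itself is the order-four relation (\ref{row8}); this is the role of a standard Steinberg computation (the group $\SL_n$-analogue is classical).

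For parts (a) and (b), I would let $\widetilde G$ be the group defined by (\ref{row1})--(\ref{row7}) and $G=\SAut(F_n)$, and let $K$ be the kernel of $\widetilde G \twoheadrightarrow G$. By (\ref{row8}) in $G$, $K$ is generated as a normal subgroup by the elements $w_{ij}^4$. Using (\ref{row5})--(\ref{row7}) one checks that $w_{ij}^4$ is central in $\widetilde G$ (it is fixed under conjugation by every generator, because both the Nielsen generators and the $w_{kl}$ permute the $w_{ij}$'s up to sign, and $w_{ij}^4$ is insensitive to signs), so $K$ is a central cyclic subgroup. To see it is of order exactly $2$ and that the extension is universal, I would appeal to the known computation that $H_2(\SAut(F_n),\dbZ)=\dbZ/2$ for $n\geq 5$ (the stable $K_2$-type calculation via Hurewicz/Steinberg, analogous to $K_2(\dbZ)=\dbZ/2$): surjectivity of the Hurewicz map for a superperfect cover would give that any central extension of $\SAut(F_n)$ by an abelian group is dominated by $\widetilde G$, so $\widetilde G$ is the universal central extension and $|K|=2$.

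The hardest step, by a wide margin, is establishing completeness in the second stage together with the $H_2$ computation in (b); everything else amounts to bookkeeping on Nielsen transformations. In practice I expect to lean on Gersten's original argument, which identifies $\widetilde{\SAut(F_n)}$ with a Steinberg-type group and transports the standard $K_2$ calculation to this setting, rather than attempting to rediscover the Schur multiplier from scratch.
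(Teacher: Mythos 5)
This theorem is quoted by the paper from Gersten's 1984 article without proof, so there is no in-paper argument to compare against; the only relevant in-paper commentary is the remark that relations (2.5)--(2.7) are redundant and that the presentation was derived from McCool's. Your outline correctly reconstructs the shape of Gersten's actual argument: verify the relations by direct computation on the Nielsen generators, obtain completeness by Tietze transformations from McCool's presentation, and identify $\widetilde{\SAut(F_n)}$ as the universal central extension via the computation $H_2(\SAut(F_n),\dbZ)\cong\dbZ/2$ for $n\geq 5$. The two genuinely hard steps --- the Tietze reduction and the Schur multiplier computation --- are exactly the content of Gersten's paper, and you are explicit that you would import them rather than reprove them, which is consistent with how the present paper treats the result. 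The only small caution is in your first stage: the centrality of $w_{ij}^4$ in the abstract group $\widetilde G$ and the fact that all the $w_{ij}^4$ coincide there are statements about the presented group, not about $\SAut(F_n)$, so they need the relations in rows (2.5)--(2.6), not just the action on $F_n$; as you state it this is fine but should not be conflated with the (trivial) verification that $w_{ij}^4=1$ holds in $\SAut(F_n)$ itself.
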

\begin{Remark}\rm 1. Presentation in \cite{Ge} actually has fewer relations -- 
\eqref{row5},\eqref{row6}~and~\eqref{row7} follow from the other relations; however, it will be
convenient for us to include them to simplify the description of reduction maps.

2. Gersten's presentation was derived from McCool's presentation for $\Aut(F_n)$ given in \cite{Mc1}, but the latter has a much larger generating set consisting of Whitehead maps. These are automorphisms of the form $T_{i,A,B}$ where $A$ and $B$ are disjoint subsets of $\{1,\ldots, n\}\setminus \{i\}$
and 
$$T_{i,A,B}(x_j)=\left\{
\begin{array}{ll}
x_j x_i &\mbox{ if } j\in A\setminus B; \\
x_i^{-1} x_j &\mbox{ if } j\in B\setminus A; \\
x_i^{-1} x_j x_i &\mbox{ if } j\in A\cap B; \\
x_j &\mbox{ if } j\not\in A\cup B.
\end{array}
\right.
$$ 
\end{Remark}

\paragraph{\bf Extended Nielsen generators.} We will refer to the generating set from Theorem~\ref{Gersten_pres} as the {\it extended set of Nielsen generators}. The elements $R_{ij}$ and $L_{ij}$ will be called {\it Nielsen generators} and the elements $w_{ij}$
will be called {\it Weyl generators}.
\skv

\paragraph{\bf Optimized Gersten's presentation.} Figure~\ref{5++GA} shows that the relations $[R_{ij},L_{ij}]=1$ actually follow from other relations involving Nielsen generators and therefore can (and will) be removed. 
It will also be convenient for us to remove half of the Weyl generators -- namely we will keep only the generators $w_{ij}$ with
$i<j$, thereby removing $w_{ij}$ for $i>j$ along with the relations in \eqref{row7} (and replacing
$w_{ij}$ by $w_{ji}^{-1}$  for $i>j$ in other relations). 

Thus, for $n\geq 5$ the group $\widetilde{\SAut(F_n)}$ has a presentation $\la \calX | \calR\ra$ where
$\calX=\{R_{ij},L_{ij}: 1\leq i\neq j\leq n\}\sqcup\{w_{ij}: 1\leq i<j\leq n\}$
and $\calR$ is the set of all relations in Theorem~\ref{Gersten_pres} except $[R_{ij},L_{ij}]=1$, \eqref{row7} and \eqref{row8}
(where $w_{ij}$ for $i>j$ should be interpreted as $w_{ji}^{-1}$ in \eqref{row4},\eqref{row5} and \eqref{row6}).
The obtained presentation of $\widetilde{\SAut(F_n)}$
will be called the {\it optimized Gersten's presentation}. 
\skv
\input{figure5++GA.tex}

\skv
\paragraph{\bf Action of the signed symmetric group.} 
Now let $\Sigma_n=S_n\rtimes \{\pm 1\}^n$ be the signed symmetric group of degree $n$, and consider the natural action of $\Sigma_n$
on the free group $F_n=F(x_1,\ldots, x_n)$. This action induces an action of $\Sigma_n$ on $\Aut(F_n)$ (by conjugation) which preserves
$\calX\cup \calX^{-1}$ where $\calX=\{R_{ij},L_{ij}: 1\leq i\neq j\leq n\}\sqcup\{w_{ij}: 1\leq i<j\leq n\}$ as above. Thus we can extend this action to the free group $F(\calX)$. The obtained action on $F(\calX)$ almost preserves the set $\calR$ of relations from optimized Gersten's presentation; more precisely it does preserve $\calR$ if we view elements of $\calR$ as {\it geometric relations} in the following
sense:

\begin{Definition}\rm
Let $X$ be a set. A {\it geometric relator} on $X$ is an equivalence class of cyclically reduced words in $X\sqcup X^{-1}$
where two words $v$ and $w$ are equivalent if $w$ can be obtained from $v$ or $v^{-1}$ by a cyclic permutation.
\end{Definition}

\begin{Remark}\rm
1. The reason we call such equivalence classes geometric relators is the following. If $\Omega$ is a van Kampen diagram with a cyclically 
reduced boundary label, then the set of all possible ways to read the 
boundary label of $\Omega$ (where the starting point and the direction can be chosen arbitrarily) form a geometric relator.

2. Let $u,v\in F(X)$ be reduced words which start with different symbols and end with different
symbols. Then the 4 words $u^{-1}v, uv^{-1}, v^{-1}u, vu^{-1}$ are cyclically reduced and all lie in the same equivalence class.
Thus, we can safely write geometric relators in the form $u=v$ (rather than $w=1$) without ambiguity.
\end{Remark}

It will be convenient to write as many relators as possible in the form $u^{-1}vu=f(u,v)$
where $u$ and $v$ are generators or their inverses and $f(u,v)$ is a short word since this will simplify
construction of conjugating single-cell maps (see the paragraph after Definition~\ref{def:commutingmap} in \S~2.2). Of course, it is trivial to write relations of the
form $[u,v]=1$ (where $u,v$ are generators) in this way. 
Below we give such expressions for relations in \eqref{row2} and \eqref{row3}. In fact, we give two different expressions for some of these relations, swapping the roles of $u$ and $v$.

\begin{Observation}
\label{rel:conj}
The following relations hold:
\begin{align*}
&R_{jk}^{-1}R_{ij}R_{jk}=R_{ij} R_{ik}^{-1}& 
&R_{ij}^{-1}R_{jk}R_{ij}=R_{jk} R_{ik}&
&R_{jk} R_{ij}R_{jk}^{-1}=R_{ij} R_{ik}&
&R_{ij}^{-1}R_{jk}R_{ij}=R_{ik} R_{jk}&
\\
& L_{jk}^{-1} R_{ij}L_{jk}=R_{ik}^{-1} R_{ij}&
&R_{ij}L_{jk}R_{ij}^{-1}=L_{jk} R_{ik}^{-1}&
& L_{jk} R_{ij} L_{jk}^{-1}=R_{ik} R_{ij}&
&R_{ij}L_{jk}R_{ij}^{-1}= R_{ik}^{-1}L_{jk}&
\\
&L_{jk}^{-1}L_{ij}L_{jk}=L_{ij} L_{ik}^{-1}&
&L_{ij}^{-1}L_{jk}L_{ij}=L_{ik} L_{jk}&
&L_{jk} L_{ij}L_{jk}^{-1}=L_{ij} L_{ik} &
&L_{ij}^{-1}L_{jk}L_{ij}=L_{jk} L_{ik}&
\\
&R_{jk}^{-1}L_{ij}R_{jk}=L_{ik}^{-1}L_{ij}&
&L_{ij}R_{jk}L_{ij}^{-1}=R_{jk} L_{ik}^{-1}&
&R_{jk} L_{ij}R_{jk}^{-1}=L_{ik} L_{ij}&
&L_{ij}R_{jk}L_{ij}^{-1}= L_{ik}^{-1}R_{jk}&
\end{align*}
\end{Observation}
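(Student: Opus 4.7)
Each of the sixteen identities in Observation~\ref{rel:conj} is an elementary algebraic rearrangement of one of the commutator relations in \eqref{row2}--\eqref{row3}, combined at most once with a commutation relation from \eqref{row1}. So the plan is simply to go through the four rows and verify each one from the corresponding line of the original presentation.

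Take the first row as a template. Using the convention $[a,b]=a^{-1}b^{-1}ab$, the relation $[R_{jk},R_{ij}]=R_{ik}$ from \eqref{row2} reads
\[
R_{jk}^{-1}R_{ij}^{-1}R_{jk}R_{ij}=R_{ik}.
\]
Inverting both sides and moving two factors to the right gives $R_{jk}^{-1}R_{ij}R_{jk}=R_{ij}R_{ik}^{-1}$, the first identity. Solving the same equation for $R_{ij}^{-1}R_{jk}R_{ij}$ yields $R_{ij}^{-1}R_{jk}R_{ij}=R_{jk}R_{ik}$; combined with $[R_{jk},R_{ik}]=1$ from \eqref{row1} (same second index), this is also the fourth identity $R_{ij}^{-1}R_{jk}R_{ij}=R_{ik}R_{jk}$. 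The remaining identity $R_{jk}R_{ij}R_{jk}^{-1}=R_{ij}R_{ik}$ follows by yet another rearrangement and one more use of $[R_{jk},R_{ik}]=1$.

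The three remaining rows are handled in exactly the same way, each one starting from the single corresponding commutator relation in \eqref{row2} or \eqref{row3}:
\begin{itemize}
\item Row 2 (the mixed $R$/$L$ identities producing an $R_{ik}^{\pm 1}$): start from $[R_{ij}^{-1},L_{jk}]=R_{ik}$ and use $[R_{ik},L_{jk}]=1$ (same second index $k$).
\item Row 3: start from $[L_{jk},L_{ij}]=L_{ik}$ and use $[L_{ik},L_{jk}]=1$.
\item Row 4: start from $[L_{ij}^{-1},R_{jk}]=L_{ik}$ and use $[L_{ik},R_{jk}]=1$.
\end{itemize}
In every case the pattern is the same: one reads the defining commutator relation in the form $u^{-1}v^{-1}uv=c$, inverts and shuffles to obtain the two conjugate forms $u^{-1}vu$ and $uvu^{-1}$, and then applies the commutation of the ``correction term'' $c\in\{R_{ik}^{\pm 1},L_{ik}^{\pm 1}\}$ with the ``conjugator'' $u\in\{R_{jk}^{\pm 1},L_{jk}^{\pm 1}\}$ (which is always an instance of $[T_{ik},T_{jk}]=1$).

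There is no real obstacle here; the only care needed is bookkeeping of signs and the choice of convention for the commutator. Once the convention is fixed, each of the sixteen identities becomes a three-line derivation, and the observation reduces to tabulating these derivations. Thus Observation~\ref{rel:conj} is a direct consequence of \eqref{row1}, \eqref{row2} and \eqref{row3} in Gersten's presentation, with no new input required.
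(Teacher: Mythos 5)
Your proposal is correct and is exactly the argument the paper intends: the paper states Observation~\ref{rel:conj} without proof, presenting it as a direct rewriting of the commutator relations \eqref{row2}--\eqref{row3} into conjugation form, with the commutation relations $[T_{ik},T_{jk}]=1$ from \eqref{row1} used to reorder the correction terms exactly as you describe. Your remark about fixing the commutator convention is apt, since the paper never specifies one; once a convention is chosen consistently, each identity is the three-line rearrangement you outline.
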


\subsection{Actions on the diagrams and the partial order}

Fix integers $n\geq 2$ and $1\leq d\leq n$, let $G=\widetilde{\SAut(F_n)}$, and let $H$ be either 
$\widetilde{\IAC_{n,d}}$ or $\widetilde{\IAR_{n,d}}$.
Our goal is to prove finite presentability of $H$ (for $d$ as in Theorem~\ref{thm:finpres}) using the general method described in 
\S~\ref{sec:diagrams}. Recall that in the setting of \S~\ref{sec:diagrams}, the vertices of the van Kampen diagrams 
are labeled by elements of the space $G/H$ which was denoted by $Z$ in \S~\ref{sec:diagrams}. For the discussion below
it will be convenient to define $Z$ not as $G/H$ itself, but as a simple-to-describe $G$-set which is isomorphic to $G/H$ (as a $G$-set).
\skv

Given $g\in G$, let $[g]$ be its natural projection to $\SL_n(\dbZ)$. There are two natural actions of $\SL_n(\dbZ)$ on the set 
$Mat_{d\times n}(\dbZ)$ of $d\times n$ matrices over $\dbZ$. Using the projection $g\mapsto [g]$, we will view these as actions of $G$:
\begin{gather}
\label{eq:actionrow}
g.A= A [g]^{-1} \qquad \mbox{(row action)} \\
\label{eq:actioncolumn}
g.A= A [g]^{T} \qquad \mbox{(column action).}
\end{gather}
Let $I_{n,d}=(I_d | 0_{d\times n})\in Mat_{d\times n}(\dbZ)$ be the matrix whose first $d$ columns form the identity $d\times d$
matrix and whose last $n-d$ columns are zero. For both actions, the $G$-orbit of $I_{n,d}$ consists of matrices which we will
call unimodular (although it would have been more accurate to call them column-unimodular):

\begin{Definition}\rm
A matrix $A\in Mat_{d\times n}(\dbZ)$ will be called {\it unimodular} if its columns span $\dbZ^d$. We will
denote the set of all $d\times n$-unimodular matrices by $Um_{d\times n}(\dbZ)$.
\end{Definition}

As in the introduction, let $\Row_{n,d}$ (resp. $\Col_{n,d}$) be the subgroup of $\GL_n(\dbZ)$ consisting of matrices whose first $d$ rows (resp. first $d$ columns) coincide with those of the identity matrix. We can reformulate the definition of $\widetilde{\IAR_{n,d}}$ and
$\widetilde{\IAC_{n,d}}$ as follows: 
$$\widetilde{\IAR_{n,d}}=\{g\in G: [g]\in \Row_{n,d}\}\mbox{ and }\widetilde{\IAC_{n,d}}=\{g\in G: [g]\in\Col_{n,d}\}.$$ 
On the other hand, it is straightforward to check that $\widetilde{\IAR_{n,d}}$ (resp. $\widetilde{\IAC_{n,d}}$) is the stabilizer of $I_{n,d}$ with respect to 
\eqref{eq:actionrow} (resp. \eqref{eq:actioncolumn}) -- this explains the names `row action' and `column action'.

Thus, if $H=\widetilde{\IAR_{n,d}}$ (resp. $H=\widetilde{\IAC_{n,d}}$), then $G/H$ is isomorphic as a $G$-set to 
$Z=Um_{d\times n}(\dbZ)$ with the row (resp. column) action above. We will refer to the groups $\widetilde{\IAR_{n,d}}$ (resp. $\widetilde{\IAC_{n,d}}$) as the {\it row-stabilizer} (resp. {\it column-stabilizer}) groups.
\skv

The following two simple observations describe the actions of the Nielsen generators on $Mat_{d\times n}(\dbZ)$
and an explicit isomorphism of $G$-sets between $G/H$ and $Um_{d\times n}(\dbZ)$. Both results follow immediately
from the definitions.

\begin{Lemma}
\label{obs:actions}
Let $A\in Mat_{d\times n}(\dbZ)$.
\begin{itemize}
\item[(a)] If $G$ acts on $Mat_{d\times n}(\dbZ)$ via the row action \eqref{eq:actionrow}, 
then $R_{ij}.A=L_{ij}.A$ is the matrix obtained from $A$
by subtracting the $j^{\rm th}$ column of $A$ from the $i^{\rm th}$ column of $A$.
\item[(b)] If $G$ acts on $Mat_{d\times n}(\dbZ)$ via the column action \eqref{eq:actioncolumn}, then
$R_{ij}.A=L_{ij}.A$ is the matrix obtained from $A$
by adding the $i^{\rm th}$ column of $A$ to the $j^{\rm th}$ column of $A$.
\end{itemize}
\end{Lemma}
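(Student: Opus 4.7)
The plan is to verify both parts by a direct matrix computation, after first pinning down the matrix representing the Nielsen generators in $\SL_n(\dbZ)$.

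First I would record the key observation that $R_{ij}$ and $L_{ij}$ induce the \emph{same} map on the abelianization $\dbZ^n=F_n/[F_n,F_n]$. Indeed, $R_{ij}$ sends $x_i\mapsto x_i x_j$ (and fixes other generators), while $L_{ij}$ sends $x_i\mapsto x_j x_i$ (and fixes other generators). In the abelianization, both send $e_i\mapsto e_i+e_j$ and fix $e_k$ for $k\neq i$. Thus $[R_{ij}]=[L_{ij}]$ in $\SL_n(\dbZ)$, and if we write $E_{ab}$ for the matrix unit with a $1$ in position $(a,b)$ and $0$ elsewhere, then in the convention where the $k$-th column of $[g]$ encodes $g.e_k$, we have $[R_{ij}]=[L_{ij}]=I+E_{ji}$.

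For part (a), the row action gives $R_{ij}.A=A[R_{ij}]^{-1}=A(I-E_{ji})$. Computing column-by-column: for $k\neq i$ the $k$-th column of $A(I-E_{ji})$ is $Ae_k$, the $k$-th column of $A$; while the $i$-th column is $Ae_i - AE_{ji}e_i = Ae_i - Ae_j$, i.e.\ the $i$-th column of $A$ minus the $j$-th column of $A$. This is exactly the stated description.

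For part (b), the column action gives $R_{ij}.A=A[R_{ij}]^T=A(I+E_{ij})$. The same column-by-column computation now shows that all columns are unchanged except the $j$-th, which becomes $Ae_j+Ae_i$, the $j$-th column of $A$ plus the $i$-th column of $A$. This proves the second assertion. Since both calculations are purely formal and take only a few lines, the main (trivial) obstacle is just making sure the transpose/inverse conventions in \eqref{eq:actionrow} and \eqref{eq:actioncolumn} are applied consistently with the chosen convention for $[R_{ij}]$.
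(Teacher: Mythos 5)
Your proposal is correct: the paper gives no explicit proof (it states that the lemma "follows immediately from the definitions"), and your direct column-by-column computation with $[R_{ij}]=[L_{ij}]=I+E_{ji}$ is precisely the verification one would write out. The conventions are handled consistently, so nothing further is needed.
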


\begin{Lemma}
\label{obs:isomorphism}
Recall that $g\mapsto [g]$ is the natural projection from $G$ to $\SL_n(\dbZ)$, and
define $\phi:G\to Um_{d\times n}(\dbZ)$ as follows:
\begin{itemize}
\item[(a)] If $H=\widetilde{\IAR_{n,d}}$, let $\phi(g)$ be the matrix consisting of the first $d$ rows of $[g]^{-1}$. 
\item[(b)] If $H=\widetilde{\IAC_{n,d}}$, let $\phi(g)$ be the matrix consisting of the first $d$ rows of $[g]^T$.
 \end{itemize} 
Then $\phi$ induces an isomorphism of $G$-sets $G/H\to Um_{d\times n}(\dbZ)$.
\end{Lemma}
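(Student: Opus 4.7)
The plan is to recognize $\phi$ as an orbit map and deduce everything from a transitivity statement plus the stabilizer computation that was already stated (without proof) in the paragraph preceding the lemma.

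First, I would rewrite $\phi$ in a uniform way. Left multiplication by $I_{n,d} = (I_d \mid 0_{d\times(n-d)})$ extracts the first $d$ rows of a matrix, so in the row case $\phi(g) = I_{n,d}\,[g]^{-1}$ and in the column case $\phi(g) = I_{n,d}\,[g]^{T}$. Comparing with \eqref{eq:actionrow} and \eqref{eq:actioncolumn}, in both cases this reads simply
\[
\phi(g) \;=\; g.I_{n,d},
\]
where $G$ acts via the appropriate action through $[\,\cdot\,]:G \to \SL_n(\dbZ)$. Since $I_{n,d}$ is unimodular and either action factors through $\SL_n(\dbZ)$ (which preserves $Um_{d\times n}(\dbZ)$), the image of $\phi$ lies in $Um_{d\times n}(\dbZ)$.

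Next I would read off well-definedness, equivariance and injectivity of the induced map $\bar\phi:G/H\to Um_{d\times n}(\dbZ)$ directly from the identification in the paragraph preceding the lemma, namely that $H$ is the stabilizer of $I_{n,d}$ under the respective action. Indeed, if $h\in H$ then $\phi(gh) = (gh).I_{n,d} = g.(h.I_{n,d}) = g.I_{n,d} = \phi(g)$, so $\phi$ descends to $\bar\phi:G/H \to Um_{d\times n}(\dbZ)$; the same calculation with $g=1$ shows equivariance $\bar\phi(g\cdot g'H)=g.\bar\phi(g'H)$; and if $g.I_{n,d}=g'.I_{n,d}$ then $g^{-1}g'$ stabilizes $I_{n,d}$, so $g^{-1}g'\in H$ and $gH=g'H$.

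The only thing still to verify is surjectivity of $\bar\phi$, equivalently transitivity of $G$ on $Um_{d\times n}(\dbZ)$. By Lemma~\ref{obs:actions}, the Nielsen generators $R_{ij},L_{ij}$ act on $Mat_{d\times n}(\dbZ)$ by elementary column operations (adding $\pm$ one column to another), so given any $A\in Um_{d\times n}(\dbZ)$ I would reduce $A$ to $I_{n,d}$ by column-wise Euclidean reduction: unimodularity guarantees that the $\gcd$ of the entries of any row reachable during the process remains $1$, so at each stage one can produce a column equal to some standard basis vector $e_i$ using only additions of columns; these column-clearings, together with sign changes of pairs of columns (available as $w_{ij}^2$) and column permutations realized by the $w_{ij}$, reduce $A$ to $I_{n,d}$. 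There is no real obstacle here — the only mild bookkeeping is the $\SL$ versus $\GL$ distinction, which is handled because sign flips and swaps are only needed in pairs, all of which lie in the image of $G$. With transitivity established, $\bar\phi$ is the desired $G$-set isomorphism.
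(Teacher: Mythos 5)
Your proof is correct and follows the same route the paper intends: the paper offers no written proof (it states that the lemma ``follows immediately from the definitions''), having already asserted in the preceding paragraph that $H$ is the stabilizer of $I_{n,d}$ and that the $G$-orbit of $I_{n,d}$ is exactly $Um_{d\times n}(\dbZ)$, so the lemma is precisely the orbit--stabilizer identification you spell out, with $\phi$ recognized as the orbit map $g\mapsto g.I_{n,d}$. The only substantive detail you add is the transitivity (column-reduction) argument, which the paper takes for granted; it is fine as written in the case $d<n$, the only one that matters for the main theorem.
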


From now on we will identify $G/H$ with $Um_{d\times n}(\dbZ)$ via the isomorphism from Lemma~\ref{obs:isomorphism},
although an explicit formula for the latter will only be used in the proof of Proposition~\ref{prop:essential2} at the end of the paper. By contrast, the formulas from Lemma~\ref{obs:actions} will be frequently used in computations without further mention.

\skv
In order to proceed with the method outlined in \S~\ref{sec:diagrams}, 
we need to fix a super-Artinian partial order on $Um_{d\times n}(\dbZ)$. 
We will define such an order on $Mat_{d\times n}(\dbZ)$ (but will only use its restriction to $Um_{d\times n}(\dbZ)$).
\skv

For the remainder of this section we will only consider the case $d=1$ (in which case we will write $\dbZ^n$ instead of $Mat_{1\times n}(\dbZ)$). For $d>1$, the proof of Theorem~\ref{thm:finpres} will be more involved, and we will use different orders in different parts of the proof.
The description of those orders for $d>1$ will be postponed until \S~5.   
 
\begin{Definition}\rm Take any element $x=(x_1,\ldots,x_n)$ in $\dbZ^n$, let $M=\|x\|_{\infty}=\max\{ |x_i|\}$ and fix $1\leq i\leq n$.
We will say that the $i^{\rm th}$ coordinate of $x$ is
\begin{itemize}
\item {\it maximal} if $|x_i|=M$;
\item {\it zero} if $x_i=0$;
\item {\it good} otherwise.
\end{itemize}
We will also say that a coordinate is {\it bad} if it is not good.
We will denote the number of maximal, zero and good coordinates of $x$ by $m(x)$, $z(x)$ and $g(x)$, respectively.
\end{Definition}
\skv
Let us now endow $\dbZ_{\geq 0}^4$ with the (total) lexicographical order where we compare two elements by first comparing their first coordinates, then their second coordinates, etc. Define the map $N:\dbZ^n\to \dbZ_{\geq 0}^4$ by
$$N(x)=(\|x\|_{\infty}, m(x), n-g(x),\|x\|_1),$$
and define a partial order on $\dbZ^{n}$ by setting $x<y$ $\iff$ $N(x)<N(y)$ 
(with respect to the lexicographical order on $\dbZ_{\geq 0}^4$).

The obtained order on $\dbZ^n$ is strongly Artinian (in particular, super-Artinian) since $\ell^{\infty}$-balls in $\dbZ^n$ are finite. 
\skv
 
More explicitly, two vectors in $\dbZ^n$ can be compared by successively applying the following criteria (we start by applying the first criterion; if it separates the vectors, we stop; if it is indecisive, we move to the second one, etc.):

\begin{itemize}
\item[(1)] a vector with the larger $\ell^{\infty}$-norm is larger;
\item[(2)] a vector with the larger number of maximal coordinates is larger;
\item[(3)] a vector with the smaller number of good coordinates is larger;
\item[(4)] a vector with the larger $\ell^1$-norm is larger. 
\end{itemize}
If two vectors cannot be separated using the criteria (1)-(4), they are declared incomparable.

\skv
\paragraph{\bf Motivating the order.} Before proceeding, let us provide some motivation for the above order. We start with a general definition.

\begin{Definition}\rm
Let $G$ be a group generated by a finite set $S$, and assume that $G$ acts transitively on a poset $P$.
Given $v\in P$, its {\it return degree} $\deg_S^{-}(v)$ is the number of pairs $(w,s)\in P\times S\cup S^{-1}$ such that
$w<v$ and $w=sv$. In other words, $\deg_S^{-}(v)$ is the number of edges in the Schreier graph $Sch(G,S)$ connecting $v$
to a smaller vertex.
\end{Definition}

Suppose now that $G$ is finitely presented and we are given a $P$-labeled van Kampen diagram $\Omega$ over some finite presentation
$(S,R)$ of $G$. Fix a maximal vertex $v$ of $\Omega$. Intuitively, one would expect that the larger $\deg_S^{-}(v)$ is, the easier it is to construct a full reduction of $\Omega$ at $v$.

The following lemma shows that in the case we are interested in,
all vertices whose labels lie outside of a fixed finite set have sufficiently large return degree:

\begin{Lemma}
\label{lem:ordermotiv}
Let $G=\widetilde{ \SAut(F_n)}$, $\calX$ its optimized Nielsen generating set
 and $P=Um_{1,n}(\dbZ)$ with one of the $G$-actions from \eqref{eq:actionrow}, \eqref{eq:actioncolumn}. 
Let $v\in P$ with $\|v\|_{\infty}\geq 2$. Then $\deg_{\calX}^-(v)\geq \lfloor \frac{n+1}{3}\rfloor$.
\end{Lemma}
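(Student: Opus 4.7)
My plan is to exhibit two disjoint families of generators $s \in \calX \cup \calX^{-1}$ with $s.v < v$, and to verify by a direct case analysis that their combined count already far exceeds $\lfloor (n+1)/3 \rfloor$.

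Fix $v \in Um_{1,n}(\dbZ)$ with $M := \|v\|_\infty \geq 2$, and write $m, g, z$ for the numbers of maximal, good, and zero coordinates of $v$. First I observe that unimodularity forces $g \geq 1$: otherwise every nonzero entry of $v$ equals $\pm M$, so the $\gcd$ of the entries is $M \geq 2$, contradicting $v \in Um_{1,n}(\dbZ)$. Second, it suffices to handle the row action, since under \eqref{eq:actioncolumn} each Nielsen generator modifies a different coordinate by the same rule and the enumeration below carries over after swapping the roles of $i$ and $j$. Throughout, I use the fact that $R_{ij}^{\pm 1}$ and $L_{ij}^{\pm 1}$ all act identically on $v$ under the row action, replacing $v_i$ with $v_i \mp v_j$, and that the Weyl generators $w_{ij}^{\pm 1}$ act as signed transpositions and therefore preserve the value of $N$, contributing nothing to $\deg_\calX^-(v)$.

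\emph{Family A (reducing a maximum):} for each maximal index $i$ and each $j \neq i$ with $v_j \neq 0$, two of the four generators $R_{ij}^{\pm 1}, L_{ij}^{\pm 1}$---an $R$-$L$ pair sharing the appropriate sign---replace $v_i$ with a value of absolute value $M - |v_j| < M$. The resulting $w$ satisfies $w < v$: by criterion~(1) if $m = 1$ (so $\|w\|_\infty < M$), and by criterion~(2) if $m \geq 2$ (so $\|w\|_\infty = M$ but $m(w) = m - 1$). This yields $2m(n - z - 1)$ edges. \emph{Family B (filling in a zero):} for each $i$ with $v_i = 0$ and each good $j$, all four generators $R_{ij}^{\pm 1}, L_{ij}^{\pm 1}$ replace $v_i$ with $\pm v_j \in (-M,0)\cup(0,M)$, making $i$ a new good coordinate. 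The new $w$ has $\|w\|_\infty = M$, $m(w) = m$, and $g(w) = g + 1$, so $w < v$ by criterion~(3). This yields $4zg$ edges. Since Family A modifies a maximal coordinate while Family B modifies a zero coordinate, the two families involve disjoint generators.

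Combining the two counts and using $m \geq 1$ and $g \geq 1$:
$$\deg_\calX^{-}(v) \;\geq\; 2m(n - z - 1) + 4zg \;\geq\; 2(n - z - 1) + 4z \;=\; 2(n - 1) + 2z \;\geq\; 2(n - 1),$$
which is at least $\lfloor (n+1)/3 \rfloor$ for every $n \geq 2$. The main subtlety to track is the degenerate subcase in Family A where $|v_j| = M$ and hence $v_i' = 0$: this still yields $w < v$, because $|v_j| = M$ forces a second maximum and so $m \geq 2$, putting us in the criterion~(2) branch. The essential input is unimodularity, which prevents degenerate vectors such as $(M, \pm M, 0, \ldots, 0)$ for which $g = 0$ and Family B would be empty; given $g \geq 1$, the bound is comfortably slack.
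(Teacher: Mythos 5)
Your proof is correct. It rests on the same two moves as the paper's argument — knocking a maximal coordinate down using any nonzero coordinate, and filling a zero coordinate with a good one — and on the same key use of the hypotheses (unimodularity plus $\|v\|_\infty\geq 2$ forces $g\geq 1$). But the organization is genuinely different: the paper runs a pigeonhole trichotomy ($m\geq t+1$, $g\geq t$, or $z\geq t$ with $t=\lfloor(n+1)/3\rfloor$) and in each case exhibits exactly $t$ pairs, whereas you count both families simultaneously and use only the crude facts $m\geq 1$, $g\geq 1$ to get $2m(n-z-1)+4zg\geq 2(n-1)$. This buys you a stronger and cleaner bound (linear in $n$ with constant $2$ rather than $1/3$) and avoids the case split entirely; the paper's version is weaker but its three cases serve as a deliberate preview of the maximal/good/zero trichotomy that structures the single-cell reductions in \S 3. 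Your counting of \emph{pairs} $(w,s)$ rather than distinct neighbors $w$ is the correct reading of the definition of return degree (the paper makes the same point in its Case 2), and your handling of the degenerate subcase $|v_j|=M$ in Family A is right.
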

\begin{Remark}\rm
Lemma~\ref{lem:ordermotiv} does not seem to have a useful counterpart for some more natural partial orders on $Um_{1,n}(\dbZ)$. For instance, if we order vectors first by $\ell^{\infty}$-norm and then by $\ell^1$-norm, then for any $n$ there will be infinitely
many elements of $Um_{1,n}(\dbZ)$ of return degree $2$. Indeed, let $v=(a,b,0,\ldots,0)$ where $a$ and $b$ are coprime and
$|a|>2|b|$. The neighbors of $v$ in the Schreier graph $Sch(G,\calX)$ are obtained from $v$ either by a signed permutation of coordinates
(in which case we get an element incomparable to $v$) or by adding or subtracting the $i^{\rm th}$ coordinate from the $j^{\rm th}$
coordinate for some $i$ and $j$, in which case we can only get a smaller element when $i=2$ and $j=1$, and the sign is uniquely determined by $a$ and $b$. Thus, $v$ has at most 1 neighbor $w$ with $w<v$, and there are $2$ elements $s\in S\cup S^{-1}$ such that $w=sv$ 
(the possible choices for $s$ are $\{R_{ij}^{\eps},L_{ij}^{\eps}\}$ or $\{R_{ji}^{\eps},L_{ji}^{\eps}\}$ with $\eps=\pm 1$ depending on
the signs of $a$ and $b$ and whether we work with the row or the column action).
 It is not hard to check that extending this partial order to a total order will not resolve the problem.
\end{Remark}

We will not formally apply Lemma~\ref{lem:ordermotiv} in this paper, but its proof should provide a good preview of how single-cell reductions will be constructed in the next section.

\begin{proof}[Proof of Lemma~\ref{lem:ordermotiv}]
We will give a proof for the row $G$-action \eqref{eq:actionrow}; the proof for the column action is analogous.
Also without loss of generality we can assume that all coordinates of $v$ are non-negative.

Recall that $m(v)$, $g(v)$ and $z(v)$ denote the number of maximal, good and zero coordinates of $v$, respectively. Thus,
$m(v)+g(v)+z(v)=n$. For simplicity of notation
let $t=\lfloor \frac{n+1}{3}\rfloor$, so that $n\geq 3t-1$. Hence one of the following 3 inequalities holds:
 $m(v)\geq t+1$, $g(v)\geq t$ or $z(v)\geq t$. We consider the 3 cases accordingly.
\skv
{\it Case 1: $m(v)\geq t+1$}. Without loss of generality we can assume that $v_1,\ldots, v_{t+1}$ (the first $t+1$ coordinates of $v$) are maximal. Then the vectors $R_{12}v, \ldots, R_{1,t+1}v$ are all distinct (since $v_i\neq 0$ for $2\leq i\leq t+1$) and 
all less than $v$ since each of them is obtained from $v$ by replacing a maximal coordinate by $0$. Thus $v$ has at least $t$
smaller neighbors, so in particular $\deg_{\calX}^-(v)\geq t$.
\skv

{\it Case 2: $g(v)\geq t$}. Without loss of generality assume that the first coordinate $v_1$ is maximal and $v_2,\ldots, v_{t+1}$ are all good.
Then the vectors $R_{12}v, \ldots, R_{1,t+1}v$ are all less than $v$ since each of them either has smaller
$\ell^{\infty}$-norm than $v$ (this happens if $v_1$ is the unique maximal coordinate) or has the same $\ell^{\infty}$-norm but fewer
maximal coordinates (if $v$ has more than $1$ maximal coordinate). Unlike Case~1, the vectors $\{R_{1i}v\}_{i=2}^{t+1}$ need not be distinct,
but the pairs $(R_{1i}v,R_{1i})$ are definitely distinct, so we still have $\deg_{\calX}^-(v)\geq t$.

\skv
{\it Case 3: $z(v)\geq t$}. This is the only case where we use the assumptions that $\|v\|_{\infty}\geq 2$ and $v$ is unimodular. Together 
they imply that $v$ has at least one good coordinate.  Without loss of generality assume that the first coordinate $v_1$ is good and $v_2,\ldots, v_{t+1}$ are all zero. Then the vectors $R_{21}v, \ldots, R_{t+1,1}v$ are all distinct and all less than $v$ since each of them is obtained from $v$ by replacing a zero coordinate by a good coordinate, so again $\deg_{\calX}^-(v)\geq t$.
\end{proof}

Let us now establish a simple, but very useful, criterion for a commuting map to be a reduction
with respect to the above partial order.

\begin{Definition}\rm $\empty$
\label{def:support}
\begin{itemize}
\item[(a)] Let $w\in F(\calX)$. Its support $\supp(w)$ is the set of all indices which appear in $w$ (written as a reduced word in
$\calX\sqcup \calX^{-1}$).
\item[(b)] Now let $\Omega$ be a diagram. For every edge $e\in E(\Omega)$ its support $\supp(e)$ is the support of its label $l(e)$.
\item[(c)] Given a subdiagram $\Delta$ of $\Omega$, we define $\supp(\Delta)$ to be the union of the supports of all edges of $\Delta$.
\end{itemize}
\end{Definition}

\begin{Lemma} 
\label{commuting}
Recall that $d=1$ in this part of \S~2. Let $v$ be an interior maximal vertex of a diagram $\Omega$ and $M=\|v\|_{\infty}$. Let $\calF$ be a cell
containing $v$, let $i\neq j$ be distinct indices not contained in $\supp(\calF)$, and 
let $S_{ij}$ be either $\{R_{ij}^{\pm 1}, R_{ji}^{\pm 1}\}$ or $\{L_{ij}^{\pm 1}, L_{ji}^{\pm 1}\}$. 
Then one of the commuting maps 
$C_{\calF}(x)$ with $x\in S_{ij}$ is a reduction with the exception of
the following cases:
\begin{itemize}
\item[(i)] $v_i=v_j=0$;
\item[(ii)] $|v_i|=M$ and $v_j=0$ or vice versa;
\item[(iii)] $v_i$ and $v_j$ are both good and $|v_i|=|v_j|$.
\end{itemize}
\end{Lemma}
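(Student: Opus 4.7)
The strategy is to analyze the new vertices produced by the commuting map $C_{\calF}(x)$, exhibit a reducing choice of $x\in S_{ij}$ in each non-exceptional case by examining the effect at the distinguished vertex $v$, and then propagate this single-vertex reduction to every vertex of $\calF$.

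\emph{Setup.} The hypothesis $i,j\notin\supp(\calF)$ ensures via the commutation relations \eqref{row1} (and their consequences involving Weyl edges of disjoint indices) that $x\in S_{ij}$ commutes with every edge label of $\calF$, so the commuting map $C_\calF(x)$ is well-defined. Its replacement diagram introduces precisely the new vertices $\{x\cdot u:u\in V(\calF)\}$, and since $v$ is maximal, to show $C_\calF(x)$ is a reduction it suffices to verify $x\cdot u<v$ for each such $u$. Writing $u=w\cdot v$ with $w$ a word in the labels of $\calF$, Lemma~\ref{obs:actions} gives $u_p=v_p$ for every $p\notin\supp(\calF)$; in particular $u_i=v_i$ and $u_j=v_j$. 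Because $x$ modifies a single coordinate $k\in\{i,j\}$ by adding or subtracting one of $v_i,v_j$, the vertex $x\cdot u$ agrees with $u$ on every coordinate except $k$, and $(x\cdot u)_k=(x\cdot v)_k$ is independent of $u$.

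\emph{Case analysis at $v$.} Set $M=\|v\|_\infty$ and classify each of $v_i,v_j$ as maximal (M), good (G), or zero (Z). In the three exceptional cases a direct check shows that no $x\in S_{ij}$ achieves $x\cdot v<v$: in (i) $\{Z,Z\}$ every $x$ fixes $v$; in (ii) $\{M,Z\}$ one direction fixes $v$ while the other creates a second maximal coordinate, increasing $m$; in (iii) $\{G,G\}$ with $|v_i|=|v_j|=a$, subtraction either zeros a good coordinate (decreasing $g$) or doubles it to $\pm 2a$, which either exceeds $M$, creates a maximal coordinate, or simply increases $\ell^1$. In each of the four remaining pairs of types I exhibit a reducing $x$. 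For $\{M,M\}$ choose signs so that $v_i$ is zeroed, strictly decreasing $m$. For $\{M,G\}$ with $|v_i|=M$, choose $\eps$ so that $|v_i-\eps v_j|=M-|v_j|<M$, again strictly decreasing $m$. For $\{G,G\}$ with $|v_i|>|v_j|$, subtract $v_j$ from $v_i$ with the right sign: both coordinates remain good, so $\ell^\infty$, $m$, and $g$ are unchanged while $\ell^1$ strictly decreases. For $\{G,Z\}$ with $v_i$ good and $v_j=0$, apply $R_{ji}^{\eps}$ (or $L_{ji}^\eps$) to fill $v_j$ with $\pm v_i$, leaving $\ell^\infty$ and $m$ unchanged and strictly increasing $g$.

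\emph{Propagation to $V(\calF)$ and the main obstacle.} Fix a reducing $x$ as above; I then verify $x\cdot u<v$ for every $u\in V(\calF)$ by a lexicographic comparison of $N(x\cdot u)$ and $N(v)$. Because $x\cdot u$ agrees with $v$ outside $\supp(\calF)\cup\{k\}$ and its $k$-th coordinate equals $(x\cdot v)_k$, the comparison is driven by two monotonicity inputs that follow from $u\le v$: first, $\|u\|_\infty\le M$; second, if $\|u\|_\infty=M$ then $m(u)\le m(v)$, and if in addition $m(u)=m(v)$ then $g(u)\ge g(v)$ and $\ell^1(u)\le\ell^1(v)$. I anticipate the main obstacle to be the case where $u$ has maximal coordinates inside $\supp(\calF)$: here one uses the second input to bound the number of maximal coordinates of $u$ (and hence of $x\cdot u$) within $\supp(\calF)$ by the corresponding count for $v$, so that in the $\{M,M\}$ and $\{M,G\}$ cases $m(x\cdot u)<m(v)$, while in the $\{G,G\}$ and $\{G,Z\}$ cases the comparison is settled by the very same criterion (3 or 4) that made $x\cdot v<v$, via a short chain of inequalities using $g(u)\ge g(v)$ and $\ell^1(u)\le\ell^1(v)$. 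Once these routine checks are carried out for each of the four non-exceptional type pairs, the lemma follows.
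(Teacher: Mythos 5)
Your proposal is correct and follows essentially the same route as the paper: the same case analysis on the types of $(v_i,v_j)$ (maximal/good/zero) with the same choice of reducing generator in each non-exceptional case, combined with the observation that all vertices of $\calF$ share the coordinates $i$ and $j$ because $i,j\notin\supp(\calF)$. The only (minor) difference is in the final step: the paper compares each new vertex $xw$ with the corresponding old vertex $w$ and invokes the super-Artinian property, whereas you compare $xu$ directly with the maximal vertex $v$ using the inequalities $\|u\|_\infty\le M$, $m(u)\le m(v)$, etc.; both versions of these routine checks go through.
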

\begin{proof} We will give a proof for $S_{ij}=\{R_{ij}^{\pm 1}, R_{ji}^{\pm 1}\}$; the proof in the other case is identical.
Below we will show that there exists $x\in S_{ij}$ such that $xv<v$ only using the above assumptions on $v_i$
and $v_j$ (and not the fact that $v$ is maximal).
Since $i,j\not\in \supp(\calF)$, all vertices of $\calF$ have the same $i^{\rm th}$ and $j^{\rm th}$ coordinates.
Thus, the same argument would imply that $xw<w$ for any vertex $w$ of $\calF$, and therefore $C_{\calF}(x)$ is a reduction.
\skv

As in Lemma~\ref{lem:ordermotiv}, we will give a proof for the row action \eqref{eq:actionrow}.
Suppose that none of (i)-(iii) holds.
Without loss of generality we can assume that $0\leq v_i\leq v_j$. Since (i) does not hold, we must have $v_j>0$. 
\skv
{\it Case 1: $v_j=M$.} Since (ii) does not hold, $v_i>0$ which implies that $R_{ji}v < v$.

\skv
{\it Case 2: $0<v_j<M$.} In this case $v_j$ is good. Since (iii) does not hold, $v_i< v_j$. If $v_i=0$,
then $R_{ij}v$ and $v$ have the same $\ell^{\infty}$-norm and the same number of maximal coordinates, but
$R_{ij}v$ has more good coordinates, so $R_{ij}v<v$. And if $v_i>0$, then $R_{ji}v$ and $v$ have the same 
$\ell^{\infty}$-norm, the same numbers of maximal and good coordinates, but
$R_{ij}v$ has smaller $\ell^1$-norm, so again $R_{ij}v<v$.  
\end{proof}

\paragraph{\bf Maximal, good and bad relative to a diagram.} We finish this section with a technical variation of an earlier definition
which will simplify the terminology later in the paper.

\begin{Definition}\rm 
\label{def:omegagood}
let $\Omega$ be a diagram, let $M=\|\Omega\|=\max\{\|w\|: w\in V(\Omega)\}$, and let $c\in\dbZ$ with $|c|\leq M$.
We will say that $c$ is {\it $\Omega$-maximal (resp. $\Omega$-good)} if $|c|=M$ (resp. $0<|c|<M$) and $c$ is {\it $\Omega$-bad}
if it is not $\Omega$-good.
\end{Definition}

The following observation summarizes the basic relation between the above properties with respect to a diagram and with respect to 
a particular vertex of that diagram. 

\begin{Observation}
\label{obs:relation}
Let $w$ be a vertex of a diagram $\Omega$ and $c$ a coordinate of $w$. The following hold:
\begin{itemize}
\item[(a)] If $c$ is a good coordinate of $w$, then $c$ is $\Omega$-good.
\item[(b)] If $c$ is $\Omega$-maximal, then $c$ is a maximal coordinate of $w$ and $\|w\|=\|\Omega\|$.
\item[(c)] If $\|w\|=\|\Omega\|$, then $c$ is a maximal (resp. good) coordinate of $w$ if and only if c is $\Omega$-maximal 
(resp. $\Omega$-good).
\end{itemize}
\end{Observation}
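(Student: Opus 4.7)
The plan is to verify all three parts by unpacking the definitions, using only the fact that $\|w\|_\infty \leq \|\Omega\| = M$ for every vertex $w \in V(\Omega)$.

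First I would handle (a): if $c$ is a good coordinate of $w$, then by definition $0 < |c| < \|w\|_\infty$. Since $\|w\|_\infty \leq M$, this immediately gives $0 < |c| < M$, which is the definition of $c$ being $\Omega$-good.

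Next I would do (b): if $c$ is $\Omega$-maximal, then $|c| = M$. But $c$ is a coordinate of $w$, so $|c| \leq \|w\|_\infty \leq M$. Combining the inequalities forces $\|w\|_\infty = M = \|\Omega\|$ and $|c| = \|w\|_\infty$, so $c$ is a maximal coordinate of $w$ and $\|w\| = \|\Omega\|$.

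Finally, for (c), assume $\|w\| = \|\Omega\| = M$. Then $c$ is a maximal coordinate of $w$ iff $|c| = \|w\|_\infty = M$ iff $c$ is $\Omega$-maximal; similarly, $c$ is a good coordinate of $w$ iff $0 < |c| < \|w\|_\infty = M$ iff $c$ is $\Omega$-good. There is no real obstacle here, since the statement is a direct bookkeeping consequence of the two parallel definitions; the only content is the elementary inequality $\|w\|_\infty \leq \|\Omega\|$, which is built into the definition of $\|\Omega\|$.
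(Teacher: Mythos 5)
Your proof is correct and is exactly the routine definition-unpacking the paper has in mind; indeed the paper states this Observation without proof, treating it as an immediate consequence of the inequality $\|w\|_\infty\leq\|\Omega\|$, which is precisely the one fact your argument uses.
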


The following obvious fact turns out to be very important:

\begin{Observation}
\label{obs:badsubgp}
Let $a,b\in\dbZ$, and suppose that $a$, $b$ and $a+b$ are coordinates of some vertices (possibly distinct) of some diagram $\Omega$. Then either all three of them are $\Omega$-bad or at least two of them are $\Omega$-good.
\end{Observation}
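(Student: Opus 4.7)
The plan is a short case analysis. Setting $M=\|\Omega\|$, the hypothesis that $a$, $b$, and $a+b$ are all coordinates of some vertex of $\Omega$ gives $|a|,|b|,|a+b|\le M$. By Definition~\ref{def:omegagood}, within the interval $[-M,M]$ the $\Omega$-bad values form the three-element set $\{0,M,-M\}$ and the $\Omega$-good values are exactly those in $(-M,0)\cup(0,M)$. Thus the observation is equivalent to ruling out the configuration in which exactly one of $a,b,a+b$ is $\Omega$-good.

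I would assume for contradiction that exactly one element of $\{a,b,a+b\}$ is $\Omega$-good, and split into three cases according to which one. In the case where $a$ is $\Omega$-good while $b$ and $a+b$ are $\Omega$-bad, I would write $a=(a+b)-b$ where both $a+b$ and $b$ lie in $\{0,\pm M\}$. If $b=0$ then $a+b=a$ is good, a contradiction; otherwise $|b|=M$, and then $a=(a+b)-b$ forces $a\in\{-2M,-M,0,M,2M\}$, so combined with $|a|\le M$ we get $a\in\{0,\pm M\}$, contradicting that $a$ is $\Omega$-good. The case where $b$ is the unique good coordinate is identical by the symmetry $a\leftrightarrow b$.

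In the remaining case, $a+b$ is $\Omega$-good while $a,b\in\{0,\pm M\}$. If either $a$ or $b$ is $0$ then $a+b$ equals the other one and is bad, a contradiction; otherwise $a,b\in\{\pm M\}$, so $a+b\in\{-2M,0,2M\}$, and since $|a+b|\le M$ either $M=0$ (making good coordinates impossible) or $a+b=0$, both contradictions.

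There is no real obstacle here: the only mild point is being careful that the derivation $a=(a+b)-b$ does not assume $-b$ is itself a coordinate of some vertex of $\Omega$ — we only use the arithmetic identity in $\dbZ$ together with the bound $|a|\le M$ coming from $a$ being such a coordinate. The argument is purely an additive combinatorial statement about the set $\{0,\pm M\}\subset\dbZ$, and after the three cases are dispatched the observation follows.
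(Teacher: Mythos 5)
Your proof is correct and rests on the same elementary fact as the paper's: for $|c|\le M=\|\Omega\|$, the $\Omega$-bad values are exactly $\{0,\pm M\}$, i.e.\ the multiples of $M$, so this set is closed under the relation $a+b=c$. The paper compresses your three-case analysis into the one-line remark that $c$ is $\Omega$-bad iff $M$ divides $c$, from which the closure (and hence the dichotomy) is immediate, but the content is identical.
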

\begin{proof} This holds since if $M=\|\Omega\|$ and $c\in\dbZ$ with $|c|\leq M$, then $c$ is $\Omega$-bad of and only if
$M$ divides $c$.
\end{proof}

\section{Finite presentability of $\IAR_{n,1}$}

\subsection{Preliminaries}
In this section we will prove finite presentability of the row stabilizer group $\IAR_{n,1}$ for $n\geq 26$.

We will fix the following notations throughout this section: $G=\widetilde{\SAut(F_n)}$, $H=\widetilde{\IAR_{n,1}}$ and 
$Z=Um_{1\times n}(\dbZ)$ with the row $G$-action \eqref{eq:actionrow}. Recall that $Z\cong G/H$ as a $G$-set
and we identify $Z$ with $G/H$ via the isomorphism from Lemma~\ref{obs:isomorphism}. 
We also let $(\calX, \calR)$ be the optimized Gersten's presentation of $G$ (see \S~2.3).

The majority of this section will be devoted to proving the following result:

\begin{Proposition} 
\label{rowstab1}
Assume that $n\geq 6$. Let $\Omega$ be a $Z$-labeled diagram over $(\calX,\calR)$, let $v\in V(\Omega)$ be an interior maximal vertex and $\calF$
a cell of $\Omega$ containing $v$. Assume in addition that either $\|v\|_{\infty}>1$ or $\|v\|_{\infty}=1$ and $\|v\|_{1}\geq 6$. 
Then $\Omega$ admits a single-cell $\calF$-reduction. 
\end{Proposition}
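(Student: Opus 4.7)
I would proceed by case analysis, first attempting commuting single-cell maps and then resorting to conjugating ones in degenerate situations. Every relator in the optimized Gersten presentation $\calR$ (see Theorem~\ref{Gersten_pres}) uses at most four distinct indices, so $|\supp(\calF)|\le 4$, and since $n\ge 6$ the complement $I=\{1,\dots,n\}\setminus\supp(\calF)$ satisfies $|I|\ge 2$.

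First I would apply Lemma~\ref{commuting}: a commuting $\calF$-map $C_\calF(x)$ with $x\in S_{ij}$, for $i\ne j$ in $I$, is a reduction unless the pair $(v_i,v_j)$ falls into one of the exceptional patterns (i)--(iii). Sorting $\{v_i:i\in I\}$ into zero/good/maximal coordinate-sets $I_0,I_g,I_M$ and checking all pair types (exploiting Observation~\ref{obs:relation}), one sees that a usable pair exists except in three degenerate configurations: (A1)~$\supp(v)\subseteq\supp(\calF)$; (A2)~exactly one coordinate of $v$ in $I$ is maximal and all other $I$-coordinates vanish; (A3)~every $v_i$ with $i\in I$ is good and shares a single absolute value $c<M$. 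In every other case we are done.

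Next I would dispose of the degenerate configurations using conjugating single-cell maps. In (A1) the hypothesis $\|v\|_1\ge 6$ when $\|v\|_\infty=1$, together with $\supp(v)\subseteq\supp(\calF)$ and $|\supp(\calF)|\le 4$, forces $M=\|v\|_\infty\ge 2$, so some $k\in\supp(\calF)$ has $|v_k|\ge 2$. Using Observation~\ref{rel:conj} together with the commutation relations in~\eqref{row1}, I would construct a conjugating $\calF$-map with generator $t\in\{R_{ki}^{\pm 1},L_{ki}^{\pm 1}\}$ for a carefully chosen $i\in I$, so that the required conjugation relators $t^{-1}et=w_{t,e}$ lie in $\calR$ for every edge label $e$ of $\calF$. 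Choosing the sign $\eps$ so that the action of $t^\eps$ on $v$ subtracts $v_k$ from the zero $i$-th coordinate shrinks $|v_k|$ without introducing any larger coordinate, so the new interior vertex and every new side vertex lies strictly below $v$ in the lexicographic norm $(\|\cdot\|_\infty,m,n-g,\|\cdot\|_1)$. Configurations (A2) and (A3) are handled by the same template with $t$ and $\eps$ adapted to the position of the large or common-value coordinates.

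The main obstacle is executing the third step uniformly for every boundary relator in Theorem~\ref{Gersten_pres}, especially cells whose boundary mixes Nielsen and Weyl letters (rows \eqref{row4}--\eqref{row6}); there the set of $t\in\calX$ for which the required conjugation relators $t^{-1}et=w_{t,e}$ all belong to $\calR$ is most restricted, and $t$ typically has to be chosen from the same ``row family'' ($R$'s) or ``column family'' ($L$'s) as most edges of $\calF$. Everything else reduces to straightforward lexicographic bookkeeping, using Observation~\ref{obs:relation} to transport the $\Omega$-maximal/good/bad dichotomy across the side edges of the replacement diagram.
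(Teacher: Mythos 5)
Your first step is sound and matches the paper: since $|\supp(\calF)|\le 4$ and $n\ge 6$, Lemma~\ref{commuting} handles every configuration except the ones you isolate, and your (A1)--(A3) correctly enumerate when no pair $i,j\notin\supp(\calF)$ admits a commuting reduction (these are exactly the paper's ``zero coordinate'' and ``good coordinate'' cases, the maximal coordinate case being the one where commuting maps succeed). The gap is in your third step. The degenerate configurations cannot be dispatched by a uniform conjugating-map template with ``straightforward lexicographic bookkeeping'': this is where essentially the entire content of the proposition lives, and the paper spends Sections 3.2--3.3 constructing the replacement diagram separately for each of the fourteen $\Sigma_n(1)$-orbits of relators containing index $1$ (Table~1 and Figures~\ref{1GA}--\ref{12ZA}), often with further sub-cases depending on \emph{which} vertex of $\calF$ is maximal (types 8--10, 12) and on arithmetic conditions on the entries (e.g.\ whether $c$ is $\Omega$-bad in type 1ZA). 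Several of these maps are not conjugating maps in your sense at all: for type 5 in the zero coordinate case the side edges are $L_{21},R_{21},w_{12}$, all supported inside $\supp(\calF)$.

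Concretely, here is why the uniform template fails. Take your case (A1) with $t=R_{ik}$ (note $R_{ki}$ as written acts trivially on $v$ since $v_i=0$; and the map does not ``shrink $|v_k|$'' --- it replaces the zero entry $v_i$ by $\pm v_k$). For each vertex $w$ of $\calF$ the replacement diagram contains $w'=t^{\pm1}.w$ with $w'_i=\mp w_k$, and if $|w_k|=\|\Omega\|_{\infty}$ then $w'$ has strictly more maximal coordinates than $w$, so $w'$ may well exceed $v$ in the order $N(\cdot)=(\|\cdot\|_\infty,m,n-g,\|\cdot\|_1)$; ruling this out requires exactly the maximality arguments the paper runs for each type (e.g.\ the elimination of sign patterns (i)--(ii) in type 1ZA via Observation~\ref{obs:badsubgp}, or Claim~\ref{claim:nonneg} and the three subcases in type 1GA). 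Moreover, when $t$ does not commute with the edge labels of $\calF$, the inner boundary of the annular diagram is a product $\prod w_{t,e}$ that must be filled by a further disk diagram whose interior vertices also have to be dominated by $v$; the paper's figures choose these fillings (the ``orange'' edges) by hand, and for the Weyl-generator types the correct choice of $t$ and of the filling changes with the position of the maximal vertex. The obstacle you name in your last paragraph and then set aside is the proof; as written, the proposal does not establish the proposition.
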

We will prove Proposition~\ref{rowstab1} by a case-by-case argument depending on the type of $\calF$ (as defined below). Ultimately we will need not
just this result, but some additional information about the reductions constructed in the proof.

We start with some simple preliminary observations.

\skv
\paragraph{\bf Action on diagrams.} Recall that $\Sigma_n$ denotes the signed symmetric group of degree $n$. 
As we explained in \S~2.3, $\Sigma_n$ acts on $F(\calX)$ preserving $\calR$ as a set of geometric relations and hence also acts on $G$.
Given $\sigma\in \Sigma_n$, let $M_{\sigma}\in \GL_n(\dbZ)$ be the corresponding signed permutation matrix.
Recall that for $g\in G$ we denote by $[g]$ its canonical image in $\GL_n(\dbZ)$. It is straightforward to check that
$$[\sigma(g)]=M_{\sigma}[g]M_{\sigma}^{-1}\mbox{ for all } g\in G \mbox{ and }\sigma\in \Sigma_n.$$
There is also a natural action of $\Sigma_n$ on $Z=Um_{1\times n}(\dbZ)$ given by $\sigma(z)=z M_{\sigma}^{-1}$ for all
$z\in Z$ and $\sigma\in \Sigma_n$.

\begin{Observation}
\label{obs:diagramaction}
For all $g\in G$ and $z\in Z$ we have $\sigma(g.z)=\sigma(g).\sigma(z)$.
\end{Observation}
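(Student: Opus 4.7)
The proof is a direct unwinding of the definitions; I do not see any genuine obstacle here. The plan is to combine the three pieces of data set up in the preceding paragraphs: the row action $g.z = z[g]^{-1}$, the conjugation formula $[\sigma(g)] = M_\sigma [g] M_\sigma^{-1}$ describing how the $\Sigma_n$-action on $G$ projects to $\GL_n(\dbZ)$, and the definition $\sigma(z) = z M_\sigma^{-1}$ of the $\Sigma_n$-action on $Z$.

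First I would compute the left-hand side directly:
\[
\sigma(g.z) \;=\; \sigma(z[g]^{-1}) \;=\; z[g]^{-1} M_\sigma^{-1}.
\]
Then I would compute the right-hand side, using the conjugation formula and simplifying:
\[
\sigma(g).\sigma(z) \;=\; \sigma(z)\,[\sigma(g)]^{-1} \;=\; (z M_\sigma^{-1})\bigl(M_\sigma [g] M_\sigma^{-1}\bigr)^{-1} \;=\; z M_\sigma^{-1} M_\sigma [g]^{-1} M_\sigma^{-1} \;=\; z[g]^{-1} M_\sigma^{-1}.
\]
Both expressions agree, proving the claim.

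The only subtlety worth flagging is the interplay of left/right conventions — the $G$-action on $Z$ is realised by right matrix multiplication, while the $\Sigma_n$-action on $G$ is by conjugation — but these are arranged precisely so that the interior $M_\sigma^{-1} M_\sigma$ cancels, leaving the outer $M_\sigma^{-1}$ as the sole $\sigma$-twist on either side. Conceptually this is just the statement that $\Sigma_n$ acts compatibly on the pair $(G, Z)$, so the two actions assemble into a single action of $G \rtimes \Sigma_n$ on $Z$; the computation above is the explicit verification of this compatibility.
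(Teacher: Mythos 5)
Your computation is exactly the paper's proof: both sides are expanded using $g.z=z[g]^{-1}$, $\sigma(z)=zM_\sigma^{-1}$ and $[\sigma(g)]=M_\sigma[g]M_\sigma^{-1}$, and the inner $M_\sigma^{-1}M_\sigma$ cancels. Correct, and essentially identical to the argument in the paper.
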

\begin{proof}
Recall that $g.z=z[g]^{-1}$, so $\sigma(g.z)=z[g]^{-1} M_{\sigma}^{-1}$.
On the other hand,
$$\sigma(g).\sigma(z)=\sigma(z)[\sigma(g)]^{-1}=zM_{\sigma}^{-1}(M_{\sigma}[g]M_{\sigma}^{-1})^{-1}=
zM_{\sigma}^{-1}(M_{\sigma}[g]^{-1}M_{\sigma}^{-1})=z[g]^{-1} M_{\sigma}^{-1}.\qedhere$$ 
\end{proof}

Observation~\ref{obs:diagramaction} implies that there is an induced action of $\Sigma_n$ on the set of $Z$-labeled diagrams over $(\calX,\calR)$.

When proving Proposition~\ref{rowstab1} we can replace the given diagram $\Omega$ by $\sigma\Omega$ for any 
$\sigma\in \Sigma_n$. Indeed, suppose that the hypotheses of Proposition~\ref{rowstab1} hold for some diagram $\Omega$ and
its cell $\calF$ and we constructed a single-cell reduction $\phi:(\sigma\Omega,\sigma\calF)\to (\Omega',\Delta')$ 
of the diagram $\sigma\Omega$ at its cell $\sigma\calF$.
We can consider the induced diagram map $\sigma^{-1}\phi:(\Omega,\calF)\to (\sigma^{-1}\Omega',\sigma^{-1}\Delta')$.
The map $\sigma^{-1}\phi$ is also a reduction since our partial order on $Z$ is $\Sigma_n$-invariant (that is, $z<z'$ 
implies $\sigma(z)<\sigma(z')$ for all $\sigma\in \Sigma_n$).

\skv
In addition to the action of $\Sigma_n$, we also have a simple action of $\dbZ/2\dbZ$ on the set of $Z$-labeled diagrams: the non-trivial
element of $\dbZ/2\dbZ$ simply multiplies all the vertex labels by $-1$. Note that this action commutes with the $\Sigma_n$-action.

\skv

Recall that we defined the notion of support for words in $F(\calX)$ and $Z$-labeled diagrams over $(\calX,\calR)$ at the end of \S~2 (see Definition~\ref{def:support}). The following observation records some of its basic properties:

\begin{Observation}
\label{obs:support} Let $\Omega$ be any $Z$-labeled diagram over $(\calX,\calR)$. The following hold:
\begin{itemize}
\item[(i)] $|\supp(r)|\leq 4$ for any $r\in\calR$ and hence $|\supp(\calF)|\leq 4$ for any cell $\calF$ of $\Omega$.
\item[(ii)] If $\Delta$ is a gallery of length $k$ in $\Omega$, then $\supp(\Delta)\leq 2k+2$.
\item[(iii)] Assume that $n\geq 6$, and let $\Sigma_{n}(1)$ be the subgroup of $\Sigma_n$ consisting of permutations which stabilize the set $\{x_1,x_1^{-1}\}$. Then any orbit of the action of $\Sigma_n(1)$ on $\calR$ contains a relator $r$ such that $4\not\in \supp(r)$.
\end{itemize}
\end{Observation}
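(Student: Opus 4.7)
For part (i), the plan is a case-by-case inspection of the relations \eqref{row1}--\eqref{row8} in the optimized Gersten's presentation. The commutator relations $[T_{ij},T_{kl}]=1$ involve the four indices $i,j,k,l$, so they saturate the bound; all other relations visibly involve at most three indices (those of the form $[T_{ij},T_{kj}]=1$, the Steinberg relations \eqref{row2}--\eqref{row3}, and \eqref{row6}) or two indices (the Weyl relations \eqref{row4}, \eqref{row5}). The bound on $\supp(\calF)$ is then immediate, since the boundary label of any cell is a cyclic shift of some $r\in\calR^{\pm 1}$.

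For part (ii), I would argue by induction on $k$, using the fact that every edge of $\Omega$ is labeled by a single element of $\calX\cup\calX^{-1}$, and each such letter (either a Nielsen generator $R_{ij}^{\pm 1}, L_{ij}^{\pm 1}$ or a Weyl generator $w_{ij}^{\pm 1}$) has support of size at most $2$. The base case $k=1$ is part (i). For the inductive step, order the cells of $\Delta$ as $\calF_1,\ldots,\calF_{k+1}$ as in Definition~\ref{def:gallery}, so that $\calF_{k+1}$ shares an edge $e$ with $\calF_{i}$ for some $i\leq k$. Then $\supp(e)\subseteq\supp(\calF_1\cup\cdots\cup\calF_k)$, so the at most four indices of $\supp(\calF_{k+1})$ contribute at most $4-|\supp(e)|\leq 2$ new indices, giving $\supp(\Delta)\leq (2k+2)+2=2(k+1)+2$.

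For part (iii), note that $\Sigma_n(1)$ acts on $\{2,\ldots,n\}$ as the full signed symmetric group on these $n-1\geq 5$ letters, so any injection of a set $S\subseteq\{2,\ldots,n\}$ into $\{2,\ldots,n\}$ can be realized by some element of $\Sigma_n(1)$. Given $r\in\calR$, part (i) gives $|\supp(r)|\leq 4$. If $1\notin\supp(r)$, then $\supp(r)\subseteq\{2,\ldots,n\}$ has at most $4$ elements, and since $|\{2,\ldots,n\}\setminus\{4\}|=n-2\geq 4$, we can choose $\sigma\in\Sigma_n(1)$ with $\sigma(\supp(r))\subseteq\{2,\ldots,n\}\setminus\{4\}$. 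If $1\in\supp(r)$, then $\supp(r)\setminus\{1\}$ has at most $3$ elements, which we can likewise inject into $\{2,\ldots,n\}\setminus\{4\}$ via some $\sigma\in\Sigma_n(1)$, and $\sigma$ fixes $1$ setwise so the image $\sigma(\supp(r))$ avoids $4$ as well. In either case, $4\notin\supp(\sigma(r))$, as required.

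None of these verifications looks genuinely difficult; the only mild subtlety is bookkeeping in part (i) to confirm that every relation in the optimized presentation really falls into one of the patterns above once we have made the substitution $w_{ij}\mapsto w_{ji}^{-1}$ for $i>j$. The hypothesis $n\geq 6$ in (iii) is used precisely to guarantee the counting inequality $n-2\geq 4$.
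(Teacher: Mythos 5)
Your proposal is correct and follows essentially the same route as the paper: inspection of the relator list for (i), the observation that a shared edge forces at least two common support indices for (ii), and a permutation in $\Sigma_n(1)$ moving the support off of index $4$ for (iii). The only cosmetic difference is in (iii), where the paper gets away with a single transposition $(i,4)$ for some $i\in\{2,3,5,6\}\setminus\supp(r_0)$ rather than re-embedding the whole support; both use $n\geq 6$ in the same counting role.
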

\begin{proof} (i) follows immediately from the form of the defining relations. To prove (ii), let $\calF_1,\ldots, \calF_k$ be the cells
of $\Delta$ ordered as in the definition of a gallery (Definition~\ref{def:gallery}). Since $\calF_i$ and $\calF_{i+1}$ share at least one edge, we have
$|\supp(\calF_i) \cap \supp(\calF_{i+1})|\geq 2$. Thus, each cell starting with $\calF_2$ contributes at most 2 new indices to $\supp(\Delta)$
and hence $|\supp(\Delta)|\leq 4+2(k-1)=2k+2$.
\skv
(iii) Take any $r_0\in R$ in the given orbit, and assume that $4\in\supp(r_0)$. Since $n\geq 6$ and $|\supp(r_0)|\leq 4$, there exists
$i\in\{2,3,5,6\}$ such that $i\not\in\supp(r_0)$. But then the transposition $(i,4)$ lies in $\Sigma_n(1)$ and
$r=(i,4)r_0$ does not contain $4$ in its support.
\end{proof}

Let us now fix $\Omega$, $v$ and $\calF$ as in Proposition~\ref{rowstab1}, and write $v=(v_1,\ldots, v_n)$. 
Since $|\supp(\calF)|\leq 4$ and $n\geq 6$, we have $n\geq |\supp(\calF)|+2$, whence at least one of the following conditions holds:
\begin{itemize}
\item[(1)] (maximal coordinate case): there exist $i,j\not\in\supp(\calF)$ such that $v_i$ and $v_j$ are both maximal;
\item[(2)] (good coordinate case): there exists $i\not\in\supp(\calF)$ such that $v_i$ is good;
\item[(3)] (zero coordinate case): there exists $i\not\in\supp(\calF)$ such that $v_i$ is zero.
\end{itemize}

In the maximal coordinate case the proof of Proposition~\ref{rowstab1} is completely straightforward -- indeed, in this case
one of the four maps
$C_{\calF}(R_{ij}^{\pm 1})$, $C_{\calF}(R_{ij}^{\pm 1})$ (with $i,j$ as in (1) above) is a reduction by Lemma~\ref{commuting}.

In the good and zero coordinate cases we will use the action of $\Sigma_n\times \dbZ/2\dbZ$ to make some additional assumptions
which will greatly simplify the notations in the proofs. First, let us choose a set of representatives
$\calR_{rep}$ of the orbits of $\Sigma_n(1)$ on $\calR$ (considered as a set of geometric relators) with the property
that $4\not\in \supp(r)$ for all $r\in \calR_{rep}$ (this is possible by Observation~\ref{obs:support}(iii)).

\begin{Lemma}
\label{obs:good} In the good coordinate case, it suffices to prove Proposition~\ref{rowstab1} under the following additional assumptions:
\begin{itemize}
\item[(i)] $v_1$ is maximal and positive;
\item[(ii)] $v_4$ is good and positive;
\item[(iii)] $l(\partial\calF)\in \calR_{rep}$; in particular $4\not\in\supp(l(\partial\calF))$.
\end{itemize}
\end{Lemma}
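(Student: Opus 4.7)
The plan is to exhibit an element $\sigma \in \Sigma_n \times \dbZ/2\dbZ$ such that the transformed triple $(\sigma\Omega, \sigma v, \sigma\calF)$ satisfies conditions (i)--(iii); this reduction is valid by a standard transfer argument. Indeed, the action of $\Sigma_n \times \dbZ/2\dbZ$ on $Z$-labeled diagrams (Observation~\ref{obs:diagramaction}) preserves the hypotheses of Proposition~\ref{rowstab1}: interiority and maximality of $v$, the containment $v \in \calF$, the quantities $\|v\|_\infty$ and $\|v\|_1$, and the partial order on $Z$ are all equivariant. Moreover, any single-cell $\sigma\calF$-reduction of $\sigma\Omega$ pulls back via $\sigma^{-1}$ to a single-cell $\calF$-reduction of $\Omega$. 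Hence it suffices to produce a single $\sigma$ with the desired properties.

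I will build $\sigma$ in two stages. Note that the good coordinate case forces $\|v\|_\infty \geq 2$ (since an integer coordinate $c$ with $0 < |c| < \|v\|_\infty$ is impossible when $\|v\|_\infty = 1$), so both an index $j_0$ with $|v_{j_0}| = \|v\|_\infty$ and (by hypothesis) an index $i_0 \notin \supp(\calF)$ with $v_{i_0}$ good are available. Stage one: choose $\sigma_1 \in \Sigma_n(1)$ with $\sigma_1(l(\partial\calF)) \in \calR_{rep}$, which exists by the definition of $\calR_{rep}$. This achieves (iii) for the transformed diagram and yields $4 \notin \supp(\sigma_1\calF)$, while the good-coordinate hypothesis carries over equivariantly: the image of $i_0$ under $\sigma_1$ still lies outside $\supp(\sigma_1\calF)$ and still has a good (signed) coordinate in $\sigma_1 v$. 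Stage two: apply a further signed permutation $\sigma_2$ that fixes $\supp(\sigma_1\calF)$ pointwise with positive signs (so $\sigma_2$ preserves $l(\partial \sigma_1\calF)$ and keeps (iii)), sending the image of $i_0$ to index $4$ and a suitably chosen maximal-coordinate index to $1$, with signs adjusted (using the $\dbZ/2\dbZ$-factor if necessary) to arrange (i) and (ii).

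The main obstacle lies in stage two, where compatibility between the required signed permutation $\sigma_2$ and the constraint that $\sigma_2$ fixes $\supp(\sigma_1\calF)$ pointwise must be verified. Routing the good coordinate to position $4$ is unproblematic, since both its current index and $4$ lie outside $\supp(\sigma_1\calF)$. Routing a maximal coordinate to position $1$ is delicate if every maximal index lies inside $\supp(\sigma_1\calF)$ or if $1 \in \supp(\sigma_1\calF)$. However, since $n \geq 6$ and $|\supp(\sigma_1\calF)| \leq 4$ by Observation~\ref{obs:support}(i), at least two indices are free outside $\supp(\sigma_1\calF)$; a short case analysis — splitting on whether $1 \in \supp(\sigma_1\calF)$, on the distribution of maximal-coordinate indices, and (when $j_0 \in \supp(\sigma_1\calF)$) choosing $\sigma_1$ within its $\Sigma_n(1)$-coset to place $j_0$ at index $1$ already during stage one — shows that (i), (ii) and (iii) can be achieved simultaneously.
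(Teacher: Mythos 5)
Your overall transfer argument (first paragraph) and the treatment of condition (ii) and the signs are fine, but there is a genuine gap in Stage two, precisely at the point you flag as ``delicate.'' The order of your two stages is the problem. After Stage one you insist that $\sigma_2$ fix $\supp(\sigma_1\calF)$ pointwise, so if $1\in\supp(\sigma_1\calF)$ and $(\sigma_1 v)_1$ is not maximal, or if every maximal index of $\sigma_1 v$ lies in $\supp(\sigma_1\calF)$, then no admissible $\sigma_2$ can make the first coordinate maximal (to make position $1$ maximal you need $\sigma_2^{-1}(1)$ to be a maximal index, and $\sigma_2$ fixes all such indices as well as every index of the support). Your proposed escape --- ``choosing $\sigma_1$ within its $\Sigma_n(1)$-coset to place $j_0$ at index $1$'' --- is impossible: every element of $\Sigma_n(1)$, and hence every element of the coset in question (which is contained in $\Sigma_n(1)$), stabilizes $\{x_1,x_1^{-1}\}$ and therefore fixes the index $1$; it cannot send $j_0\neq 1$ to $1$. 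A concrete failure: $n=6$, $\supp(\calF)=\{1,2,3\}$, $v=(0,5,1,0,2,0)$; any $\sigma_1\in\Sigma_n(1)$ leaves the unique maximal coordinate at an index in $\{2,3\}\subseteq\supp(\sigma_1\calF)$ and leaves $v_1=0$, after which your $\sigma_2$ is powerless.

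The correct fix is to reverse the stages, which is what the paper does: first apply an \emph{arbitrary} signed permutation to move a maximal coordinate to index $1$; only then apply an element of $\Sigma_n(1)$ to bring the boundary relator into $\calR_{rep}$. Since $\Sigma_n(1)$ fixes index $1$, this second step preserves the maximality of $v_1$ (up to sign), whereas it can never create it. After that, the good coordinate is routed to index $4$ exactly as you do (here one uses that the good index is $\neq 1$ because $v_1$ is maximal, and that $4\notin\supp(r)$ for $r\in\calR_{rep}$), and the signs are fixed by the global $-1$ followed by $x_4\mapsto x_4^{-1}$.
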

\begin{proof} Let $r=l(\partial\calF)$. We will show that conditions (i)-(iii) can be achieved after acting by a suitable element of 
$\Sigma_n\times \dbZ/2\dbZ$. To simplify the notations we will change the definition of $r$ throughout the proof as we act by different
elements.
\skv

{\it Step 1:} We first permute the coordinates so that $v_1$ is maximal. 
\skv

{\it Step 2:} Next we act by some $\sigma\in\Sigma_n(1)$ so that 
$l(\partial\calF)\in \calR_{rep}$. The definition of $\Sigma_n(1)$ ensures that $v_1$ will remain maximal (although it may change its sign).
\skv

{\it Step 3:} Next we ensure that $v_4$ is good. By definition of $\calR_{rep}$, after Step~2 we have $4\not\in \supp(r)$.
Since we are in the good coordinate case, we also know that $v_i$ is good for some $i\not\in\supp(r)$, and moreover $i\neq 1$
since $v_1$ is maximal. If $i=4$, we are done (with Step~3). If $i\neq 4$, we act by the transposition $(i,4)$; this makes
$v_4$ good without affecting the conclusions of Step~1 and 2 since $i\neq 1$ and $i,4\not\in\supp(r)$.
\skv
{\it Step 4:} Now we ensure that $v_1>0$.
Since $v_1$ is maximal, it is nonzero. If $v_1>0$, we are done; otherwise we act by the non-trivial element of $\dbZ/2\dbZ$
(multiplying all coordinates of $v$ by $-1$). This operation does not change any relators, so the conclusion of Step~2 still holds,
and clearly the conclusions of Steps~1~and~3 are not affected.

\skv
{\it Step 5:} Finally, we ensure that $v_4>0$. Since $v_4$ is good, it is nonzero. If $v_4>0$, we are done; otherwise
we act by the signed permutation $x_4\mapsto x_4^{-1}$. Since $4\not\in\supp(r)$, this does not affect the conclusions of Steps~1-4.
\end{proof}

The following result is an analogue of Lemma~\ref{obs:good} in the zero coordinate case. 

\begin{Lemma}
\label{obs:zero} In the zero coordinate case, it suffices to prove Proposition~\ref{rowstab1} under the following additional assumptions:
\begin{itemize}
\item[(i)] $v_1$ is good and positive;
\item[(ii)] $v_4$ is zero;
\item[(iii)] $l(\partial\calF)\in \calR_{rep}$; in particular $4\not\in\supp(l(\partial\calF))$.
\end{itemize}
\end{Lemma}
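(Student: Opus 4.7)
The plan is to mimic the five-step argument used for Lemma~\ref{obs:good}, adjusted so that we end with $v_4 = 0$ rather than $v_4$ good. As before, we exploit the action of $\Sigma_n \times \dbZ/2\dbZ$ on the set of $Z$-labeled diagrams over $(\calX,\calR)$, which preserves the partial order on $Z$ and the set $\calR_{rep}$ of relator representatives; each step applies an element of this group that does not disturb conditions already arranged.

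Before the five steps, I would first handle a degenerate case: we may assume that $v$ has at least one good coordinate. If none exists, then every coordinate of $v$ is either $0$ or $\pm M$ where $M = \|v\|_{\infty}$, and unimodularity forces $M = 1$. The hypothesis of Proposition~\ref{rowstab1} then gives $\|v\|_{1} \geq 6$, so $v$ has at least six maximal coordinates; since $|\supp(\calF)| \leq 4$ and $n \geq 6$, at least two of these lie outside $\supp(\calF)$, putting us in the maximal coordinate case (already disposed of by Lemma~\ref{commuting}).

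Assuming now that $v$ has a good coordinate, the steps are: (1) permute coordinates so that $v_1$ is good; (2) apply some $\sigma \in \Sigma_n(1)$ to arrange $l(\partial\calF) \in \calR_{rep}$ — since $\Sigma_n(1)$ fixes the first coordinate position, $v_1$ remains good (its sign might flip, which is irrelevant at this point); (3) using the zero coordinate hypothesis, pick $i \notin \supp(l(\partial\calF))$ with $v_i = 0$, noting that $i \neq 1$ since $v_1$ is nonzero, and apply the transposition $(i,4)$ — this lies in $\Sigma_n(1)$ (as $i,4 \neq 1$) and fixes the geometric relator $l(\partial\calF)$ (as $i,4 \notin \supp(l(\partial\calF))$), so Steps~1 and 2 survive and now $v_4 = 0$; (4) if $v_1 < 0$, apply the nontrivial element of $\dbZ/2\dbZ$, which negates every coordinate without altering any relator and without changing any coordinate's type, thus preserving Steps~1--3 while making $v_1 > 0$. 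No analog of Step~5 from Lemma~\ref{obs:good} is needed, since $v_4 = 0$ has no sign to correct.

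The only step that requires genuine thought is the preliminary reduction to the case when $v$ has a good coordinate; the remainder is a straightforward translation of Lemma~\ref{obs:good}'s bookkeeping, checking at each stage that the chosen symmetry element lies in $\Sigma_n(1)$ (or else is a global sign flip) and preserves the indices and sign requirements imposed by the earlier steps.
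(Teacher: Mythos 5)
Your proof is correct and follows essentially the same route as the paper: the paper also reduces to the case where $v$ has a good coordinate (disposing of the remaining case, which forces $\|v\|_{\infty}=1$ and hence at least six maximal coordinates via $\|v\|_1\geq 6$, by the maximal coordinate case) and then runs the same $\Sigma_n\times\dbZ/2\dbZ$ bookkeeping as in Lemma~\ref{obs:good}. The paper merely states this more tersely ("argue as in the proof of Lemma~\ref{obs:good}"), whereas you spell out the steps; the content is identical.
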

\begin{proof} If $\|v\|>1$, then $v$ must have at least one good coordinate, and we can argue as in the proof of Lemma~\ref{obs:good}.
If $\|v\|=1$, then by the hypotheses of Proposition~\ref{rowstab1} there exist at least $2$ indices $i,j\not\in\supp(\calF)$
such that $|v_i|=|v_j|=1$, so $v_i$ and $v_j$ are both maximal, and we are done by the maximal coordinate case.
\end{proof}

Recall that $\Sigma_{n}(1)$ is the stabilizer of $\{x_1,x_1^{-1}\}$ in $\Sigma_n$. 
Let $\calR_1$ be the set of relators in $\calR$ whose support includes index $1$.
The action of $\Sigma_{n}(1)$ on $\calR$ preserves $\calR_1$, and it is straightforward to check that the action of $\Sigma_{n}(1)$ 
on $\calR_1$  has 14 orbits whose representatives are listed in Table~1 below. We will say that two cells in the diagram have the same type if their boundary relators lie in the same orbit. The types will be denoted by integers $1$ through $12$ as well as symbols $6'$ and $7'$.
Single-cell reductions for type $6'$ (resp. type $7'$) will be completely analogous to those for type $6$ (resp. $7$)
and hence will not be discussed explicitly. 

We will also keep track of the orbits under the smaller subgroup $\Sigma_{n}^+(1)$ which consists of signed permutations fixing both $x_1$ and $x_1^{-1}$. Since $[\Sigma_{n}(1):\Sigma_{n}^+(1)]=2$, each $\Sigma_{n}(1)$-orbit on $\calR_1$ either remains a single orbit
or splits into two orbits under $\Sigma_{n}^+(1)$. In the latter case we will distinguish between the left subtype and right subtype denoted by $iL$ and $iR$ where $i$ is the underlying type. 
\skv

Table~1 below is structured as follows. Each row of the table corresponds to a cell type or subtype, which is listed in the first column,
and the second column contains a (chosen) relator for that type/subtype. Columns 3 and 4 contain information about the single-cell reduction map for each type/subtype in the good coordinate case: column 3 lists the possible labels of side edges (this information is only provided for cell types which will appear in double-cell and triple-cell reductions) and column 4 lists the possible types of side cells (we only care about side cells containing $v$, the chosen maximal vertex, but in most cases $v$ will not be specified on the diagram, so all side cells need to be considered). Finally, columns 5 and 6 contain the analogous information for the zero coordinate case.

 \vskip .3cm
\hskip -.5cm
\begin{table}
\label{table:IAR}
\caption{Single-cell maps for the group $\IAR_{n,1}$.}
\begin{tabular}{|c|c|c|c|c|c|}
\hline
&&\multicolumn{2}{c|}{good coordinate case}&\multicolumn{2}{c|}{zero coordinate case}\\[4pt]
\hline
type & relation & side edges & side cells& side edges & side cells\\[2pt]
\hline 
1R&$[R_{23},R_{12}]=R_{13}$&$L_{14}$&$5,6L$&&2, 7 \\[2pt]
\hline 
1L&$[L_{23},L_{12}]=L_{13}$&$R_{14}$&$5,6R$&&2, 7 \\[2pt]
\hline
2R&$[R_{13},R_{21}]=R_{23}$&$L_{14}$&$2L,5$&$R_{41}$ or $R_{43}$&2R, 4, 7 \\[2pt]
\hline
2L&$[R_{13},L_{21}]=R_{23}$&$R_{14}$&$2R,5$&$R_{41}$ or $R_{43}$&2L, 4, 7 \\[2pt]
\hline
3&$[R_{21},R_{32}]=R_{31}$& &$2,6$&&4, 7 \\[2pt]
\hline
4&$[R_{21},R_{31}]=1$& &$2$ &$R_{41}$& 4 \\[2pt]
\hline
5&$[R_{12},L_{13}]=1$&$L_{14}$& $5,1L$&&$2, 6, 6', 9$ \\[2pt]
\hline
6R&$[R_{12},R_{35}]=1$&$L_{14}$&$5,6L$&&2R, 7 \\[2pt]
\hline
6L&$[L_{12},R_{35}]=1$&$R_{14}$&$5,6R$&&2L, 7 \\[2pt]
\hline
$6'$R&$[R_{12},R_{32}]=1$&$L_{14}$&$5,6L$&&2R, 7 \\[2pt]
\hline
$6'$L&$[L_{12},R_{32}]=1$&$R_{14}$&$5,6R$&&2L, 7 \\[2pt]
\hline
7&$[R_{21},R_{35}]=1$&&$2,6$&$R_{41}$&4, 7 \\[2pt]
\hline
$7'$&$[R_{21},L_{23}]=1$&&$2,6$&$R_{41}$&4, 7 \\[2pt]
\hline
8&$L_{12}L_{21}^{-1}R_{12}=w_{12}$&&$1, 2, 5, 7', 11$&& $2, 3, 4, 6', 12$ \\[2pt]
\hline
9&$L_{21}L_{12}^{-1}R_{21}=w_{21}$&&$1, 2, 5, 7', 11$&& $2, 3, 4, 6', 12$ \\[2pt]
\hline
10&$w_{12}^{-1}R_{21}w_{12}=R_{12}^{-1}$&&$1,2,5,7',11$&&$2, 3, 12$ \\[2pt]
\hline
11L&$w_{12}^{-1}R_{13}w_{12}=L_{23}^{-1}$&$L_{14},R_{24}^{-1}$ & $5,6,11R$&&$2, 6, 7, 12$ \\
11L&$w_{12}R_{13}w_{12}^{-1}=R_{23}$&$L_{14},R_{24}^{-1}$ & $5,6,11R$&&$2, 6, 7, 12$ \\[2pt]
\hline
11R&$w_{12}^{-1}L_{13}w_{12}=R_{23}^{-1}$&$R_{14},L_{24}^{-1}$&$5,6,11L$&&$2, 6, 7, 12$ \\
11R&$w_{12}R_{13}w_{12}^{-1}=R_{23}$&$R_{14},L_{24}^{-1}$ & $5,6,11R$&&$2, 6, 7, 12$ \\[2pt]
\hline
12&$w_{12}^{-1}R_{31}w_{12}=R_{32}^{-1}$&&$2,6,7,11$&$R_{41}, R_{42}$&$4, 12$ \\[2pt]
\hline
\end{tabular}
\end{table}
\vskip .4cm
Before turning to the proof of Proposition~\ref{rowstab1} in the good and zero coordinate cases we introduce some additional notations and terminology. Let $I$ be a non-empty subset
of $\{1,\ldots, n\}$. Given $v\in\dbZ^n$, we denote by $v_I\in\dbZ^{|I|}$ the vector obtained from $v$ by removing the $j^{\rm th}$
coordinate for all $j\not\in I$. 

\begin{Definition}\rm $\empty$
\label{def:supported}
\begin{itemize}
\item[(a)] Let $\Omega$ be a diagram. The {\it $I$-trace} of $\Omega$ denoted by $\Omega_I$ is obtained from $\Omega$ by keeping the same vertices, edges and edge labels and replacing $v$ by $v_I$ for every vertex label $v$ in $\Omega$ (note that
$\Omega_I$ is not a $Z$-labeled diagram).

\item[(b)]Let $\phi$ be a diagram map and $\Delta'$ its replacement diagram. Let $I$ be a subset of $\{1,\ldots, n\}$. 
We will say that $\phi$ is {\it supported on $I$} if for every $j\not\in I$ all vertices of $\Delta'$ have the same $j^{\rm th}$ coordinate.
\end{itemize}
\end{Definition}
From the definition of our partial order it is clear that if $\phi$ is supported on $I$, then in order to check whether 
$\phi:(\Omega,\Delta)\to (\Omega',\Delta')$ is a reduction, it suffices to know the $I$-trace of $\Delta'$ along with the original
diagram $\Omega$.

Clearly, $\phi$ is supported on $I$ whenever 
{$\supp(\Delta')\subseteq I$}. However, the latter condition
is not necessary. For example, the reduction for type 2R described below is supported on $I=\{1,2,4\}$ while 
$\supp(\Delta')=\{1,2,3,4\}$ (in fact, $3$ already lies in the support of $\Delta$).

\skv
For each single-cell reduction described in the next two subsections we will start by specifying a set $I$ on which the given map is supported
(in all cases we will have $I\subseteq\{1,2,3,4,5\}$) and then work only with $I$-traces of the vertices of $\Delta$ (rather than full
vertex labels).
\skv

\paragraph{\bf Simplified terminology and notations.} The following additional terminology and notations will be used in the description of single-cell reductions in the next two subsections (and also later in the paper). 
\begin{itemize}
\item The single-cell reduction maps described in \S~3,4 will always be denoted by $\phi$. The initial diagram will be denoted by
$\Omega$, the modified diagram by $\Omega'$, the domain of $\phi$ (which is a single cell) by $\calF$
and the replacement diagram of $\phi$ by $\Delta'$. Thus symbolically $\phi:(\Omega,\calF)\to (\Omega',\Delta')$.

\item The boundary relator of $\calF$ will often be written in the form $r=s$ (recall that formally this denotes the geometric relator
with representative $r^{-1}s$).

\item The vertices of $\calF$ will be called {\it old vertices} and the vertices in $V(\Delta')\setminus V(\calF)$ will be called {\it new vertices}.

\item By abuse of notation, 
for a vertex $w$ we will write $w=(a,b,c,\ldots)$ meaning that $(a,b,c,\ldots)$ is the label of $w$. 

\item A notation like $(a,b,*)$ will mean an element
of $\dbZ^n$ whose first and second coordinates are $a$ and $b$, respectively. 

\vskip .1cm

\item As already mentioned, in each case we will specify a set $I$ on which $\phi$ is supported. Each figure in \S~\ref{sec:singlecellgood},~\ref{sec:singlecellzero},~4.1~and~4.2 will depict the $I$-trace of the
replacement diagram $\Delta'$ for the corresponding single-cell reduction. 

\item Recall that $v$ denotes a chosen maximal vertex of $\calF$. For some types, we will either specify exactly where $v$ appears in the diagram
or at least give some restrictions. In these cases, (the labels of) vertices of $\calF$ which could be equal to $v$ will appear in boldface. 

\item As before, for any $w\in\dbZ^n$ by $w_I$ we will denote its projection to $\dbZ^{|I|}$. 
We will define the partial order on the set of labels of $V(\Delta'_I)$ by setting 
$w_I < w'_I$ if and only if $w<w'$. This order is well defined since by the choice of $I$ the map $w\mapsto w_I$
is a bijection from the labels of $V(\Delta_I)$ to the labels of $V(\Delta'_I)$; however, this order 
may be different from the order induced from $\dbZ^{|I|}$.
\vskip .1cm

\item As before, we will deal with both $\ell^{\infty}$ and
$\ell^1$-norms of vectors in $\dbZ^n$. Since the former will be used more frequently, we will often write $\|v\|$ instead of $\|v\|_{\infty}$.

\item For any diagram $\Psi$ we set $\|\Psi\|=\max\{\|v\|: v\in V(\Psi)\}$.

\item We will refer to the single-cell reductions constructed in \S~3,4 using expressions of the form 
$n$.GA, $n$.GB, $n$.ZA and $n$.ZB where $n$ is an integer between $1$ and $12$. Here $n$ denotes the type
of the boundary relation of $\partial \calF$, the first letter (G or Z) distinguishes between the good coordinate and zero
coordinate cases, and the second letter is A in the case of the row-stabilizer groups (considered in \S~3) and 
B for the column-stabilizer groups (considered in \S~4).

\item When we say that an integer $e$ is maximal (resp. good, bad), we will mean that $e$ is $\Omega$-maximal 
(resp. $\Omega$-good, $\Omega$-bad) unless we view $e$ as a coordinate of a specific vertex (see Definition~\ref{def:omegagood}
at the end of \S~2).

\end{itemize}

\subsection{Single-cell reductions in the good coordinate case}
\label{sec:singlecellgood}
$\empty$
In this subsection we will describe single-cell reductions in the good coordinate case, separately for each cell type. Recall our initial
conventions: $v=(v_1,v_2,\ldots)$ is the chosen maximal vertex of $\calF$ (which is 
interior and maximal for the entire diagram $\Omega$). We set $$M=\|v\|=\|\calF\|=\|\Omega\|.$$ By Observation~\ref{obs:good} we 
can assume that $v_1$ is maximal and positive, $v_4$ is good and positive and $l(\partial\calF)\in\calR_{rep}$. In particular,
$4\not\in\supp(\calF)$, so all vertices of $\calF$ have the same 4th coordinate, and for simplicity of notation we denote its value by $d$.
Thus, $v=(M,v_2,v_3,d,\ldots)$ where $M>d>0$.
\skv
\vskip .2cm 
{\it Easy Case: $1\not\in\supp(l(\partial\calF))$}. Since $g(v)>4$ and $|\supp(l(\partial\calF))|\leq 4$, after acting by some element of $\Sigma_n(1)$, we can assume that $4\not\in\supp(l(\partial\calF))$ and $v_4$ is still good.
Thus, each of the commuting maps $C_{\calF}(R_{14}^{\pm 1})$, $C_{\calF}(R_{41}^{\pm 1})$
is defined, and by Lemma~\ref{commuting} one of them is a reduction.
\vskip .2cm 

From now on we we will assume that $1\in\supp(l(\partial\calF))$, so by choosing suitable $\calR_{rep}$, we can assume that
$l(\partial\calF)$ is one of the relators in Table~1.
For each cell type $i$ where we distinguished subtypes $iR$ and $iL$ we will only describe a reduction for subtype $iR$, as the corresponding reduction for subtype
$iL$ can be obtained simply by swapping $L$ and $R$ in all the edge labels.
\skv

{\it Type 1R: $l(\partial\calF)= ([R_{23},R_{12}]=R_{13})$}. In this case $I=\{1,2\}$.
All edge labels in $\partial\calF$ commute with $L_{14}$. Below we will show that the commuting 
map $C_{\calF}(L_{14})$ is a reduction (see Figure~\ref{1GA})

\input{figure1GA.tex}

\begin{Claim}
\label{claim:nonneg}
Every vertex of $\calF$ has a nonnegative first coordinate. 
\end{Claim}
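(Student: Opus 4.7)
The plan is to exploit the maximality of $v$ in the partial order on $\Omega$ to bound every coordinate of every vertex label in $\calF$ in absolute value by $M$, and then deduce nonnegativity of the three distinct first coordinates by a short pigeonhole-style argument.

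First I will observe that since $v$ is a maximal vertex of $\Omega$ and the norm function $N$ defined at the end of \S~2 begins with the $\ell^\infty$-norm, every vertex $w \in V(\Omega)$ satisfies $\|w\|_\infty \leq \|v\|_\infty = M$. Applied to the vertices of $\calF$ listed in Figure~\ref{1GA}, this forces the values $a$, $a+b$, $a+b+c$ (which appear as first coordinates), $b$, $b+c$ (which appear as second coordinates), and $c$ itself (which appears as the third coordinate, constant throughout $\calF$ because none of the generators $R_{12}$, $R_{13}$, $R_{23}$ modifies the third coordinate, and the $R_{23}$-edge from $(a,b+c)$ to $(a,b)$ pins the common value to $c$) to all lie in $[-M, M]$.

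Next I will use that $v \in V(\calF)$ satisfies $v_1 = M$, so at least one element of the set $T = \{a,\, a+b,\, a+b+c\}$ equals $M$. The pairwise differences of elements of $T$ are $\pm b$, $\pm c$, $\pm(b+c)$, all bounded in absolute value by $M$ by the previous paragraph. If some $s \in T$ were strictly negative, then $|M - s| = M - s > M$, contradicting this bound. Hence every element of $T$ is nonnegative, which is exactly the content of the claim.

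There is no substantial obstacle here: once the maximality of $v$ is invoked to bound the $\ell^\infty$-norm of all vertex labels in $\calF$ coordinatewise, the conclusion follows from a two-line argument. The only mildly subtle point is noting that $c$ itself (not merely $b$ and $b+c$) is bounded by $M$, which is where we use that the relator $[R_{23}, R_{12}] = R_{13}$ leaves the third coordinate unchanged.
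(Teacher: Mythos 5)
Your proof is correct and follows essentially the same route as the paper's: one of $a$, $a+b$, $a+b+c$ equals $M$, the pairwise differences $b$, $c$, $b+c$ are each bounded by $M$ because each occurs as a coordinate of a vertex of $\calF$ (with $c$ appearing as the common third coordinate), so a negative first coordinate would force a difference exceeding $M$. Your explicit justification that $c$ is pinned as the third coordinate is a point the paper leaves implicit, but the argument is the same.
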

\begin{proof}
The vertices of
$\calF_I$ are $(a,b)$, $(a+b,b)$, $(a+b,b+c)$, $(a+b+c,b+c)$ and $(a,b+c)$ for some $a,b,c$. Since $v=(M,*)$, by our assumption one of the numbers $a, a+b$ or $a+b+c$ equals $M$. Thus, if some vertex of $\calF$ has a negative first coordinate, then
the difference between the first coordinates of some pair of vertices of $\calF$ is larger than $M$. However, these differences (up to sign)
are $b,c$ and $b+c$, each of which appears as a coordinate of some vertex of $\calF$ and thus cannot exceed $M=\|\calF\|$ in absolute value,
a contradiction.
\end{proof}
Now take any new vertex $w'$ of $\phi$. We need to show that $w'<v$.
There exists an old vertex $w$ (that is, a vertex of $\calF$) such that $w_I=(x,y)$ and $w'_I=(x-d,y)$. Since $x\geq 0$ by Claim~\ref{claim:nonneg} and $x\leq M$ 
(as $x$ is a coordinate of $w$), $w'$ has a non-maximal first coordinate. If either $y=b+c$ or $v_I\neq (a+b+c,b+c)$,
there exists an old vertex $u$ which has a maximal first coordinate
(equal to $M$) and agrees with $w'$ in all other coordinates, so $w'<u$ and hence $w'<v$.

Assume now that $y=b$ and $v_I=(a+b+c,b+c)$, so $a+b+c=M$ and $w'=(a-d,b)$ or
$(a+b-d,b)$. 

{\it Subcase 1: $|b|< M$}. Then the first two coordinates of $w'$ are non-maximal,
so $w'<v$. 

{\it Subcase 2: $b=M$}. Then $a+c=0$ and since both $a+b$ and $b+c$
are $\leq M$ being coordinates of $\calF$, we must have $a=c=0$. Then $v_I=(M,M)$,
so $w'<v$ as $v$ has more maximal coordinates than $w'$.

{\it Subcase 3: $b=-M$}. Then $a+c=2M$, so $a=c=M$ and hence $v_I=(M,0)$ while 
$w'_I=(M-d,-M)$ or $(-d,-M)$, so $w'$ and $v$ have the same number of maximal
coordinates, but $w'$ has more good coordinates, so again $w'<v$. 

Thus we proved that $\phi$ is a reduction. 
\skv

\vskip .2cm

{\it Type 2R: $l(\partial\calF)= ([(R_{13},R_{21}]=R_{23})$}, $I=\{1,2\}$. We will use the map shown in Figure~\ref{2GA}. 

\input{figure2GA.tex}

Let us prove that this map is a reduction. This map produces $7$ new vertices, and we need to check that their $I$-traces are all $<v_I$.
The first coordinate of every vertex of $\calF$ is either $a$ or $a+c$, so by assumption $a=M$ or
$a+c=M$ and hence both $(M,b)$ and $(M,a+b)$ are present among the vertices of $\calF_I$. Since $c$ is also a coordinate of an $\calF$-vertex, $|c|\leq M$, whence $a,a+c\geq 0$. Since $d$ is a good positive coordinate, we have $|a+c-d|<M, |a-d|<M$.
Hence $(a+c-d,b)$ and $(a-d,b)$ are both $< (M,b)$ and $(a+c-d,a+b)$ and $(a-d,a+b)$ are both $< (M,a+b)$, so 
$(a+c-d,b)$, $(a-d,b)$, $(a+c-d,a+b)$ and $(a-d,a+b)$ are all $< v_I$.

Next observe that $b\leq 0$ (for otherwise $a+b>M$ or $a+b+c>M$), whence $|b+d|<M$. Hence both coordinates of the vertices $(a-d,b+d)$ and $(a+c-d,b+d)$ are non-maximal, so they are also $<v_I$. 

The only remaining new vertex is $(a+c-d,a+b+c)$. Its first coordinate is non-maximal. Since $a+c\geq 0$ and $b\leq 0$, the only way $a+b+c$ can be maximal is if
either $b=0$ and $a+c=M$ or  $b=-M$ and $a+c=0$, and in the second case
we must have $a=-c=M$ (since either $a$ or $a+c$ must be maximal).
In the first case we have $(a+c-d,a+b+c)=(M-d,M)<(M,M)=(a+c,a+b+c)$,
and in the second case we have $(a+c-d,a+b+c)=(-d,-M)<(M,-M)=(a,b)$,
so we proved that $(a+c-d,a+b+c)<v_I$.
\vskip .1cm

Let us now formulate a simple general observation which can be used to prove that a new vertex is less than $v$.

\begin{Lemma}
\label{lem:goodeasy}Let $\Omega$ be a diagram and $M=\|\Omega\|$.
The following hold:
\begin{itemize}
\item[(a)] Let $w$ be a vertex of $\Omega$, let $J$ be a subset of $\{1,\ldots, n\}$, and assume that $|w_t|=M$
for some $t\in J$ (so in particular $\|w\|=M$). Let $\widetilde w\in\dbZ^n$ be such that
\begin{itemize}
\item[(i)] $\widetilde w_j=w_j$ for all $j\not\in J$;
\item[(ii)] $|\widetilde w_j|<M$ for all $j\in J$.
\end{itemize}
If $w'\in\dbZ^n$ is obtained from $\widetilde w$ by a signed permutation of coordinates, then $w'<w$.
\skv
\item[(b)] Let $\phi:\Omega\to\Omega'$ be a diagram map and assume that every new vertex $w'$ of $\phi$ 
is obtained from some old vertex $w$ as in (a) (for some choice of $\widetilde w$). Then $\phi$ is a reduction.
\end{itemize}
\end{Lemma}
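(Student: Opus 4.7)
\medskip

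The plan for part (a) is to reduce the comparison of $w'$ with $w$ to a comparison of $\widetilde w$ with $w$, and then to read off $\widetilde w < w$ directly from the definition of the norm function $N(x)=(\|x\|_\infty, m(x), n-g(x),\|x\|_1)$. Since $w'$ is obtained from $\widetilde w$ by a signed permutation of coordinates, the two vectors have identical multisets of coordinates (up to sign), so they agree in $\ell^\infty$-norm, $\ell^1$-norm, and the counts of maximal, good, and zero coordinates. In particular $N(w')=N(\widetilde w)$, and it suffices to prove $\widetilde w < w$.

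To compare $\widetilde w$ with $w$ I will walk through the four-level lexicographic comparison of $N$. Condition~(i) keeps the coordinates of $\widetilde w$ outside $J$ equal to those of $w$, and condition~(ii) forces those inside $J$ to have absolute value strictly less than $M$; together with $\|w\|_\infty = M$ this yields $\|\widetilde w\|_\infty \leq M$. If the inequality is strict, then $\widetilde w < w$ at the very first level and we are done. Otherwise equality forces some coordinate of $\widetilde w$ outside $J$ to have absolute value $M$, and so the coordinate-wise analysis moves to the count of maximal coordinates. By~(ii), no index in $J$ is maximal for $\widetilde w$, so every maximal coordinate of $\widetilde w$ is a maximal coordinate of $w$ outside $J$; but $w$ additionally has the maximal coordinate $w_t$ with $t\in J$, giving $m(\widetilde w)\leq m(w)-1 < m(w)$. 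Hence $\widetilde w < w$ at the second lexicographic level.

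Part (b) will follow immediately by combining (a) with Observation~\ref{obs:red}: that observation characterizes reductions as those diagram maps in which each new vertex is strictly dominated by some old vertex, and the hypothesis of (b) supplies exactly such an old vertex for every new vertex. The main obstacle — or rather the only thing requiring a moment of care — will be recording that \emph{neither} of the conditions (i), (ii) alone suffices; they jointly ensure the $\ell^\infty$-control needed at the first step, and the exclusion of $J$ in~(ii) is what forces the strict decrease in $m$ at the second step. Beyond that, the argument is a direct unwinding of the order on $\dbZ^n$ and of the definition of a reduction, and I expect the total length of the proof to be no more than a handful of lines.
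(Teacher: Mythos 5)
Your proof is correct and follows essentially the same route as the paper's: the paper's argument is exactly the dichotomy "either $\|w'\|<\|w\|$, or $\|w'\|=\|w\|$ and $w'$ has fewer maximal coordinates," with (b) deduced immediately from the definition of reduction. You merely spell out the (true and easily checked) fact that $N$ is invariant under signed permutations and the count showing $m(\widetilde w)\leq m(w)-1$, so nothing further is needed.
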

\begin{proof}(a) Conditions (i) and (ii) imply that either
$\|w'\|<\|w\|$ or $\|w'\|=\|w\|$, but $w'$ has fewer maximal coordinates than $w$, and in either case $w'<w$.

(b) follows directly from (a) and the definition of reduction (Definition~\ref{def:reduction}). 
\end{proof}

Note that our argument for type 2R was based entirely on Lemma~\ref{lem:goodeasy}. Lemma~\ref{lem:goodeasy}
will also be applicable to the majority of the remaining types (in those cases we will simply check that certain coordinates
are non-maximal, skipping the remaining details as they are completely straightforward).
\skv

{\it Type 3R: $l(\partial\calF)= ([R_{21},R_{32}]=R_{31})$}, $I=\{1,2,3\}$. We will use the map $\phi$ in Figure~\ref{3GA}. 

\input{figure3GA.tex}

All the vertices of $\calF$ have the same first coordinate $a$, so by our assumption $a=M$. Since $a+b$, $a+b+c$, $b$
and $b+c$ are all
coordinates of $\calF$, we must have $-M\leq b,b+c\leq 0$.
Since $d$ is good and positive, it follows that $|a-d|,|b+d|,|b+c+d|<M$, so
$\phi$ is a reduction by Lemma~\ref{lem:goodeasy}.

{\it Type 4R: $l(\partial\calF)= [R_{21},R_{31}]$}, $I=\{1,2,3\}$. We will use the map in Figure~\ref{4GA}.
The justification is completely analogous to type 3.

\input{figure4GA.tex}

{\it Type 5: $l(\partial\calF)= [L_{12},R_{13}]$}, $I=\{1,2\}$. Write $v=(a,b,c,d,*)$. Acting by the signed permutations
$x_2\mapsto x_2^{-1}$ and/or $x_3\mapsto x_3^{-1}$ if needed, we can assume that both edges at $v$ are incoming,
so $l(\partial\calF)$ is exactly as in Figure~\ref{5GA}. The map shown in Figure~\ref{5GA} is a reduction -- the proof is again analogous to type 3.

\input{figure5GA.tex}

\skv
{\it Type 6: $l(\partial\calF)= [R_{12},R_{35}]$}, $I=\{1,3\}$. All edge labels in $\partial\calF$ commute with $L_{14}$. The commuting map $C_{\calF}(L_{14})$
is a reduction by the same argument as for type 1.  

{\it Type 7: $l(\partial\calF)= [R_{21},R_{35}]$}, $I=\{1,2,3\}$. This case is analogous to (and easier than) type 4.

{\it Type 9: $l(\partial\calF)= (L_{21}L_{12}^{-1}R_{21}=w_{21})$}, $I=\{1,2\}$. The vertices of $\calF_I$
are $(a,b)$, $(a,a+b)$, $(a+b,-a)$ and $(a+b,b)$, so $a=M$ or $a+b=M$.
We will use one of the 2 maps in Figure~\ref{9GA} -- the first map if $a=M$ and the second map if $a+b=M$ but $a\neq M$.

Since $a=M$ or $a+b=M$ and $a,b,a+b$ are all $\calF$-coordinates, it follows that $0\leq a,a+b\leq M$,
whence $|a+b-d|,|a-d|<M$. 
\skv

{\it Case 1: $a=M$.} In this case the first map in Figure~\ref{9GA} is a reduction by Lemma~\ref{lem:goodeasy}.
\skv

{\it Case 2: $a+b=M$, $0\leq a< M$.} Since $a+b=M$, we must have $b\geq 0$ and hence $|b-d|<M$.
Since we also assume that $|a|< M$, the second map in Figure~\ref{9GA} is a reduction by Lemma~\ref{lem:goodeasy}.

\input{figure9GA.tex}
\skv

{\it Type 8: $l(\partial\calF)= (L_{12}L_{21}^{-1}R_{12}=w_{12})$}, $I=\{1,2\}$. 
The vertices of $\calF_I$ are $(a,b)$, $(a,a+b)$, $(-b,a+b)$ and $(a+b,b)$, so $M$ is equal to $a,a+b$ or $-b$.
\skv 

{\it Case 1: $a+b=M$}. As in type 9, we have $a,a+b\geq 0$ and $|a+b-d|,|a-d|<M$. Hence the map
in Figure~\ref{8GA} is a reduction by Lemma~\ref{lem:goodeasy}.

\skv 

{\it Case 2: $a=M$}. In this case we slightly modify the map in Figure~\ref{8GA}, changing the subpath
$L_{14}L_{24}$ starting at the vertex $(a,a+b)$ by $L_{24}L_{14}$. This changes the new vertex
adjacent to $(a,a+b)$ from $(a,a+b-d)$ to $(a-d,a+b)$. The rest of the argument is analogous to Case~1.
\skv

{\it Case 3: $-b=M$}. In this case $(-b,a+b)$ is a maximal vertex of $\calF_I$. But then
$(a+b,b)$ is also a maximal vertex with a maximal second coordinate. Thus we can swap the roles of the first and second coordinates,
in which case we are reduced to type 9 discussed above.

\input{figure8GA.tex}

\skv

{\it Type 10: $l(\partial\calF)= (w_{12}^{-1}L_{21}w_{12}=R_{12}^{-1})$}, $I=\{1,2\}$. The maps in this case are shown in Figure~\ref{10GA}.

\input{figure10GA.tex}

The vertices of $\calF_I$ are $(a,b)$, $(a+b,b)$, $(b,-a)$ and $(b,-a-b)$. If $v=(b,*)$, that is, $b=M$, we use the first map
in Figure~\ref{10GA}. Since $b=M$ and $a+b$ is an $\calF$-coordinate, $a\leq 0$, whence $|b-d|,|a+d|<M$, and we are done as in type 8
by Lemma~\ref{lem:goodeasy}.

If $v=(*,b)$, we use the second map in Figure~\ref{10GA}. In this case $a,a+b\geq 0$, whence $|a-d|, |a-b-d|<M$, and again we are done
as in the first case.
\skv

{\it Type 11R: $l(\partial\calF)= (w_{12}L_{23}w_{12}^{-1}=R_{13}^{-1})$ or $l(\partial\calF)= (w_{12} R_{13}w_{12}^{-1}=R_{23})$}, $I=\{1,2\}$. 
Note that here we are not specifying the boundary relator in advance and will choose one of these two relators
(both of which lie in the same orbit under $\Sigma(1)^+$) depending on the labels of edges containing $v$.

Initially we only assume that $\calF$ has type 11R. One of the edge labels at $v$ is a Weyl generator of the form $w_{1i}$
and the other is a Nielsen generator. After acting by a suitable element of $\Sigma(1)^+$, we can assume that
$i=2$, $\supp(\calF)=\{1,2,3\}$ and the edge at $v$ labeled by $w_{12}$ is incoming. If $e$ is the other edge
at $v$, then $\supp(e)=\{1,3\}$ or $\{2,3\}$, and we will consider these 2 cases separately:

\begin{itemize}
\item subtype 11.R.1: $\supp(e)=\{1,3\}$. 

\noindent
In this case we can assume that $l(\partial\calF)= (w_{12}L_{23}w_{12}^{-1}=R_{13}^{-1})$.

\item subtype 11.R.2: $\supp(e)=\{2,3\}$. 

\noindent
In this case we can assume that $l(\partial\calF)= (w_{12} R_{13}w_{12}^{-1}=R_{23})$.
\end{itemize}

The reduction maps for both subtypes are shown in  Figure~\ref{11GA}. Note that in subtype 11.R.1 $v_I=(a,b)$ or $(a-c,b)$
and in subtype 11.R.2 $v_I=(a,b)$ or $(a,b-c)$. The justification is similar to previous cases.

\skv
Note that we are forced to use different boundary relator representatives for the subtypes 11.R.1 and 11.R.2 because we require 
that the edge labeled by $w_{12}$ is incoming at $v$. The reason we want to have representatives with the latter property is 
that it will simplify the description of double-cell reductions, which we will need to construct for type 11 later in this section.  

\input{figure11GA.tex}

\noindent

\skv
{\it Type 12: $l(\partial\calF)= (w_{12}^{-1}R_{31}w_{12}=R_{32}^{-1})$}, $I=\{1,2,3\}$. The vertices of $\calF_I$ are $(a,b,c)$, $(b,-a,c)$,
$(b,-a,a+c)$ and $(a,b,a+c)$. The reduction maps are shown in Figure~\ref{12GA}. The justification is very similar to Type 10. 

\input{figure12GA.tex}

\subsection{Single-cell reductions for $\IAR_{n,1}$ in the zero coordinate case}
\label{sec:singlecellzero}
We start with a simple observation which will be applicable to many cell types. Its role in the zero coordinate case
will be similar to that of Lemma~\ref{lem:goodeasy} in the good coordinate case. 

\begin{Lemma}
\label{lem:zeasy}
Let $\phi:\Omega\to\Omega'$ be a diagram map and $v$ a maximal vertex of $\Omega$.
The following hold:
\begin{itemize}
\item[(a)] Let $w$ be a vertex of $\Omega$, and suppose that $w'\in\dbZ^n$ is obtained from $w$ by replacing one or several $\Omega$-bad coordinates by 
$\Omega$-good coordinates. Then $w'<w$. Hence if every new vertex $w'$ of $\phi$ is obtained from some old vertex $w$ in this way, then $\phi$ is a reduction.
\item[(b)] Let $u$ be an old vertex of $\phi$ which agrees with $v$ apart from the $j^{\rm th}$ coordinate for some $j$,
and assume that the $j^{\rm th}$ coordinate of $v$ is good (and hence $\Omega$-good as $v$ is maximal). 
Then the $j^{\rm th}$ coordinate of $u$ is $\Omega$-good.
\end{itemize} 
\end{Lemma}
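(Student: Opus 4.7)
The plan is to unwind the lexicographic order on $\dbZ^n$ through the $4$-tuple $N(x)=(\|x\|_\infty, m(x), n-g(x), \|x\|_1)$ and exploit maximality of the chosen vertex.

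For part (a), write $M=\|\Omega\|$. Every retained coordinate of $w'$ has $|\cdot|\le\|w\|_\infty\le M$ and every replaced coordinate (now $\Omega$-good) has $|\cdot|<M$, so $\|w'\|_\infty\le M$. I would split on $\|w\|_\infty$. If $\|w\|_\infty<M$, then in fact $\|w'\|_\infty<M$, so $w'$ is smaller than any maximal vertex of $\Omega$ (in particular than $v$). If $\|w\|_\infty=M$ and every $\Omega$-maximal coordinate of $w$ is replaced, then again $\|w'\|_\infty<M$ and $w'<w$. Otherwise $\|w'\|_\infty=M=\|w\|_\infty$; writing $k_m$ and $k_z$ for the numbers of $\Omega$-maximal and zero coordinates replaced (so $k_m+k_z\ge 1$ by hypothesis), one has $m(w')=m(w)-k_m$ and $g(w')=g(w)+k_m+k_z$. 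If $k_m\ge 1$, the second entry of $N$ strictly decreases; if $k_m=0$, the second entry is unchanged but the third strictly decreases. In every case $w'$ is strictly smaller than some vertex of $\Omega$, so the ``hence'' clause follows from Observation~\ref{obs:red}.

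For part (b), I would argue by contradiction: assume $u_j$ is $\Omega$-bad, so $|u_j|\in\{0,M\}$. Since $u_i=v_i$ for $i\ne j$ and $v$ is maximal with $v_j$ good (in particular $|v_j|<M$), there exists $i\ne j$ with $|v_i|=M$; then $|u_i|=M$ too, giving $\|u\|_\infty=M=\|v\|_\infty$, so the first entries of $N(u)$ and $N(v)$ agree. If $|u_j|=M$, then $u$ has exactly one more maximal coordinate than $v$ (gained at position $j$, where $v_j$ was good), so $m(u)=m(v)+1$ and $N(u)>N(v)$. If $u_j=0$, then $m(u)=m(v)$ but $g(u)=g(v)-1$ (the good coordinate $v_j$ is lost), so the third entry of $N(u)$ strictly exceeds that of $N(v)$; again $N(u)>N(v)$. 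Either way $u>v$, contradicting maximality of $v$, so $u_j$ must be $\Omega$-good.

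The main subtlety I see is that the literal assertion $w'<w$ in part (a) can fail when $w$ is not of maximal $\ell^\infty$-norm (e.g.\ filling a zero with a large good value may push $\|w'\|_\infty$ strictly above $\|w\|_\infty$); one has to settle for $w'<v_{\max}$ for some maximal $v_{\max}\in V(\Omega)$. This fallback is nevertheless exactly what Observation~\ref{obs:red} requires to deduce that $\phi$ is a reduction, which is the working content of the lemma.
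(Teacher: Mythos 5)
Your proof is correct and follows essentially the same route as the paper: the same case split on whether $\|w'\|_\infty$ stays equal to $\|\Omega\|$, with the same fallback to $w'<v$ when it drops (the paper's own proof likewise only concludes $w'<v$ in that case, so your remark that the literal inequality $w'<w$ can fail for non-maximal $w$ is well taken). The only cosmetic difference is that the paper obtains (b) by applying (a) with $w=u$ and $w'=v$, whereas you redo the coordinate count directly; the two computations are identical in substance.
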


\begin{proof} (a) Let $M=\|\Omega\|$. By assumption $\|w\|\leq M$, and for any $i$ such that $w'_i\neq w_i$ we have
then $|w'_i|<M$, so either $\|w'\|< M=\|v\|$ (which automatically implies that $w'<v$) or $\|w'\|=\|w\|=M$. But in the latter case
for every $i$ such that $w'_i\neq w_i$, the $i^{\rm th}$ coordinate of $w$ is bad while the $i^{\rm th}$ coordinate of $w'$ is good,
so by definition $w'<w$.

(b) If the $j^{\rm th}$ coordinate of $u$ is $\Omega$-bad, we can apply (a) with $w=u$ and $w'=v$ to conclude that $v<u$, a contradiction.
\end{proof}

For the rest of this subsection we fix $\Omega$, $\calF$ and $v$ as in Proposition~\ref{rowstab1}, and we assume that we are in the zero
coordinate case. By Observation~\ref{obs:zero} we can (and will) assume that  $v_1$ (the first coordinate of $v$) is good and positive 
and the fourth coordinate of $v$ is $0$. Thus, $v=(v_1,v_2,v_3,0,*)$ where $0<v_1<M=\|v\|$.
\skv

{\it Type 1R: $l(\partial\calF)= ([R_{23},R_{12}]=R_{13})$}, $I=\{1,2,4\}$.  
We consider two cases. If the third coordinate $c$ (which is the same for all vertices of $\calF$) is good, we can simply use the commuting reduction $C_{\calF}(R_{43})$. So let us assume that $c$ is bad and use the map shown in Figure~\ref{1ZA}.

\input{figure1ZA.tex}

Each new vertex coincides with one of the vertices of $\calF$ in all coordinates except the fourth one, and the fourth coordinate changes from $0$ to $a$, $a+b$ or $a+b+c$, so by Lemma~\ref{lem:zeasy}(a) it suffices to show that $a,a+b$ and $a+b+c$ are all 
good. Since $c$ is bad, $a+b$ and $a+b+c$ are both good or both bad by Observation~\ref{obs:badsubgp}. Also 
since $v$ has a good first coordinate, at least one of the integers $a,a+b$ and $a+b+c$ must be good. So we just need to rule out two possibilities:
\begin{itemize}
\item[(i)] $a$ is good and $a+b, a+b+c$ are bad; 
\item[(ii)] $a$ is bad and $a+b, a+b+c$ are good.
\end{itemize}
If (i) occurs, $v_I=(a,b,0)$ or $(a,b+c,0)$. 
But this is impossible since $(a+b,b,0)$ and $(a+b,b+c,0)$ are also vertices of $\calF_I$ and we have 
$(a,b,0)<(a+b,b,0)$ and $(a,b+c,0)<(a+b,b+c,0)$ since $a$ is good and $a+b$ is bad.

Suppose now that (ii) occurs. If $v_I=(a+b,b,0)$ or $(a+b,b+c,0)$, we get a contradiction as in case (i). The only remaining possibility is that  $v_I=(a+b+c,b+c,0)$. Since $a+b+c$ is good while $a$ is bad, $b+c$ must be good. But then $(a+b+c,b+c,0)<(a,b,0)$, a contradiction.

\vskip .2cm

{\it Type 2R: $l(\partial\calF)= ([R_{13},R_{21}]=R_{23})$}, $I=\{1,2,4\}$. The argument is similar to, 
but easier than type 1.
If $c$ is good, the commuting map $C_{\calF}(R_{43})$ is a reduction, so  assume that $c$ is bad. In this case we use the map shown in 
Figure~\ref{2ZA}. As with type~1, all we need to show is that $a$ and $a+c$ (the only possible values of the fourth coordinates of new vertices)
are good. Since $c$ is bad, they are both good or both bad. And they cannot be both bad since then none of the vertices of $\calF$ has a good
first coordinate. 

\input{figure2ZA.tex}

\vskip .2cm

{\it Types 3R, 4R and 7R: $l(\partial\calF)= ([R_{21},R_{32}]=R_{31}), [R_{21},R_{31}]$ or $[R_{21},R_{35}]$}, $I=\{2,3,4\}$. 
In these cases all the edge labels of $\calF$ commute with $R_{41}$, and all the vertices of $\calF$ have $a$ as the first coordinate (which is good by assumption), so the commuting map $C_{\calF}(R_{41})$ is a reduction.

\vskip .2cm

{\it Type 5: $l(\partial\calF)= [L_{12},R_{13}]$}, $I=\{1,2\}$. The map $\phi$ used for this type is quite different from the ones we have
seen so far, as index $4$ does not appear not only in the support of $\phi$, but even in the support of its replacement diagram.
Write $v=(a,b,c,0,*)$.  As in type 5.GA, we can assume that both edges at $v$ are incoming, in which case
the other three vertices of $\calF$ are $(a+b,b,c,0,*)$, $(a+b+c,b,c,0,*)$ and $(a+c,b,c,0,*)$. 

If either $b$ (resp. $c$) is good, the commuting map $C_{\calF}(R_{42})$ (resp. $C_{\calF}(R_{43})$) is a reduction, so assume that $b$ and $c$ are both bad. 
Since $a$ is good (being the first coordinate of $v$), the numbers $a+b,a+c$ and $a+b+c$ are all good as well by Observation~\ref{obs:badsubgp}. We shall use the map in Figure~\ref{5ZA}. Each of the new vertices is obtained from one of the vertices of $\calF$ by replacing the bad second coordinate $b$ by a good value ($-a$, $a+b$, $-(a+c)$ or $a+b+c$), so this map is a reduction by Lemma~\ref{lem:zeasy}(a).

\input{figure5ZA.tex}

\vskip .2cm

{\it Type 6: $l(\partial\calF)= [R_{12},R_{35}]$}, $I=\{1,3,4\}$.  We will use the map shown in Figure~\ref{6ZA}. The first coordinates of the vertices of $\calF$ are equal to $a$ and $a+b$, and both must be good by Lemma~\ref{lem:zeasy}(b).
Each new vertex is obtained from a vertex of $\calF$ by replacing $0$ by $-a$ or $-(a+b)$ in the fourth coordinate, and hence the map is a reduction by Lemma~\ref{lem:zeasy}(a). 

\input{figure6ZA.tex}

\vskip .1cm
{\it Type 8: $l(\partial\calF)= (L_{12}L_{21}^{-1}R_{12}=w_{12})$}, $I=\{1,2,4\}$. The vertices of $\calF_I$ are $(a,b,0)$, $(a,a+b,0)$,
$(-b,a+b,0)$ and $(a+b,b,0)$. We will use one of the two maps in Figure~\ref{8ZA}. 
By Lemma~\ref{lem:zeasy}(a), the first map is a reduction whenever $a$ and $a+b$
are both good, and the second map is a reduction whenever $a$ and $b$ are both good. Let us prove that one of these two cases
must occur. Indeed, as before, at least two of the numbers $a,b,a+b$ are good, so we only need to rule out the possibility that
$b$ and $a+b$ are good and $a$ is bad. If the latter case occurs, $v_I=(-b,a+b,0)$ or $(a+b,b,0)$ (since the first coordinate of $v$
must be good); on the other hand, both of these vertices are $<(a,b,0)$ since $a$ is bad, a contradiction.

\input{figure8ZA.tex}

\vskip .1cm
{\it Type 9: $l(\partial\calF)= (L_{21}L_{12}^{-1}R_{21}=w_{21})$}, $I=\{1,2,4\}$. The vertices of $\calF_I$ are $(a,b,0)$, $(a+b,b,0)$,
$(a+b,-a,0)$ and $(a,a+b,0)$. If $a$ and $a+b$ are both good, the map in Figure~\ref{9ZA} is a reduction.
Suppose now that $a$ or $a+b$ is bad. By our assumptions $\calF_I$ must have a maximal vertex with a good first coordinate and bad second coordinate. Thus $(a+b,-a,0)$ or $(a,a+b,0)$ is maximal, and since these two vertices differ by a signed permutation of coordinates, they are both maximal. Therefore $\calF_I$ (and hence $\calF$) has a maximal vertex with a good second coordinate, so we can switch the roles of the first and second coordinates, thereby reducing to Type~8.

\input{figure9ZA.tex} 

\vskip .1cm
{\it Types 10-12}: The reduction maps for these types are similar to the ones we used in the good coordinate case. The map
for type 12 is shown on Figure~\ref{12ZA} -- we emphasize the map for this type since it will be used for construction
of double-cell reductions.

\input{figure12ZA.tex} 
 
\subsection{Conclusion of the proof}

We start by constructing double-cell reductions for certain types of galleries of length $2$. As earlier in this section,
we assume that $v$ is an interior maximal vertex of some diagram $\Omega$ and the first coordinate of $v$
is positive (so that we can refer to cell types as before). 
\skv

Recall that $m(v),g(v)$ and $z(v)$ denote the number of maximal, good and zero coordinates of $v$, respectively.

\begin{Lemma}
\label{doublecell}
Let $\Delta$ be a gallery of length $2$ at $v$ with cells $\calF_1$ and $\calF_2$, and let $e\in E(\calF_1)\cap E(\calF_2)$ be an edge 
containing $v$. In each of the following cases $\Omega$ admits a $\Delta$-reduction which eliminates $e$:
\begin{itemize}
\item[(1)] $g(v)\geq 7$,  $\calF_1$ and $\calF_2$ both have type 2R or both have type 2L, and $e$ has label $R_{j1}$ or $L_{j1}$ for some $j$;
\item[(2)] $g(v)\geq 7$,  $\calF_1$ and $\calF_2$ both have type 5, and $e$ has label $R_{1j}$ for some $j$;
\item[(3)] $g(v)\geq 7$,  $\calF_1$ has type 5, $\calF_2$ has type 6R and $e$ has label $R_{1j}$ for some $j$;
\item[(4)] $g(v)\geq  7$, $\calF_1$ and $\calF_2$ both have type 11L and $e$ has label $w_{1j}$ for some $j$;
\item[(5)] $z(v)\geq 7$, $\|v\|_{\infty}>1$, $\calF_1$ and $\calF_2$ both have type 2, and $e$ has label $R_{1j}$ for some $j$;
\item[(6)] $z(v)\geq  7$, $\|v\|_{\infty}>1$, $\calF_1$ and $\calF_2$ both have type 12, and $e$ has label $w_{1j}$ for some $j$.
\end{itemize}  
\end{Lemma}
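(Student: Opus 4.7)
The strategy will be uniform across the six cases: I will construct the double-cell reduction $\phi = \phi_1 \cup \phi_2$ of Definition~\ref{def:gallerymap} by taking $\phi_1$ at $\calF_1$ and $\phi_2$ at $\calF_2$ to be suitable re-indexings of the single-cell reductions already built in \S~\ref{sec:singlecellgood} (cases (1)--(4)) or \S~\ref{sec:singlecellzero} (cases (5)--(6)), chosen so that the side edges they produce at the two endpoints of $e$ agree. This agreement will force the two side cells at $e$ to carry identical boundary labels and therefore to form a cancellable pair in the sense introduced at the end of \S~\ref{sec:vanKampen}; the cancellation then removes $e$ from the diagram, giving a gallery $\Delta$-map at $v$ which is the desired reduction.

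The first step is to select an auxiliary index $i$ to play the role that the index $4$ plays in \S~\ref{sec:singlecellgood} and \S~\ref{sec:singlecellzero}. By Observation~\ref{obs:support}(ii) we have $|\supp(\Delta)|\leq 6$, and $1\in\supp(\Delta)$ since $e$ carries a label whose support contains $1$. In cases (1)--(4) the hypothesis $g(v)\geq 7$ together with pigeonhole supplies an $i\not\in\supp(\Delta)$ with $v_i$ good; in cases (5)--(6) the hypothesis $z(v)\geq 7$ supplies an $i\not\in\supp(\Delta)$ with $v_i=0$. Acting by the transposition $(i,4)\in \Sigma_n(1)$ (and, in the good-coordinate case, possibly by the sign change $x_i\mapsto x_i^{-1}$ to arrange $v_i>0$) then places us in the normalized setting of Lemmas~\ref{obs:good} and~\ref{obs:zero}, in which the single-cell reductions are defined.

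With this normalization in place, I apply the appropriate single-cell reduction of \S~\ref{sec:singlecellgood} or \S~\ref{sec:singlecellzero} to $\calF_1$ and to $\calF_2$. In cases (1)--(3) Table~1 shows that the maps 2R.GA (and 2L.GA), 5.GA and 6.GA all emit a single side-edge label $L_{14}$ oriented uniformly; the two side cells at $e$ are then commutator cells with boundary label $[l(e),L_{14}]$, and they cancel automatically. Case (4) is more delicate because the type~11 map emits two different side-edge labels ($L_{14}$ and $R_{24}^{-1}$), but here I will use the fact that in \S~\ref{sec:singlecellgood} the representative of the boundary relator for type~11 was chosen so that the $w_{12}$-edge at $v$ is incoming; this forces the side-edge labels at the two endpoints of $e$ to match between $\phi_1$ and $\phi_2$, so the side cells at $e$ agree. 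Cases (5) and (6) proceed identically using the zero-coordinate maps 2R.ZA and 12.ZA with side edges labeled $R_{41}$ (either of the two permissible choices of side edge works, and one fixed choice can be used for both cells).

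The main point requiring care is verifying that the two side cells at $e$ genuinely form a cancellable pair, i.e.\ satisfy condition~(iii) of cancellability; this should be automatic because each such side cell is built from $e$ and two new side edges at its endpoints, and the new side edges lie in the interior of the combined replacement diagram, so the intersection of the union of the boundaries with $\partial\Omega'$ is contained in a single one of them. The remaining verifications --- that each $\phi_k$ is a reduction and that the cancellation produces a well-defined gallery map --- reduce to the single-cell arguments of \S~\ref{sec:singlecellgood} and \S~\ref{sec:singlecellzero}, since those arguments depended only on the type of $v_i$ and on $i\not\in\supp(\calF_k)$, both of which hold for the index $i$ selected above. The expected bottleneck is case (4), where $\phi_1$ and $\phi_2$ involve conjugating (rather than commuting) single-cell maps, and matching the boundary labels of the two side cells at the $w_{1j}$-edge requires careful bookkeeping of the chosen relator representatives.
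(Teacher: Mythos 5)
Your proposal is correct and follows essentially the same route as the paper: normalize via the $\Sigma_n(1)$-action using $g(v)\geq 7$ (resp. $z(v)\geq 7$) and $|\supp(\Delta)|\leq 6$ to place a good (resp. zero) coordinate at an index outside $\supp(\Delta)$, apply the single-cell reductions of \S 3.2--3.3 to $\calF_1$ and $\calF_2$ with matching side-edge labels at the endpoints of $e$, and cancel the resulting pair of side cells at $e$. One minor slip: in case (1) the side cell at an edge labeled $R_{j1}$ or $L_{j1}$ is not a commutator square but a pentagon of type 2 (with boundary relator of the form $[L_{14}^{-1},R_{j1}]=R_{j4}$); this does not affect the argument, since the two side cells are still mirror images of each other and hence cancellable.
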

\begin{proof}By Proposition~\ref{rowstab1}, in each case there exists an $\calF_i$-reduction $\phi_i$ for $i=1,2$, and we only need
to check that $\phi_1$ and $\phi_2$ can be made compatible at $e$. Below we will give a detailed argument for case (1);
the other 5 cases can be handled similarly.

\skv
First, without loss of generality we can assume that $\calF_1$ and $\calF_2$ both have type 2R. Since $g(v)\geq 7$, we have $|\supp(\Delta)|\leq 6<g(v)$. Arguing as in the proof of Lemma~\ref{obs:good}, after acting by a suitable element of $\Sigma_n^+(1)$ (which does not change the cell types), we can assume that $v_4$ is good and positive, $4\not\in \supp(\Delta)$ (so in particular, $j\neq 4$)
and the label of $e$ is $R_{21}$. 

We can now construct single-cell reductions $\phi_i$ at $\calF_i$ for $i=1,2$, as in \S~3.1, such that in both cases both side edges at $e$ are labeled by $L_{14}$ and point away from $e$. Note that the replacement diagrams for
$\phi_1$ and $\phi_2$ may not be identical to the one in Figure~\ref{2GA} -- for instance, we only know that
$\supp(\calF_i)=\{1,2,k_i\}$ for some $k_i\not\in \{1,2,4\}$, and it is possible that $k_1\neq k_2$. However, an easy verification shows that 
if $\calG_i$ is the side-cell at $e$ for $\phi_i$, then $\supp(\calG_i)=\{1,2,4\}$ and
the boundary label of $\calG_i$ is exactly as in Figure~\ref{2GA}. Hence the boundary labels of $\calG_1$ and $\calG_2$
are mirror images of each other, and therefore the single-cell reductions $\phi_1$ and $\phi_2$ are compatible 
at $e$, as desired. 
\end{proof}

We are now ready to prove that the row-stabilizer group $\IAR_{n,1}$ is finitely presented for $n\geq 26$. 
First we claim that it suffices to prove the following proposition.

\begin{Proposition} 
\label{prop:removable}
Let $n\geq 26$, let $v$ be an interior maximal vertex of some diagram $\Omega$, and assume that either
$\|v\|_{\infty}>1$ or $\|v\|_{1}> 9$. Then $v$ is removable, that is, $\Omega$ admits a full reduction which
eliminates $v$.
\end{Proposition}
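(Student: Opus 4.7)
The plan is to execute the three-stage peak-reduction strategy outlined at the end of \S~2.2: first clean up the cells at $v$, then clean up the edges at $v$, and finally decrease $\deg(v)$ to zero via a sequence of triple-cell reductions. Proposition~\ref{rowstab1} will supply the cells for stages~1 and~3, Lemma~\ref{doublecell} will supply the edges for stage~2, and Lemma~\ref{lem:kgallery} together with Lemma~\ref{obs:Artinian} will guarantee termination. The numerical hypothesis $n \geq 26$ enters through Observation~\ref{obs:support}(ii), which bounds $|\supp(\Delta)|$ by $2k+2$ for a gallery of length $k$, and through the counts $g(v) \geq 7$ or $z(v) \geq 7$ required by Lemma~\ref{doublecell}. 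The assumption $\|v\|_\infty > 1$ or $\|v\|_1 > 9$ is exactly the hypothesis required by Proposition~\ref{rowstab1}, and also ensures either $g(v) \geq 1$ (good-coordinate case) or $z(v) \geq 1$ (zero-coordinate case), which can then be promoted to the ``$\geq 7$'' bounds using $n \geq 26$.

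\textbf{Stage 1 (cell cleanup).} I would fix a partition of the cell types of Table~1 into a \emph{hard} set (the types whose boundary involves a Weyl generator, namely $8,9,10,11,12$, together with any Nielsen-generator type that forces side cells incompatible with the triple-cell maps used in Stage~3) and an \emph{easy} set (the remaining types, where the relevant commuting single-cell map produces only easy side cells). Proposition~\ref{rowstab1} provides, for every cell $\calF$ at $v$ of hard type, a single-cell $\calF$-reduction whose side cells at $v$ are listed in column~4 or column~6 of Table~1. I would apply these single-cell reductions iteratively; by Lemma~\ref{obs:Artinian}, this halts after finitely many steps with a diagram in which every cell at $v$ is of easy type (although the number of cells at $v$ may have grown).

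\textbf{Stage 2 (edge cleanup).} Next I would remove the \emph{hard edges} at $v$ --- those whose label is a Weyl generator $w_{1j}$ or a Nielsen letter $R_{j1}$ or $L_{j1}$ obstructing a Stage~3 triple-cell map. After Stage~1, any hard edge sits between two cells of types matching one of the six cases of Lemma~\ref{doublecell}; the lemma then yields a double-cell reduction eliminating the edge. Such a reduction may reintroduce hard cells, so Stages~1 and~2 must be alternated; Lemma~\ref{obs:Artinian} again forces termination, producing a diagram in which all cells at $v$ are easy and all edges at $v$ are tame.

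\textbf{Stage 3 (degree reduction).} With all local obstructions removed, for any three consecutive cells in the gallery at $v$ a triple-cell reduction can be assembled: using Lemma~\ref{commuting} and its zero-coordinate analogue, choose compatible commuting single-cell maps at each of the three cells along the two shared edges at $v$. By Lemma~\ref{lem:kgallery}, each such triple-cell reduction decreases $\deg(v)$ by at least one, and since the resulting new cells are again of easy type and meet $v$ along tame edges, Stage~3 iterates without invoking Stages~1-2 again. After finitely many steps the gallery at $v$ becomes the full gallery and the next gallery map eliminates $v$, yielding the desired full reduction.

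The main obstacle is Stage~2: one must verify that \emph{every} pair of cells meeting at a hard edge, after Stage~1, falls into the short list of configurations covered by Lemma~\ref{doublecell}, and that the interleaving of Stages~1 and~2 does not loop. This is a nontrivial finite case analysis driven by Table~1, and the quantitative bound $n \geq 26$ is calibrated precisely to make all of these cases accessible: it gives enough free indices outside $\supp(\Delta)$ for every short gallery $\Delta$ to accommodate the commuting maps required for both compatibility at shared edges and the good-/zero-coordinate conditions $g(v) \geq 7$ or $z(v) \geq 7$.
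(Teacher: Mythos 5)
Your three-stage outline matches the strategy the paper actually follows, but two of your load-bearing claims do not hold up, and the step you yourself flag as "the main obstacle" is precisely the content of the paper's proof.

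First, the claim that $g(v)\geq 1$ or $z(v)\geq 1$ "can then be promoted to the `$\geq 7$' bounds using $n\geq 26$" is false: a single good coordinate cannot be promoted to seven (consider a vertex with one good coordinate and many zeros). The correct use of $n\geq 26$ is a pigeonhole on $m(v)+g(v)+z(v)=n>25=9+8+8$, which forces $m(v)>9$ \emph{or} $g(v)>8$ \emph{or} $z(v)>8$; one must then split into these three cases, and the case $m(v)>9$ (which your scheme does not cover — there $g(v)$ and $z(v)$ may both be small, so Lemma~\ref{doublecell} is unavailable) is handled separately and more simply by commuting gallery reductions via Lemma~\ref{commuting}. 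Note also that the case $\|v\|_\infty=1$, $\|v\|_1>9$ lands in the maximal-coordinate case, which is why the hypothesis is stated as it is. Second, your termination arguments for Stages~1 and~2 appeal to Lemma~\ref{obs:Artinian}, but that lemma only rules out sequences containing infinitely many \emph{full} reductions; single-cell and double-cell reductions at $v$ are not full reductions, so the lemma gives no termination for an interleaved Stage~1/Stage~2 loop. The paper instead gets termination structurally: Step~1 is exactly two rounds (the side-cell columns of Table~1 show the types stabilize at $1L,2L,5,6L,11L$), and Step~2 never returns to Step~1 because the eliminations are performed in a specific order ($11L$, then $2L$, then $5$) chosen so that at each stage the edge label being removed ($w_{1j}$, then $R_{j1}/L_{j1}$, then $R_{1j}$) occurs \emph{only} in the type being eliminated, and the non-cancelled side cells are of types already cleared or of the final easy types. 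That ordering, together with the observation that double- and triple-cell reductions never increase the cell count, is the actual argument; your proposal acknowledges but does not supply it. (A smaller point: in Step~3 the triple-cell reductions do reintroduce cells of types $5$ and $6R$ sharing an $R_{1i}$ edge, which must be removed by an immediate application of Lemma~\ref{doublecell}(2); the claim that Stage~3 never revisits Stage~2-type moves is not quite accurate, though the net cell count still drops.)
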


Indeed, there are only finitely many elements of $\dbZ^n$ of $\ell^{\infty}$-norm $1$. Therefore, combining Proposition~\ref{prop:removable}
and Lemma~\ref{obs:Renz} applied with $B=\{w\in\dbZ^n: \|w\|=1\}$, we deduce that $\IAR_{n,1}$ is finitely presented. 

The reason we are not restricting ourselves to vertices $v$ with $\|v\|_{\infty}>1$ in Proposition~\ref{prop:removable} and also allow some vertices
of $\ell^{\infty}$-norm $1$ is that we will need Proposition~\ref{prop:removable} in this form to prove finite presentability of $\IAR_{n,d}$ for $d>1$.

\begin{proof}[Proof of Proposition~\ref{prop:removable}]
Since $n> 25$, we must have $m(v)> 9$ or $g(v)>8$ or $z(v)>8$. We consider the 3 cases accordingly.

\skv
{\it Case 1:  $m(v)>9$}. Let $\Delta$ be any gallery at $v$ of length $\leq 3$. By Observation~\ref{obs:support}(ii)
we have $|\supp(\Delta)|\leq 8$. Since $m(v)>9$, there exist distinct $i,j\not\in\supp(\Delta)$
such that $v_i$ and $v_j$ are both maximal. Hence by Lemma~\ref{commuting} one of the commuting maps
$C_{\Delta}(x)$ with $x\in\{R_{ij}^{\pm 1},R_{ji}^{\pm 1}\}$ is a $\Delta$-reduction. Any such reduction decreases the degree of $v$, so after finitely many steps $v$ will be eliminated.
\skv
Recall that by our assumptions either $\|v\|_{\infty}>1$ or $\|v\|_{1}> 9$. If $\|v\|_{\infty}=1$, then every nonzero coordinate of $v$ is maximal and $m(v)=\|v\|_{1}> 9$, so Case~1 occurs. Thus from now on we can assume that $\|v\|_{\infty}>1$. This observation is not essential for Case~2, but will be used in Case~3.

\skv
{\it Case 2:  $g(v)>8$}. The proof in this case is considerably more involved and will be divided into 3 steps.
For brevity, we will say that a cell $\calF$ containing $v$ is {\it essential} if $1\in\supp(\calF)$. 
\skv

{\it Step 1:} We start by performing a single-cell reduction in the good coordinate case 
at every essential cell $\calF$ containing to $v$ (recall that such reductions have been described in \S~3.1). As one can see from Table~1 (see \S~3.1), some relation types never arise as side-cells, and in the new diagram all essential cells containing $v$ will have type $1,2,5,6$ or $11$. Moreover, after another round of single-cell reductions at cells of type $1R,2R,6R$ and $11R$, we can assume that the only remaining essential cell types are $1L,2L,5,6L$ and $11L$. 
Furthermore, it is easy to check that every cell $\calF$ containing $v$ which arises after the reductions described in the table has the following property (P):
\begin{itemize}
\item[(P)] If $e$ is any edge of $\calF$ such that $1\in\supp(e)$, then $\calF$ has an edge $e'$ (possibly equal to $e$) which contains
$v$ and such that $l(e')=l(e)$.
\end{itemize}

{\it Step 2:} We will now explain how to eliminate the remaining cells containing $v$. In the discussion below by a cell we will always mean a cell containing $v$. All the subsequent steps will only involve double-cell and triple-cell reductions, so the total number of cells will never go up for the rest of the reduction process, and moreover the process is guaranteed to terminate after sufficiently many triple-cell reductions.

Recall that after Step~1 we are left with cells of types $1L,2L,5,6L$ and $11L$ as well as non-essential cells. First observe that $w_{1i}$ only appears as an edge label in type $11L$, and by property (P) any cell of type $11L$ must have an edge labeled $w_{1i}$ and 
containing $v$. Thus, if there exists at least one cell of type $11L$, there must be two such cells $\calF_1,\calF_2$ sharing an edge $e$ labeled $w_{1i}$ and containing $v$. By Lemma~\ref{doublecell}(4) there exists a double-cell reduction which removes $e$. 
The side-cells that get canceled during this double-cell reduction have type $11L$, and the non-canceled side-cells have type $5$. Therefore, applying this operation several times we can eliminate all cells of type $11L$.

Next we use a similar argument to eliminate all cells of type $2L$ and then all cells of type $5$. In the case of type $2L$ we use the fact that these are the only remaining cells with edge labels $R_{i1}$ or $L_{i1}$, so we can apply Lemma~\ref{doublecell}(1). Once cells of type $2L$ are eliminated, cells of type $5$ are the only remaining cells with labels $R_{1i}$ and we can apply
Lemma~\ref{doublecell}(2).

\skv
{\it Step 3: }
We now arrive at a diagram where all essential cells at $v$ have type $1L$ or $6L$. For any such cell $\calF$, all edge labels of $\calF$ will commute
with $R_{1i}$ with $i\not\in \supp(\calF)$, so as in case~1, we can apply a triple-cell reduction to any gallery of length $3$ (or a double-cell reduction if $\deg(v)=2$). 

Note that after this reduction two cells of types $5$ and/or $6R$, sharing an edge $R_{1i}$, will appear; however, this is not a problem. Indeed, 
By Lemma~\ref{doublecell}(2) we can use a double-cell reduction to eliminate these two cells (and hence also the above edge $R_{1i}$), thereby producing a diagram where all cells again have type $1L$ or $6L$, but the total number of cells is 1 fewer than at the start of Step~3. Therefore, by repeating Step~3 sufficiently many times, we can eliminate all cells at $v$ (and hence $v$ itself).
This completes the proof in Case~2.

\skv {\it Case 3:  $z(v)>8$}. Recall that we are also assuming that $\|v\|_{\infty}>1$, so Lemma~\ref{doublecell}(5)(6) is applicable. The overall reduction procedure in this case is similar to (but substantially easier than) Case~2.

First, after two rounds of single-cell reductions we will be left with essential cells of type $2,4,7,12$. The only remaining cells containing edges with labels $w_{1i}$ or $w_{i1}$ are those of type $12$, and these can be eliminated using double-cell reductions
by Lemma~\ref{doublecell}(5). Similarly by Lemma~\ref{doublecell}(6) we can eliminate all cells of type $2$ as they are the only remaining  ones with edges labeled $R_{1i}$ or $L_{1i}$. 

At this point we are left with only cells of type $4$ and $7$ and non-essential cells. Note that in the zero coordinate case, single-cell reductions at cells of type $4$ and $7$ may only produce side cells of type $4$ and $7$. Moreover,
all edges in a cell of type $4$ or $7$ commute with $R_{i1}$ for any $i$. Therefore, using commuting triple-cell reductions of the form $C(R_{i1})$(for a suitable index $i$), we can eliminate all the remaining cells at $v$.
\end{proof}

\section{The column-stabilizer subgroup for $d=1$}

In this section we prove finite presentability of the column-stabilizer group $\IAC_{n,1}$ for $n\geq 26$. The proof is quite similar to the case of $\IAR_{n,1}$, and we will omit parts of the argument which are identical or analogous to the case of $\IAR_{n,1}$. Lemmas~\ref{lem:goodeasy}~and~\ref{lem:zeasy} remain valid for $\IAC_{n,1}$ (the proof remains the same).

The single-cell reductions for $\IAC_{n,1}$ will have almost the same edge labelings as those for $\IAR_{n,1}$, although  
the roles of the good coordinate and zero coordinate cases will be swapped, that is, the edge labelings used
in the good coordinate case for $\IAR_{n,1}$ will often work in the zero coordinate case for $\IAC_{n,1}$ and vice versa.

However, the vertex labels in the case of $\IAC_{n,1}$ will be very different (since $\IAC_{n,1}$ and $\IAR_{n,1}$ act 
differently on $\dbZ^n$), and therefore we will usually need a new argument to prove that a given single-cell map is a reduction. 

Once the single-cell reductions have been constructed, the remainder of the proof (elimination of an interior maximal vertex of sufficiently large norm) is mostly similar to the case of $\IAR_{n,1}$, but substantial extra work in the good coordinate case will be needed. It will be described in \S~\ref{sec:4GBonedim}.

As with $\IAR_{n,1}$, we start by presenting the summary table of single-cell reductions, followed by their individual descriptions. 

$\empty$
 \vskip .3cm
\begin{table}
\label{table:IAC}
\caption{Single-cell maps for the group $\IAC_{n,1}$.}

\begin{tabular}{|c|c|c|c|c|c|}
\hline
&&\multicolumn{2}{c|}{good coordinate case}&\multicolumn{2}{c|}{zero coordinate case}\\[4pt]
\hline
type & relation & side edges & side cells& side edges & side cells\\
\hline 
1R&$[R_{23},R_{12}]=R_{13}$&&$2,7$&$L_{14}$&$5, 6L$ \\[2pt]
\hline 
1L&$[L_{23},L_{12}]=L_{13}$&&$2,7$&$R_{14}$&$5, 6R$ \\[2pt]
\hline
2R&$[R_{13},R_{21}]=R_{23}$&$R_{41}$ or $L_{41}$&$2,4,7$&$L_{14}$&$2L, 5, 6L$ \\[2pt]
\hline
2L&$[L_{13},L_{21}]=L_{23}$&$R_{41}$ or $L_{41}$&$2,4,7$&$R_{14}$&$2R, 5, 6R$ \\[2pt]
\hline
3&$[R_{21},R_{32}]=R_{31}$&&$4,7$&&2, 6 \\[2pt]
\hline
4&$[R_{21},R_{31}]=1$&$L_{41}, L_{42}$&$3, 4$&& 2 \\[2pt]
\hline
5&$[R_{12},L_{13}]=1$&& $2,9$&$L_{21},R_{21}$& $1L, 5, 6R$ \\[2pt]
\hline
6R&$[R_{12},R_{35}]=1$&&$2,7$&$L_{14}$& $5, 6L$ \\[2pt]
\hline
6L&$[L_{12},R_{35}]=1$&&$2,7$&$R_{14}$& $5, 6R$ \\[2pt]
\hline
$6'$R&$[R_{12},R_{32}]=1$&&$2,7$&$L_{14}$& $5, 6L$ \\[2pt]
\hline
$6'$L&$[L_{12},R_{32}]=1$&&$2,7$&$R_{14}$& $5, 6R$ \\[2pt]
\hline
7&$[R_{21},R_{35}]=1$&$R_{41}$ or $L_{41}$&$4,7$&&$2, 6$ \\[2pt]
\hline
$7'$&$[R_{21},L_{23}]=1$&$R_{41}$ or $L_{41}$&$4,7$&&$2, 6$ \\[2pt]
\hline
8&$L_{12}L_{21}^{-1}R_{12}=w_{12}$&&$2, 4, 12$&& $1, 2, 5, 6', 11$ \\[2pt]
\hline
9&$L_{21}L_{12}^{-1}R_{21}=w_{21}$&&$2, 3, 4, 6', 12$&&$1, 2, 5, 6', 11$ \\[2pt]
\hline
10&$w_{12}^{-1}R_{21}w_{12}=R_{12}^{-1}$&&$2, 3, 12$&& $1,2,11$ \\[2pt]
\hline
11L&$w_{12}^{-1}R_{13}w_{12}=L_{23}^{-1}$&&$2, 6, 7, 12$&$L_{14},R_{24}^{-1}$& $5, 6, 11R$ \\[2pt]
\hline
11R&$w_{12}^{-1}L_{13}w_{12}=R_{23}^{-1}$&&$2, 6, 7, 12$&$R_{14},L_{24}^{-1}$& $5, 6, 11L$ \\[2pt]
\hline
12&$w_{12}^{-1}R_{31}w_{12}=R_{32}^{-1}$&$R_{41},R_{42}$&$4, 7, 12$&& $1, 2, 5, 6', 11$ \\[2pt]
\hline
\end{tabular}
\end{table}
\vskip .4cm

\subsection{Single-cell reductions for $\IAC_{n,1}$ in the good coordinate case}
$\empty$
\vskip .2cm
We will use the same notations and make the same basic assumptions as in the good coordinate case for $\IAR_{n,1}$. 
Thus, $v\in V(\calF)$ is a chosen interior maximal vertex, $M=\|\calF\|=\|v\|$, and we assume that $v=(M,v_2,v_3,d,*)$ where $M>d>0$,
that is, $d$ is good and positive. 

\vskip .2cm

{\it Type 1R: $l(\partial\calF)= ([R_{23},R_{12}]=R_{13})$}, $I=\{1,2,3\}$.  We will use the map shown in Figure~\ref{1GB}.

\input{figure1GB.tex}

The first coordinate of every vertex of $\calF$ is $a$, so we must have $a=M$. By Lemma~\ref{lem:goodeasy}(b), to prove that the map is a reduction it suffices to show that $|M-d|<M$, $|M-b-d|<M$ and $|M+c-b-d|<M$. The latter follows from the facts that $M>d>0$ and $b,b-M,c-b$ and $M+c-b$ are all $\calF$-coordinates and hence cannot exceed $M$ in absolute value.

\vskip .2cm

{\it Type 2R: $l(\partial\calF)= ([R_{13},R_{21}]=R_{23})$}, $I=\{1,3\}$. We will use the map $\phi$ in Figure~\ref{2GB}.
Thus, $M=a$ or $M=a-b$. In both cases we deduce that $|a-d|,|c-d|,|a-b-d|<M$ similarly to type~1. If $M=a-b$, these
inequalities are sufficient to prove that $\phi$ is a reduction using Lemma~\ref{lem:goodeasy}(a). Suppose now that $M=a$ and $a-b\neq M$ (whence $a-b$ is non-maximal). Lemma~\ref{lem:goodeasy}(a) is still applicable to the majority of new vertices of $\phi$, and the only possible exception is the vertex $w$ with $w_I=(a-b-c,b-d)$ in the case
where $a-b-c$ is maximal.  But the latter is only possible if $a=b=c=M$, in which case $w_I=(-M,b-d)<(a,c)$,
so $w<v$ as desired.

\input{figure2GB.tex}

\vskip .2cm

{\it Types 3R, 6R, 7: } 
 All the edge labels of $\calF$ commute with $R_{41}$. In addition, all vertices of $\calF$ have non-negative first coordinate
(this can be proved exactly as for type 1.GA). Therefore, the commuting map $C_{\calF}(R_{41}^{-1})$ is a reduction.
\vskip .2cm

\vskip .2cm

{\it Type 4: $l(\partial\calF)= [R_{21},R_{31}]$}, $I=\{1,2\}$. Unlike type 3R, we cannot use $C_{\calF}(R_{41}^{-1})$ since the vertex of $\calF$ opposite $v$ may have negative first coordinate. Instead
we will use the map $\phi$ in Figure~\ref{4GB}.

\input{figure4GB.tex}

Write $v=(a,b,c,d,*)$, so $a=M$. As in types 5.GA and 5.ZA we can assume that both edges at $v$ are incoming, so the other vertices of $\calF_I$
are as in Figure~\ref{4GB}. Since $a-b,a-c,b,c$ are all $\calF$-coordinates, we have
$0\leq a-b, a-c,b,c\leq M$ whence $|a-d|,|b-d|,|a-c-d|<M$. Lemma~\ref{lem:goodeasy}(a) is applicable to the majority
of new vertices of $\phi$ except possibly $(a-b,b-d,*)$ in the case $a-b=M$ and $(a-b-c,b-d,*)$ in the case
$|a-b-c|=M$. We claim in both cases these vetrices are $<(a,b,*)$, which would finish the proof.

Recall that $a=M$. Hence if $a-b=M$, we have $b=0$, whence $(a-b,b-d)=(M,-d)<(M,0)=(a,b)$.
Also $b,c\geq 0$, so $|a-b-c|=M$ forces $b=c=0$ or $b=c=M$. In the former case we have
$(a-b-c,b-d)=(M,-d)<(M,0)=(a,b)$, and in the latter case $(a-b-c,b-d)=(-M,M-d)<(M,M)=(a,b)$, as desired.
\skv

{\it Type 5: $l(\partial\calF)= [L_{12},R_{13}]$}, $I=\{1,2,3\}$. The map $\phi$ in this case is shown in Figure~\ref{5GB}.
Note that, similarly to type 5.ZA, index $4$ does not lie in the support of the replacement diagram of $\phi$.

\input{figure5GB.tex}

As with type~4, we can assume that $v=(a,b,c,*)$ and both edges at $v$ are incoming. Since $v$ is maximal and $a=M>0$, we must have $b,c>0$. This ensures that $|a-b|<M$ and either $|b+c-a|<M$ or $a=b=c=M$. If $|b+c-a|<M$, all the new vertices are smaller than $v$ by Lemma~\ref{lem:goodeasy}. And if $a=b=c=M$, then $v_I=(a,b,c)$ has $3$ maximal coordinates, while $w_I$ has at most 2 maximal coordinates for
any new vertex $w$. In both cases $\phi$ is a reduction.

\vskip .2cm
{\it Type 8: $l(\partial\calF)= (L_{12}L_{21}^{-1}R_{12}=w_{12})$}, $I=\{1,2\}$. We will use the map shown in Figure~\ref{8GB}.
The vertices of $\calF_I$ are
$(a,b), (a-b,b),(a-b,a),(a,b-a)$. Thus $a=M$ or $a-b=M$. One can prove that the map in Figure~\ref{8GB} is a reduction
similarly to type 9GA.

\input{figure8GB.tex}

\vskip .2cm

{\it Type 9: $l(\partial\calF)= (L_{21}L_{12}^{-1}R_{21}=w_{21})$}, $I=\{1,2\}$. The vertices of $\calF_I$ are
$(a,b), (a,b-a),(b,b-a),(a-b,b)$. Thus $a=M$, $b=M$ or $a-b=M$. If $b=M$ or $a-b=M$, then the second coordinate of $v$ is maximal (regardless of which of these 4 vertices equals $v$). In these cases we are reduced to type~8 by swapping the roles of the first and
second coordinates.

Thus, we only need to consider the case $a=M$. In this case we use the map in Figure~\ref{9GB}. One can prove that this map
is a reduction similarly to type~8.

\input{figure9GB.tex}

\vskip .2cm

{\it Type 10:  $l(\partial\calF)= (w_{12}^{-1}R_{12}w_{12}=L_{21}^{-1})$}, $I=\{1,2\}$. The vertices of $\calF_I$ are
$(a,b), (b,-a),(b-a,-a)$ and $(a,b-a)$. If $a=M$, the reduction map is shown in Figure~\ref{10GB}. In the remaining cases ($M=b$ or $b-a$) the map is constructed similarly, but will be simpler as all the side cells will be quadrilaterals.

\input{figure10GB.tex}

\vskip .2cm

{\it Type 11:  $l(\partial\calF)= (w_{12}^{-1}R_{13}w_{12}=R_{23})$}, $I=\{1,2,3\}$. The vertices of $\calF_I$ are
$(a,b,c), (-c,b,a)$, $(-c,b-a,a)$ and $(a,b-a,c)$. If $a=M$, the reduction map is shown in Figure~\ref{11GB}. In the remaining case $M=-b$ the diagram is constructed similarly, but will be simpler, as in type 10.

\input{figure11GB.tex}

\vskip .2cm

{\it Type 12:  $l(\partial\calF)= (w_{12}^{-1}R_{31}w_{12}=R_{32})$}, $I=\{1,2\}$. The reduction map in this case is similar to (but simpler than) those for types 10 and 11, as all the side cells will be quadrilaterals.

\subsection{Single-cell reductions for $\IAC_{n,1}$ in the zero coordinate case}
$\empty$

\vskip .2cm

As in the zero coordinate case for $\IAR_{n,1}$, we will assume that $v$ is an interior maximal vertex of $\calF$, 
$v_1$ is good and positive and $v_4=0$. As before, we also set $M=\|v\|=\|\calF\|$.

The majority of reduction maps described below will have side edges labeled by $R_{14}$ or $L_{14}$.

\vskip .2cm

{\it Type 1R: $l(\partial\calF)= ([R_{23},R_{12}]=R_{13})$}, $I=\{2,3,4\}$. All edge labels commute with $L_{14}$. In addition, all
vertices of $\calF$ have the same first coordinate, which by assumption is good. Therefore, the commuting map
$C_{\calF}(L_{14})$ is a reduction.

\vskip .2cm

{\it Type 2R: $l(\partial\calF)= ([R_{13},R_{21}]=R_{23})$}, $I=\{1,3,4\}$. If $b$ is good, the commuting map $C_{\calF}(L_{24})$ is a reduction.
And if $b$ is bad, then $a$ and $a+b$ are both good by Observation~\ref{obs:badsubgp} (since one of them must be good) and hence the map in 
Figure~\ref{2ZB} is a reduction by Lemma~\ref{lem:zeasy}(a).

\input{figure2ZB.tex}

\vskip .2cm

{\it Type 3R: $l(\partial\calF)= [R_{32},R_{21}]R_{31}^{-1}$}, $I=\{1,2,4\}$. All vertices of $\calF$ have the same third coordinate $c$.
If $c$ is good, the commuting map $C_{\calF}(L_{34})$ is a reduction,  so assume now that $c$ is bad. 
We claim that the map in Figure~\ref{3ZB} is a reduction. By Lemma~\ref{lem:zeasy}(a), it suffices to show that $a,a+b$ and $a-c$ are all good.

The vertices of $\calF_I$ are $(a-c,b,0)$, $(a,b,0)$, $(a+b,b,0)$, $(a+b,b+c,0)$ and $(a-c,b+c,0)$.
If $v_I$ is one of the first 3 vertices, $a-c,a$ and $a+b$ are good by Lemma~\ref{lem:zeasy}(b). And if $v$ is one of the last 2,
Lemma~\ref{lem:zeasy}(b) yields that $a-c$ and $a+b$ are good. Since $c$ is bad, we conclude that $a=(a-c)+c$ is also good.

\input{figure3ZB.tex}

{\it Type 4R: $l(\partial\calF)= [R_{21},R_{31}]$}, $I=\{1,4\}$. We will use the map $\phi$ in Figure~\ref{4ZB}. As in type 3R,
$a-b$, $a-c$ and $a-b-c$ are good by Lemma~\ref{lem:zeasy}(b), and hence $\phi$ is a reduction by Lemma~\ref{lem:zeasy}(a).

\input{figure4ZB.tex}

{\it Type 5: $l(\partial\calF)= [R_{12},L_{13}]$}, $I=\{2,3,4\}$. 
 As before, write $v=(a,b,c,0,*)$ (so that $v_I=(b,c,0)$) and we can assume that both edges at $v$ are incoming. Then  
the vertices of $\calF_I$ are $(b,c,0)$, $(b,c-a,0)$, $(b-a,c,0)$ and $(b-a,c-a,0)$.
We claim that the map in Figure~\ref{5ZB} is a reduction.
 
Indeed, each new vertex is obtained from a vertex of $\calF$ by replacing $0$ by $a$ or $a-c$ in the fourth coordinate. 
We know that $a$ is good. If $a-c$ is also good, this map is a reduction by Lemma~\ref{lem:zeasy}(a). And if $a-c$ is bad, then $c=a-(a-c)$ is good
whence $(b,c,0)<(b,c-a,0)$, contrary to the assumption that $v_I=(b,c,0)$.

\input{figure5ZB.tex}

\vskip .2cm

{\it Type 6R: $l(\partial\calF)= [R_{12},R_{35}]$}, $I=\{2,4,5\}$. In this case we can argue exactly as for type 1R. 

\vskip .2cm

{\it Type 7R: $l(\partial\calF)= [R_{21},R_{35}]$}, $I=\{1,4,5\}$. The argument in this case is similar to and easier than type 3R. 

\vskip .2cm

{\it Types 8-12}. The reduction diagrams in these cases have the same edge labelings as the corresponding diagrams in the good coordinate case
for $\IAR_{n,1}$ (type GA), and the justifications are also very similar. For completeness, we will present the diagrams
for types 8,9,10 and 11 in the appendix at the end of the paper.
\vskip .2cm
\subsection{Conclusion of the proof} \label{sec:4GBonedim}

As for the groups $\IAR_{n,1}$, it suffices to prove (the analogue of) Proposition~\ref{prop:removable}
for the groups $\IAC_{n,1}$. 
\begin{proof}[Proof of Proposition~\ref{prop:removable} for the groups $\IAC_{n,1}$] Similarly to the groups $\IAR_{n,1}$,
we consider $3$ cases: $m(v)>9$, $g(v)>8$ and $z(v)>8$ (recall that $m(v),g(v)$ and $z(v)$ denote the numbers of maximal, good and
zero coordinates of $v$, respectively). 

If $m(v)>9$, the proof is exactly the same as for the groups $\IAR_{n,1}$. The proof
in the case $z(v)>8$ is analogous to the case $g(v)>8$ for the groups $\IAR_{n,1}$. Finally, the proof in the case
$g(v)>8$ is mostly similar to the case $z(v)>8$ for the groups $\IAR_{n,1}$, but the argument requires a substantial modification
which is described below.
\vskip .12cm
Thus assume that $g(v)>8$. As in the case $z(v)>8$ for the groups $\IAR_{n,1}$, after two rounds of single-cell reductions,
we can assume that all essential cells containing $v$ have type $2,4,7$ or $12$ (recall that a cell is essential if its support contains $1$). Next we can use double-cell reductions to first eliminate cells of type $12$ and then cells of type $2$, so all the remaining essential cells
have type $4$ and $7$.
The difference comes at the next step. We cannot apply the commuting map of the form $C(R_{i1})$ to cells of type $4$, as this time the first coordinates in a cell of type $4$ may have
opposite signs and thus the map $C(R_{i1})$ may not be a reduction. 

Let us take a closer look at the reduction map in Figure~\ref{4GB}, as we will need to apply it not only to cells whose boundary label is exactly as in Figure~\ref{4GB}, but also to other cells of the same type. The direction and labels of the exterior horizontal edges (labeled by $R_{21}$) are important. On the other hand, if we change the direction of the exterior vertical edges or change their labels from $R_{31}$ to $L_{31}$, we still get a valid van Kampen diagram which yields a reduction map (with different vertex labels in the case of the direction change). Further, one can obtain an analogous reduction map by
replacing all labels $R_{31}$ by either $R_{ij}$ or $L_{ij}$ for fixed distinct $i,j\not\in \{1,2,3\}$ (this will be a single-cell reduction for a cell of type $7$). Below we will refer to such maps as {\it Figure~\ref{4GB}-type reductions}. Also, for brevity we will say that an edge has a 
{\it right} (resp. {\it left}) label if its label is equal to $R_{i1}$ (resp. $L_{i1}$) for some $i$.

The following lemma shows how one can construct multiple-cell reductions starting from Figure~\ref{4GB}-type reductions.

\begin{Lemma}\label{obs:4GB}
Let $\calF$ and $\calF'$ be cells which share an edge $e$ containing $v$, where $\calF$ has type $4$, and let 
$\phi$ be a Figure~\ref{4GB}-type $\calF$-reduction. Assume in addition that one of the following holds:
\begin{itemize}
\item[(i)] $\calF'$ has type $7$.
\item[(ii)] $\calF$ has type $4$ and the side cell of $\phi$ containing $e$ has type $3$ (so $e$ plays the role of the bottom horizontal edge in Figure~\ref{4GB}).
\item[(iii)] $\calF$ has type $4$ and the side cell of $\phi$ containing $e$ has type $4$. Moreover, if $e_1$ (resp. $e_1'$) is the edge of
$\calF$ (resp. $\calF'$) which contains $v$ and is different from $e$, then either $e_1$ and $e_1'$ have the same direction at $v$ and both have right labels or left labels, or $e_1$ and $e_1'$ have opposite directions at $v$ and their labels are $R_{i1}$ and $L_{j1}$ for some $i\neq j$.
\end{itemize}
Then there exists a Figure~\ref{4GB}-type reduction $\phi'$ at $\calF'$ which is compatible with $\calF$ at $e$, and thus $\phi$ can be extended to a double-cell $\calF\cup\calF'$-reduction.
\end{Lemma}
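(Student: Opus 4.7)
The plan is to verify the lemma by constructing, in each of the three cases (i)--(iii), an explicit Figure~\ref{4GB}-type reduction $\phi'$ at $\calF'$ such that the side cells of $\phi$ and $\phi'$ containing $e$ form a cancellable pair. Once this is established, Definition~\ref{def:gallerycompatible} yields the compatibility of $\phi$ and $\phi'$ at $e$, and the resulting double-cell map $\phi\cup\phi'$ is automatically a reduction since each of $\phi,\phi'$ is.

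The argument hinges on the flexibility of Figure~\ref{4GB}-type reductions described in the paragraph preceding the lemma: the two ``horizontal'' exterior edges (with label $R_{21}$) are forced, but the two ``vertical'' exterior edges may independently have their direction reversed, and their label may be taken to be $R_{31}$ or $L_{31}$ (type~$4$) or any $R_{ij}$ or $L_{ij}$ with fixed $i,j\not\in\{1,2,3\}$ (type~$7$). Once these choices are made, the four side edges and hence the four side cells of the reduction are uniquely determined. In Case~(i), the shared edge $e$ must carry the label $R_{21}$, as this is the only edge label common to a type~$4$ cell and a type~$7$ cell; take $\phi'$ to be the Figure~\ref{4GB}-type reduction on $\calF'$ with exterior vertical label $R_{35}$. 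The side cell of $\phi'$ at $e$ is then a pentagon whose side-edge labels $L_{41},L_{42}$ at the two endpoints of $e$ agree with those of the side cell of $\phi$ at $e$, and since $\calF$ and $\calF'$ lie on opposite sides of $e$, the two pentagons meet along $e$ with matching boundary labels and form a cancellable pair. Case~(ii) is essentially identical, replacing $R_{35}$ by $R_{31}$ in the construction of $\phi'$.

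Case~(iii) is the main obstacle. Here $e$ is a vertical edge of both $\calF$ and $\calF'$, so the side cell of $\phi$ at $e$ is a type~$4$ quadrilateral whose two side-edge labels at the endpoints of $e$ are equal (both $L_{41}$ or both $L_{42}$), and similarly for $\phi'$. Which of $L_{41}$ or $L_{42}$ appears is dictated by the position of $e$ relative to the other edge $e_1$ (resp.\ $e_1'$) at $v$ in the Figure~\ref{4GB} layout of $\calF$ (resp.\ $\calF'$). The two subcases of the hypothesis on $e_1,e_1'$ correspond exactly to the two ways in which the layouts of $\phi$ and $\phi'$ can be aligned so that the $L_{41}/L_{42}$ labels agree at both endpoints of $e$ and the interior vertical edges of the two quadrilaterals are consistently labeled and oriented: the ``same direction and matching right/left labels'' subcase allows $\phi'$ to be chosen with the same orientation convention as $\phi$, while the ``opposite directions with mixed $R/L$ labels'' subcase calls for the freedom to reverse the direction of the interior vertical edge of $\phi'$. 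In each subcase the two quadrilaterals then form a cancellable pair.

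The main difficulty is the systematic bookkeeping in Case~(iii): one must track, for each endpoint of $e$, how the label $L_{41}$ vs $L_{42}$ is determined by the layout at $v$, and verify that the two subcases of the hypothesis precisely characterize the configurations which admit compatible Figure~\ref{4GB}-type reductions. Configurations outside the hypothesis would produce an essential mismatch between the side cells of $\phi$ and $\phi'$ that no choice of directions and vertical labels could reconcile, which is why the extra combinatorial condition on $e_1, e_1'$ is needed.
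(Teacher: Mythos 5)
Your overall strategy --- choose $\phi'$ so that the two side cells at $e$ form a cancellable pair, then invoke Definition of compatibility --- is the right framework and is how the paper proceeds. Cases (i) and (ii) are plausible at this level of detail (the paper dismisses them as ``similar but easier''), modulo a small slip: since $e$ contains the distinguished vertex $v$, the side edges of the relevant side cell at the endpoints of $e$ are both $L_{41}$ in the standard layout; the $L_{41}$ versus $L_{42}$ dichotomy distinguishes the two ends of a \emph{horizontal} edge, not the two vertical edges.

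The genuine gap is in case (iii), which you correctly flag as the main difficulty but then do not resolve. The paper's mechanism is to act by signed permutation automorphisms: if $e_1$ and $e_1'$ are both incoming at $v$ with right labels the claim is immediate from Figure~\ref{4GB} (and symmetrically for left labels); if both are outgoing with right labels $R_{1i}$, $R_{1j}$ one either observes that $\calF$ and $\calF'$ are mirror images when $i=j$, or applies $x_i\mapsto x_i^{-1}$, $x_j\mapsto x_j^{-1}$ to flip both edges to the incoming/left-label configuration; and in the mixed subcase one flips exactly one of the two edges. This is precisely where the hypothesis $i\neq j$ enters: the automorphism $x_j\mapsto x_j^{-1}$ must change the direction and label of $e_1'$ without disturbing $e_1$, which fails if the two edges share the index. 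Your proposal contains none of this: the ``freedom to reverse the direction of the interior vertical edge of $\phi'$'' is not the relevant freedom (the direction and label of the horizontal $R_{21}$-edges --- which is the role played by $e_1$ and $e_1'$ --- are exactly the data that Figure~\ref{4GB} does \emph{not} allow you to vary, so nonstandard configurations must be normalized by an automorphism before a Figure~\ref{4GB}-type reduction can be written down at all). The assertion that ``the two subcases of the hypothesis correspond exactly to the two ways the layouts can be aligned'' is the entire content of (iii), and it is stated rather than proved; in particular you give no reason why $i\neq j$ is required, and you omit the mirror-image subcase. As written, the proposal is a plan for case (iii) rather than an argument.
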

\begin{proof} We will sketch a proof of (iii). The proofs of other parts are similar but easier. If $e_1$ and $e_1'$ are both incoming at $v$
and have right labels, the statement is clear from Figure~\ref{4GB}, and by obvious symmetry, the same is true if $e_1$ and $e_1'$ are both incoming at $v$ and have left labels. 

Suppose now that $e_1$ and $e_1'$ are both outgoing at $v$ and both have right labels, say, $R_{1i}$ and $R_{1j}$ (again the case of two left labels is analogous). If $i=j$, the cells
$\calF$ and $\calF'$ are mirror images of each other, so the assertion of Lemma~\ref{obs:4GB} is trivially true (of course, in this case we could simply cancel $\calF$ and $\calF'$ right away), so assume that $i\neq j$. If we apply the automorphism $x_i\mapsto x_i^{-1}$, $x_j\mapsto x_j^{-1}$,
the edges $e_1$ and $e_1'$ will change their orientation, and their labels will change to $L_{1i}$ and $L_{1j}$, reducing to the case from the first paragraph. 

Finally, the last case where $e_1$ and $e_1'$ have opposite directions, $l(e_1)=R_{i1}$ and $l(e_1')=L_{j1}$ with $i\neq j$ is treated similarly to the previous case. The hypothesis $i\neq j$ is necessary to ensure that we can apply an automorphism to change the direction (and the label)
for exactly one of the edges $e_1$ and $e_1'$ and not both of them.
\end{proof}

We proceed with the proof of Proposition~\ref{prop:removable}. Recall that by assumption, all the remaining cells containing $v$ have type $4$ or $7$ or are not essential. If $\deg(v)=2$, it is routine to construct a double-cell reduction (which will eliminate $v$). 

Next suppose that $\deg(v)> 3$. 
Below we will use a case-by-case argument to show that there exists a triple-cell $\calG$-reduction for some gallery $\calG$ at $v$ such that 
the new cells at $v$ (which must be adjacent to each other) both have type $4$, both have type $7$, or have types $3$ and $7$. In the latter case,
these 2 new cells can then be eliminated using a double-cell reduction of the form $C(R_{i1})$ (this reduction may only produce new cells of types $4$ and $7$). 

\vskip .12cm
{\it Case 1: there is a length $3$ gallery $\calG$ at $v$ not containing any cells of type $4$}. In this case we use the commuting reduction
$C_{\calG}(R_{i1}^{-1})$ for any good index $i\not\in \supp(\calG)$ (as before, such $i$ exists since $g(v)>8$).
\vskip .12cm

{\it Case 2: there is a length $3$ gallery $\calG$ at $v$ whose middle cell has type $4$ and whose other two cells have other types.}
Denote the cells of $\calG$ by $\calF_1,\calF_2$ and $\calF_3$ (where $\calF_2$ is the middle cell). Let $\phi_2$ be any Figure~\ref{4GB}-type
$\calF_2$-reduction. By Lemma~\ref{obs:4GB}(i), there exist Figure~\ref{4GB}-type reductions $\phi_i$ at $\calF_i$ for $i=1,3$ which are compatible with $\phi_2$. Then the map $\phi_1\cup\phi_2\cup\phi_3$ is a $\calG$-reduction with required properties.
\vskip .12cm

{\it Case 3: there is a length $3$ gallery $\calG$ at $v$ which has exactly $2$ cells of type $4$ including the middle one.} Let $\calF$ and
and $\calF'$ be the cells of type $4$ in $\calG$. Thus, $\calF$ and $\calF'$ are adjacent, and the remaining cell of $\calG$
has type $7$ (it cannot be non-essential as it shares an edge with a cell of type $4$). Let $e$ be the unique common edge of $\calF$ and $\calF'$. 
Start with any Figure~\ref{4GB}-type $\calF$-reduction such that the side cell at $e$ has type $3$. By Lemma~\ref{obs:4GB}(ii),
we can extend it to a double-cell reduction $\phi_2$ on $\calF\cup\calF'$. And since the remaining cell of $\calG$ has type $7$,
by Lemma~\ref{obs:4GB}(i), we extend $\phi_2$ to a triple-cell $\calG$-reduction.
\vskip .12cm

{\it Case 4: there is a length $3$ gallery $\calG$ at $v$ all of whose cells have type $4$.} Let $e_1,e_2,e_3,e_4$ be the edges of $\calG$
containing $v$ in counterclockwise order, and for $i=1,2,3$ let $\calF_i$ be the cell of $\calG$ containing $e_i$ and $e_{i+1}$.

Assume first that $e_1$ and $e_3$ satisfy the hypothesis of Lemma~\ref{obs:4GB}(iii). We can start with any 
Figure~\ref{4GB}-type $\calF$-reduction such that the side cell at $e_2$ has type $4$, then use Lemma~\ref{obs:4GB}(iii)
to extend it to a double-cell $\calF_1\cup\calF_2$-reduction $\phi_2$ and finally use Lemma~\ref{obs:4GB}(ii)
to extend $\phi_2$ to a triple-cell $\calG$-reduction.

We now consider the general case. Without loss of generality assume that $l(e_3)=R_{i1}$ for some $i$.
We can always perform a double-cell $\calF_2\cup\calF_3$-reduction such that the side cells at $e_3$
(the common edge of $\calF_2$ and $\calF_3$) have type $3$ (this is possible by Lemma~\ref{obs:4GB}(ii)). This will replace 
$\calF_2\cup\calF_3$ by another gallery of length $2$ with cells of type $4$, but the label of $e_3$ (or, more precisely, the
new edge in the place of $e_3$) will change from $R_{1i}$ to $L_{1k}$, where $k$ is any good index not in $\supp(\calG)$ (which is for us to choose),
and the direction of $e_3$ will stay the same. After applying such double-cell reductions at most twice, we can ensure that $e_1$ and $e_3$ satisfy the hypothesis of Lemma~\ref{obs:4GB}(iii), reducing to the case in the previous paragraph.
\skv
This completes the proof in the case $\deg(v)>3$, and it remains to consider the case where $\deg(v)=3$. In this case $\calG$
will denote the full gallery at $v$. If $\calG$ has no cells of type~$4$, we can use a commuting reduction of the form $C(R_{i1})$,
so assume that $\calG$ has at least one cell of type $4$. Below we will consider the case where all cells have type $4$;
the other cases can be treated similarly but are slightly easier.

Denote the cells of $\calG$ by $\calF_1,\calF_2,\calF_3$ and the edges of $\calG$ containing $v$ by $e_1,e_2,e_3$, so that
$\calF_i$ contains $e_i$ and $e_{i+1}$, with indices taken mod $3$. The edges $e_i$ must have distinct supports since any two of them
are adjacent edges of a cell of type $4$. Applying suitable automorphisms as in the proof of Lemma~\ref{obs:4GB}(iii),
we can assume that all edges $e_i$ are incoming at $v$, and then arguing as in Case~4 above, we can assume that all $e_i$
have right labels. In this case we can construct a full $\calG$-reduction similarly to Case~3 above. Even though we cannot
make single-cell reductions compatible at all edges (we are forced to have side cells of types $3$ and $4$ at one of the edges),
we still obtain a full reduction after cell cancellation -- see Figure~\ref{4GBgallery} below.
\input{figure4GBgallery.tex}
\end{proof}

\section{Finite presentability of $\IAR_{n,d}$ and $\IAC_{n,d}$ for $d>1$}

In this section we will prove Theorem~\ref{thm:finpres} for arbirary $d$.

\subsection{Outline}

Throughout this section we fix integers $1\leq d\leq n$, let $G=\widetilde{\SAut(F_n)}$ and 
$H=\widetilde{\IAR_{n,d}}$ or $\widetilde{\IAC_{n,d}}$. Recall some of the notations and conventions introduced in \S~2: 
\begin{itemize}
\item $(\calX,\calR)$ denotes the optimized Gersten's presentation for $G$ (see \S~2.3); 
\item $Um_{d\times n}(\dbZ)$ denotes the set of $d\times n$ matrices over $\dbZ$ whose columns span $\dbZ^d$; 
\item $Um_{d\times n}(\dbZ)$ is isomorphic to $G/H$ as a $G$-set
where  $G$ acts on $Um_{d\times n}(\dbZ)$ via \eqref{eq:actionrow} if $H=\widetilde{\IAR_{n,d}}$ and
via \eqref{eq:actioncolumn} if $H=\widetilde{\IAC_{n,d}}$. 
\item As before, we will identify $G/H$ with $Um_{d\times n}(\dbZ)$ via the map from Lemma~\ref{obs:isomorphism}. 
\end{itemize}

Unless specifically stated otherwise, 
a diagram in this section will mean a 
$Um_{d\times n}(\dbZ)$-labeled diagram over $(\calX,\calR)$ with the respective action of $G$.
However, $Um_{k\times n}(\dbZ)$-labeled diagrams for $1\leq k<d$ will naturally arise as well 
(see Observation~\ref{obs:reduction} below).

Given a diagram $\Omega$ and a non-empty subset $I$ of $\{1,\ldots, d\}$, we define
$row_{I}\Omega$ to be the projection of $\Omega$ onto the set of rows indexed by $I$, that is,
$row_{I}\Omega$ has the same vertices, edges and edge labels as $\Omega$, and the label of each vertex
in $row_{I}\Omega$ is obtained from the label of the corresponding vertex of $\Omega$ by keeping only
the rows indexed by elements of $I$. We will use simplified notations in two special cases. 
Given $1\leq k\leq d$, we let

\begin{itemize}
\item $row_k \Omega$ be the projection of $\Omega$ onto its $k^{\rm th}$ row, that is, $row_k \Omega=row_{\{k\}} \Omega$ and
\item $bot_k \Omega$ be the projection of $\Omega$ onto its bottom $k$ rows, that is, 
$$bot_k \Omega=row_{\{d-k+1,d-k+2,\ldots, d\}} \Omega.$$
\end{itemize}

The following basic observation will allow us to use many results previously established in the case $d=1$
without an explicit reference to the groups $\IAR_{n,1}$ and $\IAC_{n,1}$:

\begin{Observation}
\label{obs:reduction}
Let $\Omega$ be a $Um_{d\times n}(\dbZ)$-labeled diagram with $G$-action given by \eqref{eq:actionrow} (resp. \eqref{eq:actioncolumn}), and let $I$ be any nonempty subset of $\{1,\ldots, d\}$. Then
$row_I\Omega$ is a $Um_{|I|\times n}(\dbZ)$-labeled diagram with $G$-action given by \eqref{eq:actionrow} (resp. \eqref{eq:actioncolumn}). In particular, for any $1\leq k\leq d$, the diagram $row_k\Omega$ is $Um_{1\times n}(\dbZ)$-labeled.  
\end{Observation}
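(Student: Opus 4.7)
The plan is to verify two things: first, that for either $G$-action on $Mat_{d\times n}(\dbZ)$, the operation $A\mapsto A_I$ of selecting the rows indexed by $I$ is $G$-equivariant; and second, that this projection sends $Um_{d\times n}(\dbZ)$ into $Um_{|I|\times n}(\dbZ)$. Once these are established, the labeling condition defining a $Z$-labeled diagram will pass to the projection automatically.

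For equivariance, observe that both formulas \eqref{eq:actionrow} and \eqref{eq:actioncolumn} act on a matrix $A$ by right multiplication by a matrix depending only on $g$ (namely $[g]^{-1}$ for the row action and $[g]^T$ for the column action). Selecting a subset of rows commutes with right multiplication: if $E_I$ denotes the $|I|\times d$ matrix picking out the rows indexed by $I$, then $A_I = E_I A$, so $(AM)_I = E_I AM = A_I M$ for any matrix $M$. Hence $(g.A)_I = g.(A_I)$ for both actions.

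For preservation of unimodularity, let $A\in Um_{d\times n}(\dbZ)$ so that the columns of $A$ span $\dbZ^d$. The map $\pi_I:\dbZ^d\to\dbZ^{|I|}$ given by $x\mapsto E_I x$ (projection onto the coordinates in $I$) is surjective, so its restriction to the submodule generated by the columns of $A$ is also surjective. Since the columns of $A_I=E_I A$ are precisely the $\pi_I$-images of the columns of $A$, they span $\dbZ^{|I|}$, i.e.\ $A_I\in Um_{|I|\times n}(\dbZ)$.

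Combining these two observations: $row_I\Omega$ has the same edges, edge labels, and (by the definition of $row_I\Omega$) vertex labels in $Um_{|I|\times n}(\dbZ)$; and for every edge $e$ from $v$ to $w$ in $\Omega$ the identity $l(w)=l(e).l(v)$ in $Um_{d\times n}(\dbZ)$ projects, via the equivariance of $A\mapsto A_I$, to $l(w)_I=l(e).l(v)_I$ in $Um_{|I|\times n}(\dbZ)$ under the corresponding ($|I|$-row) action of the same type. The specialization to $k=1$ is immediate by taking $I=\{k\}$. There is no real obstacle here; the only thing worth emphasizing is the uniform nature of both actions as right multiplications, which is exactly what makes row-selection commute with them.
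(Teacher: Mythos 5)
Your proposal is correct and matches the paper's justification, which consists precisely of the remark that both actions are right multiplications so that row-selection (left multiplication by $E_I$) commutes with them; your additional check that the projection $\dbZ^d\to\dbZ^{|I|}$ preserves the spanning property of the columns is the only other ingredient needed and is handled correctly.
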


The reason we retain a $G$-action by projecting to rows and not columns is that in both \eqref{eq:actionrow} and \eqref{eq:actioncolumn} $G$ acts by right multiplication. 
\skv

Before proceeding, we introduce some additional terminology. The invariants below will only be used for matrices
in $Um_{d\times n}(\dbZ)$ but can be defined for arbitrary $d\times n$ matrices over $\dbZ$. 
For each $0\leq k\leq d$
we will identify $\dbZ^k$ with the set of vectors in $\dbZ^d$ whose last $d-k$ coordinates are equal to $0$. 
The {\it depth} of a vector $c\in\dbZ^d$ is the smallest $k$ such that $c\in\dbZ^k$.

\begin{Definition}\rm Let $0\leq k\leq d$.
\begin{itemize}
\item[(a)] Elements of $\dbZ^k$ will be called {\it $k$-vectors}. In other words, $c\in\dbZ^d$ is a $k$-vector if and only if its
depth is $\leq k$ (in particular, a $0$-vector is just the zero vector).
\item[(b)] If $c\in\dbZ^d$ is a $k$-vector which was defined as a column
of some $v\in Mat_{d\times n}(\dbZ)$, we will say that $c$ is a {\it $k$-column} of $v$.
\item[(c)] We will say that $v\in Mat_{d\times n}(\dbZ)$ is 
{\it $k$-unimodular} if its $k$-columns span $\dbZ^k$.
\item[(d)] The {\it $k$-defect} of $v\in Mat_{d\times n}(\dbZ)$ denoted by  $def_k(v)$ is the number of non-$k$-columns of $v$.
\end{itemize}
Now let $\Omega$ be a $Um_{d\times n}(\dbZ)$-labeled diagram.
\begin{itemize}
\item[(d)] We will say that $\Omega$ is {\it $k$-unimodular} if all of its vertices are $k$-unimodular (thus,
$\Omega$ is always $d$-unimodular).
\item[(e)] We define $def_k(\Omega)$, the {\it $k$-defect of $\Omega$}, to be the maximum of the $k$-defects of its vertices.
\end{itemize}
\end{Definition}

\paragraph{\bf One-dimensional vs higher-dimensional settings.} Many definitions and constructions introduced in the special case $d=1$ admit
obvious generalizations for arbitrary $d$. In some other situations, we will use what we did in the case $d=1$ as a tool to establish the corresponding
result for $d>1$. We will refer to the cases $d=1$ and $d>1$ as {\it one-dimensional setting} and {\it higher-dimensional setting}, respectively.
\footnote{We will avoid explicit references to the value of $d$ in such situations to avoid confusion since we treat $d$ as a fixed integer throughout this section.}

\skv
\paragraph{\bf Constant $C$.} Throughout this section $C$ will denote a constant with the property that for any
gallery $\Delta$ of length $\leq 3$ we have $|\supp(\Delta)|\leq C$. By Observation~\ref{obs:support}(ii), the smallest integer with this property is $C=8$, so the reader may simply replace $C$ by $8$ in all formulas
below; however, we feel that some of the computations may be easier to follow if one thinks of $C$ as an unspecified constant.
\skv

\paragraph{\bf Lifting diagram maps.} In \S~3,4 we constructed a number of specific diagram maps in the one-dimensional setting.
If $\phi$ is one of those maps, we can consider the corresponding map $\phi^*$ of  $Um_{d\times n}(\dbZ)$-labeled diagrams.
By definition, the underlying unlabeled\footnote{Recall that this means that the vertices are not labeled, but the edges are still labeled.} van Kampen diagram for $\phi^*$ is the same as for $\phi$ and the labels of all new vertices are obtained from the labels of old vertices by the same formulas.
In this case we will say that $\phi^*$ is a {\it lift of $\phi$}. The majority of diagram maps considered in this section
will be constructed in this way.

\skv

\paragraph{\bf Extended labels.} A new key feature in the higher-dimensional setting is that the order on the vertices
will not be entirely determined by their labels and instead we will use the {\it extended labels}. Let $Z=Um_{d\times n}(\dbZ)$.
Later in this section we will introduce two super-Artinian orders on $Z$, called Step~1 pre-order and Step~2 pre-order 
(these orders will be used in different parts on this proofs).
Relative to each of these orders, we will consider the poset $Z'=Z\times \dbZ_{\geq 0}$ ordered lexicographically, that is,
$(z_1,h_1)<(z_2,h_2)$ if and only if $z_1<z_2$ or $z_1$ and $z_2$ are incomparable and $h_1<h_2$.
It is clear that $Z'$ is also super-Artinian.
 
To each vertex $v$ of a diagram $\Delta$ we will associate its {\it extended label} $L_{\Delta}(v)$
such that $$L_{\Delta}(v)=(l(v),h_{\Delta}(v))$$ for some $h_{\Delta}(v)\in \dbZ$, that is, the
$Z$-component of the extended label of $v$ is its usual label.

The group $G$ will only act on $Z$, not on $Z'$, but this will not introduce any complications for the proof.
\skv
A vertex $v\in V(\Delta)$ will be called {\it maximal for $\Delta$} if $L_{\Delta}(v)$ is a maximal element
of the set $\{L_{\Delta}(w): w\in V(\Delta)\}$.

\skv
Let $\psi:\Delta\to\Delta'$ be a diagram map. Recall that in the one-dimensional setting, a vertex $v\in V(\Delta')$
was called new (for $\psi$) if $v\in V(\Delta')\setminus V(\Delta)$ and old otherwise. With the use
of extended labels, things become more complicated as a vertex may remain in the diagram, but acquire a new extended label, and our original definition of reduction becomes ambiguous in the presence of such vertices.

To address this issue, in the higher-dimensional setting we will use the following terminology:

\begin{Definition}\rm \label{def:pseudonew}
Let $\psi:\Delta\to\Delta'$ be a diagram map. A vertex $v\in V(\Delta')$ will be called
\begin{itemize}
\item[(i)] {\it new} if $v\in V(\Delta')\setminus V(\Delta)$;
\item[(ii)] {\it old} if $v\in V(\Delta')\cap V(\Delta)$ and $h_{\Delta'}(v)=h_{\Delta}(v)$;
\item[(iii)] {\it pseudo-new} if $v\in V(\Delta')\cap V(\Delta)$ but $h_{\Delta'}(v)\neq h_{\Delta}(v)$.
\end{itemize}
\end{Definition}
Thus, by our convention a pseudo-new vertex belongs to both the old diagram $\Delta$ and the new diagram $\Delta'$,
but represents different elements of $Z'$ depending on which diagram we are considering. This is why we explicitly refer to the diagram in the notation for the extended label.

We can now give a formal definition of a reduction applicable in the higher-dimensional setting.
 
\begin{Definition}\rm
\label{def:reductionhigh}
Let $\psi:\Delta\to\Delta'$ be a diagram map and $v\in V(\Delta)$ a maximal vertex of $\Delta$. We will say that $\psi$
is a {\it reduction} if $L_{\Delta'}(w)<L_{\Delta}(v)$ for any new or pseudo-new vertex $w$.
\end{Definition}
As in the one-dimensional setting, it is straightforward to check that this definition does not depend on the choice
of a maximal vertex $v$.
\skv

While this definition technically does not fit the setting of Section~2, the assertion of Lemma~\ref{obs:Artinian} remains valid (with the same proof). 
\skv
\skv

\paragraph{\bf Outline of the proof of Theorem~\ref{thm:finpres} for $d>1$.}
As in the one-dimensional setting, given a vertex $v$ of some diagram, by $\|v\|$ we will denote the
$\ell^{\infty}$-norm of its label. Given a diagram $\Delta$, we let $\|\Delta\|=\max\{\|v\|: v\in V(\Delta)\}$.

\skv
We need one more technical definition:

\begin{Definition}\rm 
\label{def:small}
A matrix $v\in Um_{d\times n}(\dbZ)$ will be called {\it small} if
\begin{itemize}
\item[(i)] $\|v\|=1$;
\item[(ii)] $v$ differs from the $d\times n$ ``identity'' matrix $(I_{d\times d} \mid 0_{d\times (n-d)})$
in at most $8$ columns. In particular, every row of $v$ has at most $9$ nonzero entries and hence by (i) has $\ell^1$-norm $\leq 9$;
\item[(iii)] $v$ is $k$-unimodular for all $k\leq d$.
\end{itemize}
A subset of $Um_{d\times n}(\dbZ)$ is {\it small} if all of its elements are small.
\end{Definition}

The following result will be established at the end of this section.

\begin{Proposition}
\label{prop:essential2} There exists a small subset $E$ of $G/H$ whose preimage in $Cay(G,\calX)$ is connected
 (here we identify $G/H$ with $Um_{d\times n}(\dbZ)$ as before).
\end{Proposition}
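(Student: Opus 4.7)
The plan is to construct $E$ as the projection to $Um_{d\times n}(\dbZ)$ of a carefully chosen finite collection of paths in $Cay(G,\calX)$ that together realize a finite generating set $S$ of $H$. Specifically, for each $s\in S$ I will fix a word $w_s = y_1 y_2 \cdots y_{m_s}$ in $\calX^{\pm 1}$ representing $s$ and take $E$ to be the union over $s\in S$ of the projections $\{\rho(y_j\cdots y_1) : 0 \leq j \leq m_s\}$, together with $\rho(1)=I_{n,d}$. If every such projection is small, then $E$ will be small by construction. A standard argument (essentially the converse of the construction in the proof of Claim~\ref{claim:ess}) will then show that $\rho^{-1}(E)$ is connected: the chosen paths realize a generating set of $H$ within $\rho^{-1}(E)$, and right-translation of these paths by elements of $H$ preserves their projection under $\rho$, so the connected component of $1$ in $\rho^{-1}(E)$ contains all of $H$ and hence (by translation via $H$) all of $\rho^{-1}(E)$.

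I would choose $S$ in two parts. Part~I consists of $\calX\cap H$, which in the row-stabilizer case is $\{R_{ij},L_{ij}: j>d\}\cup\{w_{ij}:d<i<j\}$ (symmetrically in the column case). For each such $s$ the word $w_s=s$ has length $1$, and a direct computation using Lemma~\ref{obs:actions} shows $s.I_{n,d}=I_{n,d}$, so the entire path projects to $\{I_{n,d}\}$ and contributes nothing beyond $I_{n,d}$ to $E$. Part~II consists of a fixed finite set of Magnus-type generators for the Torelli component of $H$. When $n\geq d+115$ there is ample room to pick most of these generators with all indices in $\{d+1,\ldots,n\}$, for which the same row-preservation argument forces every intermediate projection to equal $I_{n,d}$. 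Only a bounded number of ``mixing'' generators, which must involve at least one index $\leq d$ in order for $S$ to generate $H$ rather than a proper subgroup, require genuine analysis. These can be chosen as specific conjugates $htyh^{-1}$ of pure Magnus generators $t$ by elements $h\in\calX\cap H$, each expressible by a word of bounded length involving at most $8$ distinct indices.

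For each mixing word $w_s$ I would trace the column evolution under the row or column action step by step and verify the three defining smallness conditions at every intermediate vertex: $\ell^\infty$-norm equal to $1$, at most $8$ columns deviating from those of $I_{n,d}$, and $k$-unimodularity for all $k\leq d$. The first two conditions follow from the bounded length and bounded support of $w_s$, since each Nielsen step modifies a single column and the cumulative set of touched columns has size at most $|\supp(w_s)|\leq 8$. The hard part will be verifying $k$-unimodularity, since temporarily perturbing a single column of depth $j\leq d$ can destroy the spanning of $\dbZ^j$. To prevent this, the mixing words must be chosen so that any momentary alteration of a column of depth $\leq j$ leaves sufficiently many other columns of depth $\leq j$ intact to span $\dbZ^j$; the hypothesis $n\geq d+115$ provides enough free indices to arrange such interchanges. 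The verification proceeds case-by-case in the spirit of the single-cell reductions constructed in Sections~3 and~4, reducing to finitely many explicit column calculations.
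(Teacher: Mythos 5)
Your skeleton coincides with the paper's: choose a finite generating set of $H$, represent each generator by a word in $F(\calX)$ all of whose partial products project to small matrices, and deduce connectivity of the preimage from the standard Schreier-type argument (Observation~\ref{obs:Schreier} in the paper). The treatment of the ``pure'' generators supported on $\{d+1,\dots,n\}$ is also the same. However, there is a genuine gap in your treatment of the mixing generators, which is where all the content of the proposition lies. You propose to take the mixing generators to be conjugates $hth^{-1}$ of pure Magnus generators $t$ by letters $h\in\calX\cap H$. Since both $h$ and $t$ already belong to your generating set, these conjugates are redundant: your set $S$ generates exactly the subgroup $\langle \calX\cap H,\ \text{pure Magnus generators}\rangle$, and you give no argument that this subgroup is all of $H$; the Magnus generators genuinely involving indices $\le d$ (for instance $R_{ij}L_{ij}^{-1}$ with $i,j\le d$) are not visibly in it. The paper sidesteps this by keeping the full standard generating set $\{R_{ij}L_{ij}^{-1},\,[R_{ij},R_{ik}]:j\le d\}\cup\{R_{ij},L_{ij}:j\ge d+1\}\cup\{w_{12}^4\}$ and, for the generators with $i\le d$, producing a different \emph{word} for the \emph{same} group element via the explicit identities of Claim~\ref{claim:lift} and \eqref{eq:Kijk}; note that the conjugator $R_{mi}$ appearing in \eqref{eq:Kijk} does not even lie in $H$, so the needed rewriting is not a conjugation inside $H$.

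Second, the mechanism you describe for preserving $k$-unimodularity does not work as stated. In $I_{n,d}$ the vector $e_i$ occurs in exactly one column, so once that column is disturbed (e.g.\ the suffix $L_{ij}^{-1}$ of the naive word for $R_{ij}L_{ij}^{-1}$ turns column $i$ into $e_i+e_j$, by Lemma~\ref{lem:theta}), there are no ``other columns of depth $\le j$'' left to span $\dbZ^k$ for $i\le k<j$, no matter how many spare indices are available. The correct fix, implemented in Claim~\ref{claim:lift} and the verification table that follows it, is to first copy $\pm e_i$ into a column of index $>d$ (via $L_{mi}$), then modify column $i$, and finally undo the copy; this needs only two spare indices (so the bound $n\ge d+115$ is irrelevant here), and the relation making the rewritten word equal to $R_{ij}L_{ij}^{-1}$ in $G$ is a nontrivial consequence of the Steinberg relations. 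Your proposal defers exactly these words and relations to an unspecified case-by-case check, so as written it does not yet constitute a proof.
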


Unlike the case $d=1$, we will formally prove finite presentability of $H$ for $d>1$ using 
our general finite presentability criterion, Proposition~\ref{thm:criterion}, bypassing Lemma~\ref{obs:Renz}.
The latter is actually not applicable since the partial orders we will be using for $d>1$ are only assumed super-Artinian, but not necessarily strongly Artinian.

More precisely, we will use Corollary~\ref{cor:criterion} below which is an easy consequence of
Propositions~\ref{prop:essential2}~and~\ref{thm:criterion}.

\begin{Corollary}
\label{cor:criterion}
Suppose that there exists a constant $M$ such that for any $Um_{d\times n}(\dbZ)$-labeled diagram $\Omega$ over $(\calX,\calR)$ with small boundary there exists a diagram map $\Omega\to\Omega'$ such that $\|\Omega'\|\leq M$. Then $H$ is finitely presented. 
\end{Corollary}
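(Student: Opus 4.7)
The plan is to derive the corollary directly from Proposition~\ref{thm:criterion} combined with Proposition~\ref{prop:essential2}; all the real content of finite presentability has already been packaged into those two results together with the hypothesis of the corollary itself, so the deduction will be very short.

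First I will take $A$ to be the small subset $E \subseteq Um_{d\times n}(\dbZ)$ supplied by Proposition~\ref{prop:essential2}, so that $\rho^{-1}(A)$ is connected in $\Cay(G,\calX)$, and I will set
\[ B = \{\, w \in Um_{d\times n}(\dbZ) : \|w\|_{\infty} \leq \max(M,1) \,\}. \]
Since $d$ and $n$ are fixed, each matrix in $B$ has $dn$ entries drawn from a finite set of integers, so $B$ is finite; moreover $A \subseteq B$, since every small matrix has $\ell^{\infty}$-norm $1$ by Definition~\ref{def:small}. I also note that $A = E$ is itself finite, since a small matrix differs from the $d \times n$ identity matrix in at most eight columns and each entry lies in $\{-1,0,1\}$.

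Next I will verify condition~(*) of Proposition~\ref{thm:criterion} for this choice of $A$ and $B$. Given any simple closed path $\gamma$ in $\Cay(G,\calX)$ whose vertices all lie in $\rho^{-1}(A)$, van Kampen's Lemma (Lemma~\ref{lem:vanKampen}) first produces some $Um_{d\times n}(\dbZ)$-labeled disk diagram $\Omega$ over $(\calX,\calR)$ with $\partial\Omega = [\gamma]_Z$. The labels of the boundary vertices of $\Omega$ are exactly the $Z$-labels of the vertices of $\gamma$, all of which lie in $E$, so $\Omega$ has small boundary. The hypothesis of the corollary then supplies a diagram map $\Omega \to \Omega'$ with $\|\Omega'\| \leq M$; since diagram maps do not change the boundary, $\partial\Omega' = [\gamma]_Z$, and every vertex of $\Omega'$ lies in $B$ by the norm bound. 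This is precisely condition~(*), and Proposition~\ref{thm:criterion} then yields that $H$ is finitely presented.

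The only conceivable obstacle is checking that a bounded $\ell^{\infty}$-norm constraint in $Um_{d\times n}(\dbZ)$ genuinely produces a finite set, but this is immediate from the fixed size $d\cdot n$ and the boundedness of entries. All the serious work remaining in the paper is therefore pushed into Proposition~\ref{prop:essential2} (the construction of a small essential set) and into producing the uniform bound $M$ of the hypothesis (which is the job of the rest of Section~5).
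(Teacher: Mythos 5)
Your proof is correct and is essentially the paper's own argument: take $A$ to be the small set $E$ from Proposition~\ref{prop:essential2}, take $B$ to be the (finite) set of unimodular matrices of $\ell^{\infty}$-norm at most $M$, and verify condition~(*) of Proposition~\ref{thm:criterion} by filling $\gamma$ with any van Kampen diagram (which has small boundary since its boundary labels lie in $E$) and then applying the hypothesized diagram map. The only difference is that you spell out the verification of~(*) and the harmless enlargement of $B$ to contain $A$, which the paper leaves implicit.
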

\begin{proof} Let $E$ be the small set from Proposition~\ref{prop:essential2}. Corollary~\ref{cor:criterion} implies that the hypotheses of Proposition~\ref{thm:criterion} hold if $A=E$ and $B$ is the set of matrices in $Um_{d\times n}(\dbZ)$ with $\ell^{\infty}$-norm at most $M$ (clearly, this is a finite set). Hence by Proposition~\ref{thm:criterion}, $H$ is finitely presented.
\end{proof}

\paragraph{\bf How to verify the hypothesis of Corollary~\ref{cor:criterion}.}
Let $\Omega$ be any diagram with small boundary. Our goal is to show that the hypothesis of Corollary~\ref{cor:criterion} is always satisfied, that is, to construct a diagram map $\Omega\to \Omega'$ with $\Omega'$ of bounded norm.
This will be accomplished in several stages where at each stage we will concentrate on a particular row of the diagram. We start by projecting the entire diagram $\Omega$ onto its last row $row_d \Omega$. The hypothesis that $\Omega$ has small boundary implies, in particular, that for any $v\in V(\partial\Omega)$ we have $\|v\|=1$ and $\|row_d(v)\|_1\leq 9$. Since $C\geq 8$, 
applying Proposition~\ref{prop:removable} sufficiently many times,
we obtain a reduction map $\phi:row_d \Omega\to\Delta_d$ such that $\|v\|=1$ and $\|v\|_{1}\leq C+1$ for every vertex $v$ of $\Delta_d$. We can now lift $\phi$ to construct a diagram map 
$\phi^*:\Omega\to\Delta$ such that for each vertex $v$ of $\Delta$ we have $\|row_d(v)\|=1$ and $\|row_d(v)\|_1\leq C+1$, and so $def_{d-1}(\Delta)\leq C+1$. The new diagram $\Delta$ 
is still $Um_{d\times n}(\dbZ)$-labeled and thus $d$-unimodular. 
\skv

Next we would like to apply a similar procedure to the $(d-1)^{\rm st}$ row of $\Delta$, but we have to be careful as we do not want to lose the nice structure of the $d^{\rm th}$ row. In order to accomplish this, we first perform additional reductions involving the $d^{\rm th}$ row to make our diagram $(d-1)$-unimodular. This can be done preserving the condition $\|row_d(v)\|=1$ for all $v$ and keeping the $(d-1)$-defect uniformly bounded (although with a slightly worse bound than $C+1$). Once we obtain a $(d-1)$-unimodular diagram satisfying these additional conditions, we can lift
one-dimensional reductions (with some minor modifications) to make 
both $\|row_{d}\|$ and $\|row_{d-1}\|$ equal to $1$ and making the $(d-2)$-defect bounded. We then successively apply the same procedure to each row (going from bottom to top), so in the end we make the $\ell^{\infty}$-norm of the entire diagram equal to $1$.
\skv

The algorithm we have outlined so far will only work when $d\leq \frac{n}{C}$ since it can only ensure that the $k$-defect 
is bounded above by $C(d-k)$ (and we need the $k$-defect to be bounded away from $n$ to deal with the $k^{\rm th}$ row). In order
to overcome this problem, we will make additional adjustments after dealing with the $k^{\rm th}$ row (as described in the previous paragraph) which will yield a better bound on $k$-defect: $def_k\leq d-k+C$. Unfortunately, this will be done at the expense of substantially increasing the norms of the last $d-k$ rows, which is why in the end we will only be able to claim that the $\ell^{\infty}$-norm is absolutely bounded (rather than bounded by $1$). This concludes a (very) informal sketch of the algorithm which will be described in this section.
\skv

We now formulate the main result of this section:

\begin{Theorem}
\label{thm:hdim}Assume that $1\leq d\leq n-(C^2+6C+3)$. Then there exists a sequence of positive integers $M_1,\ldots, M_d$ with the following property.
Suppose that for some $1\leq k\leq d$ we are given a diagram $\Omega$ such that
\begin{itemize}
\item[(a)] $\Omega$ is $k$-unimodular;
\item[(b)] $def_k(\Omega)\leq d-k+C$;
\item[(c)] if $k<d$, then $\|bot_{d-k}\Omega\|\leq M_{k+1}$;
\item[(d)] $\Omega$ has small boundary.
\end{itemize}
Then there exists a diagram map $\Omega\to \Omega_1$ such that
\begin{itemize}
\item[(i)] $\Omega_1$ is $(k-1)$-unimodular;
\item[(ii)] $def_{k-1}(\Omega_1)\leq d-k+C+1$;
\item[(iii)] $\|bot_{d-k+1}\Omega_1\|\leq M_{k}$.
\end{itemize}
\end{Theorem}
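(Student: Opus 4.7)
The plan is to construct $\Omega\to\Omega_1$ via a sequence of reductions with respect to two different super-Artinian pre-orders on $Um_{d\times n}(\dbZ)$ (the ``Step~1'' and ``Step~2'' pre-orders alluded to at the beginning of \S~5), and to define $M_k$ recursively in terms of $M_{k+1}$ by tracking how much the last $d-k+1$ rows grow during the process. Broadly, Step~1 will reduce the $k$-th row of every vertex to $\ell^\infty$-norm at most $1$ and $\ell^1$-norm at most $9$, while Step~2 will further perform column operations that convert enough $k$-columns into $(k-1)$-columns to achieve $(k-1)$-unimodularity.

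For Step~1, I would first note that the $k$-th row of a $k$-unimodular matrix is itself in $Um_{1\times n}(\dbZ)$: any integer combination of the $k$-columns yielding $e_k\in\dbZ^k$ exhibits $1$ as a combination of the $k$-th-row entries. Consequently $row_k\Omega$ is a legitimate $Um_{1\times n}(\dbZ)$-labeled diagram. I would define the Step~1 pre-order on $Z=Um_{d\times n}(\dbZ)$ by $v<w$ iff $row_k(v)<row_k(w)$ in the strongly Artinian order from the end of \S~2.4, and extend it to a super-Artinian order on $Z\times\dbZ_{\ge 0}$ by using an extended label $h_\Omega(v)$ that encodes the residual contribution of the other rows in a way compatible with lifted reductions. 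Since $n-d\ge C^2+6C+3\ge 26$, Proposition~\ref{prop:removable} applies to $row_k\Omega$: every interior vertex $v$ with $\|row_k(v)\|_\infty>1$ or $\|row_k(v)\|_1>9$ is removable. Lifting the single-, double-, and triple-cell reductions of \S~3.3 or \S~4.3 from the one-dimensional setting to the multi-row setting gives the corresponding multi-dimensional reductions (because the one-dimensional maps were constructed independently of the ``tacit'' rows), and iterating via Lemma~\ref{obs:Artinian} produces an intermediate diagram $\widetilde\Omega$ in which every vertex satisfies both $k$-th-row norm bounds, with the bottom $d-k$ rows growing only by an amount bounded in terms of $M_{k+1}$ and $C$.

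For Step~2, at every vertex of $\widetilde\Omega$ there are at least $n-(d-k+C)\ge k-1+(C^2+5C-6)$ $k$-columns with zero $k$-th entry (because at most $9$ of the $k$-columns can carry a nonzero $k$-th entry). What remains is to arrange that enough of these $(k-1)$-columns have top $(k-1)$-entries spanning $\dbZ^{k-1}$. I would set up a Step~2 pre-order whose primary invariant is the maximum vertex $(k-1)$-defect, refined by the Step~1 invariants on $row_k$ and by further extended labels controlling the bottom $d-k+1$ rows. For an interior maximal vertex failing $(k-1)$-unimodularity, I would mimic the column-cancellation procedures of \S~3 and \S~4: an appropriate $k$-column with nonzero $k$-th entry can be used to cancel another via a Nielsen (or Weyl) generator, and the abundance of spare indices outside any gallery of length $\le 3$ guaranteed by $n-d\ge C^2+6C+3$ allows this cancellation to be realized as a single- or multiple-cell reduction exactly as in the one-dimensional proofs.

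The main obstacle I foresee is Step~2: one must verify simultaneously that each reduction (i) decreases the Step~2 invariant, (ii) does not re-introduce cells handled in Step~1, so that $row_k$ remains tame throughout, and (iii) causes at most a bounded increase in $\|bot_{d-k+1}\Omega\|$, determining the explicit recursive definition of $M_k$. The delicate interplay of the two pre-orders, together with bookkeeping of extended labels as they propagate through lifted reductions, is essentially where the bulk of the proof will lie. Once these are controlled, the composed diagram map $\Omega\to\Omega_1$ will satisfy~(i) by construction (termination of Step~2), will satisfy~(ii) because the $(k-1)$-columns produced are explicitly among those already present with zero $k$-th entry (yielding the slightly weakened defect bound $d-k+C+1$ instead of $d-k+C$), and will satisfy~(iii) for $M_k$ chosen as the final bound on the bottom $d-k+1$ rows after both phases.
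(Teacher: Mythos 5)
Your two-phase plan (normalize $row_k$, then restore $(k-1)$-unimodularity) reproduces the paper's Steps~1 and~2, but it omits the paper's Step~3, and this is where your argument for conclusion~(ii) breaks down. You justify the bound $def_{k-1}(\Omega_1)\leq d-k+C+1$ by saying the new $(k-1)$-columns are "among those already present with zero $k$-th entry." But $def_{k-1}(v)=def_k(v)+\#\{\text{columns of depth exactly }k\}$, and even under the most optimistic version of your Step~1 (where $\|row_k(v)\|_1\leq 9$ for every vertex, which is already more than Theorem~\ref{thm:step1} delivers -- it only controls the $\ell^\infty$-norm), you get $def_{k-1}\leq (d-k+C)+9$. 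The induction requires $def_{k-1}\leq d-(k-1)+C=d-k+C+1$, i.e.\ an increment of at most $1$ per level; an increment of order $C$ per level accumulates to $def_k\approx C(d-k)$, which is exactly the obstruction the paper flags in its outline and which restricts the method to $d\lesssim n/C$ rather than $d\leq n-(C^2+6C+3)$.

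The missing idea is a separate defect-reduction phase with a completely different (weighted $\ell^1$) order: if a vertex has more than $d-k+C+1$ columns of depth $\geq k$, then by pigeonhole (there are only $d-k+1$ possible depths and at most $C$ columns are tied up in the support of a short gallery) two columns outside the gallery support share a common depth $m\geq k$ and can be combined by a commuting reduction. This operation is emphatically \emph{not} $k$-safe -- it can double the $\ell^1$-norms of rows $k,\ldots,m-1$ -- which is precisely why the constants $M_k$ must grow (triple-exponentially in the paper) and why conclusion~(iii) only asserts a bound $M_k$ rather than norm $1$ on the bottom rows. Your bookkeeping inverts this: you allow the bottom rows to grow during Steps~1 and~2 (the paper keeps those steps $k$-safe precisely so that termination and the norm bounds are controllable) and you have no mechanism at all that produces the increment-by-one defect bound. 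Secondarily, your assertion that the one-dimensional reductions lift "because the one-dimensional maps were constructed independently of the tacit rows" understates the work required: a number of cell types (4GB, 5GA, 5GB, 1ZA, 2ZA, 2ZB, 3ZB, 5ZB) need modified replacement diagrams or new arguments in the higher-dimensional setting, and the extended labels are introduced to handle purely maximal type-5 cells, not to "encode the residual contribution of the other rows."
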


First we will deduce Theorem~\ref{thm:finpres} from Theorem~\ref{thm:hdim}.

\begin{proof}[Proof of Theorem~\ref{thm:finpres}]
Observe that $(C^2+6C+3)=115$ for $C=8$, so any $d\leq n-115$ satisfies the hypotheses of Theorem~\ref{thm:hdim}. 
We will prove finite presentability of $H$ using Corollary~\ref{cor:criterion}.

First note that any $Um_{d\times n}(\dbZ)$-labeled diagram $\Omega$ 
satisfies condition (a)-(c) from Theorem~\ref{thm:hdim} for $k=d$. Indeed,
$\Omega$ is $d$-unimodular by assumption, $def_d(v)=0$ for all $v\in\dbZ^d$ 
(and hence $def_d(\Omega)=0$), and (c) is vacuous for $k=d$. 

Assume now that $\Omega$ has small boundary, so (d) holds as well and thus we can apply Theorem~\ref{thm:hdim} with $k=d$ to 
$\Omega$. The obtained diagram $\Omega_1$ satisfies conditions
(i)-(iii) for $k=d$, which are the same as (a)-(c) for $k=d-1$. Since diagram maps do not change the boundary,
$\Omega_1$ also satisfies (d) (which is the same for all $k$). Thus, we can apply Theorem~\ref{thm:hdim} to $\Omega_1$ with $k=d-1$ and keep going up to (and including) $k=1$. If $\Omega_{fin}$ is the diagram obtained at the end, by condition (iii) for $k=1$ we have $\|\Omega_{fin}\|=\|bot_d(\Omega_{fin})\|\leq M_1$, so $H$ is finitely presented by Corollary~\ref{cor:criterion}.
\end{proof}

Before discussing the structure of the proof of Theorem~\ref{thm:hdim} in detail, we introduce some auxiliary terminology.

\begin{Definition}\rm \label{def:weakdefect}
Let $0\leq k\leq d$.
\begin{itemize}
\item[(i)] Define a relation $\sim_{k}$ on $\dbZ^d$ by $v\sim_k w$ if
either $w-v$ or $w+v$ is a $k$-vector, that is, $w$ agrees with either $v$ or $-v$ in the last $d-k$ rows.
It is clear that $\sim_{k}$ is an equivalence relation. 
\item[(ii)]Given $v\in Mat_{d\times n}(\dbZ)$, its {\it weak $k$-defect} denoted by $wdef_k(v)$ is the 
maximum number of non-$k$-columns of $v$ which are pairwise inequivalent with respect to $\sim_k$. 
\item[(iii)]If $\Omega$ is a diagram, we define $wdef_k(\Omega)$ as the maximum value of $wdef_k(v)$ where $v$ is a vertex of $\Omega$.
\end{itemize}
\end{Definition}

Clearly, $wdef_k(v)\leq def_k(v)$. Of course, the difference between $def_k(v)$ and $wdef_k(v)$ can be arbitrarily large; however, 
by Proposition~\ref{step1b} below any diagram admits a simple modification such that the difference
$def_k(v)-wdef_k(v)$ is bounded by an absolute constant for every vertex $v$ of the modified diagram. Thus, the $k$-defect is 
to a large extent controlled by the weak $k$-defect. The advantage of working with the latter is that, as we will see below, it is much easier to construct reductions which do not increase the weak $k$-defect.

\begin{Definition}\rm 
A diagram map $\Delta\to\Delta'$ will be called {\it $k$-safe} if
\begin{itemize}
\item[(A)] $\|row_i\Delta'\|\leq \|row_i\Delta\|$ for all $i>k$, so in particular $\|bot_{d-k}\Delta'\|\leq \|bot_{d-k}\Delta\|$;
\item[(B)] $wdef_k(\Delta')\leq wdef_k(\Delta)$;
\item[(C)] if $\Delta$ is $k$-unimodular, then so is $\Delta'$.
\end{itemize}
\end{Definition}

Very informally, a diagram map is $k$-safe if it does not make the structure of the last $d-k$ rows of the diagram more complicated.
\skv

For the rest of this section we fix $1\leq k\leq d$ and view it as $k$ from the statement of Theorem~\ref{thm:hdim}. 
The proof of Theorem~\ref{thm:hdim} will be divided into 3 steps: 
\begin{itemize}
\item In Step~1 we will make the $\ell^{\infty}$-norm of the $k^{\rm th}$ row equal to $1$. 
\item In Step~2 we will make the diagram $(k-1)$-unimodular.
\item Finally in Step~3 we will obtain the desired bound on the $(k-1)$-defect while keeping the norms of the last $d-k+1$ rows bounded.
\end{itemize} 

\subsection{Step~1}

\skv

We start by formulating the main result of the first step of the proof of Theorem~\ref{thm:hdim}. Recall that
the notion of super-Artinian partial order was defined in \S~2.2.

\begin{Theorem}[Step 1] 
\label{thm:step1}
Let $\Omega$ and $d$ be as in Theorem~\ref{thm:hdim}.
There exists a super-Artinian partial order (defined below and called the {\it Step~1 order}) and a $k$-safe reduction
$\phi:\Omega\to \Omega'$ (with respect to that order) such that
\begin{itemize}
\item[(i)] $def_{k}(\Omega')\leq def_k(\Omega)+C$ and therefore $def_{k}(\Omega')\leq d-k+2C$
\newline
(recall that $def_k(\Omega)\leq d-k+C$ by the hypotheses of Theorem~\ref{thm:hdim});
\item[(ii)] the $k^{\rm th}$ row of any vertex of $\Omega'$ has $\ell^{\infty}$-norm $1$.
\end{itemize}
\end{Theorem}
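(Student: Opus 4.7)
I will define the Step~1 partial order on $Z=Um_{d\times n}(\dbZ)$ as the pullback of the one-dimensional super-Artinian order from \S~2.4 via projection to the $k$-th row: $v<_1 w$ iff $row_k(v)<row_k(w)$ in $\dbZ^n$. The extended labels $h_\Delta$ can be taken identically zero, so no pseudo-new vertices arise. Super-Artinianity of $<_1$ follows immediately from that of the underlying one-dimensional order, since two elements of $Z$ incomparable under $<_1$ project to incomparable elements of $\dbZ^n$ and thus inherit matching $<_1$-up and $<_1$-down sets.

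The strategy is to run the elimination procedure of Proposition~\ref{prop:removable} on the projection $row_k\Omega$ (which is $Um_{1\times n}(\dbZ)$-labeled by Observation~\ref{obs:reduction}) and lift each step back to $\Omega$. At each stage I pick an interior vertex $v$ whose $k$-th row is $<_1$-maximal and violates the target bound---$\|row_k(v)\|_\infty>1$, or $\|row_k(v)\|_\infty=1$ with $\|row_k(v)\|_1>9$---and build a full reduction at $v$ using the single-, double-, and triple-cell recipes of \S~3--\S~4. To guarantee that every such lift is $k$-safe, I insist that every Nielsen or Weyl column operation involved be \emph{safe at the vertex where it is applied}: every index touched by the operation must label a column that is a $k$-vector at that vertex. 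A safe operation acts trivially on rows $k+1,\dots,d$, so it cannot increase $\|row_i\,\cdot\,\|$ for $i>k$, cannot destroy $k$-unimodularity, and in fact preserves $def_k$ exactly. Thus conditions (A), (B), (C) of $k$-safety hold automatically for each lifted step.

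The hypotheses $def_k(\Omega)\leq d-k+C$ and $d\leq n-(C^2+6C+3)$ guarantee that every vertex of $\Omega$ has at least $n-(d-k+C)\geq C^2+5C+3$ safe indices. Since each reduction from \S~3--\S~4 uses only a bounded number of fresh indices (at most $2C$) outside the current cell or gallery support, and the case analysis in Proposition~\ref{prop:removable} only requires a constant-sized collection of such indices satisfying prescribed coordinate-parity conditions on $v$, the fresh indices can always be drawn from the safe pool. Substituting ``safe index outside the support'' for ``index outside the support'' throughout that proof yields a safe full reduction at each $v$; composing these and invoking Lemma~\ref{obs:Artinian} for termination produces the required diagram map $\phi:\Omega\to\Omega'$.

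By construction every interior vertex of $\Omega'$ has $k$-th row of $\ell^\infty$-norm $\leq 1$; boundary vertices already satisfy this by smallness of $\partial\Omega$, and $k$-unimodularity forces $\|row_k\,\cdot\,\|_\infty\geq 1$ everywhere, yielding~(ii). Safety preserves $def_k$, giving $def_k(\Omega')=def_k(\Omega)\leq d-k+C\leq d-k+2C$, which is~(i); the extra $+C$ slack is budget left for Steps~2 and~3. The main obstacle is the case-by-case verification in the previous paragraph: for cell types whose reductions invoke Weyl generators $w_{ij}$ (notably types~11 and~12 in Tables~1--2), safety forces \emph{both} $i$ and $j$ to be safe, and one must recheck that the coordinate conditions imposed on $v$ still admit a valid choice of safe indices. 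The numerical threshold $C^2+6C+3$ in the hypothesis of Theorem~\ref{thm:hdim} is chosen precisely to leave enough room for this verification.
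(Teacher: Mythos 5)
There is a genuine gap, and it sits exactly where the higher-dimensional setting differs from the one-dimensional one.

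First, the pullback order $v<_1 w \iff row_k(v)<row_k(w)$ is too coarse to certify the reductions you actually need. A $k$-unimodular vertex $v$ with $\|row_k(v)\|\geq 2$ may have almost all of its ``safe'' columns be \emph{nonzero} $(k-1)$-vectors, i.e.\ columns $c$ with $c_k=0$ but $c\neq 0$. In $row_k(v)$ these look like zero coordinates, but they are not zero columns, so the one-dimensional trichotomy (many maximal / many good / many zero coordinates) does not transfer: you cannot run the zero-coordinate ZS-type reductions on them, because overwriting a nonzero $(k-1)$-column $e$ by $e+a$ with $a$ a possibly restricted column of the cell's support is not among the permitted operations of Lemma~\ref{lem:ksafe} (only genuinely zero columns may be overwritten by restricted vectors) and can destroy $k$-unimodularity. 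The paper handles this configuration as a separate case (Case~1 of Proposition~\ref{step1a}) using commuting maps $C_{\calG}(R_{ij}^{\pm1})$ where $col_i(v)$ and $col_j(v)$ have the same depth $m<k$; such a map leaves $row_k$ of every vertex unchanged, so under your order its new vertices are \emph{incomparable} to $v$ and the map is not a reduction. Criteria (5) and (6) of the paper's Step~1 order (counting $(k-1)$-columns and lexicographically comparing $\|row_m(\cdot^{\rm ur})\|_1$ for $m<k$) exist precisely to make these maps reductions; your order has no substitute.

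Second, the safety discipline ``every index touched must label a $k$-vector column at that vertex'' cannot be arranged: the indices in $\supp(\calF)$ of the cell being reduced are forced by the diagram, and the corresponding columns (e.g.\ the maximal column in the good-coordinate case, or the columns $a,b,c$ in types 1ZA--3ZB) are frequently restricted. The paper's actual mechanism only requires the column being \emph{added} to be unrestricted (Lemma~\ref{lem:ksafe}(b)), plus special allowances for zero columns, and then certifies the reduction property for operations on restricted columns via the restricted-column criteria (3) and (5) of its order --- again absent from yours. Consequently your claim that safety ``preserves $def_k$ exactly'' fails: the lifted zero-coordinate reductions genuinely create new restricted columns (e.g.\ substituting $a+b+c$ for a zero column in type 1ZA when $c$ is restricted), so only $wdef_k$ is controlled, the safe pool degrades as the process runs, and one must interleave the cleanup of Proposition~\ref{step1b} (collapsing $\sim_k$-equivalent restricted columns) to keep $def_k\leq wdef_k+C$ --- which is exactly where the $+C$ in conclusion (i) comes from and why $def_k(\Omega')=def_k(\Omega)$ is not available. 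Finally, discarding the extended labels should at minimum be re-justified: the paper needs them for the purely maximal type-5 cells, and while your coarser order happens to separate some of those configurations, you have not checked that it does so in all of them.
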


The assertion of Theorem~\ref{thm:step1} is an easy consequence of the following two propositions.

\begin{Proposition}
\label{step1a}
Let $\Delta$ be a $k$-unimodular diagram with small boundary such that
\begin{itemize}
\item[(a)] $def_{k}(\Delta)\leq n-4C-k-1$;
\item[(b)] $\|row_k(\Delta)\|\geq 2$.
\end{itemize}
Then for any maximal vertex $v$ of $\Delta$ there exists a $k$-safe reduction $\Delta\to\Delta'$ which eliminates $v$.
\end{Proposition}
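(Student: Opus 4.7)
The plan is to reduce Proposition~\ref{step1a} to the one-dimensional result Proposition~\ref{prop:removable} via projection onto the $k$th row. I would define the Step~1 partial order on $Z=Um_{d\times n}(\dbZ)$ by $v<_1 w$ iff $row_k(v)<row_k(w)$ in the one-dimensional order of \S\ref{sec:diagrams}; this is super-Artinian. With it, maximality of $v$ in $\Delta$ forces $row_k(v)$ to be maximal in $row_k(\Delta)$ one-dimensionally, so hypothesis~(b) gives $\|row_k(v)\|_\infty=\|row_k(\Delta)\|_\infty\geq 2$ and smallness of the boundary forces $v$ to be interior. By Observation~\ref{obs:reduction} and $k$-unimodularity of $\Delta$, $row_k(\Delta)$ is a $Um_{1\times n}(\dbZ)$-labeled diagram, so Proposition~\ref{prop:removable} applies and supplies a sequence of one-dimensional reductions of $row_k(\Delta)$ eliminating $row_k(v)$. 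My goal is to lift this sequence to a $k$-safe reduction $\Delta\to\Delta'$.

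The key device is to restrict every auxiliary Nielsen index appearing in the lifted reduction to the set
\[
J=\{j\in\{1,\ldots,n\}:\text{column }j\text{ of }v\text{ is a }k\text{-vector}\},
\]
which by hypothesis~(a) satisfies $|J|\geq n-def_k(\Delta)\geq 4C+k+1$. For any gallery $\calG$ at $v$, vertices of $\calG$ agree with $v$ outside $\supp(\calG)$, so for $j\in J\setminus\supp(\calG)$ column $j$ is a $k$-vector in every vertex of $\calG$. A close inspection of the single-cell reductions designed in \S\ref{sec:singlecellgood} and \S\ref{sec:singlecellzero} shows that every new vertex of the replacement diagram is reachable from a boundary vertex via a short path of edges labeled by Nielsen generators whose auxiliary (second) index is the fixed auxiliary index of the reduction (e.g., the index $4$ in those tables). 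When that auxiliary index is taken from $J$, each such generator subtracts a $k$-vector column from another, leaving rows $>k$ unchanged; moreover, the Weyl generators $w_{ij}^{\pm 1}$ that appear in the ``$w$-type'' replacement diagrams swap two columns with sign and thus preserve rows $>k$ up to a signed permutation regardless of $i,j$. Consequently, for every new vertex $w$ of $\Delta'$, rows $>k$ of $w$ equal rows $>k$ of some boundary vertex of the replaced cell. This yields $k$-safety at once: (A) is immediate; (B) holds because $wdef_k$ depends only on rows $>k$; and (C) holds because the first $k$ rows of the $k$-columns transform by invertible column operations among themselves, preserving the $\dbZ^k$-span.

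It remains to check that Proposition~\ref{prop:removable}'s elimination algorithm can be carried out while keeping every auxiliary Nielsen index in $J$. Writing $m_J,g_J,z_J$ for the numbers of maximal, good, and zero coordinates of $row_k(v)$ at indices in $J$, we have $m_J+g_J+z_J=|J|\geq 33$, and for any gallery $\calG$ of length $\leq 3$ at $v$ we have $|J\setminus\supp(\calG)|\geq 3C+k+1\geq 26$; hence one of $m_{J\setminus\supp(\calG)}\geq 10$, $g_{J\setminus\supp(\calG)}\geq 9$, $z_{J\setminus\supp(\calG)}\geq 9$ holds, enabling the corresponding case of Proposition~\ref{prop:removable}. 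Lifting each step produces a composite diagram map $\phi\colon\Delta\to\Delta'$ eliminating $v$, and every new vertex $w$ satisfies $row_k(w)<row_k(v)$ one-dimensionally, so $w<_1 v$ and $\phi$ is a Step~1 reduction. The main obstacle is bookkeeping: one must retrace the multi-round argument of Proposition~\ref{prop:removable}'s proof (particularly the sequential elimination in Cases~2 and~3 via double- and triple-cell reductions from Lemma~\ref{doublecell}) under the constraint that auxiliary indices lie in $J$ minus the accumulated support, verifying case by case that the margin $|J|\geq 4C+k+1$ is always sufficient.
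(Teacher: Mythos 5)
Your overall strategy --- project onto the $k^{\rm th}$ row, run the one-dimensional machinery of Proposition~\ref{prop:removable} there, and lift --- is indeed the skeleton of the paper's argument, and your counting of the index set $J$ matches the paper's use of hypothesis (a). But there is a structural gap: the order $v<_1 w\iff row_k(v)<row_k(w)$ is too coarse, and the central claim that ``rows $>k$ of every new vertex equal rows $>k$ of some old vertex'' is false. The projection $row_k$ cannot distinguish three kinds of columns with zero $k^{\rm th}$ entry: genuine zero columns, nonzero $k$-vectors of depth $m<k$, and restricted columns with zero $k^{\rm th}$ entry. In the zero-coordinate-case maps the side edges are of the form $R_{4j}$ with $j\in\supp(\calF)$, so the auxiliary column is modified by columns indexed by $\supp(\calF)$, which your choice of $J$ does not control. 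Worse, if the auxiliary index drawn from $J$ carries a \emph{nonzero} $k$-vector of depth $m<k$ (which your one-dimensional projection reads as a zero coordinate), the lifted map replaces that column by itself plus/minus a possibly restricted column; this can eject a column from the set of $k$-columns and destroy $k$-unimodularity. Concretely, for $d=3$, $k=2$, take $col_4(v)=(1,0,0)^T$, $col_1(v)=(0,1,1)^T$, with all other $2$-columns of $v$ supported in the second coordinate: after $R_{41}$ the new fourth column $(1,-1,-1)^T$ is restricted and the remaining $2$-columns no longer span $\dbZ^2$, so condition (C) of $k$-safety fails.

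This is precisely why the paper's proof contains a separate Case~1 (at least $C+k$ columns of depth between $1$ and $k-1$), handled by commuting maps $C_\calG(R_{ij}^{\pm1})$ in which \emph{both} columns have the same depth $m<k$ --- but those maps leave $row_k$ unchanged, so all their new vertices are incomparable to $v$ in your order and they cannot be reductions for it. The paper's Step~1 pre-order therefore carries additional criteria invisible to $row_k$ (numbers of restricted and of $(k-1)$-columns, and the $\ell^1$-norms of rows $k,k-1,\dots,1$ of the unrestricted part), plus the extended labels $h_\Delta(v)$ used for purely maximal cells of type $5$. Moreover, even when the auxiliary column is a genuine zero column, the substituted vectors ($a$, $a+b$, $a+b+c$, etc.) need not be unrestricted; the paper isolates types 1ZA, 2ZA, 2ZB, 3ZB, 5ZB exactly for this reason, and your proposal would need the analogous case analysis rather than the blanket assertion that rows $>k$ are untouched. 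So the gap is not just bookkeeping: the order itself must be refined before the lifting strategy can work.
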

\begin{Remark}\rm As we will explain later, the hypotheses of Proposition~\ref{step1a} imply that
$v$ must be an interior vertex (see Observation~\ref{obs:goodcolumn} below).
\end{Remark}

\begin{Proposition}
\label{step1b}
Any diagram $\Delta$ with small boundary admits a $k$-safe reduction $\Delta\to \Delta'$ such that $def_{k}(\Delta')\leq wdef_k(\Delta')+C$.
\end{Proposition}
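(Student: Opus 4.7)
The plan is to construct $\Delta'$ by iteratively eliminating vertices $v$ with excess
\[
\alpha(v) := def_k(v) - wdef_k(v) > C,
\]
using the super-Artinian partial order on $Um_{d\times n}(\dbZ)$ induced by the norm $N(v) = \alpha(v)$. Since $\Delta$ has small boundary, every boundary vertex $v$ satisfies $def_k(v) \leq 8 \leq C$, hence $\alpha(v) \leq C$, so any maximal vertex with $\alpha > C$ is automatically interior.

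The main step is the following. Let $v$ be an interior maximal vertex with $\alpha(v) > C$, and let $\calG$ be any gallery at $v$ of length at most $3$, so that $|\supp(\calG)| \leq C$ by Observation~\ref{obs:support}(ii). Since $\alpha(v) > C \geq |\supp(\calG)|$, pigeonhole yields indices $i,j \notin \supp(\calG)$ such that the columns $c_i,c_j$ of $v$ are non-$k$-columns with $bot_{d-k}(c_i) = \pm bot_{d-k}(c_j)$: at least $def_k(v) - C > wdef_k(v)$ non-$k$-columns of $v$ lie outside $\supp(\calG)$ but represent at most $wdef_k(v)$ distinct $\sim_k$-classes. Choose $\epsilon \in \{\pm 1\}$ so that $R_{ji}^\epsilon$ zeroes the last $d-k$ entries of column $j$, and apply the commuting gallery map $\phi = C_\calG(R_{ji}^\epsilon)$, or its conjugating analogue with extra cells filling the commutator $[R_{ji}, w_{ab}]$ if $\calG$ contains Weyl-generator edges. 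Since $\{i,j\} \cap \supp(\calG) = \emptyset$, the Steinberg relations~\eqref{row1} imply $R_{ji}$ commutes in $G$ with every edge label of $\calG$, so $\phi$ is well defined. At each $u \in V(\calG)$, columns $i$ and $j$ agree with those of $v$ (no edge of $\calG$ modifies them), so the same cancellation occurs, and the new vertex $R_{ji}^\epsilon \cdot u$ satisfies $\alpha(R_{ji}^\epsilon \cdot u) = \alpha(u) - 1 < \alpha(v)$; hence $\phi$ is a reduction.

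To verify $k$-safety: (A) $R_{ji}^\epsilon$ modifies only column $j$, zeroing its bottom $d-k$ entries, so $\|row_i\|$ cannot increase for $i>k$; (B) at each modified vertex the $\sim_k$-class of $c_j$ still contains $c_i$, so $wdef_k$ is preserved; and (C) $k$-unimodularity is preserved since the set of $k$-columns only grows. By Lemma~\ref{lem:kgallery}, each triple-cell reduction strictly decreases $\deg(v)$; once $\deg(v) \leq 3$, a full-gallery reduction (still with $|\supp| \leq C$) eliminates $v$. Lemma~\ref{obs:Artinian} then guarantees that iterating the process terminates and produces the desired $\Delta'$.

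The main technical difficulty is the case of very large $\deg(v)$, since a single full-gallery reduction would have support far exceeding $C$, defeating the pigeonhole argument. The resolution is to use only triple-cell reductions throughout the degree-reduction phase: each such reduction uses only $|\supp| \leq C$ indices, leaves $\alpha(v)$ itself unchanged, and introduces only new vertices with strictly smaller $\alpha$, while strictly reducing $\deg(v)$ by Lemma~\ref{lem:kgallery}. Only after finitely many triple-cell reductions have brought $\deg(v)$ down to $3$ do we apply a full-gallery reduction to finish off $v$.
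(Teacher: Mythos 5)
Your construction is essentially the paper's: the same pigeonhole argument (at least $def_k(v)-C>wdef_k(v)$ restricted columns of $v$ lie outside the support of a gallery of length $\min(3,\deg v)$, so two of them are $\sim_k$-equivalent), the same commuting gallery map merging them into a $k$-column, the same $k$-safety check, and the same degree-reduction-then-full-gallery scheme to eliminate the vertex. Two points of divergence deserve comment.

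First, the order. The paper shows these maps are reductions with respect to the Step~1 order — the order shared with Proposition~\ref{step1a} — by tracking $\|row_k\|_\infty$, maximal columns, and non-maximal unrestricted columns. You instead introduce an ad hoc super-Artinian order with norm $\alpha=def_k-wdef_k$, for which the reduction property is immediate ($\alpha$ drops by exactly $1$ at every new vertex, since the merged column's $\sim_k$-class is still represented by column $i$) and termination follows cleanly from Lemma~\ref{obs:Artinian}. This is internally consistent, and in some respects tidier than the paper's vertex-selection argument. But it does not deliver what the surrounding architecture consumes: in the proof of Theorem~\ref{thm:step1} the outputs of Propositions~\ref{step1a} and~\ref{step1b} are interleaved, and termination of the interleaved sequence is obtained by applying Lemma~\ref{obs:Artinian} to a \emph{single} order. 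A map that is a reduction only for your $\alpha$-order could in principle create vertices that are large in the Step~1 order and undo the progress made by Proposition~\ref{step1a}. So you would either need to verify additionally that your maps are Step-1-order reductions, or rework the termination argument of Theorem~\ref{thm:step1}.

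Second, a small but genuine error: you assert that a small boundary vertex satisfies $def_k(v)\le 8$. That is false — the identity block of $(I_{d\times d}\mid 0_{d\times(n-d)})$ already contributes $d-k$ non-$k$-columns, so $def_k(v)$ can be as large as $d-k+8$. The conclusion you need, $\alpha(v)\le C$, is nevertheless true: at least $d-k-8$ of the columns $e_{k+1},\dots,e_d$ survive unchanged and are pairwise $\sim_k$-inequivalent, so $wdef_k(v)\ge def_k(v)-8$. This is precisely how condition (ii) of Definition~\ref{def:small} is meant to be used; as written, your justification does not establish that vertices with large $\alpha$ are interior, which is needed before any interior-vertex reduction can be applied.
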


Let us first explain how Theorem~\ref{thm:step1} follows from Propositions~\ref{step1a}~and~\ref{step1b}. 

\begin{proof}[Proof of Theorem~\ref{thm:step1}]
We start with the initial diagram $\Omega$ and keep applying one of these propositions as long as we can. More precisely, if at some
point we have a diagram $\Delta$ with $def_{k}(\Delta)> wdef_k(\Delta)+C$, apply Propositions~\ref{step1b}; otherwise apply
Proposition~\ref{step1a} as long as its hypotheses are satisfied. Since reductions in Proposition~\ref{step1a}
are full reductions and the order is Artinian, the process will terminate after finitely many steps by Lemma~\ref{obs:Artinian}. Let $\Omega'$ be the resulting diagram, and let $\phi:\Omega\to\Omega'$ be the composition of all reductions from Propositions~\ref{step1a}~and~\ref{step1b}. 
We claim that $\phi$ and $\Omega'$ satisfy all the required conditions.

\skv
Since the reductions in Propositions~\ref{step1a}~and~\ref{step1b} are $k$-safe, so is $\phi$.
In particular, $wdef_k(\Omega')\leq wdef_k(\Omega)$. Since Proposition~\ref{step1b} is not applicable to $\Omega'$ (in a non-trivial way),
we have $def_{k}(\Omega')\leq wdef_k(\Omega')+C$ and thus $def_{k}(\Omega')\leq wdef_k(\Omega)+C\leq def_k(\Omega)+C$,
so condition (i) in Theorem~\ref{thm:step1} holds. 

Now recall that $d\leq n-(C^2+6C+3)< n-6C-3$ and hence
$def_{k}(\Omega')\leq (n-6C-3)-k+2C=n-k-4C-3$, so $\Omega'$ satisfies condition (a)
of Proposition~\ref{step1a}. Since Proposition~\ref{step1a} is not applicable, 
condition (b) does not hold, that is, $\|row_k\Omega'\|=1$.
Thus $\Omega'$ satisfies both (i) and (ii) from the conclusion of Theorem~\ref{thm:step1}.
\end{proof}

Before turning to the proofs of Propositions~\ref{step1a}~and~\ref{step1b}, we need
to define the order that will be used in Step~1. First we introduce some additional terminology.

\begin{Definition}\rm Fix $k$ with $1\leq k\leq d$.
Let $v\in Mat_{d\times n}(\dbZ)$ and $c$ a column of $v$. We will say that
\begin{itemize}
\item[(i)] $c$ is {\it maximal} if $|c_k|\geq |d_k|$ for any column $d$ of $v$; 
\item[(ii)] $c$ is {\it good} if $c$ is a $k$-column, $c$ is not maximal and $c_k\neq 0$.
\end{itemize}
\end{Definition}
Note that $c$ is maximal if and only if $c_k$ is a maximal coordinate of $row_k(v)$. If $c$ is good, then $c_k$ must be a good
coordinate of $row_k(v)$, but the converse is not true (since $row_k(c)$ cannot tell us whether $c$ is 
a $k$-column or not).

\skv

As in the one-dimensional setting, we will also need the corresponding notions relative to a $Um_{d\times n}(\dbZ)$-labeled diagram $\Omega$.
Given such a diagram $\Omega$, let $M=\|row_k(\Omega)\|$. A vector $c\in\dbZ^d$ with $|c_k|\leq M$ will be called 
{\it $\Omega$-maximal} (resp. {\it $\Omega$-good}) if $|c_k|=M$ (resp. $c$ is a $k$-vector and $0<|c_k|<M$).
\skv

As we will see shortly, there is a clear distinction between the roles played by $k$-columns and
non-$k$-columns in the definition of the Step~1 order and the construction of reductions in the proof of Proposition~\ref{step1a}. To make this distinction
more transparent, we will say that
\begin{itemize}
\item A vector $c\in\dbZ^d$ is {\it unrestricted} if $c$ is a $k$-vector and
{\it restricted} otherwise.
\end{itemize}
We will explain why we call $k$-vectors {\it unrestricted} in the remark below.

\begin{Observation}
\label{obs:restricted}
Let $v,v'\in Mat_{d\times n}(\dbZ)$, and suppose that $v'$ is obtained from $v$
by replacing $col_i(v)$ by $col_i(v)\pm col_j(v)$ for some $i\neq j$. 
If $col_j(v)$ is a $k$-column, then
\begin{itemize}
\item[(a)] $v$ and $v'$ agree in the last $d-k$ rows;
\item[(b)] $wdef_k(v')=wdef_k(v)$;
\item[(c)] the $k$-columns of $v$ and $v'$ span the same subspace of $\dbZ^d$.
\end{itemize}
 \end{Observation}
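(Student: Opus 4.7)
The plan is to verify all three parts by direct inspection, since each reduces to the single fact that $col_j(v)$ being a $k$-column means it has zeros in rows $k+1,\ldots,d$ and, in particular, is itself a $k$-vector. Throughout the argument note that $col_m(v')=col_m(v)$ for every $m\neq i$ and $col_i(v')=col_i(v)\pm col_j(v)$.

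For (a) I would observe that since $col_j(v)$ has zero entries in rows $k+1,\dots,d$, adding or subtracting it to $col_i(v)$ does not change those entries of $col_i$; and all other columns are untouched. So $v$ and $v'$ agree in the last $d-k$ rows.

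For (b) the point is that the operation preserves both the $k$-column indicator and the $\sim_k$-equivalence class of each column. For $m\neq i$ there is nothing to check. For column $i$, since $col_j(v)$ is a $k$-vector, $col_i(v')-col_i(v)=\pm col_j(v)$ is a $k$-vector, so $col_i(v')\sim_k col_i(v)$ and moreover $col_i(v')$ is a $k$-column iff $col_i(v)$ is. Hence the set of indices of non-$k$-columns of $v'$ equals that of $v$, and within that set the $\sim_k$-classes correspond column by column; therefore $wdef_k(v')=wdef_k(v)$.

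For (c), part (b) already shows that the sets $I$ of indices of $k$-columns for $v$ and $v'$ coincide. For $m\in I\setminus\{i\}$, $col_m(v')=col_m(v)$. If $i\notin I$, the sets of $k$-columns are literally identical and there is nothing more to do. If $i\in I$, then $j\in I$ as well (by hypothesis), so $col_i(v')=col_i(v)\pm col_j(v)$ lies in the $\mathbb{Z}$-span of $\{col_m(v):m\in I\}$; conversely, using $col_j(v)=col_j(v')$, we recover $col_i(v)=col_i(v')\mp col_j(v')$ in the $\mathbb{Z}$-span of $\{col_m(v'):m\in I\}$. Hence the two spans agree.

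There is no real obstacle here: the whole observation is essentially a record of the fact that elementary column operations with a $k$-column as ``pivot'' act on the matrix only within the top $k$ rows and only within each $\sim_k$-class, and are obviously invertible (by the opposite-sign operation). The only thing one must be careful about is not to add a non-$k$-column to another column; that is precisely the hypothesis that $col_j(v)$ is a $k$-column, and it is used in each of (a), (b), (c).
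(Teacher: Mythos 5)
Your proof is correct and follows essentially the same route as the paper's: (a) is immediate from the definition of a $k$-vector, (b) follows because every column of $v'$ is $\sim_k$-equivalent to the corresponding column of $v$, and (c) follows because $col_i(v)$ and $col_i(v)\pm col_j(v)$ are simultaneously $k$-columns or non-$k$-columns, with the operation invertible by the opposite-sign operation. The only difference is that you spell out the span argument in (c) slightly more explicitly than the paper does.
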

\begin{proof}
(a) holds by definition. It also implies that each column of $v'$ is equivalent
to the corresponding column of $v$ with respect to $\sim_k$ which yields (b) (recall that
$\sim_k$ was defined in Definition~\ref{def:weakdefect}).
Finally, since $col_j(v)$ is a $k$-column, $col_i(v)$ and $col'_i(v)=col_i(v)\pm col_j(v)$
are both $k$-columns or both non-$k$-columns; in either case (c) holds.
\end{proof}

\begin{Remark}\rm
Recall that the reduction in the statement of Theorem~\ref{thm:step1} is 
required to be $k$-safe.
Suppose now that $\psi$ is a diagram map such that every new vertex 
$v'$ is obtained from some vertex $v$ by replacing $col_i(v)$ by $col_i(v)\pm col_j(v)$ for some $i\neq j$. If $col_j(v)$ is a $k$-column in all cases,
Observation~\ref{obs:restricted} immediately implies that $\psi$ is $k$-safe
(see Lemma~\ref{lem:ksafe} below). Thus, when constructing a $k$-safe map, we can add (resp. subtract) a $k$-column to (resp. from) any other column without any restrictions. 
\end{Remark}
\skv

\paragraph {\bf Step 1 order.} Let us now define the Step~1 order. As in the one-dimensional setting, it will be defined as a comparison algorithm, where we stop as soon as two vertices are comparable via a given criterion. We warn the reader that this order will only be used in step 1. Different orders will be used in steps 2 and 3.
\skv 
As we already mentioned, a key difference with the one-dimensional setting is that to compare two vertices, it will in general be insufficient to know their labels. But first we define the Step 1 pre-order which only takes labels into account.

\skv
\paragraph{\bf Step 1 pre-order:}
\begin{itemize}
\item[(1)] The vertex with smaller $\ell^{\infty}$-norm of the $k^{\rm th}$ row is smaller.
\item[(2)] The vertex with fewer maximal columns is smaller.
\item[(3)] The vertex with fewer non-maximal restricted columns is smaller. Equivalently, the vertex
with more non-maximal unrestricted columns is smaller.
\item[(4)] The vertex with more good columns is smaller.
\item[(5)] The vertex with more $(k-1)$-columns is smaller.
\item[(6)] Let $v^{\rm ur}$ be the matrix obtained from the label of $v$ by removing all the restricted columns. 
The vertex $v$ with the smaller value of the vector 
$$(\|row_k(v^{ur})\|_1,\|row_{k-1}(v^{ur})\|_1,\ldots,\|row_{1}(v^{ur})\|_1)$$ 
(with respect to the lexicographical order, moving from left to right) is smaller.
\end{itemize}
Given two vertices $v$ and $w$, we will write $v<w$ if $v$ is smaller than $w$ relative to Step~1 pre-order and $v\sim w$
if $v$ and $w$ are incomparable. 
\skv
  
Before proceeding, we need a technical definition.

\begin{Definition}\rm Let $\Delta$ be a diagram.
\begin{itemize}
\item[(a)] A vertex $v$ of $\Delta$ will be called {\it pre-maximal} if its label $l(v)$ is maximal relative to the Step~1 pre-order.
\item[(b)] A cell $\calF$ of $\Delta$ will be called {\it purely maximal} if all of its vertices are pre-maximal.
\item[(c)] Given a vertex $v$ of $\Delta$, we define $h_{\Delta}(v)$ to be the number of purely maximal cells of $\Delta$ which contain $v$ and have type $5$.
\end{itemize}
\end{Definition} 

As we will see below, given a cell $\calF$ containing a pre-maximal vertex $v$,
one can construct a single-cell $\calF$-reduction with respect to the Step~1 pre-order in almost all cases, 
except when $\calF$ is purely maximal and has type 5. What we can do in the latter case is construct a diagram map $\phi$ such that
for every new vertex $w$ of $\phi$ either $w < v$ (relative to the Step~1 pre-order) or $w\sim v$, but $h_{\Delta}(w)<h_{\Delta}(v)$. In order to turn this map into a reduction, we introduce extended labels as follows.

The extended labeling set is $Z'=Z\times \dbZ_{\geq 0}$, where $Z=Um_{d\times n}(\dbZ)$ as before. Given a vertex $v\in V(\Delta)$, its 
extended label is $L_{\Delta}(v)=(l(v),h_{\Delta}(v))$ where $l(v)$ is the usual label.
Recall that $Z'$ is ordered lexicographically, that is, 
\skv
\centerline{$L_{\Delta}(v)<L_{\Delta'}(v')$ if and only if
$l(v)<l(v')$ or $l(v)\sim l(v')$ and $h_{\Delta}(v)<h_{\Delta'}(v')$.}

\skv
\paragraph{\bf Notation:} Given vertices $v$ and $w$, we will still write $v<w$ only when $l(v)<l(w)$. Whenever an inequality between vertices takes
extended labels into account, those extended labels will be mentioned explicitly.
\skv

Recall from Definition~\ref{def:pseudonew} that given a diagram map $\psi:\Delta\to\Delta'$, a vertex $v$ is called pseudo-new for $\psi$ if $v\in V(\Delta')\cap V(\Delta)$
but $L_{\Delta'}(v)\neq L_{\Delta}(v)$. Thus, a pseudo-new vertex will be considered to belong to both the old and the new diagrams,
but with different extended labels.

While it is difficult to control which vertices become pseudo-new, there is a sufficient condition for a diagram map to be a reduction 
which only depends on the extended labels of new vertices.

\begin{Lemma}
\label{lem:enhancedreduction}
Let $\psi:\Delta\to\Delta'$ be a diagram map and $v$ a maximal vertex of $\Delta$ (with respect to Step~1 order). 
Suppose that 
\begin{itemize}
\item[(i)] $L_{\Delta'}(w) < L_{\Delta}(v)$ for every new vertex $w$ of $\psi$; 
\item[(ii)] every new cell of $\psi$ of type $5$ has at least one vertex $u$ (not necessarily new) with $u<v$. 
\end{itemize}
Then $\psi$ is a reduction (with respect to Step~1 order) .
\end{Lemma}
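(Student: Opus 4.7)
By Definition~\ref{def:reductionhigh}, what must be checked is that $L_{\Delta'}(w) < L_{\Delta}(v)$ for every new and every pseudo-new vertex $w$ of $\psi$. The new vertices are covered verbatim by hypothesis (i), so the entire content of the lemma lies in controlling pseudo-new vertices. Recall that a pseudo-new vertex is a $w \in V(\Delta) \cap V(\Delta')$ with $h_{\Delta'}(w) \neq h_{\Delta}(w)$; in particular its underlying label $l(w)$ is the same in both diagrams.

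My plan is to establish
\[
  h_{\Delta'}(w) \;<\; h_{\Delta}(v)
\]
for every pseudo-new $w$. Once this is in hand, the desired $L_{\Delta'}(w) < L_{\Delta}(v)$ will follow from the definition of the Step~1 order on $Z' = Z \times \dbZ_{\geq 0}$: maximality of $v$ in $\Delta$ forces $l(w) \not> l(v)$ in the Step~1 pre-order, so either $l(w) < l(v)$ strictly (and the strict inequality comes from the first coordinate), or $l(w)$ is equal or incomparable to $l(v)$ (and the bound on the second coordinate supplies it). I intend to obtain the displayed inequality as the composition of two bounds: first, $h_{\Delta'}(w) \leq h_{\Delta}(w)$, which combined with pseudo-newness ($h_{\Delta'}(w) \neq h_{\Delta}(w)$) upgrades to strict inequality; second, $h_{\Delta}(w) \leq h_{\Delta}(v)$, which follows from maximality of $v$ in the extended order, since any pseudo-new $w$ with $l(w)$ equal or incomparable to $l(v)$ and $h_{\Delta}(w) > h_{\Delta}(v)$ would have $L_{\Delta}(w) > L_{\Delta}(v)$.

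The substantive step is the bound $h_{\Delta'}(w) \leq h_{\Delta}(w)$. I will prove it by matching each purely maximal type-$5$ cell of $\Delta'$ containing $w$ to such a cell of $\Delta$ containing $w$. The cells of $\Delta'$ containing $w$ split into those inherited from $\Delta$ (outside the domain of $\psi$, hence identical as cells in $\Delta$ and $\Delta'$) and genuinely new cells. The key preliminary is that every label appearing in $\Delta'$ is bounded above by $l(v)$ in the Step~1 pre-order: for old vertices this is maximality of $v$ in $\Delta$, and for new vertices it is contained in hypothesis (i). In particular $l(v)$ continues to realise a maximum of the label set of $\Delta'$, so any vertex with label strictly below $l(v)$ fails to be pre-maximal in $\Delta'$. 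Hypothesis (ii) then applies: each new cell of type $5$ contains a vertex $u$ with $l(u) < l(v)$, which by the preceding sentence is not pre-maximal in $\Delta'$, so no new cell of type $5$ is purely maximal in $\Delta'$. For an inherited cell, the same boundedness shows that being purely maximal in $\Delta'$ forces its vertices to have maximal labels, which remain maximal in $\Delta$ as well; hence purely-maximal status in $\Delta'$ transfers to $\Delta$.

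The main obstacle I anticipate — and the step most prone to error — is the interplay between the diagram-dependent notion of pre-maximal and the change of vertex set when passing from $\Delta$ to $\Delta'$, which is precisely what forces the detour through the preliminary label bound above and is the place where hypothesis (i) is genuinely needed beyond its direct role of handling new vertices.
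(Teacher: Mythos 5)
Your overall strategy is the paper's own argument in contrapositive form: the paper splits on whether $h_{\Delta'}(w)$ decreases or increases and, in the latter case, uses (ii) plus the purely-maximal condition to force $l(w)\sim l(u)<l(v)$; you instead split on whether $l(w)<l(v)$ and, when it does not, show $h$ cannot increase. These are equivalent, and your assembly of the two $h$-bounds with maximality of $v$ is fine.

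There is, however, one step whose stated justification does not hold: ``every label appearing in $\Delta'$ is bounded above by $l(v)$ \dots In particular $l(v)$ continues to realise a maximum of the label set of $\Delta'$, so any vertex with label strictly below $l(v)$ fails to be pre-maximal in $\Delta'$.'' An upper bound that is not attained does not prevent strictly smaller labels from being maximal \emph{in the set of labels actually occurring in $\Delta'$}, and the lemma is stated for arbitrary diagram maps, which may eliminate $v$ together with every other vertex whose label lies in the top equivalence class. In that situation the pre-maximal vertices of $\Delta'$ all have labels strictly below $l(v)$, a new type-$5$ cell can perfectly well be purely maximal in $\Delta'$, and an inherited cell can acquire purely-maximal status it did not have in $\Delta$ --- so both halves of your cell-matching argument, as justified, fail, and the universal claim $h_{\Delta'}(w)\le h_{\Delta}(w)$ is in fact false in general. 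The argument is saved only because you never need that bound except for a pseudo-new $w$ with $l(w)$ equal to or incomparable with $l(v)$: such a $w$ itself belongs to $V(\Delta')$ and carries a label in the top class, so by the super-Artinian property every $u\in V(\Delta')$ with $l(u)<l(v)$ satisfies $l(u)<l(w)$ and is therefore not pre-maximal in $\Delta'$; equivalently, any purely maximal cell of $\Delta'$ containing $w$ has all its vertex labels incomparable with $l(v)$, which rules out new type-$5$ cells via (ii) and transfers purely-maximal status of inherited cells back to $\Delta$. You should replace the ``$l(v)$ realises a maximum'' claim by this observation (which is precisely the mechanism in the paper's proof); with that substitution the argument is complete.
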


If $w<v$ for every new vertex of $\psi$, conditions (i) and (ii) hold automatically 
(for (ii) this is true since every new cell must contain at least one new vertex). Thus, we have the following special
case of Lemma~\ref{lem:enhancedreduction} which will be applicable to most cell types:

\begin{Lemma}
\label{lem:easyreduction}
Let $\psi:\Delta\to\Delta'$ be a diagram map and $v$ a maximal vertex of $\Delta$.
If $w<v$ for every new vertex of $\psi$, then $\psi$ is a reduction.
\end{Lemma}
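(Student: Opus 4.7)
The plan is to derive Lemma~\ref{lem:easyreduction} as an immediate corollary of Lemma~\ref{lem:enhancedreduction}, by checking that the hypothesis "$w<v$ for every new vertex $w$ of $\psi$" (comparison in the Step~1 pre-order) forces both conditions (i) and (ii) of the enhanced lemma.

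For condition (i), recall that the extended labels live in $Z'=Z\times\dbZ_{\geq 0}$ equipped with the lexicographic order having $Z$ as the leading factor. Thus a strict inequality $l(w)<l(v)$ in the first coordinate automatically yields $L_{\Delta'}(w)<L_{\Delta}(v)$ in $Z'$, regardless of the values of $h_{\Delta'}(w)$ and $h_{\Delta}(v)$. Hence (i) is verified for every new vertex of $\psi$.

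For condition (ii), I would invoke the observation already flagged in the paragraph preceding the statement: every new cell of $\psi$ contains at least one new vertex on its boundary. This is essentially structural — the diagram maps constructed in this paper (single-cell maps, gallery maps, and their compositions with cancellations) introduce each new $2$-cell together with new boundary vertices — so any new cell $\calF$ of type $5$ has a new vertex $u\in V(\calF)$, and by hypothesis $u<v$, giving (ii).

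With (i) and (ii) established, Lemma~\ref{lem:enhancedreduction} applies and concludes that $\psi$ is a reduction. No genuine obstacle is expected here, since the definition of the extended label and the lexicographic order on $Z'$ were set up precisely so that strict decrease in the $Z$-coordinate dominates the comparison; the only point one has to be slightly careful about is that pseudo-new vertices are not addressed directly by the hypothesis, but Lemma~\ref{lem:enhancedreduction} has already absorbed that subtlety.
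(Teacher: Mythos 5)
Your proof is correct and is essentially identical to the paper's: the paper also obtains this as an immediate special case of Lemma~\ref{lem:enhancedreduction}, noting that (i) follows from the lexicographic order on $Z'$ and (ii) from the fact that every new cell contains at least one new vertex.
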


\begin{proof}[Proof of Lemma~\ref{lem:enhancedreduction}] Because of condition (i), we just need to show that $L_{\Delta'}(w)< L_{\Delta}(v)$ whenever $w$ is a pseudo-new vertex.

Let $w$ be a pseudo-new vertex, so that $h_{\Delta}(w)\neq h_{\Delta'}(w)$.
If $h_{\Delta'}(w)< h_{\Delta}(w)$, then $L_{\Delta'}(w)=(l(w),h_{\Delta'}(w)) < L_{\Delta}(w)=(l(w),h_{\Delta}(w))$ and hence 
$L_{\Delta'}(w) < L_{\Delta}(v)$  (as $v$ is maximal for $\Delta$).

Assume now that  $h_{\Delta'}(w)> h_{\Delta}(w)$. This means that $\Delta'$ has a new cell $\calF$ which is purely maximal, has type $5$ and contains $w$. By condition (ii), $\calF$ contains a vertex $u$ with $l(u)<l(v)$. Since
$\calF$ is purely maximal and contains $w$, we must have $l(w)\sim l(u)$. Hence  $l(w)< l(v)$ as well, and therefore 
$L_{\Delta'}(w)< L_{\Delta}(v)$ as desired.
\end{proof}
 
\skv

\begin{proof}[Proof of Proposition~\ref{step1b}]
If $def_{k}(\Delta)\leq wdef_k(\Delta)+C$, there is nothing to do, so suppose that the opposite inequality holds. Thus there exists
a vertex $v$ such that $def_{k}(v)> wdef_k(v)+C$; let us call any such $v$ {\it irregular}. 
Condition (ii) in the definition of a small matrix (Definition~\ref{def:small}) implies that small vertices cannot
be irregular. Since we assume that $\Delta$ has small boundary, irregular vertices must be interior. Among all irregular vertices choose one with the largest $k$-defect and denote it by $v$. 

Let $\calG$ be any gallery at $v$ of length $\min(3,\deg(v))$, that is, a
length $3$ gallery at $v$ or the full gallery at $v$ if $\deg(v)=2$. 
Recall that $|\supp(\calG)|\leq C$ (by the definition of $C$).
Since $v$ is irregular, there exist distinct indices $i,j\not\in \supp(\calG)$ such that $col_i(v)\sim_k col_j(v)$ and $col_i(v)$ (and hence also $col_j(v)$) is restricted.  
Then the commuting map $C_{\calG}(R_{ij}^{\pm 1})$ is a reduction (for a suitable choice of sign) as it does not increase the $\ell^{\infty}$-norm or the number of maximal columns and increases the number of non-maximal unrestricted columns. It is also 
straightforward to check that this map is $k$-safe. 
Since $C_{\calG}(R_{ij}^{\pm 1})$
decreases the degree of $v$, applying this operation to various $\calG$, we will eliminate $v$ after finitely many iterations.

Also note that the new vertices arising from the maps $C_{\calG}(R_{ij}^{\pm 1})$ have $k$-defect smaller than $def_k(v)$. Since $v$ was an irregular
vertex with largest $k$-defect, after applying the procedure in the previous paragraph to finitely many vertices, we will
obtain a diagram $\Delta'$ which has no irregular vertices, that is, $def_{k}(\Delta')\leq wdef_k(\Delta')+C$.
\end{proof}

Before turning to the proof of Proposition~\ref{step1a}, we state the higher-dimensional analogues of  Lemma~\ref{lem:goodeasy}~and~\ref{lem:zeasy} which provide
a sufficient condition for a diagram map to be a reduction. The proofs are completely analogous to the
one-dimensional setting.

\begin{Lemma}[analogue of Lemma~\ref{lem:goodeasy}]
\label{lem:goodeasyH}
Let $\Delta$ be a diagram and $M=\|row_k(\Delta)\|$.
The following hold:
\begin{itemize}
\item[(a)] Let $w$ be a vertex of $\Delta$, let $I$ be a subset of $\{1,\ldots, n\}$, and assume that $|w_{kt}|=M$
for some $t\in I$ (here $w_{kt}$ is the $(k,t)$-entry of $w$). Let $\widetilde w\in\dbZ^n$ be such that
\begin{itemize}
\item[(i)] $\widetilde w_{kj}=w_{kj}$ for all $j\not\in I$;
\item[(ii)] $|\widetilde w_{ki}|<M$ for all $i\in I$.
\end{itemize}
\skv
If $w'\in Mat_{d\times n}(\dbZ)$ is obtained from $\widetilde w$ by a signed permutation of columns, then $w'<w$.
\skv
\item[(b)] Let $\psi:\Delta\to\Delta'$ be a diagram map and assume that every new vertex $w'$ of $\psi$ can be constructed from some old vertex $w$ as in (a). Then $\psi$ is a reduction.
\end{itemize}
\end{Lemma}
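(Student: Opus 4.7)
The plan is to derive (b) from (a) by a direct application of Lemma~\ref{lem:easyreduction}, and to prove (a) by a short two-case analysis using only criteria (1) and (2) of the Step~1 pre-order.

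First I would observe that signed permutations of columns preserve every statistic appearing in the Step~1 pre-order: the $\ell^\infty$-norm of $row_k$, the count of maximal columns, the count of restricted columns, the count of good columns, the count of $(k-1)$-columns, and the vector of $\ell^1$-norms of the rows of $v^{\rm ur}$. Indeed, each of these statistics depends only on the multiset of columns (and of their absolute-value entries), so it is unaffected by a signed permutation. Consequently, to show $w' < w$ it suffices to show $\widetilde w < w$.

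For (a), set $M=\|row_k(\Delta)\|$, so that $|w_{kt}|=M$ and $\|row_k(w)\|=M$. By (i) the $k$-entries of $\widetilde w$ in columns outside $I$ coincide with those of $w$ (hence are bounded by $M$), and by (ii) the $k$-entries in columns of $I$ are bounded strictly below $M$; together these give $\|row_k(\widetilde w)\|\leq M$. If the inequality is strict, criterion (1) alone yields $\widetilde w<w$. Otherwise equality holds, and I would compare maximal columns: by (ii) every maximal column of $\widetilde w$ must lie outside $I$, and by (i) it then coincides with a maximal column of $w$, whereas the column $t$ is maximal in $w$ but lies in $I$ and so is no longer maximal in $\widetilde w$. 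Hence $\widetilde w$ has strictly fewer maximal columns than $w$, and criterion (2) concludes $\widetilde w<w$.

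For (b), fix a maximal vertex $v$ of $\Delta$ and any new vertex $w'$ of $\psi$. By hypothesis $w'$ arises from some old vertex $w$ via the construction in (a), so $l(w')<l(w)$. Since $v$ is maximal, either $l(w)<l(v)$ or $l(w)\sim l(v)$; the first case is immediate from transitivity, and in the second case the defining property of super-Artinian orders (Definition~\ref{def:Artinian}(b)) ensures that every element strictly below $l(w)$ is also strictly below $l(v)$, so $l(w')<l(v)$ in either case. Lemma~\ref{lem:easyreduction} then immediately identifies $\psi$ as a reduction.

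There is no genuine obstacle here beyond translating the one-dimensional bookkeeping into its row-$k$ analogue: the substance of the proof mirrors that of Lemma~\ref{lem:goodeasy}, with ``maximal coordinate'' replaced by ``column whose $k$-entry attains maximal absolute value in row $k$ of $\Delta$''.
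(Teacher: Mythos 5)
Your proof is correct and follows essentially the same route as the paper, which simply declares the argument "completely analogous to the one-dimensional setting" of Lemma~3.4: either criterion (1) of the Step~1 pre-order decides, or the row-$k$ norms tie and criterion (2) decides via a strict drop in the number of maximal columns, with signed column permutations leaving all the pre-order statistics unchanged. Your handling of (b) via Lemma~\ref{lem:easyreduction} (rather than the bare definition of reduction) is the right way to dispose of the pseudo-new-vertex subtlety that the extended labels introduce.
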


\begin{Lemma}[analogue of Lemma~\ref{lem:zeasy}]
\label{lem:zeasyH}
Let $\psi:\Delta\to \Delta_1$ be a diagram map and let $v$ be a maximal vertex of $\Delta$.
The following hold:
\begin{itemize}
\item[(a)] Let $w,w'\in Mat_{d\times n}(\dbZ)$, and suppose that $w'$ is obtained from $w$ by replacing one or several $\Delta$-bad columns by  $\Delta$-good columns. If $w$ is a vertex of $\Delta$, then $w'<v$. Hence if every new vertex $w'$ of $\psi$ is obtained from some old vertex $w$ in this way, then $\psi$ is a reduction.
\item[(b)] Suppose that $col_i(v)$ is good for some $i$. Let $u$ be another vertex of $\Delta$, which agrees with $v$ apart from the $i^{\rm th}$ column. Then the $i^{\rm th}$ column of $u$
is $\Delta$-good.
\end{itemize} 
\end{Lemma}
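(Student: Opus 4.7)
My plan is to imitate the argument of the one-dimensional Lemma~\ref{lem:zeasy}, but now respecting the richer Step~1 pre-order. For part (a), I will prove the stronger intermediate statement that $w' < w$ in the pre-order; the conclusion $w' < v$ then follows from $w \leq v$ combined with the super-Artinian property of the order (Definition~\ref{def:Artinian}(b)). The second sentence of (a) is then an immediate application of Lemma~\ref{lem:easyreduction}, since $w' < v$ for every new vertex $w'$ of $\psi$.

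To establish $w' < w$, I will walk through the first four layers of the pre-order. Writing $M = \|row_k(\Delta)\|$, the point is that every new column (being $\Delta$-good) is a $k$-column whose $k^{\rm th}$ entry lies strictly between $-M$ and $M$ and is non-zero, while every replaced (old) column is either a non-$k$-column or a $k$-column whose $k^{\rm th}$ entry is $0$ or $\pm M$. From this I can verify: invariant~(1) is non-increasing; invariant~(2) is non-increasing, strictly decreasing exactly when some maximal column is replaced; and invariant~(4) strictly increases by the number of replacements. If (1) or (2) strictly decreases, we are done. Otherwise no maximal column was replaced, and every replaced column is either a non-maximal non-$k$-column (each of which strictly decreases invariant~(3)) or a $k$-column with zero $k^{\rm th}$ entry (each of which leaves (3) unchanged); in the former case the lex comparison is settled at (3), in the latter at (4). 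I do not foresee a substantive obstacle here — the only point that requires some care is the bookkeeping of how each replacement affects invariants (1)--(4), but this reduces to a routine case analysis.

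Part (b) is a short reductio based on (a). Suppose for contradiction that $col_i(u)$ is $\Delta$-bad. Since $u$ agrees with $v$ outside column $i$ and $col_i(v)$ is $\Delta$-good by hypothesis, $v$ itself is obtained from $u$ by replacing the single $\Delta$-bad column $col_i(u)$ by the $\Delta$-good column $col_i(v)$. Applying part (a) with $w = u$ and $w' = v$ would then yield $v < v$, which is impossible. Hence $col_i(u)$ must be $\Delta$-good.
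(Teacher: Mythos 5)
Your part (b) and the overall architecture of part (a) — reduce to a pre-order comparison between labels, then pass to $w'<v$ via the super-Artinian property and conclude with Lemma~\ref{lem:easyreduction} — are exactly the paper's intended route (the paper's proof is declared "completely analogous" to that of Lemma~\ref{lem:zeasy}). However, your proof of part (a) rests on a false intermediate claim. You assert that $w'<w$ always holds, and in particular that invariant (1) of the Step~1 pre-order, namely $\|row_k(\cdot)\|_\infty$, is non-increasing under the replacement. This fails whenever $\|row_k(w)\|<M$, where $M=\|row_k(\Delta)\|$: a $\Delta$-good column is only required to have $k^{\rm th}$ entry of absolute value strictly between $0$ and $M$, so it may exceed $\|row_k(w)\|$, in which case $\|row_k(w')\|>\|row_k(w)\|$ and $w'>w$ by criterion (1). (Already for $d=1$: if $M=10$ and $w=(0,0,1,0,\dots)$, replacing the first coordinate by the $\Delta$-good value $5$ produces $w'>w$.) This is precisely why the higher-dimensional statement weakens the conclusion from "$w'<w$" (as in Lemma~\ref{lem:zeasy}(a)) to "$w'<v$"; a vertex $w$ of $\Delta$ need not satisfy $\|row_k(w)\|=M$, and such vertices do occur in the applications.

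The repair is the case split from the one-dimensional proof. Every new column has $k^{\rm th}$ entry of absolute value $<M$ and every unreplaced column has it $\le M$, so $\|row_k(w')\|\le M$. If $\|row_k(w')\|<M$, then $w'<v$ immediately by criterion (1), since $\|row_k(v)\|=M$ by maximality of $v$. If $\|row_k(w')\|=M$, then some unreplaced column attains $M$, forcing $\|row_k(w)\|=M$ as well; only under this equality do "maximal/good as a column of $w$" and "$\Delta$-maximal/$\Delta$-good" coincide, and then your cascade through criteria (2)--(4) goes through verbatim and yields $w'<w$, hence $w'<v$. With this adjustment the argument is complete; part (b) is correct as written (modulo the routine remark that "good" and "$\Delta$-good" agree for columns of the maximal vertex $v$).
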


We will also need the following sufficient condition for a diagram to be $k$-safe. Recall that we call $c\in \dbZ^d$ unrestricted (resp. restricted)
if $c$ is a $k$-vector (resp. non-$k$-vector).

\begin{Lemma}
\label{lem:ksafe}
Let $\psi$ be a diagram map. Suppose that every new vertex $w$ of $\psi$ is obtained from an old vertex by the composition of some of the following operations:
\begin{itemize}
\item[(a)] a signed permutation of columns;
\item[(b)] adding (resp. subtracting) an unrestricted column to (resp. from) another column, e.g. 
$(a,b,c,x,\ldots)\mapsto (a+x,b,c,x,\ldots)$ where $x$ is unrestricted;
\item[(c)] replacing a $0$-column by another (possibly restricted) column;
\item[(d)] replacing a restricted column or a $0$-column by an unrestricted vector;
\item[(e)] if the vertex has 2 identical columns, replacing one of them by $0$, e.g. $(a,b,a,\ldots)\mapsto (a,b,0,\ldots)$.
\end{itemize}
Then $\psi$ is $k$-safe.
\end{Lemma}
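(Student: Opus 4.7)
The plan is to reduce $k$-safety to a per-vertex check and then verify it separately for each of the five operations. Because the three defining conditions of $k$-safety are all maxima or universal statements over $V(\Delta')$, and since every old or pseudo-new vertex of $\Delta'$ carries the same underlying label in $Um_{d\times n}(\dbZ)$ as it did in $\Delta$, it suffices to prove the following per-vertex statement: for any new vertex $w'$ produced from an old vertex $w$ by one of (a)--(e), we have $\|row_i(w')\|_{\infty} \leq \|row_i(w)\|_{\infty}$ for every $i > k$, $wdef_k(w') \leq wdef_k(w)$, and $w'$ is $k$-unimodular whenever $w$ is. The general case of an arbitrary composition then follows by induction on length, since these three properties are closed under composition.

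Four of the five operations are handled by essentially immediate checks. Operation (a) is trivial: a signed permutation of columns preserves the multiset of columns up to sign, and all three invariants in question are sign- and permutation-invariant. Operation (b) is exactly the content of Observation~\ref{obs:restricted}. Operation (d) replaces a restricted or zero column by an unrestricted (hence $k$-vector) column $u$; since $u$ has zero entries in rows $i>k$, each such row-norm can only shrink, the set of non-$k$-columns shrinks or stays the same, and the span of $k$-columns is enlarged by $u$ at the new position while losing only a zero contribution. Operation (e) removes one of two duplicate columns by replacing it with $0$; the remaining copy keeps row entries, $\sim_k$-classes, and span contributions unchanged.

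The only operation requiring care is (c). I would interpret it as replacing a $0$-column at position $i$ by a duplicate of another column $c$ already present at position $j$ of the same vertex. Under this reading the row-$i$ entries for $i>k$ of the new vertex form the same multiset as before with $c_i$ duplicated, so $\|row_i\|_{\infty}$ is unchanged; the non-$k$-columns gain at most an extra representative of an existing $\sim_k$-equivalence class, hence $wdef_k$ does not increase; and the $0$-column contributed nothing to the span of $k$-columns to begin with, so if $c$ is a $k$-vector the span is unchanged (the contribution at position $j$ still suffices), while if $c$ is not a $k$-vector then position $i$ simply leaves the set of $k$-column positions without the span losing any previously generated element. There is no substantial obstacle once the per-vertex reduction is in place; the whole argument is bookkeeping, with (c) being the only case where one must be careful to spell out the sub-cases depending on whether the inserted column is or is not a $k$-vector.
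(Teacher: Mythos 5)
Your proof is correct and follows the same route as the paper, which simply observes that each of (a)--(e) preserves the row norms for $i>k$, the weak $k$-defect, and $k$-unimodularity (citing Observation~\ref{obs:restricted} for (b) and calling the rest obvious), then composes. Your reading of operation (c) as duplicating a column of the \emph{same} vertex is the one the paper actually uses (e.g.\ in the type 1ZA argument), and it is the reading needed for the weak-defect bound, so your extra care there is well placed.
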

\begin{proof}
Operations (a)-(e) do not increase the $\ell^{\infty}$-norm for any of the last $d-k$ rows, do not
increase the weak $k$-defect and preserve $k$-unimodularity. This holds for (b)
by Observation~\ref{obs:restricted} and is obvious for the remaining operations. Thus, any map obtained as composition of 
these operations is $k$-safe.
\end{proof}

We now begin the proof of Proposition~\ref{step1a}.

\begin{proof}[Proof of Proposition~\ref{step1a}]
Recall that $def_k(v)\leq n-4C-k-1$, so $v$ has at least $4C+k+1$ unrestricted columns. Hence (at least) one of the following holds:
\begin{itemize}
\item[(1)] $v$ has at least $C+k$ columns of depth between $1$ and $k-1$;
\item[(2)] $v$ has at least $C+2$  maximal unrestricted columns;
\item[(3)] $v$ has at least $C+1$ good columns;
\item[(4)] $v$ has at least $C+1$ $0$-columns.
\end{itemize}

We will consider these 4 cases separately. Cases 1 and 2 are easier, and here we will explicitly describe how to construct a sequence of reductions which eliminates $v$. Cases 3 and 4 correspond to the good coordinate case and the zero coordinate case in the one-dimensional setting, respectively. In these
cases the desired reduction map will be constructed as a composition of single-cell, double-cell and triple-cell reductions. Double-cell and triple-cell reductions will be obtained as combinations of compatible single-cell reductions mostly as in the one-dimensional setting,
but we will need a minor modification involving cells of type 4GB.

\skv
For the rest of the proof we fix a maximal vertex $v$.

\begin{Observation}
\label{obs:goodcolumn}
The vertex $v$ is interior and has at least one good column.
\end{Observation}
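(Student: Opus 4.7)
The plan is to handle the two assertions separately; both follow directly from the hypotheses and definitions, so I do not expect either to be a genuine obstacle. The observation simply sets up the case analysis (1)--(4) that opens the proof of Proposition~\ref{step1a}.

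To see that $v$ is interior, I would note that because $v$ is maximal for $\Delta$ under the Step~1 order, $l(v)$ must be maximal under the Step~1 pre-order (extended labels are compared lexicographically, with the $Z$-component taking priority). Criterion~(1) of the Step~1 pre-order then yields $\|row_k(v)\|_{\infty} = \|row_k(\Delta)\|$, which is $\geq 2$ by hypothesis~(b). In particular $\|v\|_{\infty} \geq 2$. But $\Delta$ has small boundary, so by condition~(i) of Definition~\ref{def:small} every boundary vertex has $\ell^{\infty}$-norm $1$. Hence $v \notin \partial\Delta$.

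To produce a good column, set $M = \|row_k(v)\|_{\infty}$, so $M \geq 2$ by the paragraph above. Since $\Delta$ is $k$-unimodular, the label of $v$ is $k$-unimodular as well; that is, the $k$-columns of $v$ span $\dbZ^k$ over $\dbZ$. Projecting onto the $k$th coordinate, the integers $\{c_k : c \text{ is a } k\text{-column of } v\}$ generate $\dbZ$ as an abelian group, so their greatest common divisor equals $1$. Each such entry has absolute value at most $M$; if every one of them lay in $\{0, \pm M\}$, their gcd would be a nonzero multiple of $M \geq 2$, which is a contradiction. Therefore some $k$-column $c$ of $v$ satisfies $0 < |c_k| < M$, which is exactly the definition of a good column.

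The only genuine subtlety is keeping the four overlapping uses of the word ``maximal'' straight: maximal vertex of $\Delta$ (under the Step~1 order), maximal label under the pre-order, maximal column (meaning $|c_k| = M$), and maximal coordinate of a row. In the argument above the first two are used only to derive $M \geq 2$, and the third enters only in ruling out $|c_k| = M$; the confusion can be avoided by unwinding each term the first time it appears.
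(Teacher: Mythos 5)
Your proof is correct and follows essentially the same route as the paper: maximality under the Step~1 order plus hypothesis (b) gives $\|row_k(v)\|=\|row_k(\Delta)\|\geq 2$, which rules out $v$ being a boundary vertex since the boundary is small, and $k$-unimodularity forces some $k$-column whose $k$th entry is not a multiple of $M$, hence good. Your gcd phrasing is just a slightly more explicit version of the paper's "not divisible by $M$" step.
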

\begin{proof}
Let $M=\|row_k(v)\|$. Since $v$ is maximal, by the definition of Step~1 order $\|row_k(v)\|=\|row_k(\Delta)\|$,
so $M\geq 2$ by hypothesis (b) in Proposition~\ref{step1a}. Since we also assume that
$\Delta$ has small boundary, for any boundary vertex $w$ of $\Delta$ we have $\|row_k(w)\|=1$,
so $v$ must be interior.

Since $v$ is $k$-unimodular, it must have a column $c$ of depth $k$ whose $k^{\rm th}$ coordinate, call it $c_k$, is not divisible by $M$. Thus, $0< |c_k| < M$, so by definition $c$ is good. 
\end{proof}
Observation~\ref{obs:goodcolumn} will be crucial for the argument in Case~4 below.

\skv
{\it Case 1: $v$ has at least $C+k$ columns of depth between $1$ and $k-1$}. 

Take any gallery $\calG$ at $v$ of length $\min(3,\deg(v))$.
By the hypotheses in this case there exist distinct $i,j\not\in\supp(\calG)$ such that $col_i(v)$ and $col_j(v)$ have the
same depth $m$ with $1\leq m\leq k-1$, so that both entries $v_{m,i}$ and $v_{m,j}$ are nonzero.

Consider the commuting maps $C_{\calG}(R_{ij}^{\pm 1})$ and $C_{\calG}(R_{ji}^{\pm 1})$.
At least one of these 4 maps, call it $\psi$, has the following property: for every new vertex $w'$
there is a vertex $w$ of $\calG$ such that $w$ and $w'$ are incomparable based on the criteria (1)-(5) in the Step~1 pre-order,
$\|row_t^{\rm{ur}}(w')\|=\|row_t^{\rm{ur}}(w)\|$ for all $m<t\leq k$ and $\|row_m^{\rm{ur}}(w')\|<\|row_m^{\rm{ur}}(w)\|$
in the notations from criterion (6). Then $w'<w$ and thus $\psi$ is a reduction. It is also $k$-safe since
every new vertex is obtained from an old vertex using operation (b) from Lemma~\ref{lem:ksafe}.

Since the commuting maps $C_{\calG}(R_{ij}^{\pm 1})$ and $C_{\calG}(R_{ji}^{\pm 1})$ decrease the degree of $v$, after repeating this operation finitely many times we will eliminate $v$.
\skv

In Cases~2, 3 and 4 we will construct a reduction $\phi^*$ by lifting a suitable one-dimensional reduction $\phi$.

\skv
{\it Case 2: $v$ has at least $C+2$ maximal unrestricted columns.} 

We construct the one-dimensional reduction $\phi$ as in the maximal coordinate case (see \S~3,4). One can prove that $\phi^*$ is a reduction exactly as we proved that $\phi$ is a reduction in \S~3,4. As in Case~1, every new vertex is obtained from an old vertex using operation (b) from Lemma~\ref{lem:ksafe}, so $\phi^*$ is also $k$-safe.

\skv
{\it Case 3: $v$ has at least $C+1$ good columns.} The majority of work in this case will be devoted to constructing single-cell reductions.
Once this is done, we will describe a minor change in the definition of multiple-cell reductions (compared to the one-dimensional setting).

So let us fix a cell $\calF$ containing $v$. As in the one-dimensional setting, we will specify a small set $I$ (depending on the type of $\calF$) such that the replacement diagram of our reduction map is supported on $I$. We can permute the columns so that
the first column is maximal, the fourth column is good and $4\not\in supp(\calF)$ (the last condition can be arranged since $|supp(\calF)|\leq C$). 
In addition, we can also assume that the first and fourth columns of $v$ have positive $k^{\rm th}$ coordinate.

Also recall that $\calF_I$ and $v_I$ denote the $I$-traces of $\calF$ and $v$, respectively. 
\skv

First let us assume that $\calF$ has type other than 4GB, 5GA or 5GB. 
Similarly to Case~2, we construct the one-dimensional reduction $\phi$ as in the good coordinate case (see \S~3,4). By straightforward case-by-case verification, every new vertex $w'$ of $\phi^*$ satisfies the following 2 conditions:

\begin{itemize}
\item[(1)] $w'$ is obtained from some old vertex $w$ either as in Lemma~\ref{lem:goodeasyH}(a)
or by replacing a $0$-column by a $\Delta$-good column (a special case of the condition from Lemma~\ref{lem:zeasyH}(a));

\item[(2)] $w'$ is obtained from some old vertex $w_1$ (possibly different from $w$ in (1)) by adding/subtracting an unrestricted column
to/from another column (operation (b) from Lemma~\ref{lem:ksafe}).
\end{itemize}
Condition (1), Lemma~\ref{lem:goodeasyH}(b)~and~Lemma~\ref{lem:zeasyH}(a) imply that $\phi^*$ is a reduction while (2) and Lemma~\ref{lem:ksafe} imply that $\phi^*$ is $k$-safe.
\skv
{\it Type 4GB}. Next consider the case where $\calF$ has type 4GB. As in the one-dimensional setting, we set $I=\{1,2\}$
and assume that $v_I=(a,b)$ and the other three vertices of $\calF$ have $I$-traces $(a-b,b)$, $(a-c,b)$
and $(a-b-c,b)$. 

If $b_k\neq 0$, we define $\phi$ as in Figure~\ref{4GB}. The map $\phi^*$ is $k$-safe by Lemma~\ref{lem:ksafe}(b),
and one can prove that $\phi^*$ is a reduction exactly as in the one-dimensional setting. On the other hand, if
$b_k=0$, the first columns of all vertices of $\calF$ have non-negative $k^{\rm th}$ coordinate and hence
the commuting map $C(R_{41})$ is a reduction. This map is also $k$-safe by Lemma~\ref{lem:ksafe}(b).  

\skv
{\it Type 5GA}. As in the one-dimensional setting, we set $I=\{1,2\}$
and assume that $v_I=(a,b)$ and the other three vertices of $\calF$ have $I$-traces $(a+b,b)$, $(a+c,b)$
and $(a+b+c,b)$. Define $\phi$ as in Figure~\ref{5GA}. The map $\phi^*$ is $k$-safe by Lemma~\ref{lem:ksafe}(b),
so we only need to prove that $\phi^*$ is a reduction.

If either $b_k\neq 0$ or $b$ is unrestricted, we can argue as in the one-dimensional setting to show that $w<v$ for every new vertex $w$ of $\phi^*$ and hence $\phi^*$ is a reduction by Lemma~\ref{lem:easyreduction}. If $c_k\neq 0$ or $c$ is unrestricted, we can construct
a reduction by swapping the roles of the $2^{\rm nd}$ and $3^{\rm rd}$ columns.

Thus, it remains to consider the case where $b_k=c_k=0$ and $b$ and $c$ are both restricted. In this case the cell $\calF$
is purely maximal, so $h_{\Delta}(v)>0$. We will use Lemma~\ref{lem:enhancedreduction}. Set $\Delta_{new}=\phi^*(\Delta)$. 
Let us first verify condition (i). If $w$ is a new vertex of $\phi^*$, then $w<v$ except
when $w_I=(a+b,b+d)$ or $w_I=(a+b+c,b+d)$, in which case $l(w)\sim l(v)$. Suppose that $w$ is one of the latter two vertices.
The diagram $\Delta_{new}$ has exactly one cell $\calF_{5}$ of type $5$ containing $w$, and its vertices have $I$-traces
$(a+b,b+d)$, $(a+b+c,b+d), (a-d,b+d)$ and $(a+c-d,b+d)$. If $z$ is any of the last two vertices, then $z<w$,
so $\calF_{5}$ is not pre-maximal. Hence $h_{\Delta_{new}}(w)=0$, so $L_{\Delta_{new}}(w)<L_{\Delta}(v)$ and thus condition (i) 
holds. 

The map $\phi^*$ has 2 new cells of type $5$, one of which is $\calF_5$. We already checked condition~(ii) for $\calF_5$,
and verification of (ii) for the other cell is analogous. Thus $\phi^*$ is a reduction by Lemma~\ref{lem:enhancedreduction}.

\skv
{\it Type 5GB}. As in the one-dimensional setting, we set $I=\{1,2,3\}$ and assume that $v_I=(a,b,c)$ and the other three vertices of $\calF$ have $I$-traces $(a,b-a,c)$, $(a,b,c-a)$
and $(a,b-a,c-a)$. We consider 4 subcases.
\skv

{\it Subcase~1: $b$ is unrestricted}. In this case we define $\phi$ as in the one-dimensional setting. Every new vertex of $\phi^*$
can be obtained from an old vertex by adding/subtracting the unrestricted second column $b$ to/from other columns and
signed permutations of columns, so $\phi^*$ is $k$-safe by Lemma~\ref{lem:ksafe}. One can prove that 
$\phi^*$ is a reduction as in the one-dimensional setting.
\skv

{\it Subcase~2: $b$ is restricted and $b-a$ is unrestricted}. Note that in this subcase $a$ must also be restricted.
Again we define $\phi$ as in the one-dimensional setting. Up to a signed permutation of columns, $\phi^*$ has 3 new vertices whose $I$-traces are $(a-b,b,c)$, $(a-b,b,b+c-a)$ and $(a-b,b,c-a)$. The first vertex is obtained from the old vertex $(a,b,c,*)$ by replacing the restricted first column $a$ by $a-b$ (which is unrestricted by assumption). The third vertex is obtained from the old vertex $(a,b,c-a,*)$ in the same way. Finally, the second vertex is obtained from $(a,b,c,*)$ by replacing the restricted first column $a$ by $a-b$ and then subtracting the unrestricted first column $a-b$ from the third column. All of these operations come from the list
in Lemma~\ref{lem:ksafe}, so $\phi^*$ is $k$-safe. As in subcase~1, $\phi^*$ is a reduction by the same argument as in the one-dimensional setting.

\skv
{\it Subcase~3: $c$ or $c-a$ is unrestricted}. This subcase is reduced to subcases~1 and 2 by swapping the roles of the $2^{\rm nd}$ and $3^{\rm rd}$ columns.

{\it Subcase~4: $b,c,b-a$ and $c-a$ are all restricted}. In this case we define $\phi^*$ by a completely different diagram
given in Figure~\ref{5GBM}. Here $d$ denotes the common $4^{\rm th}$ column of the vertices of $\calF$.
By symmetry we can assume that either $b$ and $c$ are non-maximal or $b$ is maximal.

\input{figure5GBM.tex}

All new vertices with first column $a-d$ are smaller than $v$ by Lemma~\ref{lem:goodeasyH}(a). If $b$ is maximal, the same is true
for the remaining two new vertices (whose first 2 columns are $a$ and $b-d$) and hence $\phi^*$ is a reduction by Lemma~\ref{lem:goodeasyH}(b).
On the other hand, if neither $b$ nor $c$ is maximal, it is straightforward to check that the cell $\calF$ is purely maximal, and
we can prove that $\phi^*$ is a reduction similarly to type 5GA.
\skv
\paragraph{\bf Multiple-cell reductions.} We now discuss a minor change in the construction
of double-cell and triple-cell reductions. It only affects type GB (the good coordinate case for the groups
$\IAC_{n,d}$). Recall that $v$ is a fixed maximal vertex with at least $C+1$ good columns and the first column of $v$ is maximal with
positive $k^{\rm th}$ coordinate.

As in the one-dimensional setting, we can get to the stage where all the remaining cells containing $v$ have type $4$, $7$ or are not essential. However, at the next step (whose one-dimensional counterpart was described in \S~\ref{sec:4GBonedim}) we need some extra care since our definition of the single-cell reductions for cells of type 4GB in the higher-dimensional setting depends on additional data. 

It will convenient to introduce the following definition. 

\begin{Definition}\rm
\label{def:problematicedge} 
Let $e$ be an edge containing $v$ with $l(e)=R_{i1}$ or $L_{i1}$ for some $i$. We will say that $e$ is {\it problematic for $v$}
if the $k^{\rm th}$ coordinate of the $i^{\rm th}$ column of $v$ is $0$.
\end{Definition}

If there are no problematic edges for $v$, we can continue the process and eventually eliminate $v$ as in the one-dimensional setting. On the other hand, if $e$ is a problematic edge and $\calF$ and $\calF'$ are the two cells containing $v$, then the first columns of all the vertices of 
$\calG=\calF\cup\calF'$ have non-negative first coordinate. Hence if $j$ is any good index not in $\supp(\calG)$, then 
$C_{\calG}(R_{j1}^{\pm 1})$ is a double-cell reduction (for a suitable choice of sign) which replaces $e$ by a non-problematic edge.
Applying this operation finitely many times, we can eliminate all problematic edges and then proceed as before.
This completes the proof in Case~3.
\skv

{\it Case 4: $v$ has at least $C+1$ $0$-columns.} 

Here it is crucial that $v$ has at least one good column (see Observation~\ref{obs:goodcolumn} above). 
Unlike Case~3, we will only describe single-cell reductions, as there are no non-trivial changes in the construction of multiple-cell reductions.  
Thus we fix a cell $\calF$ containing $v$. As in the one-dimensional seeting, we can assume that 
the first column is good with positive $k^{\rm th}$ coordinate, the fourth column is zero and $4\not\in supp(\calF)$.

\skv
Before proceeding, we introduce a technical definition.

\begin{Definition}\label{def:ZSmap}
\rm Let $\psi$ be a diagram map and $1\leq i\leq n$. We will say that $\psi$ is a $ZS_i$-map (where ZS stands for
{\it zero substitution}) if for every new vertex $w'$ of $\psi$ there is an old vertex $w$ such that
$col_i(w)=0$ and $w'$ is obtained from $w$ by replacing $0$ by $c$ in the $i^{\rm th}$ column where $c$ is a column of some old vertex of $\psi$.
If $\psi$ is a $ZS_i$ map, the set of the $i^{\rm th}$ columns of the new vertices will be called the substitution set of $\psi$ 
and denoted by $Sub_i(\psi)$.
\end{Definition}

Here is a simple criterion for a $ZS_i$ map to be $k$-safe and to be a reduction.

\begin{Observation}
\label{obs:ZS}
Let $\psi$ be a $ZS_i$ map.
\begin{itemize}
\item[(a)] If every element of $Sub_i(\psi)$ is unrestricted, then $\psi$ is $k$-safe.
\item[(b)] If every element of $Sub_i(\psi)$ is good, then $\psi$ is a $k$-safe reduction. 
\end{itemize}
\end{Observation}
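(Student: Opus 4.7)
The proof will be a short, direct application of two results established earlier in the section: Lemma~\ref{lem:ksafe} handles $k$-safety, and Lemma~\ref{lem:zeasyH}(a) handles the reduction property. My plan is to simply unpack the definitions and invoke these lemmas.

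For part (a), I would observe that by the definition of a $ZS_i$-map, every new vertex $w'$ is obtained from some old vertex $w$ by the single operation of replacing the $i^{\text{th}}$ column, which is the $0$-column of $w$, by an element $c\in Sub_i(\psi)$. By hypothesis $c$ is unrestricted, i.e.\ a $k$-vector, so this is operation~(d) in the list from Lemma~\ref{lem:ksafe} (replacing a $0$-column by an unrestricted vector). Applying Lemma~\ref{lem:ksafe} then gives that $\psi$ is $k$-safe.

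For part (b), $k$-safety is immediate from (a) since, by definition, a good column is a $k$-column and hence unrestricted. The substantive step is to verify that every good column $c\in Sub_i(\psi)$ is actually $\Delta$-good in the sense of Definition~\ref{def:omegagood} (extended to the higher-dimensional setting). Indeed, $c$ is a $k$-vector with $c_k\neq 0$, and if $c$ is a column of some old vertex $w$ of $\Delta$, then by definition of ``good'' there is another column $d$ of $w$ with $|c_k|<|d_k|$; hence
\[
0<|c_k|<|d_k|\leq \|row_k(w)\|_{\infty}\leq \|row_k(\Delta)\|_{\infty}=M,
\]
which is exactly the condition for $c$ to be $\Delta$-good.

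To finish, I note that the $0$-vector is $\Delta$-bad (its $k^{\text{th}}$ coordinate, namely $0$, is divisible by $M$), so every new vertex of $\psi$ is obtained from an old vertex by replacing a single $\Delta$-bad column by a $\Delta$-good column. Lemma~\ref{lem:zeasyH}(a) then directly yields that $\psi$ is a reduction. Since this argument amounts to chasing definitions and applying two prior lemmas, there is no real obstacle; the observation is mainly a convenient repackaging that will be invoked repeatedly in Case~4 of the proof of Proposition~\ref{step1a}.
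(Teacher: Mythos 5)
Your proof is correct and follows essentially the same route as the paper's: part (a) via operation~(d) of Lemma~\ref{lem:ksafe}, and part (b) by combining (a) with Lemma~\ref{lem:zeasyH}(a). The extra verification that a good column is $\Delta$-good and that the $0$-column is $\Delta$-bad is harmless definition-chasing that the paper leaves implicit.
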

\begin{proof} (a) holds by Lemma~\ref{lem:ksafe} as the hypothesis of (a) implies that every new vertex of $\psi$
is obtained from an old vertex by an operation of type (d) in the statement of Lemma~\ref{lem:ksafe}.

(b) $\psi$ is $k$-safe by (a) and a reduction by Lemma~\ref{lem:zeasyH}(a). 
\end{proof}
\begin{Remark}\rm Even if neither part of Observation~\ref{obs:ZS}
is applicable to a $ZS_i$ map $\psi$, one can still use the same idea
to shorten verification of the fact that $\psi$ is a reduction
or that $\psi$ is $k$-safe. In particular, if $\psi:\Delta\to \Delta_1$ is a $ZS_i$ map and $v$ is a maximal
vertex of $\Delta$, to prove that $\psi$ is a reduction it suffices to check that $w<v$ for every new vertex $w$ which has 
a restricted $i^{\rm th}$ column. Likewise, to prove that $\psi$ is $k$-safe it suffices to check
that every new vertex with restricted $i^{\rm th}$ column can be obtained
from an old vertex using operations from Lemma~\ref{lem:ksafe}.
\end{Remark}

In the one-dimensional setting, the single-cell reduction $\phi$  we used in the zero coordinate case was a $ZS_4$ map with the exception
of type 5ZA. Moreover, we were able to prove that $\phi$ is a reduction using Observation~\ref{obs:ZS}(b) (even though we did not formally
refer to the latter). 

Let us resume the proof of Proposition~\ref{step1a}. Fix a cell type, and let $\phi$ be the reduction used for that type in the one-dimensional setting, and as before, let $\phi^*$ be the lift of $\phi$. Apart from type 5ZA, $\phi$ is a 
$ZS_4$ map, whence $\phi^*$ is also a $ZS_4$ map. A straightforward verification shows that
for most cell types, all elements of $Sub_4(\phi^*)$ are good (and this can be proved exactly as in the one-dimensional setting), so that
$\phi^*$ is a $k$-safe reduction by Observation~\ref{obs:ZS}(b). The
only exceptions are the types 1ZA, 2ZA, 2ZB, 3ZB and 5ZB (in addition to type 5ZA we excluded earlier), and these types will be treated
separately below.

For types 1ZA, 2ZA, 2ZB, 3ZB we will use the same map $\phi^*$, but 
to prove that $\phi^*$ is a $k$-safe reduction we will use the remark
following Observation~\ref{obs:ZS} rather than Observation~\ref{obs:ZS} itself. For type 5ZB, we will slightly modify the map used in the one-dimensional setting, but the rest of the argument will still be quite similar. 

Finally, type 5ZA does not require any special treatment,
and we can use the same $\phi$ is in the one-dimensional setting. Even though $\phi^*$ is not a $ZS_4$ map in this case, one can prove that
$\phi^*$ is a $k$-safe reduction similarly to the one-dimensional setting.
\end{proof}

\centerline {\bf Exceptional types (Case 4)}
\skv

In the discussion below by saying that an element of $\dbZ^d$ is good (resp. bad) we will mean that it is $\Delta$-good (resp. $\Delta$-bad).
Also recall that $v$ denotes the (chosen) maximal vertex of $\mathcal F$ and we assume that $col_1(v)$ is good while $col_4(v)=0$.

\skv For types 1ZA, 2ZA, 2ZB and 3ZB our definition of the reduction map
in the one-dimensional setting was dependent on whether a particular coordinate of $v$ is good or bad (namely, the $3^{\rm rd}$ coordinate for types 1ZA, 2ZA and 3ZB and the $2^{\rm nd}$ coordinate for type 2ZB), and in the
case where that coordinate is good we argued that a certain commuting map is a reduction. An analogous argument shows that in the 
higher-dimensional setting
the same commuting map is a $k$-safe reduction provided the respective column of $v$ is good, so from now on we will assume that 
\begin{itemize}
\item for types 1ZA, 2ZA and 3ZB the $3^{\rm rd}$ column of $v$ is bad and 
\item for type 2ZB the $2^{\rm nd}$ column of $v$ is bad. 
\end{itemize} 

\skv
{\it Type 1ZA, $I=\{1,2,4\}$}. As in Figure~\ref{1ZA}, the cell $\calF$ we are replacing (that is, the domain of $\phi^*$) has 5 vertices whose $I$-traces are $(a,b,0)$, $(a+b,b,0)$, $(a+b,b+c,0)$, $(a+b+c,b+c,0)$ and $(a,b+c,0)$.
The substitution set of $\phi^*$ is $\{a,a+b,a+b+c\}$. 

Suppose first that $v$ is one of the three vertices with second column $b+c$. Since these three vertices
only differ in the first column and $v$ has a good first column, the other two vertices from this triple must also have a good first column 
by Lemma~\ref{lem:zeasyH}(b). Therefore, $a$, $a+b$ and $a+b+c$ are all good, and we are done by Observation~\ref{obs:ZS}.

Now consider the remaining cases where $v_I=(a,b,0)$ or $(a+b,b,0)$. Arguing as in the previous paragraph, $a$ and $a+b$ are both good.
Also recall that $c$ is bad by the initial assumption. If $c$ and unrestricted, then $a+b+c$ is also good, and we are done, so
let us assume that $c$ is restricted, in which case $a+b+c$ and $b+c$ are also restricted. In this case $\phi^*$ has 2 new vertices with a new
restricted column (namely restricted $4^{\rm th}$ column): $u_1=(a+b+c,b+c,c,a+b+c,*)$ and $u_2=(a+b,b+c,c,a+b+c,*)$,
and by the remark following Observation~\ref{obs:ZS} to finish the proof
it suffices to check that
\begin{itemize}
\item[(i)] both $u_1$ and $u_2$ are obtained from an old vertex
using operations from Lemma~\ref{lem:ksafe} (this will prove
that $\phi^*$ is $k$-safe);
\item[(ii)] $u_1<v$ and $u_2<v$ (this will prove that
$\phi^*$ is a reduction).
\end{itemize}
To prove (i) we just note that $u_1$ is obtained from the old
vertex $u_0=(a+b+c,b+c,c,0,*)$ by replacing $0$ by $a+b+c$ in the
$4^{\rm th}$ column (operation (c) in Lemma~\ref{lem:ksafe}) 
while $u_2$ is obtained from the same vertex $u_0$ by replacing
$0$ by the good vector $a+b$ in the
$4^{\rm th}$ column (operation (d))
followed by a permutation of coordinates (operation (a)). 

Let us now prove (ii). Recall that $a+b+c$ and $b+c$ are
both restricted. In particular $(a,b+c,0)$ and $(a+b,b+c,0)$ are both vertices of $\calF_I$ with a restricted column among the first two. Since the first two columns of $v$ are unrestricted and the first one is good while $v$ is maximal, the only possibility is that the second column of $v$ (which we know is equal to $b$) is maximal. Thus, $b_k=\pm M$, and without loss of generality we can assume that $b_k=M$.

Since $(u_1)_I=(a+b+c,b+c,a+b+c)$ and $(u_2)_I=(a+b,b+c,a+b+c)$, to prove (ii) it will be enough to show that none of the columns $a+b$, $b+c$ and $a+b+c$ is maximal (since we already showed that
the second column of $v$ is maximal). We already know that $a+b$ is good (and hence not maximal).
If $b+c$ is maximal, then $v_I<(a+b+c,b+c,0)$, a contradiction. Finally suppose that $a+b+c$ is maximal. Since $b$ is maximal
and $a$ is good (in particular not maximal), $(a+b+c)_k=M$ whence $c_k=-a_k$. Since $(a+b)_k=M+a_k$ and
$(b+c)_k=b_k+c_k=M-a_k$ and $(a+b)_k,(b+c)_k\leq M$, we must have $a_k=0$, contrary to the assumption that $a$ is good.
\skv

For the types 2ZA, 2ZB and 3ZB one can prove that the map $\phi^*$
as defined below is $k$-safe similarly to type 1ZA. Thus, for those
types we will only explain why $\phi^*$ is a reduction.

\skv
{\it Type 2ZA}, $I=\{1,2,4\}.$ The vertices of $\calF_I$ are $(a,b,0)$, $(a+c,b,0)$, $(a,a+b,0)$, $(a+c,a+b,0)$ and $(a+c,a+b+c,0)$,
and we assume that $c$ is bad. We define $\phi$ is in Figure~\ref{2ZA}, so that $Sub(\phi^*)=\{a,a+c\}$. 

If $v_I$ equals any of the first 4 vertices in the above list, arguing as in type 1ZA, we conclude that $a$ and $a+c$ are both good, and we are done, so assume from now on that $v_I=(a+c,a+b+c,0)$. This means that $a+c$ is good.
Since $c$ is bad, if it is also unrestricted,
then $a=(a+c)-c$ is good, and we are done, so let us assume that $c$ is restricted, in which case $a$ is also restricted. 

The only new vertices which have $a$ as their $4^{\rm th}$ column 
have $I$-traces $(a,b,a)$, $(a,a+b,a)$ and $(a+c,b,a)$. To prove that these vertices are $<v$,
it suffices to show that $v_I$ has a maximal column while none of $a,b$ and $a+b$ is maximal (we already
know that $a+c$ is good and hence not maximal).

Since $a$ is restricted and $a+c$ is unrestricted, $(a,b,0)$ has more restricted columns than $v_I=(a+c,a+b+c,0)$.
Since $v$ is maximal and has good first column, this is only possible if $a$ and $b$ are not maximal and $a+b+c$ is maximal.
And if $a+b$ is maximal, we have $v_I=(a+c,a+b+c,0)<(a,a+b,0)$, a contradiction. Thus $a,b$ and $a+b$ are all non-maximal and $a+b+c=col_2(v)$ is maximal, as desired.
\skv
\skv
{\it Type 2ZB}, $I=\{1,3,4\}$. The vertices of $\calF_I$ are $(a,c,0)$, $(a+b,c,0)$, $(a,a+b+c,0)$, $(a+b,a+b+c,0)$ and $(a,b+c,0)$,
and this time we assume that $b$ is bad. We will use the diagram map $\phi^*$ where $\phi$ is given by Figure~\ref{2ZB}. Thus,
$Sub(\phi^*)=\{a,a+b\}$.

If $v_I$ is equal to any vertex other than $(a,b+c,0)$, then $a$ and $a+b$ are good by Lemma~\ref{lem:zeasyH}(b), and we are done. Let us now assume that $v_I=(a,b+c,0)$, in which case $a$ is still good.
Since $b$ is bad, if it is also unrestricted, then $a+b$ is good, so again we are done.
Thus, we can assume that $b$ is restricted, which means that $a+b$ is also restricted. 
The only new vertices which have $a+b$ as their $4^{\rm th}$ column have $I$-traces $(a+b,c,a+b)$ and 
$(a+b,a+b+c,a+b)$, and it suffices to show that $v_I=(a,b+c,0)$ has a maximal column while
none of $a+b,c$ and $a+b+c$ is maximal.

Note that $(a+b,a+b+c,0)$ has more restricted columns than $v_I=(a,b+c,0)$ (since $a+b$ is restricted,
while $a$ is not and hence $a+b+c$ and $b+c$ are both restricted or both unrestricted). 
Since $(a+b,a+b+c,0)\leq (a,b+c,0)$, this implies that $(a,b+c,0)$ has more maximal columns
than $(a+b,a+b+c,0)$. And since $a$ is good (hence non-maximal), we conclude that $b+c$ is maximal and $a+b$ and $a+b+c$ are not maximal. 
If $c$ is not maximal, we are done, so assume
that $c$ is maximal. Since $b+c$ is also maximal and $|b_k|\leq M$, we have $(b+c)_k=c_k$ whence $b_k=0$. But then $(a,b+c)<(a+b,c)$ since
these two vertices have the same number of maximal columns while $(a+b,c)$ has at least as many restricted non-maximal columns as $(a,b+c)$
and fewer good columns. This inequality contradicts the assumption that $v_I=(a,b+c,0)$.

\skv
{\it Type 3ZB,} $I=\{1,2,4\}$. The vertices of $\calF_I$ are $(a-c,b,0)$, $(a,b,0)$, $(a+b,b,0)$, $(a+b,b+c,0)$ and $(a-c,b+c,0)$, and we assume that
$c$ is bad. We will use the diagram map $\phi^*$ where $\phi$ is given by Figure~\ref{3ZB}. Thus, $Sub(\phi^*)=\{a,a+b,a-c\}$.

If $v_I=(*,b,0)$, then $a,a+b,a-c$ are all good by Lemma~\ref{lem:zeasyH}(b), and we are done. So let us assume that $v_I=(*,b+c,0)$, in which
case $a+b$ and $a-c$ are still good. Since $c$ is bad, if it is also unrestricted, then $a=(a-c)+c$ is good, and we are done.

Thus we can assume that $c$ is restricted. Since $a+b$ and $a-c$ are good, $b+c=(a+b)-(a-c)$ is unrestricted; on the other hand, $a=(a-c)+c$ and $b=(b+c)-c$ must both be restricted. Since $(a,b,0)$ cannot be strictly larger than $v_I$ and
$v_I=(x,b+c,0)$ where $x\in\{a+b,a-c\}$ is good,
$b+c$ must be maximal while $a$ and $b$ are not maximal. We also know that $a+b$ is good (and hence
non-maximal). Hence $(a+b,b,a)< v_I$ and $(a,b,a) < v_I$ (as $col_2(v)=b+c$ is maximal). Since $(a+b,b,a)$ and $(a,b,a)$
are precisely the $I$-traces of the new vertices which have $a$
as their $4^{\rm th}$ column, we are done.
\skv

\skv
{\it Type 5ZB,} $I=\{2,3,4\}$.  
 Unlike the one-dimensional setting, we will use one of the two distinct maps shown in Figure~\ref{5ZBM}, depending on the values of $a,b$ and $c$. The first map is the lift of the map in Figure~\ref{5ZB} that we used for type 5ZB in the one-dimensional setting.

\input{figure5ZBM.tex}

As in the one-dimensional setting, we can assume that $v=(a,b,c,0,*)$, so in particular $a$ is good. 

Suppose first that $c$ is unrestricted,
so $a-c$ is also unrestricted. If $a-c$ is bad, then
$c=a-(a-c)$ is good, whence $(a,b,c,0,*)<(a,b,c-a,0,*)$, a contradiction.
Thus, $a-c$ is good, whence the first map in Figure~\ref{5ZBM} is a 
$k$-safe reduction by Observation~\ref{obs:ZS}(b).

If $b$ is unrestricted, we can apply the same argument swapping the roles of the second and third columns.
Suppose now that $b$ and $c$ are both restricted (so $b-a$ and $c-a$ are also restricted). By symmetry, we can also assume that
 either $b$ and $c$ are both non-maximal or $c$ is maximal.
We will show that the second map in Figure~\ref{5ZBM} (call it $\phi^*$) is a reduction. This map is also $k$-safe by
Observation~\ref{obs:ZS}(a). 

We have $Sub_4(\phi^*)=\{a,0\}$. Recall that $a$ is good, and the only new vertices of $\phi^*$ whose fourth column
is $0$ have $I$-traces $(b,c-a,0)$ and $(b-a,c-a,0)$. If $c$ is maximal, both of these vertices are $<v$ by Lemma~\ref{lem:goodeasyH}(a),
and we are done. And if $b$ and $c$ are both non-maximal (and restricted by an earlier assumption), the cell $\calF$ is purely maximal,
in which case $\phi^*$ is a reduction by the same argument as in type~5GA.

\subsection{Step~2}
The main result of Step~2 is the following theorem:

\begin{Theorem}[Step~2]
\label{step2}
There exists a super-Artinian order (called Step~2 order) with the following property.
For any diagram $\Omega'$ satisfying the conclusion of Step~1, there exists a $k$-safe reduction 
$\Omega'\to \Omega''$ (with respect to the Step~2 order) such that $\Omega''$ is $(k-1)$-unimodular.
\end{Theorem}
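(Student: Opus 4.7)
The plan is to set up the Step~2 order so that making a vertex ``more $(k-1)$-unimodular'' strictly decreases it, and then imitate the single-cell/gallery reduction machinery of \S~3--4 to eliminate any maximal non-$(k-1)$-unimodular vertex. For a vertex $v$, let $V_{k-1}(v)\subseteq\dbZ^{k-1}$ be the $\dbZ$-span of the $(k-1)$-columns of $v$, and set $r(v)=(k-1)-\mathrm{rk}_{\dbQ}V_{k-1}(v)$. I would define the Step~2 pre-order lexicographically by: $(1)$ $\|row_k(v)\|_1$ (so that vertices on the boundary, which already have very few nonzero $k$-th entries, are at the bottom); $(2)$ $r(v)$; $(3)$ the index $[\dbZ^{k-1}\cap\dbQ V_{k-1}(v) : V_{k-1}(v)]$ when $r(v)=0$ (or, equivalently, a lexicographic comparison of the Smith invariants of $V_{k-1}(v)\hookrightarrow\dbZ^{k-1}$); $(4)$ a secondary $\ell^1$ norm on the columns of depth exactly $k$, paralleling criterion~(6) of the Step~1 pre-order. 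Since boundary vertices are small and hence automatically $(k-1)$-unimodular, any non-$(k-1)$-unimodular vertex is interior, and any maximal such vertex $v$ satisfies either $r(v)>0$ or the corresponding index is $>1$.

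The key geometric observation is that after Step~1, $\|row_k(\Omega')\|=1$, so every column of depth exactly $k$ in any vertex of $\Omega'$ has $k$-th entry $\pm 1$. Thus for any two columns $i\neq j$ of depth exactly $k$ in $v$, a single application of $R_{ij}^{\pm 1}$ (with an appropriate sign) subtracts column~$j$ from column~$i$ and produces a column of depth $\leq k-1$, strictly enlarging $V_{k-1}$ (hence strictly lowering $r(v)$ or the index). The construction of the actual full reduction at $v$ proceeds exactly as in Step~1: choose a cell $\calF$ containing $v$; after a signed permutation of columns we may assume a pivot column of depth $k$ has index outside $\supp(\calF)$; then lift a one-dimensional reduction from Table~1 to the higher-dimensional setting. $k$-safety is automatic from Lemma~\ref{lem:ksafe}(b), because every column added or subtracted in these maps is a $k$-column (unrestricted vector): the last $d-k$ rows are untouched, $wdef_k$ cannot increase by Observation~\ref{obs:restricted}, and $k$-unimodularity is preserved.

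Assembling single-cell reductions into gallery reductions of length $\leq 3$, one obtains a full reduction at any maximal non-$(k-1)$-unimodular interior vertex, provided $n$ is sufficiently large compared to $C$ (which is ensured by the hypothesis $n\geq d+C^2+6C+3$). By Lemma~\ref{obs:Artinian}, the process terminates after finitely many full reductions, yielding the required $(k-1)$-unimodular diagram~$\Omega''$.

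The main obstacle I anticipate is the same phenomenon that forced extended labels into Step~1: certain cell types (notably type~5 and the types involving Weyl generators $w_{ij}$) are purely maximal for the pre-order, and their single-cell reductions produce new vertices that are only incomparable to~$v$, not strictly smaller. I would handle this the same way as in \S~5.2 by enhancing the Step~2 pre-order to a genuine super-Artinian order through extended labels $L_\Delta(v)=(l(v),h_\Delta(v))$, where $h_\Delta(v)$ counts the purely maximal cells of a distinguished type containing~$v$, and verify the analogue of Lemma~\ref{lem:enhancedreduction} in the Step~2 setting. The remaining work is a type-by-type check that the lifted single-cell reductions are reductions with respect to this enhanced order, which I expect to go through by direct computation as in \S~3--4.
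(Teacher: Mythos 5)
Your overall strategy matches the paper's: choose an order on $Um_{d\times n}(\dbZ)$ under which enlarging the span of the $(k-1)$-columns is a strict decrease, then eliminate each maximal non-$(k-1)$-unimodular vertex by lifted single-cell and gallery reductions, with $k$-safety coming from Lemma~\ref{lem:ksafe} and termination from Lemma~\ref{obs:Artinian}; you also correctly isolate the key consequence of Step~1 (every depth-$k$ column has $k^{\rm th}$ entry $\pm 1$) and anticipate the need for extended labels. However, there are two genuine gaps. First, your central geometric claim --- that for any two columns of depth exactly $k$ an application of $R_{ij}^{\pm 1}$ ``strictly enlarg(es) $V_{k-1}$'' --- is false: the difference of two depth-$k$ columns is a $(k-1)$-vector, but it may already lie in $V_{k-1}(v)$ (the two columns may be equal, or congruent modulo the span). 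The paper therefore partitions the depth-$k$ columns into classes modulo the active subgroup $A(v)$ and singles out ``frequent'' classes; one only gains a new $(k-1)$-column when the two chosen columns lie in distinct classes, and when all abundant depth-$k$ columns lie in a single frequent class a separate argument through the semi-active subgroup $SA(v)$ is required (the paper's Case~3).

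Second, and more seriously, you do not treat the case where the only abundant columns outside the support of the gallery are zero columns, which is where most of the work in Step~2 actually lies. There the single-cell reductions are zero-substitution maps that insert a ``helpful'' column (an unrestricted column not in $SA(v)$, whose existence follows from $k$-unimodularity once $SA(v)\neq\dbZ^k$), and the order must be designed so that such substitutions are decreases --- hence the paper's criteria counting helpful columns, $(k-1)$-columns and restricted columns, together with the analogues of Lemmas~\ref{lem:zeasyStep2a} and~\ref{lem:zeasyStep2b}. Your proposed order cannot support these maps: inserting a column of depth exactly $k$ into a zero column increases $\|row_k\|_1$, which you placed as the top criterion, so the new vertex would come out larger than $v$ rather than smaller. (That first criterion is also unnecessary for keeping maximal bad vertices in the interior, since boundary vertices are small and hence $(k-1)$-unimodular by Definition~\ref{def:small}(iii).) Your rank/index invariants in criteria (2)--(3) are an acceptable substitute for the paper's Noetherian order on active subgroups, but without the semi-active/helpful-column layer the type-by-type verification you defer to ``direct computation'' would not go through.
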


Before defining the Step~2 order, let us introduce a total order $\prec$ on the set of subgroups of $\dbZ^m$  
(below we will use this order for different values of $m$). 
The precise definition will not be important for our purposes, and the only properties relevant for the proof
are
\begin{itemize}
\item[(i)] $\prec$ refines the order by inclusion;
\item[(ii)] $\prec$ is Noetherian (that is, there are no infinite strictly ascending chains).
\end{itemize}

First, for each $t\in\dbN$ choose a total order on $\dbZ^t$ which refines the $\ell^{1}$-norm. Since $\ell^{1}$-balls are finite,
this order is automatically Artinian. 

Now let $A$ be a nonzero subgroup of $\dbZ^m$ for some $m$, and let $r=rk(A)$. 
Viewing $r$-tuples of elements of $\dbZ^m$ as elements of $\dbZ^{mr}$, define
the {\it norm} of $A$, denoted by $N(A)$, to be the smallest $r$-tuple in $A$ consisting of linearly independent vectors (with respect to the above order on $\dbZ^t$ for $t=mr$). Given two subgroups $A$ and $A'$, we set $A\prec A'$ if either $rk(A)<rk(A')$
or $rk(A)=rk(A')$ and $N(A')<N(A)$. It remains to define the order on the set of subgroups with fixed rank and fixed norm.

 Fix $r\in\dbN$
and an $r$-tuple $S$ consisting of linearly independent vectors. Then any subgroup of rank $r$ and norm $S$ contains $\dbZ S$ (the $\dbZ$-span
of $S$) and is contained in $\dbQ S \cap \dbZ^m$. Since $\dbZ S$ is a finite index subgroup of $\dbQ S \cap \dbZ^m$, the subset of such subgroups
is finite, and we choose an arbitrary order on this subset refining the inclusion order. We have now defined a total order on the set of
subgroups of $\dbZ^m$, and it is clear from the construction that it satisfies (i) and (ii) above.

\skv
Next we introduce the notions of the active and semi-active subgroups of a vertex $v$, denoted by $A(v)$ and $SA(v)$, respectively:

\begin{Definition}\rm Let $v\in Mat_{d\times n}(\dbZ)$. The subgroup of $\dbZ^{k-1}$ generated by the $(k-1)$-columns of $v$
will be called the {\it active subgroup} of $v$ and denoted by $A(v)$.
\end{Definition}
\begin{Remark}\rm
A diagram $\Omega$ is $(k-1)$-unimodular if and only if $A(v)=\dbZ^{k-1}$ for every $v\in V(\Omega)$.
\end{Remark}

We now define the {\it semi-active subgroup} of $v$ denoted by $SA(v)$.

\begin{Definition}\rm 
Consider the equivalence relation $\sim$ on $\dbZ^k$ where $y\sim z$ $\iff $ $y-z\in A(v)$. 
\begin{itemize}
\item[(a)] An equivalence class of $k$-vectors will be called {\it frequent} if it does not lie in $A(v)$ (that is, its elements
have depth exactly $k$) and
$v$ contains at least $C+1$ columns from that class (these columns need not
be distinct as elements of $\dbZ^d$).
\item[(b)] The {\it semi-active subgroup} $SA(v)$ is the subgroup generated by $A(v)$ and all columns from frequent classes (clearly, it suffices to take just one column from each class). Thus, $A(v)\subseteq SA(v)\subseteq \dbZ^k$.
\end{itemize}
\end{Definition} 
 
As in Step~1, the new order will take into account the number of restricted columns. Good columns will not play any role in this step. Indeed, 
by assumption all vertices of $\Omega'$ (the diagram obtained at the end of Step~1) have $k^{\rm th}$ row
of norm $1$ and thus cannot have any good columns. Instead we will be tracking the number of {\it helpful} columns:
 
\begin{Definition}\rm Let $v\in Mat_{d\times n}(\dbZ)$ and $c\in\dbZ^d$. We will say that
\begin{itemize}
\item[(*)] $c$ is {\it $v$-helpful} if $c$ is unrestricted and $c\not\in SA(v)$;
\end{itemize}
If $c$ is a column of $v$ and $c$ is $v$-helpful, we will say that $c$ is a helpful column of $v$.   
\end{Definition}

We are now ready to define the new order on the vertices. As before, we compare two vertices by successively applying the following criteria
(and stop as soon as one of the criteria is applicable).
\skv

\paragraph{\bf Step 2 order:}

Similarly to the Step~1 order, we first define Step~2 pre-order based on the vertex labels:
\begin{itemize}
\item[(1)] The vertex with the larger active subgroup is smaller.
\item[(2)] The vertex with the larger semi-active subgroup is smaller.
\item[(3)] The vertex with the larger number of helpful columns is smaller
\item[(4)] The vertex with the larger number of $(k-1)$-columns is smaller. 
\item[(5)] The vertex with the smaller number of restricted columns is smaller. 
\item[(6)] The vertex $v$ with the smaller value of the vector 
$$(\|row_k(v^{\rm ur})\|_1,\|row_{k-1}(v^{\rm ur})\|_1,\ldots,\|row_{1}(v^{\rm ur})\|_1)$$
is smaller (with respect to the lexicographical order). Here $v^{\rm ur}$ is defined as in Step~1 order. 
\end{itemize}
We now define Step~2 order in terms of this pre-order exactly as in Step~1, except that the notions of a pre-maximal vertex and a purely
maximal cell are now taken with respect to the Step~2 pre-order. Unlike Step~1, where extended labels were used to treat cells of types
5GA, 5GB and 5ZB, in Step~2 extended labels will only be needed for type 5ZB.

\skv
Our goal is to prove the following proposition:

\begin{Proposition}
\label{step2b}
Let $\Lambda$ be a diagram which is $k$-unimodular, but
not $(k-1)$-unimodular. Also assume that $def_k(\Lambda)\leq n-(C^2+3C+k+3)$ and $\|row_k(\Lambda)\|=1$.
Then for any vertex $v$ of $\Lambda$ which is maximal relative to the Step~2 order there exists a $k$-safe reduction $\Lambda\to \Lambda'$ which eliminates $v$.
\end{Proposition}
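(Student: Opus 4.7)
The plan is to prove Proposition~\ref{step2b} by a case analysis closely parallel to the proof of Proposition~\ref{step1a}, exploiting two basic facts: first, since $v$ is maximal with respect to the Step~2 order and $\Lambda$ is not $(k-1)$-unimodular, we have $A(v)\subsetneq \dbZ^{k-1}$ (this is forced by criterion~(1) of the Step~2 pre-order together with the existence of a vertex of $\Lambda$ whose active subgroup is a proper subgroup of $\dbZ^{k-1}$); second, the hypothesis $def_k(v)\leq n-(C^2+3C+k+3)$ guarantees that $v$ has a large reservoir of unrestricted columns, from which we may remove those in $\supp(\calG)$ for a length-$3$ gallery $\calG$ at $v$ (at most $C$), those lying in any one frequent $\sim_k$-class (at most $C+1$ per class, with the number of frequent classes bounded by $C+2$ after accounting for $SA(v)/A(v)$), and the at most $k-1$ generators of $A(v)$, while still leaving enough columns to argue.

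The central observation is that because $\|row_k(\Lambda)\|=1$, every unrestricted column has $k^{\rm th}$-entry in $\{-1,0,1\}$. By pigeonhole, $v$ contains two unrestricted columns $c_i,c_j$ with equal $k^{\rm th}$-entry and $c_i-c_j\notin A(v)$ (unless $v$ is already of a very special form treated separately below). A Nielsen operation $R_{ij}^{\pm 1}$ or $L_{ij}^{\pm 1}$ replaces $c_i$ by $c_i\pm c_j$, producing a $(k-1)$-column lying outside $A(v)$, so the active subgroup of every new vertex strictly contains $A(v)$ (which lowers it in criterion~(1)). Because $c_j$ is unrestricted, the operation is $k$-safe by Lemma~\ref{lem:ksafe}(b). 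To realize such Nielsen operations at $v$ inside $\Lambda$, I would use commuting and conjugating maps on galleries of length $\min(3,\deg(v))$, indexed by $i,j\notin\supp(\calG)$, exactly as in Case~1 of Proposition~\ref{step1a}; these decrease $\deg(v)$ and eventually eliminate $v$.

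The case analysis would split $v$ into cases according to which criterion of the Step~2 pre-order we can force to improve: \emph{(i)} if $v$ has many columns of intermediate depth $m\in\{1,\ldots,k-2\}$ falling outside $A(v)$, a commuting gallery reduction $C_{\calG}(R_{ij}^{\pm 1})$ suffices, analogous to Case~1 of Proposition~\ref{step1a}; \emph{(ii)} if $v$ has many unrestricted columns outside $SA(v)$ (i.e.\ many helpful columns), we lift the one-dimensional single-cell reductions of \S3--\S4 to create more $(k-1)$-columns outside $A(v)$, working through each cell type as in Case~3 of Proposition~\ref{step1a}; \emph{(iii)} if $v$ has many unrestricted columns in $SA(v)\setminus A(v)$ lying in a common frequent class, we use the same lifted single-cell maps to absorb columns into $A(v)$, now improving criterion~(2) even when criterion~(1) stays the same; \emph{(iv)} if $v$ has many $0$-columns, we use $ZS_i$-type maps (Definition~\ref{def:ZSmap}) whose substitution set consists of helpful or $(k-1)\setminus A(v)$ vectors, analogous to Case~4 of Proposition~\ref{step1a}. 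Each case reduces to a finite check of cell types, where the single-cell reductions are lifts of those in Tables~1--2 and the multiple-cell reductions are constructed compatibly at shared edges, with the minor modification for type~4GB (problematic edges) described in \S4.3.

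The main obstacle, as in Step~1, is handling type~$5$ cells, where a naive single-cell reduction may leave the Step~2 pre-order unchanged: the replacement produces vertices incomparable to $v$ under criteria (1)--(6). This is precisely the purpose of the extended label $h_{\Lambda}(v)$ counting purely maximal type~$5$ cells at $v$; a diagram map analogous to the one used for type~5GA/5GB in Step~1 will be a reduction with respect to the Step~2 order via the exact analogue of Lemma~\ref{lem:enhancedreduction}. A secondary technical nuisance is verifying $k$-safety of the lifted single-cell maps: here every new column either comes from an unrestricted-column operation (Lemma~\ref{lem:ksafe}(b),(d)) or is a $0$-substitution by a $k$-vector (Lemma~\ref{lem:ksafe}(c)), and both are $k$-safe. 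Termination of the elimination of $v$ follows from Lemma~\ref{obs:Artinian} applied to the super-Artinian Step~2 order, completing the proof.
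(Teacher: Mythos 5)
Your overall architecture is the right one: deduce $A(v)\neq\dbZ^{k-1}$ from maximality, split into cases according to how the unrestricted columns of $v$ are distributed, handle the easy cases with commuting gallery maps $C_{\calG}(R_{ij}^{\pm1})$ for $i,j\notin\supp(\calG)$, adapt the Step~1 zero-column machinery with ``helpful'' in place of ``good'', and use the extended labels to deal with purely maximal type~$5$ cells. Your cases (i) and (iv) are essentially the paper's Cases 2 and 4.

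The gap is in your treatment of the depth-$k$ columns, i.e.\ your cases (ii) and (iii). First, your ``central observation'' --- that by pigeonhole $v$ has two unrestricted columns with equal $k^{\rm th}$ entry whose difference lies outside $A(v)$ --- fails exactly when all depth-$k$ columns of $v$ lie in a single class modulo $A(v)$, and this is not a degenerate situation you can wave away: it is the hardest subcase. The paper handles all depth-$k$ situations with \emph{commuting maps only}: if $SA(v)=\dbZ^k$ there are two frequent classes and the difference of representatives enlarges $A$; if there are many depth-$k$ columns but few classes, one class must have at least $C+4$ representatives, and subtracting one member of that class from another produces a $(k-1)$-column \emph{inside} $A(v)$, so the reduction must be certified via criteria (2)--(4) of the Step~2 order. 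The delicate point there --- which your proposal never confronts --- is that removing a representative from a frequent class could make it cease to be frequent and thus \emph{shrink} $SA$, increasing the vertex under criterion (2); the threshold $C^2+C+4$ is chosen precisely so that the class retains at least $C+1$ representatives after the gallery map. Second, your proposed tool for (ii) and (iii), namely lifting the good-coordinate single-cell reductions of \S3--\S4, is both unnecessary and unjustified in Step~2: since $\|row_k(\Lambda)\|=1$ there are no good columns, those reductions are built around the $\ell^\infty$/maximal-coordinate arithmetic of the Step~1 order, and adding a helpful column to another column does not visibly improve any criterion of the Step~2 order (it neither enlarges $A$ or $SA$ nor creates $(k-1)$-columns). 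Finally, the exhaustiveness of your four cases is never checked against the hypothesis $def_k(v)\leq n-(C^2+3C+k+3)$; the paper's explicit count (at least $C+k$ intermediate-depth columns, or at least $C^2+C+4$ depth-$k$ columns, or at least $C+1$ zero columns) is what makes the case analysis close.
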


Before proving Proposition~\ref{step2b}, let us deduce Theorem~\ref{step2} from it.

\begin{proof}[Proof of Theorem~\ref{step2}]

By assumption, $\Omega'$ satisfies the conclusion of 
Theorem~\ref{thm:step1}, so $\|row_k(\Omega')\|=1$
and $def_k(\Omega')\leq d-k+2C$. 
Since $d\leq n-(C^2+6C+3)$, we have 
$def_k(\Omega')\leq n-(C^2+4C+k+3)$. Thus either $\Omega'$ is $(k-1)$-unimodular and there is nothing to prove
or Proposition~\ref{step2b} is applicable to $\Lambda=\Omega'$. Let us proceed with the latter case.

Let $\Lambda_1$ be the diagram obtained by applying
Proposition~\ref{step2b} to $\Omega'$, and let $\Lambda_2$ be obtained by
applying Proposition~\ref{step1b} to $\Lambda_1$ (it is possible that
$\Lambda_2=\Lambda_1$), so that $def_{k}(\Lambda_2)\leq wdef_{k}(\Lambda_2)+C$.
Proposition~\ref{step1b} only asserts that the map $\Lambda_1\to\Lambda_2$ is a reduction 
relative to the Step~1 order, but it follows immediately from the proof that
this map is also a reduction relative to the Step~2 order, so
the composite map $\Omega'\to \Lambda_2$ is also a reduction relative to the Step~2 order.

By construction, the composite map $\Omega'\to \Lambda_2$ is $k$-safe, so we have 
$\|row_k(\Lambda_2)\|\leq \|row_k(\Omega')\|=1$ (and hence $\|row_k(\Lambda_2)\|=1$
since a unimodular diagram cannot have a zero row) and
$$wdef_k(\Lambda_2)\leq wdef_k(\Omega')\leq def_k(\Omega')\leq n-(C^2+4C+k+3),$$
whence $def_{k}(\Lambda_2)\leq n-(C^2+3C+k+3)$. Thus, either 
$\Lambda_2$ is $(k-1)$-unimodular (and we are done) or we can apply 
Proposition~\ref{step2b} to $\Lambda=\Lambda_2$ and keep going.

Since the Step~2 order is Artinian, by Lemma~\ref{obs:Artinian} 
the process will terminate after finitely many steps, that is, we will obtain
a $(k-1)$-unimodular diagram, call it $\Omega''$.

The obtained map $\Omega'\to\Omega''$ is a composition of $k$-safe reductions
(relative to the Step~2 order) and thus is itself a $k$-safe reduction, as desired.
\end{proof}

Next we state and prove suitable counterparts of Lemmas~\ref{lem:goodeasyH}~and~\ref{lem:zeasyH} which will be applicable to the Step~2 order.
It will be convenient to introduce one more technical definition.

\begin{Definition}\rm
Let $v,w\in \dbZ^d$. We will say that $w$ is {\it $v$-soft} if either $A(w)>A(v)$ or $A(w)=A(v)$ but $S(w)>S(v)$.  
\end{Definition}
Equivalently, $w$ is $v$-soft if $w<v$ (relative to the Step~2 pre-order) and the inequality can be checked using one of the first 2 criteria in the Step~2 pre-order.

\begin{Lemma}
\label{lem:zeasyStep2a}
Let $\Delta$ be a diagram, $v$ a maximal vertex of $\Delta$ and $J$ a subset of $\{1,\ldots,n\}$.
Let $w$ be any vertex of $\Delta$ such that $col_i(w)=col_i(v)$ for all $i\not\in J$.
The following hold:
\begin{itemize}
\item[(a)] If $col_j(w)$ is restricted for all $j\in J$, then $coi_j(v)$ is restricted for all $j\in J$.
\item[(b)] Assume that $col_j(v)$ is helpful for all $j\in J$. Then either $w$ is $v$-soft or $S(v)=S(w)$
and $col_j(w)$ is $v$-helpful for all $j\in J$.
\end{itemize}
\end{Lemma}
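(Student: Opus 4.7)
The plan is to prove both parts by directly comparing $v$ and $w$ via the six criteria of the Step~2 pre-order, using the maximality of $v$ in $\Delta$. Since $col_i(v)=col_i(w)$ for every $i\notin J$, the subgroups $A$, $SA$, and all relevant column counts depend on $v$ and $w$ only through the $J$-indexed columns, so each criterion reduces to a comparison of the contributions coming from $J$.

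For part~(a), I would argue by contradiction: assume that $col_j(w)$ is restricted for every $j\in J$ while some $col_{j_0}(v)$ is unrestricted. Since a restricted column is neither a $(k-1)$-vector nor a depth-$k$ vector, $w$'s $J$-columns contribute nothing to $A(w)$ or to any frequent class, so $A(w)$ is generated by non-$J$ $(k-1)$-columns alone and every frequent class of $w$ is built entirely from non-$J$ columns. Consequently $A(v)\supseteq A(w)$, and since $v$ has at least as many columns as $w$ in every equivalence class modulo $A$, $SA(v)\supseteq SA(w)$; maximality of $v$ combined with criteria~(1) and~(2) forces both inclusions to be equalities. An analogous cascade through criteria~(3) and~(4), tracking helpful-column counts and $(k-1)$-column counts, forces further equalities. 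But then criterion~(5) strictly favors $v$: since $w$ has $|J|$ restricted $J$-columns while $v$ has at most $|J|-1$, we conclude $v<w$, contradicting maximality.

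For part~(b), the key observation is that any helpful column has depth exactly $k$ (being unrestricted forces depth $\leq k$, and being outside $SA\supseteq A$ forces depth $>k-1$), so the columns $col_j(v)$ for $j\in J$ contribute nothing to $A(v)$. This yields the inclusion $A(w)\supseteq A(v)$. If strict, then criterion~(1) gives $w<v$ and $w$ is $v$-soft, so we are done. Otherwise $A(v)=A(w)$, and maximality of $v$ together with criterion~(2) forces $SA(w)\succeq SA(v)$ in the order $\prec$; if this is strict, $w<v$ and $w$ is again $v$-soft, while otherwise $SA(v)=SA(w)$, so being $v$-helpful and being $w$-helpful are the same property. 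The non-$J$ helpful-column counts of $v$ and $w$ then coincide, and since $v$ contributes $|J|$ helpful $J$-columns while $w$ contributes at most $|J|$, criterion~(3) forces equality---so every $col_j(w)$ is $w$-helpful, hence $v$-helpful, which is exactly the required conclusion.

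The main technical care needed is verifying the monotonicity relations $A(v)\supseteq A(w)$ in part~(a) and $A(w)\supseteq A(v)$ in part~(b), together with the corresponding statement for $SA$ in part~(a), and keeping the six-criterion cascade straight. No single step is difficult, but the cumulative bookkeeping---particularly the interplay between frequent classes, depth, and the subgroup order $\prec$---must be handled with precision.
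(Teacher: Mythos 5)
Your proof is correct and follows essentially the same route as the paper: in both parts you compare $v$ and $w$ criterion by criterion in the Step~2 pre-order, use maximality of $v$ to force ties on the earlier criteria (noting that restricted, resp.\ helpful, $J$-columns contribute nothing to the quantities in criteria (1)--(4), resp.\ to $A(v)$), and then read off the conclusion from criterion (5) in part (a) and criterion (3) in part (b). Your write-up is more explicit about the subgroup inclusions $A(v)\supseteq A(w)$, $SA(v)\supseteq SA(w)$ (and their reverses in (b)), but this is the same argument the paper compresses into a few lines.
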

\begin{Remark}\rm We will primarily apply Lemma~\ref{lem:zeasyStep2a} in the case $|J|=1$.
\end{Remark}
\begin{proof} (a) Let $j\in J$. Since $col_j(w)$ is restricted, it cannot contribute to the active or semi-active subgroups of $w$,
the number of helpful columns or the number of $(k-1)$-columns. Since $col_i(w)=col_i(v)$ for all $i\not\in J$, 
we cannot prove that $w>v$ using the first 4 criteria
of the Step~2 order. If in addition $col_j(v)$ is unrestricted for some $j\in J$, then $w$ has more restricted columns than $v$ and thus $w>v$ (by criterion~5 in the Step~2 order), a contradiction.

(b) Suppose $w$ is not $v$-soft. Since $v$ is maximal, we must have $A(w)=A(v)$ and $S(w)=S(v)$. 
If $col_j(w)$ is not $v$-helpful for some $j\in J$, it is also not $w$-helpful (as $S(w)=S(v)$), so $w$ has fewer helpful columns than $v$, and
hence $w>v$ (by criterion~3 in the Step~2 order), a contradiction.
\end{proof}

\begin{Lemma}
\label{lem:zeasyStep2b}
Let $\psi:\Delta\to \Delta_1$ be a diagram map and $v$ be a maximal vertex of $\Delta$. The following hold:
\begin{itemize}
\item[(a)] Let $w'$ be a new vertex of $\psi$. Suppose that $w'$ is obtained from an old vertex $w$ by replacing 
a zero column by some $c\in \dbZ^d$ where either 
\begin{itemize}
\item[(i)] $w$ is $v$-soft or 
\item[(ii)] $c$ is $v$-helpful. 
\end{itemize}
Then $w'<v$. 
\item[(b)] Suppose that $\psi$ is a $ZS_i$ map for some $i$ (see Definition~\ref{def:ZSmap}) and all elements of $Sub_i(\psi)$ are $v$-helpful.
Then $\psi$ is a reduction.
\end{itemize}
\end{Lemma}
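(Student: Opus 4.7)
The proof of part~(a) splits by the two cases and hinges on tracking how the active and semi-active subgroups change as $w$ passes to $w'$; part~(b) will then fall out of (a)(ii) as a one-step corollary. The key monotonicity observation is that $A(w)\subseteq A(w')$ and $SA(w)\subseteq SA(w')$ whenever $w'$ is obtained from $w$ by replacing a zero column by any vector $c$. The active-subgroup inclusion is immediate, since the removed zero contributes nothing to $A(w)$ while $c$ contributes either $\dbZ c$ (if $c$ is a $(k-1)$-vector) or nothing. The semi-active inclusion is the main technical point: any generator $s$ of $SA(w)$ that does not already lie in $A(w')$ represents a non-trivial coset $s+A(w')\supseteq s+A(w)$ for $w'$, and this new coset inherits all of the $\geq C+1$ columns that made $s+A(w)$ frequent for $w$, using that the only column disturbed in passing from $w$ to $w'$ is the zero column, which lies in $A(w)\subseteq A(w')$ and hence not in the non-trivial coset $s+A(w')$.

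With these inclusions in hand, Case~(i) is quick. If $w$ is $v$-soft because $A(w)>A(v)$, then $A(w')\geq A(w)>A(v)$ and criterion~(1) of the Step~2 pre-order gives $w'<v$. If instead $A(w)=A(v)$ and $SA(w)>SA(v)$, then either $A(w')>A(v)$, so criterion~(1) again applies, or $A(w')=A(v)$, in which case the semi-active inclusion yields $SA(w')\geq SA(w)>SA(v)$ and criterion~(2) closes the case.

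For Case~(ii) I would first reduce to the situation where $w$ is not $v$-soft, since otherwise Case~(i) already applies. Maximality of $v$ then forces $A(w)=A(v)$ and $SA(w)=SA(v)$, and because $c$ is unrestricted it cannot enlarge the active subgroup, so $A(w')=A(v)$. If $SA(w')$ strictly contains $SA(v)$, criterion~(2) finishes the case. Otherwise $SA(w')=SA(v)$, so the notions of $v$-helpful and $w'$-helpful coincide; since $c$ is $v$-helpful while the removed zero column is not, $w'$ has exactly one more helpful column than $w$. Maximality of $v$, combined with the agreement of $w$ and $v$ on criteria~(1)--(2) and with criterion~(3), forces $w$ to have at least as many helpful columns as $v$, so $w'$ has strictly more and criterion~(3) delivers $w'<v$.

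Part~(b) then follows immediately from (a)(ii): every new vertex of a $ZS_i$ map $\psi$ is obtained from an old vertex by replacing a zero $i^{\rm th}$ column by some $c\in Sub_i(\psi)$, and all such $c$ are $v$-helpful by assumption, so each new vertex $w'$ satisfies $l(w')<l(v)$ by (a)(ii); the analogue of Lemma~\ref{lem:easyreduction} for the Step~2 order then yields that $\psi$ is a reduction. The main obstacle I foresee is the inclusion $SA(w)\subseteq SA(w')$ when $c$ is a $(k-1)$-vector and the active subgroup strictly grows: one must rule out the possibility that an old frequent class, now sitting inside the coarser equivalence relation for $w'$, fails to place its old representative inside $SA(w')$. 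The coset-containment argument sketched in the first paragraph handles this uniformly, which is why I spelled it out up front rather than inside the case analysis.
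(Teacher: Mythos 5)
Your proposal follows essentially the same route as the paper: establish $A(w')\geq A(w)$ and $SA(w')\geq SA(w)$ when a zero column is replaced, use criteria (1)--(2) of the Step~2 order to dispose of the soft cases, and in the remaining case observe that $c$ stays helpful for $w'$ and increases the helpful-column count, so criterion (3) applies; part (b) is then immediate. You actually supply more detail than the paper on the one point it merely asserts, namely why frequent classes of $w$ remain frequent for $w'$ (the disturbed column is zero, lies in $A(w)\subseteq A(w')$, and so cannot belong to a frequent class), which is a genuine improvement in rigor.

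One micro-step is stated incorrectly: in Case~(ii) you claim that ``because $c$ is unrestricted it cannot enlarge the active subgroup, so $A(w')=A(v)$.'' An unrestricted vector is a $k$-vector, but a $(k-1)$-vector is in particular a $k$-vector, and a $(k-1)$-vector lying outside $SA(v)\supseteq A(v)$ would be $v$-helpful yet would strictly enlarge the active subgroup. The slip is harmless because the subcase you wrongly exclude is covered at once by criterion~(1) (if $A(w')>A(v)$ then $w'<v$), exactly as in your Case~(i); but the justification as written should be replaced by that case split rather than the false non-enlargement claim.
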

\begin{proof} 
(a) Since a zero column does not contribute to the active or semi-active subgroups, we have
$A(w')\geq A(w)$ and $S(w')\geq S(w)$. If one of these inequalities is strict or if $w$ is $v$-soft, then $w'$ is $v$-soft and hence $w'<v$.
Otherwise, $c$ is $v$-helpful and we have $A(w')=A(w)=A(v)$ and $S(w')=S(w)=S(v)$, whence $c$ is also $w'$-helpful. Thus
$w'$ has more helpful columns than $w$, so again $w'<w$ and hence $w'<v$.

(b) follows directly from (a).
\end{proof}

We are now ready to prove Proposition~\ref{step2b}.

\begin{proof}[Proof of Proposition~\ref{step2b}] Since $\Lambda$ is not $(k-1)$-unimodular,
$A(w)\neq \dbZ^{k-1}$ for some $w\in V(\Lambda)$. Since $v$ is a maximal vertex of $\Lambda$,
by definition of the Step~2 order we must have $A(v)\neq \dbZ^{k-1}$. 

We will consider $4$ cases. Lemmas~\ref{lem:zeasyStep2a}~and~\ref{lem:zeasyStep2b} will only be needed in the most technically demanding 
Case~4. Recall that $C$ is a fixed constant with the property that for any gallery $\Delta$ of length $\leq 3$ we have $|\supp(\Delta)|\leq C$.
\skv

{\it Case 1: $SA(v)=\dbZ^k$. } Since $A(v)\neq \dbZ^{k-1}$ and $SA(v)$ is generated by $A(v)$ and representatives of frequent classes of $v$, there must be at least two frequent classes. Recall that each frequent class contains at least $C+1$ columns. This means that for any gallery $\calG$ at $v$ of length $\min(3,\deg(v))$, we can find distinct indices $i,j\not\in supp(\calG)$ such that $col_i(v)$ and $col_j(v)$ are representatives of distinct frequent classes of $v$. 

Recall that $\|row_k(v)\|=1$ by assumption. Both $col_i(v)$ and $col_j(v)$ have depth exactly $k$ (since
they lie in frequent classes), so $v_{ki}$ and $v_{kj}$ (the $(k,i)$ and $(k,j)$ entries of $v$) are both equal to $\pm 1$. Multiplying $col_i(v)$ or $col_j(v)$ by $-1$ if needed, we can assume that $v_{ki}=v_{kj}$. 

Let $\phi=C_{\calG}(R_{ij})$ or $C_{\calG}(R_{ji}^{-1})$  depending on whether 
$H=\widetilde{\IAR_{n,d}}$ or $\widetilde{\IAC_{n,d}}$. Then every new vertex of $\phi$ has
$col_i(v)-col_j(v)$ as its $i^{\rm th}$ column (and coincides with one of the old vertices in the remaining columns). By construction, $col_i(v)-col_j(v)$ is a $(k-1)$-column which does not lie in $A(v)$, so
all new vertices of $\phi$ have active subgroup larger than $A(v)$ and hence $\phi$ is a reduction
(relative to Step~2 order). It is also routine to check that $\phi$ is $k$-safe.
Since $\phi$ also decreases the degree of $v$, we can eliminate $v$ repeating this operation finitely many
times. This completes the proof in case~1.

\skv
Let us now assume that $SA(v)\neq \dbZ^k$. Since $v$ is $k$-unimodular (as $\Lambda$ is $k$-unimodular), this means that $v$ has at least one helpful column. Since $def_k(v)\leq n-(C^2+3C+k+3)$, one of the following must hold:
\begin{itemize}
\item[(ii)] $v$ has at least $C+k$ columns of depth between $1$ and $k-1$;
\item[(iii)] $v$ has at least $C^2+C+4$ columns of depth $k$;
\item[(iv)] $v$ has at least $C+1$ zero columns.
\end{itemize}
We consider these $3$ cases separately.
\skv

{\it Case 2: $SA(v)\neq \dbZ^k$ and $v$ has at least $C+k$ columns of depth between $1$ and $k-1$}. We will not make any use of the condition $SA(v)\neq \dbZ^k$ in this case. As in Case~1, take any gallery $\calG$ at $v$ of length $\min(3,\deg(v))$.
By the hypotheses in this case, there exist distinct $i,j\not\in\supp(G)$ such that $col_i(v)$ and $col_j(v)$ have the
same depth $m$ with $1\leq m\leq k-1$. 

Let $\phi$ be one of the $4$ commuting maps $C_{\calG}(R_{ij}^{\pm 1})$ or $C_{\calG}(R_{ji}^{\pm 1})$.
Every new vertex $w'$ of $\phi$ is obtained from an old vertex $w$ by replacing the $i^{\rm th}$ or $j^{\rm th}$ column
by $col_i(w)\pm col_j(w)$ or $col_j(w)\pm col_i(w)$. In all cases $w$ and $w'$ are incomparable based on criteria (1)-(5) in Step~2 order, and for some
$\phi\in \{C_{\calG}(R_{ij}^{\pm 1}),C_{\calG}(R_{ji}^{\pm 1})\}$ (chosen independently of $w$) we have 
$\|row_m((w')^{\rm ur})\|_1<\|row_m(w^{\rm ur})\|_1$,
whence $w'<w$, so $\phi$ is a reduction, and again it is straightforward to check that
$\phi$ is $k$-safe. Since $\phi$ decreases the degree of $v$, we are done as in Case~1.
\skv

{\it Case 3: $SA(v)\neq \dbZ^k$ and $v$ has at least $C^2+C+4$ columns of depth (exactly) $k$}. If there are at least $2$ frequent classes or at least $C+2$ distinct classes (mod $A(v)$) of columns of depth $k$, there exist distinct indices $i,j\not\in\supp(\calG)$
such that $col_i(v)$ and $col_j(v)$ both have depth $k$ and belong to distinct classes, and we can argue exactly as in Case~1.

Thus, we can assume that there are at most $C+1$ distinct classes of columns of depth $k$, and among these classes there is at most one frequent class. Since a non-frequent class has at most $C$ representatives while $v$ has at least $C^2+C+4$ columns of depth $k$, 
it follows that one of the classes (which in particular has to be frequent)
must have at least $C+4$ representatives. Let us denote this class by $\calC$.

Given a gallery $\calG$ at $v$ of length $\min(3,\deg(v))$, we can find
distinct $i,j\not\in \supp(\calG)$ such that $col_i(v)$ and $col_j(v)$ lie in $\calC$.
As before, one of the commuting maps $\psi=C_{\calG}(R_{ij}^{\pm 1})$ has the property that every new vertex $w'$ 
is obtained from an old vertex $w$ by replacing a column of depth $k$ (equal to either $col_i(w)=col_i(v)$ or $col_j(v)=col_j(v)$) by a $(k-1)$-column. Such $\psi$ is $k$-safe, similarly to previous cases.
Without loss of generality, we can assume that $col_i(w)=col_i(v)$ is the column that is being replaced.
 We claim that $w'<v$ for every new vertex $w'$, so that $\psi$ is a reduction.
We consider two subcases:

\skv
{\it Subcase 1:} $A(w)>A(v)$ (where $w$ is as above). Since $w'$ is obtained from $w$ by removing a column of depth $k$ 
(which does not contribute to the active subgroup), we have $A(w')\geq A(w)>A(v)$, so $w'<v$.
\skv
{\it Subcase 2:} $A(w)=A(v)$. If $A(w')>A(w)$, we are done, so assume from now on that $A(w')=A(w)=A(v)$. 
This means that the equivalence relation defining the equivalence classes of $k$-columns is the same for $v,w$ and $w'$.

We first claim that $SA(w')\geq SA(w)$. Since $w'$ and $w$ only differ in the $i^{\rm th}$ column, it suffices
to show that the class of $col_i(v)$ is frequent for $w'$ as well. The latter holds since $w'$ differs from $w$
in at most $3$ columns and by assumption the class of $col_i(v)$ has $C+4$ representatives for $v$.

The same argument shows that the $i^{\rm th}$ column of $w$ is not helpful, so either $SA(w')>SA(w)$
(in which case we are done) or $SA(w')=SA(w)$ and $w'$ and $w$ have the same number of helpful columns. 
In the latter case, $w'$ and $w$ cannot be separated by the first 3 criteria of the Step~2 order,
but $w'$ has more $(k-1)$-columns, so $w'<w$ and hence $w'<v$.

\skv
{\it Case 4: $SA(v)\neq \dbZ^k$ and $v$ has at least $C+1$ $0$-columns}. In this case we will use the same maps as in Case~4 of Step~1, but the role of good columns will be played by $v$-helpful columns (recall that $v$ has at least one helpful column). We already know from Step~1 that these maps are 
$k$-safe. Showing that they are also reductions (now with respect to Step~2 order) requires a new proof, although the arguments
for most cell types are quite similar. Below we explicitly consider the types which were exceptional in Step~1, namely 1ZA, 2ZA, 2ZB, 3ZB and 5ZB.
For the remaining types, the argument in Step~1 relied primarily on the fact that $\Delta$-good columns form the complement of a subgroup inside the group of unrestricted columns, and the latter property is shared by the set of $v$-helpful columns.
\skv
As in Case~4 of Step~1, we fix a cell $\calF$ containing $v$ and permute the columns so that $col_1(v)$ is helpful (and in particular unrestricted), $col_4(v)=0$ and $4\not\in supp(\calF)$.

\skv
{\it Type 1ZA.} Recall that $I=\{1,2,4\}$ and $\calF_I$ has vertices $(a,b,0)$, $(a+b,b,0)$, $(a+b,b+c,0)$, $(a+b+c,b+c,0)$ and $(a,b+c,0)$.
First we prove that $a,b$ and $c$ must be unrestricted. If $v_I=(*,b+c,0)$, Lemma~\ref{lem:zeasyStep2a}(a) applied with $J=\{1\}$ implies that $a$, $a+b$ and $a+b+c$ are unrestricted and hence $b$ and $c$ are also unrestricted. If $v_I=(*,b,0)$, we can still deduce from Lemma~\ref{lem:zeasyStep2a}(a)
that $a$ and $b$ are unrestricted. And if $c$ is restricted, then $b+c$ is also restricted, whence $(a,b+c,0)>(a,b,0)$
and $(a+b,b+c,0)>(a+b,b,0)$, contrary to the assumption that $v_I=(*,b,0)$.

If $c$ is $v$-helpful, the commuting map $C_{\calF}(R_{34})$ is a reduction by Lemma~\ref{lem:zeasyStep2b}(b),
so from now on we can assume that $c$ is not $v$-helpful and thus $c\in S(v)$ (as $c$ is unrestricted). We will
show that if $\phi$ is the map in Figure~\ref{1ZA}, then $\phi^*$ is a reduction.
\skv

{\it Subcase 1: $col_2(v)\in S(v)$}. Since $col_2(v)$ equals $b$ and $b+c$ and $c\in S(v)$, in either case we must
have $b,c\in S(v)$. This means that $a,a+b$ and $a+b+c$ are all $v$-helpful or none of them is $v$-helpful, and the 
latter is impossible since $col_1(v)$ is helpful by assumption. Thus, all elements of $Sub_4(\phi^*)=\{a,a+b,a+b+c\}$ are 
$v$-helpful and hence $\phi^*$ is a reduction by Lemma~\ref{lem:zeasyStep2b}(b).

\skv

{\it Subcase 2: $col_2(v)\not \in S(v)$}. Since $b$ and $c$ are unrestricted, this implies that $col_2(v)$ is helpful
and hence the first 2 columns of $v$ are helpful. Since $v$ is maximal and every vertex of $\calF$ can only differ from $v$ in the first 2 columns, 
applying Lemma~\ref{lem:zeasyStep2a}(b) with $J=\{1,2\}$, we deduce that for any vertex $w$ of $\calF$ either 
\begin{multline}
\label{crit:soft}
\mbox{(a) $w$ is $v$-soft} \quad \mbox{ or } \quad \mbox{(b) $S(v)=S(w)$ and  $col_1(w)$ and $col_2(w)$ are $v$-helpful}.
\end{multline}
The following claim can be checked by straightforward case-by-case verification. Let $w'$ be a new vertex of $\phi^*$. Then for
any $i\in I=\{1,2,4\}$ there exists a vertex $w_i$ of $\calF$ such that $w'$ can be obtained from $w_i$ by replacing a zero
column by another vector, possibly followed by a permutation of columns, and moreover $col_i(w')$ equals one of the first two columns
of $w_i$. 

If $w_i$ is $v$-soft for some $i$, then $w'<v$ by Lemma~\ref{lem:zeasyStep2b}(a)(i). By \eqref{crit:soft}, the only other
possibility is that the first two columns of $w_i$ are $v$-helpful for each $i\in I$. But this means that $col_i(w')$ is $v$-helpful
for all $i\in I$ and hence $w'<v$ by Lemma~\ref{lem:zeasyStep2b}(a)(ii).

\skv
{\it Type 2ZA.} Recall that  $I=\{1,2,4\}$ and the vertices of $\calF_I$ are $(a,b,0)$, $(a+c,b,0)$, $(a,a+b,0)$, $(a+c,a+b,0)$
and $(a+c,a+b+c,0)$.

First, if $v_I\neq (a+c,a+b+c,0)$, Lemma~\ref{lem:zeasyStep2a}(a) applied with $J=\{1\}$ implies that $a$ and $a+c$ are unrestricted. Suppose now that $v_I=(a+c,a+b+c,0)$, in which case $a+c$ is still unrestricted. And if $a$ is restricted, the first two columns
of either $(a,b,0)$ or $(a,a+b,0)$ are restricted, and hence by Lemma~\ref{lem:zeasyStep2a}(a) applied with $J=\{1,2\}$ the same is true for $v$, a contradiction.

Thus, we proved that $a$ and $a+c$ are unrestricted and hence $c$ is also unrestricted. If $c$ is $v$-helpful, the map $C_{\calF}(R_{43})$
is a reduction. And if $c$ is not $v$-helpful, then $c\in SA(v)$. Since one of the vectors $a,a+c$ must be $v$-helpful, they are both
$v$-helpful. Hence if we define $\phi$ as in Figure~\ref{2ZA}, then $\phi^*$ is a $ZS_4$-map and all vectors of $Sub_4(\phi^*)=\{a,a+c\}$
are $v$-helpful, so $\phi^*$ is a reduction by Lemma~\ref{lem:zeasyStep2b}(b). 
\skv
{\it Type 2ZB.} Recall that $I=\{1,3,4\}$ and the vertices of $\calF_I$ are $(a,c,0)$, $(a+b,c,0)$, $(a,a+b+c,0)$, $(a+b,a+b+c,0)$ and $(a,b+c,0)$. Similarly to Step~1, if $b$ is $v$-helpful, then $C_{\calF}(L_{24})$ is a reduction by Lemma~\ref{lem:zeasyStep2b}(b). In the remaining cases we will use the map $\phi^*$
where $\phi$ is given by Figure~\ref{2ZB}. Recall that $Sub_4(\phi^*)=\{a,a+b\}$.
\skv
If $b\in S(v)$, then both $a$ and $a+b$ are $v$-helpful (since one of them must be $v$-helpful), and we are
done by Lemma~\ref{lem:zeasyStep2b}(b). Thus, we can assume that $b$ is restricted, in which case
$v_I=(a,b+c,0)$ (otherwise we get a contradiction with Lemma~\ref{lem:zeasyStep2a}(a)), so in particular
$a$ is unrestricted. If $b+c$ is unrestricted, then $c$ is restricted, whence $(a,b+c,0)<(a,c,0)$, a contradiction. And if $b+c$ is restricted, then $a+b$ and $a+b+c$ are both restricted, so 
$(a,b+c,0)<(a+b,a+b+c,0)$, again a contradiction.

\skv
{\it Type 3ZB.} Recall that $I=\{1,2,4\}$ and the vertices of $\calF_I$ are $(a-c,b,0)$, $(a,b,0)$, $(a+b,b,0)$, $(a+b,b+c,0)$ and $(a-c,b+c,0)$. 

First we claim that $a,a+b$ and $a-c$ are all unrestricted (whence $b$ and $c$ are also unrestricted). If $v_I=(*,b,0)$, this is automatic by Lemma~\ref{lem:zeasyStep2a}(a). Suppose now that $v_I=(*,b+c,0)$. Lemma~\ref{lem:zeasyStep2a}(a) still implies that
$a+b$ and $a-c$ are both unrestricted, whence so is $b+c=(a+b)-(a-c)$. Thus the first two columns
of $v$ are unrestricted, whence (by maximality of $v$) every vertex of $\calF$ has at least one unrestricted column among the first two; in particular, this is true for $w=(a,b,0,*)$. If $a$ is restricted, then so
is $b=(a+b)-a$, a contradiction. Thus, $a$ is unrestricted, and we are done.

If $c$ is $v$-helpful, $C_{\calF}(L_{34})$ is a reduction, so from now on we assume that $c\in S(v)$.
We will use the diagram map $\phi^*$ where $\phi$ is given by Figure~\ref{3ZB}.

\skv
{\it Subcase 1: $b\in S(v)$}. Since $c\in S(v)$ as well, the vectors $a$, $a-c$ and $a+b$ are all congruent modulo $S(v)$, and since
one of them is $v$-helpful, they must all be $v$-helpful, so $\phi^*$ is a reduction by Lemma~\ref{lem:zeasyStep2b}(b).

\skv
{\it Subcase 2: $b\not\in S(v)$}. In this subcase $b$ and $b+c$ are both $v$-helpful. Hence the first two columns
of $v$ are helpful, and we can finish the proof exactly as in Subcase~2 for type 1ZA.

\skv
{\it Type 5ZB.} Recall that $I=\{2,3,4\}$, all vertices of $\calF$
have the same first column $a$ (which thus must be $v$-helpful),
$v_I=(b,c,0)$, and the other 3 vertices of $\calF_I$ are $(b-a,c,0)$,
$(b,c-a,0)$ and $(b-a,c-a,0)$. As in Step~1, we will use one of the two maps in  Figure~\ref{5ZBM}.
\skv

First suppose that $c-a$ or $b-a$ is $v$-helpful. By symmetry, we can
assume that $c-a$ is $v$-helpful. In this case the first map
in Figure~\ref{5ZBM} is a reduction by Lemma~\ref{lem:zeasyStep2b}(b). 
\skv

Thus from now on we can assume that each of the vectors $c-a$ and $b-a$
either lies in $S(v)$ or is restricted. We consider several subcases.

{\it Subcase~1: $c-a,b-a\in S(v)$.} Since $a$ is $v$-helpful, $b$ and $c$ must also be $v$-helpful. Since
$v=(a,b,c,0,*)$ is maximal, $v$ only differs from other vertices of $\calF$ in the $2^{\rm nd}$ and $3^{\rm rd}$
columns, and each of those vertices has a non-$v$-helpful $2^{\rm nd}$ or $3^{\rm rd}$ column, each of the remaining
vertices of $\calF$ must be $v$-soft. Hence  the first map in Figure~\ref{5ZBM} is a reduction by Lemma~\ref{lem:zeasyStep2b}(a)(i).
\skv

{\it Subcase~2: $c-a\in S(v)$ while $b-a$ is restricted or vice versa.} Without loss of generality, we can
assume that $c-a\in S(v)$ and $b-a$ is restricted (in which case $b$ is also restricted). In this case
$(a,b-a,c,0,*)$ is also a maximal vertex while $(a,b,c-a,0,*)$ and $(a,b-a,c-a,0,*)$ are $v$-soft by the same
argument as in Subcase~1. Hence if $\phi^*$ is the second map in Figure~\ref{5ZBM}, the four new vertices of $\phi^*$ with $4^{\rm th}$ column $a$ are $<v$ by Lemma~\ref{lem:zeasyStep2b}(a)(ii) while the remaining
two new vertices are $v$-soft, so $\phi^*$ is a reduction.
\skv

{\it Subcase~3: $c-a$ and $b-a$ are both restricted.} In this case $\calF$ is a purely maximal cell, and
the second map in Figure~\ref{5ZBM} is a reduction by the argument we used for types 5GA, 5GB and 5ZB in Step~1.
\skv

This concludes the proof in Case~4.
\end{proof}

\subsection{Step~3} The goal in this final step is to reduce the $(k-1)$-defect of the diagram while preserving the $(k-1)$-unimodularity property.
Unfortunately, in order to achieve the latter, we will have to allow the norms of the last $d-k+1$ rows of the diagram to increase; however,
these norms will still stay uniformly bounded, which is sufficient for our purposes. More precisely, our goal in this step is to prove the
following result.

\begin{Theorem} 
\label{step3}
There exist natural numbers $1=M_{d+1}\leq M_d\leq M_{d-1}\leq \cdots \leq M_1$ with the following property. Suppose that
for some $1\leq k\leq d$ we are given a diagram $\Lambda$ such that 
$\|bot_{d-k+1}(\Lambda)\|\leq M_{k+1}$. Then there exists
a diagram map $\Lambda\to\Lambda'$ such that
\begin{itemize}
\item[(a)] $def_{k-1}(\Lambda')\leq d-k+C+1$;
\item[(b)] $\|bot_{d-k+1}(\Lambda')\|\leq M_{k}$;
\item[(c)] if $\Lambda$ is $(k-1)$-unimodular, then so is $\Lambda'$.
\end{itemize}
\end{Theorem}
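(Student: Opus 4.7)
My approach to Step~3 is structurally parallel to Steps~1 and~2: introduce a third super-Artinian partial order (the ``Step~3 order'') on $Um_{d\times n}(\dbZ)$, prove a proposition asserting that any diagram with a maximal vertex $v$ satisfying $def_{k-1}(v)>d-k+C+1$ admits a $k$-safe reduction eliminating $v$, and iterate via Lemma~\ref{obs:Artinian}. In the intended application within the proof of Theorem~\ref{thm:hdim}, the input $\Lambda$ is the output of Step~2, so $\|row_k(\Lambda)\|=1$ and $\Lambda$ is $(k-1)$-unimodular. Together these imply that every column of depth exactly $k$ has $k^{\rm th}$ entry $\pm 1$ and that the $(k-1)$-columns span $\dbZ^{k-1}$ at every vertex. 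The general statement of Theorem~\ref{step3} would then be obtained by first applying an internal ``row-$k$ normalization'' that reuses the same families of single-cell reductions constructed in Step~1 to reduce to the case $\|row_k\|=1$.

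\textbf{Order and elimination of a vertex.} I would compare vertices $v$ using, in order: the active subgroup $A(v)$ (to preserve $(k-1)$-unimodularity as in Step~2), then $def_{k-1}(v)$ (the target invariant), then refinements counting frequent equivalence classes of depth-$k$ columns modulo $A(v)$ and the number of restricted columns, with the $\ell^{\infty}$-norm of $bot_{d-k+1}$ entering only as a final tie-breaker and allowed to grow within the budget $M_k$. Given a maximal $v$ with $def_{k-1}(v)>d-k+C+1$, a pigeonhole argument in the spirit of Case~4 of Proposition~\ref{step1a} and Case~1 of Proposition~\ref{step2b} yields either (i) at least $C+k$ columns of depth between $1$ and $k-1$, handled by commuting maps on galleries of length $\le 3$ exactly as in Case~1 of Proposition~\ref{step1a}; or (ii) two depth-$k$ columns of $v$ in the same class modulo $A(v)$ whose indices avoid the support of some gallery, to which one applies a commuting map subtracting one from the other: since both have $k^{\rm th}$ entry $\pm 1$, the result is a $(k-1)$-column and $def_{k-1}$ strictly decreases. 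These maps are $k$-safe by Lemma~\ref{lem:ksafe}, and each single-cell replacement perturbs row~$k$ by at most $\pm 2$ entrywise while leaving the lower rows untouched.

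\textbf{Main obstacle.} The hard part is bounding the cumulative growth of $\|bot_{d-k+1}(\Lambda')\|$ and defining the constants $M_1\le\cdots\le M_d$ self-consistently. The Step~3 order would be only super-Artinian, not strongly Artinian (which is precisely why Corollary~\ref{cor:criterion}, rather than Lemma~\ref{obs:Renz}, is the operative finite-presentability criterion for $d>1$), so termination must be extracted from its leading coordinates alone---each a nonnegative integer invariant of a vertex, bounded by $n$ for $def_{k-1}$ and the class count and by a chain length in the lattice of subgroups of $\dbZ^{k-1}$ for $A(v)$. This yields an a priori bound $P=P(n,d,C)$ on the number of single-cell replacements needed to remove any one vertex, and since each such replacement changes row~$k$ by $O(1)$ entrywise, one may define $M_k:=M_{k+1}+f(n,d,C)$ for a suitable explicit $f$. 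Verifying this bookkeeping and checking compatibility of the constants across all $d$ iterations is the principal technical obstacle; in particular, one must be careful that the internal row-$k$ normalization preceding the collapse step does not itself violate the budget $M_k$, which forces $f$ to be chosen slightly larger than the naive estimate coming just from the collapse phase.
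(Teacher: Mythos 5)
Your overall architecture (a third order, a vertex-elimination proposition, iteration via Lemma~\ref{obs:Artinian}) matches the paper's, and your pigeonhole for finding two combinable columns outside the support of a gallery is essentially the paper's (the paper pigeonholes on the \emph{depth} $m\in\{k,\ldots,d\}$ of the $>d-k+1$ columns of depth $\geq k$ avoiding $\supp(\calG)$, and does not need $\|row_k\|=1$ or $(k-1)$-unimodularity of the input, so your preliminary ``row-$k$ normalization'' is unnecessary). However, your resolution of what you correctly identify as the main obstacle --- bounding $\|bot_{d-k+1}(\Lambda')\|$ --- does not work. You propose an a priori bound $P(n,d,C)$ on the number of single-cell replacements needed to remove a vertex and then set $M_k:=M_{k+1}+f(n,d,C)$. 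There is no such bound: eliminating one vertex requires on the order of $\deg(v)$ gallery maps, and $\deg(v)$ is not controlled by $n$, $d$, $C$; moreover the number of offending vertices in $\Lambda$ is unbounded, and eliminating one creates new vertices that may themselves need processing. So the additive bookkeeping collapses, and the per-step estimate ``row $k$ changes by $\pm 2$ entrywise'' gives nothing without a step count. (A secondary point: when the two matched columns have common depth $m>k$, the commuting map also changes row $m$ of the bottom block, so the lower rows are \emph{not} untouched in general.)

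The missing idea is to make the order itself do the bookkeeping. The paper defines $N(v)=f_k(x_d,\ldots,x_k)$ with $x_i=\|row_i(v)\|_1$, where $f_d(x_d)=x_d$ and $f_i=3^{f_{i+1}}\cdot x_i$, i.e., an exponentially weighted combination of the $\ell^1$-norms of the bottom rows in which lower rows carry overwhelmingly more weight. Subtracting one same-depth-$m$ column from another strictly decreases $x_m$ by at least $1$ while at most doubling each $x_t$ for $t<m$, and a short downward induction shows $N$ strictly decreases; the map visibly preserves $(k-1)$-unimodularity and does not increase $def_{k-1}$. Since every row norm is bounded by $N$, one simply sets $M_k:=f_k(M_{k+1},\ldots,M_{k+1})$ and the norm bound on $\Lambda'$ is automatic, with no count of the number of reductions performed. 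Without this (or an equivalent monotone quantity dominating the row norms), your Step~3 remains incomplete.
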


Before proving Theorem~\ref{step3}, let us deduce Theorem~\ref{thm:hdim} from the main results of Steps~1, 2 and 3 (Theorems~\ref{thm:step1},~\ref{step2}~and~\ref{step3}, respectively).

\begin{proof}[Proof of Theorem~\ref{thm:hdim}]
We will prove that the assertion of Theorem~\ref{thm:hdim} holds for the same $\{M_i\}$ as
in Theorem~\ref{step3}.
Let $\Omega$ be any diagram satisfying the hypotheses of Theorem~\ref{thm:hdim}, that is,
\begin{itemize}
\item[(a)] $\Omega$ is $k$-unimodular;
\item[(b)] $def_k(\Omega)\leq d-k+C$;
\item[(c)] if $k<d$, then $\|bot_{d-k}\Omega\|\leq M_{k+1}$;
\item[(d)] $\Omega$ has small boundary.
\end{itemize}

Let $\Omega\to\Omega'$ and $\Omega'\to\Omega''$ be the reductions from
Step~1 (Theorem~\ref{thm:step1}) and Step~2 (Theorem~\ref{step2}). By Theorem~\ref{thm:step1} we have $\|row_k(\Omega')\|=1$. Since both reductions are
$k$-safe, we have $\|row_k(\Omega'')\|\leq \|row_k(\Omega')\|=1$ and
$\|bot_{d-k}\Omega''\|\leq \|bot_{d-k}\Omega\|\leq M_{k+1}$ if $k<d$.
We have 
\begin{align*}
&\|bot_{d-k+1}\Omega''\|=\max\{\|bot_{d-k}\Omega''\|, \|row_k\Omega''\|\}\leq \max\{M_{k+1},1\}= M_{k+1} 
&&\mbox { if }k<d\\
&\|bot_{d-k+1}\Omega''\|=\|bot_1\Omega''\|=\|row_d\Omega''\|=1=M_{d+1} &&\mbox { if }k=d.
\end{align*}
Thus, $\|bot_{d-k+1}\Omega''\|\leq M_{k+1}$ regardless of the value of $k$, so we can apply Theorem~\ref{step3} to $\Omega''$.
Since $\Omega''$ is $(k-1)$-unimodular, Theorem~\ref{step3} yields a diagram map $\Omega''\to \Omega'''$
such that
\begin{itemize}
\item[(a)] $def_{k-1}(\Omega''')\leq d-k+C+1$;
\item[(b)] $\|bot_{d-k+1}(\Omega''')\|\leq M_{k}$.
\item[(c)] $\Omega'''$ is $(k-1)$-unimodular,
\end{itemize}
which proves Theorem~\ref{thm:hdim}.
\end{proof}

\begin{proof}[Proof of Theorem~\ref{step3}]
We will define the numbers $M_i$ by downward induction. Recall that we already set $M_{d+1}=1$.

In this proof we will use a completely different order. It will only take into account the $\ell^1$-norms of the bottom
$d-k+1$ rows, but give different weights to different rows, with the lower rows receiving much higher weight.
Also, unlike Steps~1 and 2, we will explicitly define a norm function which gives the order.

Define a sequence of functions $f_d(x_d)$, $f_{d-1}(x_{d},x_{d-1})$, $\ldots$, $f_k(x_d,\ldots, x_k)$ inductively by
$f_d(x_d)=x_d$ and $f_i(x_d,\ldots, x_i)=3^{f_{i+1}(x_d,\ldots, x_{i+1})}\cdot x_i$ for $k\leq i\leq d-1$.
Now given a matrix $v\in Mat_{d\times n}(\dbZ)$, define $N(v)=f_k(x_d,\ldots, x_k)$ where $x_i=\|row_i v\|_1$.
Finally, given $v,v'\in Mat_{d\times n}(\dbZ)$, set $v<v'$ if and only if $N(v)<N(v')$.
\skv

Below we will prove that if a diagram $\Sigma$ contains a vertex $v$ with $def_{k-1}(v)> (d-k+1)+C$, there exists a reduction
(with respect to the above order) $\Sigma\to\Sigma'$ which eliminates $v$ and preserves $(k-1)$-unimodularity. If this
is proved, then by Lemma~\ref{obs:Artinian}, applying such reductions finitely many times (starting with $\Lambda$), we will obtain a $(k-1)$-unimodular diagram $\Lambda'$ such that $def_{k-1}(\Lambda')\leq (d-k+1)+C$, so conditions (a) and (c) from Proposition~\ref{step3} hold. Moreover, we have $\|bot_{d-k+1}(\Lambda')\|\leq N(\Lambda')\leq N(\Lambda)\leq f_k(M_{k+1},\ldots, M_{k+1})$, so if we set $M_{k}=f_k(M_{k+1},\ldots, M_{k+1})$, then (b) holds as well, and the proof is complete.
\skv

We now explain how to construct the desired reduction. Fix a vertex $v$ with $def_{k-1}(v)> (d-k+1)+C$.
Without loss of generality we can assume that $v$ has maximal $(k-1)$-defect among all vertices.
Choose any gallery $\calG$ at $v$ of length $\min(3,\deg(v))$. 
As before, it suffices to construct a $\calG$-reduction at $v$. By assumption $v$ has
at least $d-k+C+2$ columns of depth at least $k$, that is, of depth between $k$ and $d$. Since there are $d-k+1$ integers
between $k$ and $d$ and $|\supp(\calG)|\leq C$, there exist distinct $i,j\not\in supp(\calG)$ such that $col_i(v)$ and $col_j(v)$
have the same depth $m\geq k$. Without loss of generality we can assume that $v_{mi}\geq v_{mj}>0$. 

Let $\phi$ be the commuting map $C_{\calG}(R_{ij})$ if $H=\widetilde{\IAR_{n,d}}$ and $C_{\calG}(R_{ji}^{-1})$ if $H=\widetilde{\IAC_{n,d}}$. We claim that $\phi$ has the desired properties. Since $\phi$ does not modify any of the $(k-1)$-columns, it automatically
preserves $(k-1)$-unimodularity. 

It remains to prove that $\phi$ is a reduction. Any new vertex $u$ arising from $\phi$ is obtained from a vertex $w$ of $\calG$
by replacing $col_i(w)=col_i(v)$ by $col_i(v)-col_j(v)$. It is clear that $def_{k-1}(u)\leq def_{k-1}(w)\leq def_{k-1}(v)$.
Also if we let $x_t=\|row_t(u)\|_1$ and $y_t=\|row_t(w)\|$, then by construction $x_t=y_t$ for $t>m$, $x_m\leq y_m-1$
and $x_t\leq 2y_t$ for $k\leq t<m$. 

\skv
Let us prove that $f_t(x_d,\ldots, x_t)\leq f_t(y_d,\ldots, y_t)-1$ for all $1\leq t\leq m$ using downward induction on $t$.
The result is clear for $t=m$. Suppose now that $f_t(x_d,\ldots, x_t)\leq f_t(y_d,\ldots, y_t)-1$ for some $1<t\leq m$.
Then 
\begin{multline*}
f_{t-1}(x_d,\ldots, x_{t-1})=3^{f_t(x_d,\ldots, x_{t})}x_{t-1}\leq 3^{f_t(y_d,\ldots, y_{t})-1}\cdot 2y_{t-1}\\
= \frac{2}{3}\cdot 3^{f_t(y_d,\ldots, y_{t})}y_{t-1}=\frac{2}{3}\cdot f_{t-1}(y_d,\ldots, y_{t-1})\leq f_{t-1}(y_d,\ldots, y_{t-1})-1,
\end{multline*}
where the last inequality holds since clearly $f_{s}(y_d,\ldots, y_{s})\geq 3$ for all $s<d$.

Applying the above inequality with $t=k$, we deduce that $N(w)<N(v)$ for any new vertex $w$, so $\phi$ is indeed a reduction, as desired.
\end{proof}

\subsection{Proof of Proposition~\ref{prop:essential2}}
We start by recalling the statement of Proposition~\ref{prop:essential2} as well as the definition of a small subset (Definition~\ref{def:small}), combined in the following statement:

\begin{Proposition}
\label{prop:essential3} There exists a finite subset $A\subset G/H$ such that the preimage of $A$ in $G$ is connected in $Cay(G,\calX)$ and $A$ is small, that is, every $v\in A$ satisfies the following conditions:  
\begin{itemize}
\item[(i)] $\|v\|=1$;
\item[(ii)] $v$ differs from the matrix 
$\begin{pmatrix}I_{d\times d} \mid 0_{d\times (n-d)}\end{pmatrix}$ in at most 8 columns;
\item[(iii)] $v$ is $k$-unimodular for all $k\leq d$.
\end{itemize}
\end{Proposition}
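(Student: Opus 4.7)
The plan is to construct $A$ explicitly as the union, over a finite family $\mathcal{W}$ of words in $\calX\cup\calX^{-1}$, of the projections to $Um_{d\times n}(\dbZ)\cong G/H$ of all partial products of the words in $\mathcal{W}$. The words in $\mathcal{W}$ will express a generating set of $H$, so each contributes a path in $Cay(G,\calX)$ from $1$ to a generator of $H$, all of whose vertices project into $A$. Once every partial product is verified to be small, $A$ is automatically finite and small, and connectedness of $\rho^{-1}(A)$ follows by the right-translation argument already used in the proof of Claim~\ref{claim:ess}: the right action of $H$ on $G$ preserves fibers of $\rho$, so concatenating translates of the given paths connects $1$ to every element of $H$, and to every element of $\rho^{-1}(A)$ by truncation.

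The generating set of $H$ will have two families. First, the Nielsen and Weyl generators in $\calX$ that already lie in $H$: in the row case these are $R_{ij}^{\pm 1}, L_{ij}^{\pm 1}$ with $j>d$ and $w_{ij}^{\pm 1}$ with $d<i<j$ (the column case is symmetric), and they project surjectively onto $\Row_{n,d}\cap\SL_n(\dbZ)$ (resp.\ $\Col_{n,d}\cap\SL_n(\dbZ)$); each is represented by a length-one word whose only partial product is $I_{n,d}$. Second, the Magnus generators $K_{ij}\colon x_i\mapsto x_jx_ix_j^{-1}$ and $K_{ijk}\colon x_i\mapsto x_i[x_j,x_k]$ of $\widetilde{\IA_n}\le H$, each admitting a standard expression of length at most four in Nielsen generators. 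The $\ell^{\infty}$-norm and column-count requirements of smallness are then immediate, since a single Nielsen step modifies one column and a Weyl step signed-permutes two, so any partial product of a bounded-length word differs from $I_{n,d}$ in a uniformly bounded number of columns (well below $8$) with entries in $\{-1,0,1\}$.

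The delicate requirement is $k$-unimodularity for every $k\le d$. A Nielsen step ``subtract column $j$ from column $i$'' preserves it exactly when the depth of column $i$ does not increase, which holds whenever $j>d$, or when $i,j\le d$ and $i\ge j$, and Weyl steps $w_{ij}$ always preserve it; failure can only occur when $i<j\le d$. This is the only problematic pattern in the na\"ive length-two expansions of $K_{ij}$ (and in certain Nielsen expansions of $K_{ijk}$) when all of their indices lie in $\{1,\ldots,d\}$. The main obstacle is therefore to rewrite each such offending word so that $k$-unimodularity persists at every partial product. The plan to do this exploits the plentiful supply of helper indices $i',i''\in\{d+1,\ldots,n\}$ made available by $n\ge d+115$: rewrite $K_{ij}$ via the $G$-identity $K_{ij}=w_{i,i'}K_{i'j}w_{i,i'}^{-1}$, whose inner Nielsen subword then only modifies the ``safe'' column $i'$, and bracket the whole expression by an auxiliary Nielsen move $L_{i'',i}^{\pm 1}$ that first parks $\pm e_i$ in the currently-zero column $i''$ and undoes it at the end. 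Because $e_i$ then persists in some $k$-column throughout every intermediate step, $k$-unimodularity is preserved; the $G$-element ultimately represented by the word is $K_{ij}$ modified by an $\IA_n$-type commutator arising from the non-commutation of $L_{i'',i}$ with $K_{ij}$, which is itself a product of Magnus generators and may simply be adjoined to the generating family. The same scheme adapts to $K_{ijk}$ and to the column case. With these expansions fixed, verifying that every partial product satisfies (i)--(iii) reduces to a finite case-check on a uniformly bounded number of modified columns, which is routine.
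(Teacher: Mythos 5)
Your proposal follows essentially the same route as the paper: pick a finite generating set for $H$ (the Nielsen/Weyl generators already lying in $H$ together with lifts of the Magnus generators), represent each generator by an explicit word in $\calX$ all of whose suffixes project to small matrices, and get connectedness of the preimage of $A$ by right-translating the resulting paths by elements of $H$. In fact the paper's treatment of the generators $[R_{ij},R_{ik}]$ with $i\le d$ is exactly your ``parking'' device: conjugate by $R_{mi}$ with $m>d$ and absorb the resulting correction $[R_{mk},R_{mj}]$ — whose first index exceeds $d$ and hence admits a safe natural lift — into the generating set, so no circularity arises. For the generators $K_{ij}$ the paper instead verifies a specific nine-letter identity by a direct table computation, whereas you conjugate by $w_{i,i'}$ and park; both yield suffixes that keep every $\pm e_m$ alive as a column, which is the sufficient condition for (iii) that the paper also uses. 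Two points to tighten. First, the lifted Magnus generators only generate a subgroup of $\widetilde{\IA_n}$ surjecting onto $\IA_n$; you must also adjoin the central element $w_{12}^4$ (harmless: its suffixes project to signed permutation matrices, which are small). Second, your blanket assertion that a Nielsen step preserves $k$-unimodularity ``whenever $j>d$'' conflates the column index with the depth of the vector currently occupying that column, and the ``exactly when the depth of column $i$ does not increase'' clause is also not right — your own parking trick shows $k$-unimodularity can survive a depth increase. Neither slip affects the construction, since what you actually verify on the explicit words is the stronger invariant that every standard basis vector occurs, up to sign, among the columns of each partial product.
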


\begin{proof}
As before, let $\rho:G\to G/H$ and $\theta:F(\calX)\to G/H$ denote the natural projections. 
Recall that we are identifying $G/H$ with $Um_{d\times n}(\dbZ)$ via the map from Lemma~\ref{obs:isomorphism}.

An explicit formula for $\theta$ viewed as a map from $F(\calX)$ to $Um_{d\times n}(\dbZ)$ is as follows.

\begin{Lemma}
\label{lem:theta}
Given $w\in F(\calX)$, let $[w]$ be its projection to $\SL_n(\dbZ)$. Then 
\begin{itemize}
\item[(i)] $\theta(g)$ consists of the first $d$ rows of $[g]^{-1}$ if $H=\widetilde{IAR_{n,d}}$ and
\item[(ii)] $\theta(g)$ consists of the first $d$ rows of $[g]^{T}$ if $H=\widetilde{IAC_{n,d}}$.
\end{itemize}
\end{Lemma}

We will prove Proposition~\ref{prop:essential3} for $H=\widetilde{\IAR_{n,d}}$;
the proof for $H=\widetilde{\IAC_{n,d}}$ is quite similar.
\skv

As we explained in the proof of Proposition~\ref{thm:criterion}, if we are given an explicit finite subset $A$ of $G/H$
such that $\rho^{-1}(A)$ is connected in $Cay(G,\calX)$, it is easy to construct a finite generating set for $H$ in terms of $A$.
In this proof we will proceed backwards, starting with a finite generating set for $H$ and using the following observation to construct $A$ with the above property. 

\begin{Observation}
\label{obs:Schreier}
Let $S_H$ be a finite generating set for $H$. For each $s\in S_H^{\pm 1}$ choose a lift $\widetilde s\in F(\calX)$, let
$E$ be the set of all suffixes of those lifts and let $A=\theta(E)$. 
Then $\rho^{-1}(A)$ is connected in $Cay(G,\calX)$.
\end{Observation}

In view of Observation~\ref{obs:Schreier}, to prove Proposition~\ref{prop:essential3} we just need to find a finite generating
set $S_H$ for $H$ with the following property: every $s\in S_H^{\pm 1}$ admits a lift $\widetilde s\in F(\calX)$ such that 
\begin{itemize}
\item[(*)] $\theta(w)$ is small for any suffix $w$ of $\widetilde s$.
\end{itemize}

We start by describing the finite generating set $S_H$ for $H$ that we will be using. Recall that $H=\widetilde{\IAR_{n,d}}$ is defined as the preimage of the subgroup $\Row_{n,d}$ of
$\SL_n(\dbZ)$ consisting of matrices whose first $d$ rows coincide with those of the identity matrix.
Thus, if we denote by $\widetilde {\IA_n}$ the kernel of the projection $G=\widetilde{\SAut(F_n)}\to\SL_n(\dbZ)$, we can produce a generating set for $H$ by taking the union of a generating set for $\widetilde {\IA_n}$ and any subset of $\widetilde{\SAut(F_n)}$
which maps onto a generating set of $\Row_{n,d}$.

Magnus~\cite{Ma} showed that $\IA_n$ is generated by the automorphisms $K_{ij}=(x_i\mapsto x_j^{-1}x_i x_j)$
with $i\neq j$ and $K_{ijk}=(x_i\mapsto x_i[x_j,x_k])$ with $i,j,k$ distinct. The group $\widetilde {\IA_n}$
is generated by (chosen) preimages of these elements, e.g. $R_{ij}L_{ij}^{-1}$ and $[R_{ij},R_{ik}]$,
and $w_{12}^4$ which generates the kernel of the map $\widetilde{\SAut(F_n)}\to \SAut(F_n)$.

The group $\Row_{n,d}$ is generated by the elementary matrices $E_{ji}$, $i\neq j$, where $1\leq i\leq n$
and $d+1\leq 1\leq j\leq n$. As their preimages we can use the elements $R_{ij}$ with the same restrictions on $i$ and $j$. Thus, the set $\{R_{ij}L_{ij}^{-1}\}\cup \{[R_{ij},R_{ik}]\}\cup\{R_{ij}: j\geq d+1\}\cup\{w_{12}^4\}$ generates $H$ and hence the set
$$S_H=\{R_{ij}L_{ij}^{-1}, [R_{ij},R_{ik}]: j\leq d\}\cup\{L_{ij}, R_{ij}: j\geq d+1\}\cup\{w_{12}^4\} \eqno(***)
$$ also generates $H$. 

A simple way to ensure that a matrix $v\in Um_{d\times n}(\dbZ)$ is $k$-unimodular for all $k\leq d$ is to require that
every element of the standard basis of $\dbZ^d$ (denoted below by $e_1,\ldots, e_d$) appears among the columns of $v$, up to sign.
Thus, to prove Proposition~\ref{prop:essential3} it suffices to show that 
every $s\in S_H^{\pm 1}$ admits a lift $\widetilde s$ such that every suffix $w$ of $\widetilde s$
satisfies the following two conditions:

\begin{itemize}
\item[(1)] for every $1\leq i\leq d$ one of the columns of $\theta(w)$ is equal to $e_i$ or $-e_i$;
\item[(2)] every entry of $\theta(w)$ is equal to $0$ or $\pm 1$ and $\theta(w)$
differs from $\begin{pmatrix}I_{d\times d} \mid 0_{d\times (n-d)}\end{pmatrix}$ in at most 8 columns.
\end{itemize}
Clearly, we only need to consider generators of the form $R_{ij}L_{ij}^{-1}$ and $[R_{ij},R_{ik}]$ with $j\leq d$ and their inverses.

Let us first consider $s=R_{ij}L_{ij}^{-1}$. If we let $\widetilde s$ be the ``natural'' lift 
(that is, $\widetilde s=R_{ij}L_{ij}^{-1}$ where $L_{ij}$ and $R_{ij}$ are considered as elements of $F(\calX)$),
it is easy to see that both (1) and (2) holds for $i>d$, in which case for every suffix $w$ of $\widetilde s$,
the matrix $\theta(w)$ is of the block form $(I_d | 0_{d\times (n-d)})$.

If $i\leq d$, the suffix $L_{ij}^{-1}$ still satisfies (2), but not (1), although it is not far off. Indeed, by Lemma~\ref{lem:theta}, the matrix $\theta(L_{ij}^{-1})\in Um_{d\times n}(\dbZ)$ has the block form $( E_{ji} | 0)$ where $E_{ji}$ is the $d\times d$ elementary matrix having $1$ in the $(j,i)$-entry. Thus, all elements of the standard basis of $\dbZ^d$ except $e_i$ are present among the columns of $\theta(L_{ij})$.
To fix this issue we replace $R_{ij}L_{ij}^{-1}$ by a more involved word in $F(\calX)$ which has the same projection to $G$.

\begin{Claim}
\label{claim:lift}
Fix integers $k,m$ satisfying $d<k<m\leq n$ (recall that $i\neq j$ are both $\leq d$, so $i,j,k$ and $m$ are distinct)
The following relation holds in $G$:
\begin{equation}
\label{eq:lift}
R_{ij}L_{ij}^{-1}=R_{ki}^{-1}L_{mj}L_{mi}^{-1}L_{mj}^{-1}R_{ij}R_{ki}R_{kj}^{-1}L_{ij}^{-1}L_{mi}.
\end{equation}
\end{Claim}
\begin{proof} By assumption, $i,j,k$ and $m$ are distinct, and without loss of generality we can assume that
$i=1,j=2,k=3$ and $m=4$ (we do this for better readability; there are no mathematical simplifications).

Using the basic relations  
$R_{31}^{-1}R_{12}R_{31}=R_{12}R_{32}$ and $L_{41}^{-1}L_{12}L_{41}=L_{12}L_{42}$ (see Observation~\ref{rel:conj}), we get
$R_{12}=R_{31}^{-1}R_{12}R_{31}R_{32}^{-1}$ and $L_{12}=L_{41}^{-1}L_{12}L_{41}L_{42}^{-1}$, whence
$L_{12}^{-1}=L_{42}L_{41}^{-1}L_{12}^{-1}L_{41}$ and thus
$R_{12}L_{12}^{-1}=R_{31}^{-1}R_{12}R_{31}R_{32}^{-1}L_{42}L_{41}^{-1}L_{12}^{-1}L_{41}$.
Since $L_{42}$ commutes with $R_{12},R_{31}$ and $R_{32}$ and $L_{41}$ commute with $R_{31}$ and $R_{32}$,
it follows that
$$R_{12}L_{12}^{-1}=R_{31}^{-1}L_{42}R_{12}L_{41}^{-1}R_{31}R_{32}^{-1}L_{12}^{-1}L_{41}. \eqno (***)$$
From the relation $L_{41}R_{12}L_{41}^{-1}=L_{42}^{-1}R_{12}$, we get $R_{12}L_{41}^{-1}=L_{41}^{-1}L_{42}^{-1}R_{12}$
and hence $$R_{12}L_{12}^{-1}=R_{31}^{-1}L_{42}(L_{41}^{-1}L_{42}^{-1}R_{12})R_{31}R_{32}^{-1}L_{12}^{-1}L_{41}\mbox{ in }G,$$
as desired.
\end{proof}
We claim that the word on the right-hand side of \eqref{eq:lift} is a lift of $s=R_{ij}L_{ij}^{-1}$ with required properties,
that is, all of its suffixes satisfy (1) and (2). The latter is shown in the following table where
again we assume that $i=1,j=2,k=3$ and $m=4$ and this time also $d=2, n=4$.
 \vskip .3cm
\hskip -1cm
\small
{
\begin{tabular}{|c|c|c|c|c|c|}
\hline
$w$&$w_0=1$&$w_1=L_{41}$&$w_2=L_{12}^{-1}w_1$&$w_3=R_{32}^{-1}w_2$&$w_4=R_{31}w_3$\\
\hline
$\theta(w)$&
$\begin{pmatrix}1&0&0&0 \\ 0&1&0&0  \end{pmatrix}$&
$\begin{pmatrix}1&0&0&-1 \\ 0&1&0&0   \end{pmatrix}$&
$\begin{pmatrix}1&0&0&-1  \\ 1&1&0&0  \end{pmatrix}$&
$\begin{pmatrix} 1&0&0&-1  \\ 1&1&1&0  \end{pmatrix}$&
$\begin{pmatrix} 1&0&-1&-1  \\ 1&1&0&0 \end{pmatrix}$\\
\hline
&&&&&\\
\hline
$w$&$w_5=R_{12}w_4$&$w_6=L_{42}^{-1}w_5$&$w_7=L_{41}^{-1}w_6$&$w_8=L_{42}w_7$&$w_9=R_{31}^{-1}w_8$\\
\hline
$\theta(w)$&
$\begin{pmatrix} 1&0&-1&-1  \\ 0&1&0&0 \end{pmatrix}$&
$\begin{pmatrix}1&0&-1&-1  \\ 0&1&0&1 \end{pmatrix}$&
$\begin{pmatrix} 1&0&-1&0  \\ 0&1&0&1 \end{pmatrix}$&
$\begin{pmatrix} 1&0&-1&0  \\ 0&1&0&0 \end{pmatrix}$&
$\begin{pmatrix} 1&0&0&0  \\ 0&1&0&0 \end{pmatrix}$
\\
\hline
\end{tabular}
}
\normalsize
\vskip .2cm
Similarly one can construct desired lifts of $(R_{ij}L_{ij}^{-1})^{-1}=L_{ij}R_{ij}^{-1}$.
\skv
The lifts for $s=[R_{ij},R_{ik}]$ are slightly easier to construct. As before, the natural lift works
if $i>d$, so assume that $i\leq d$, and fix $m>d$ which is distinct from $j$ and $k$.

Using the relations $R_{ij}^{R_{mi}}=R_{ij}R_{mj}$ and $R_{ik}^{R_{mi}}=R_{ik}R_{mk}$, we get
$[R_{ij},R_{ik}]^{R_{mi}}=[R_{ij}R_{mj},R_{ik}R_{mk}]$. Since $R_{ij}$ and $R_{ik}$
both commute with $R_{mj}$ and $R_{mk}$, we have $$[R_{ij}R_{mj},R_{ik}R_{mk}]=[R_{ij},R_{ik}][R_{mj},R_{mk}],$$
and therefore 
\begin{equation}
\label{eq:Kijk}
[R_{ij},R_{ik}]=R_{mi}^{-1}[R_{ij},R_{ik}]R_{mi}[R_{mk},R_{mj}]. 
\end{equation}
It is straightforward to check that all suffixes of the word on the right-hand side of \eqref{eq:Kijk} satisfy both (1) and (2),
which finishes the proof.
\end{proof}

\section{Additional diagrams for the ZB case}

In this short section we will describe the single-cell reductions for types 8-11 for the group $\IAC_{n,1}$ in the zero coordinate case (ZB case). As we already mentioned in \S~4, the reduction diagrams will be identical to the GA case. For each type, the map described below
is a reduction since each new vertex is obtained from an old vertex of a diagram $\Omega$ by replacing a zero coordinate by an 
$\Omega$-good coordinate (this is a special case of Lemma~\ref{lem:zeasy}(a)). But in order to show that Lemma~\ref{lem:zeasy}(a) is applicable, we need to check that certain vertex coordinates are good, which is done below. We keep all the notations from \S~3,4. In particular, $\calF$ denotes the cell at which the reduction is being performed, and $v$ denotes a chosen maximal of $\calF$.
\skv
  {\it Type 9: $l(\partial\calF)= (L_{21}L_{12}^{-1}R_{21}=w_{21})$}, $I=\{1,2,4\}$. The vertices of $\calF_I$ are
$(a,b,0), (a,b-a,0),(b,b-a,0),(a-b,b,0)$. We consider 2 cases. If $v_I=(a,b,0)$ or $(a-b,b,0)$, then by Lemma~\ref{lem:zeasy}(b) $a$ and $a-b$
are both good whence the first map in Figure~\ref{9ZB} is a reduction by Lemma~\ref{lem:zeasy}(a). Likewise, if $v_I=(a,b-a,0)$ or $(b,b-a,0)$,
then $a$ and $b$ are good whence the second map in Figure~\ref{9ZB} is a reduction.
\skv
\input{figure9ZB.tex}

\skv
{\it Type 8: $l(\partial\calF)= (L_{12}L_{21}^{-1}R_{12}=w_{12})$}, $I=\{1,2,4\}$. The vertices of $\calF_I$ are $(a,b,0), (a-b,b,0),(a-b,a,0),(a,b-a,0)$.
If $v_I=(a,b,0)$ or $(a-b,b,0)$, then $a$ and $a-b$ are both good, whence the first map in Figure~\ref{8ZB} is a reduction. And if $v_I=(a-b,a,0)$
or $(a,b-a,0)$, then $\calF$ has a maximal vertex with a good second coordinate, whence by swapping the roles of the indices $1$ and $2$, we are reduced to type 8.
\skv

\input{figure8ZB.tex}

\skv

{\it Type 10:  $l(\partial\calF)= (w_{12}^{-1}R_{12}w_{12}=L_{21}^{-1})$}, $I=\{1,2\}$. The vertices of $\calF_I$ are
$(a,b,0), (b,-a,0),(b-a,-a,0)$ and $(a,b-a,0)$. If $v_I=(*,-a,0)$, then $b$ and $b-a$ are both good, whence the first map in Figure~\ref{10ZB} is a reduction. If $v_I=(a,*,0)$, then $a$ is good, whence the second map in Figure~\ref{10ZB} is a reduction. 
\input{figure10ZB.tex}

\skv
{\it Type 11R:  $l(\partial\calF)= (w_{12}^{-1}R_{13}w_{12}=R_{23})$}, $I=\{1,2,3,4\}$. As in type~11A, we will assume that the edge labeled by
$w_{12}$ is incoming at $v$, which forces us to consider different boundary label representatives depending on the support of the other edge
at $v$. The maps in both cases are shown in Figure~\ref{11ZB}. By assumption $a$ is good in the first case and $b$ is good in the second case, so
in both cases the map is a reduction Lemma~\ref{lem:zeasy}(a).

\input{figure11ZB.tex}
\newpage

\end{document}